\theoremstyle{plain}
\newtheorem{lemma}{Lemma}[section]
\newtheorem{theorem}[lemma]{Theorem}
\newtheorem{proposition}[lemma]{Proposition}
\newtheorem{corollary}[lemma]{Corollary}
\theoremstyle{definition}
\newtheorem{definition}[lemma]{Definition}
\newtheorem{remark}[lemma]{Remark}
\newtheorem{example}[lemma]{Example}
\numberwithin{equation}{section}
\let\temp\phi
\let\phi\varphi
\let\temp\epsilon
\let\epsilon\varepsilon
\let\varepsilon\temp
\newcommand{\R}{\mathbb{R}}
\newcommand{\N}{\mathbb{N}}
\newcommand{\Q}{\mathbb{Q}}
\DeclareMathOperator*{\argmin}{arg\,min}
\DeclareMathOperator{\Dom}{Dom}
\DeclareMathOperator{\Ent}{Ent}
\DeclareMathOperator{\Exp}{Exp}
\DeclareMathOperator{\Graph}{Graph}
\DeclareMathOperator{\Hess}{Hess}
\DeclareMathOperator{\Length}{Length}
\DeclareMathOperator{\OptCaus}{OptCaus}
\DeclareMathOperator{\OptGeo}{OptGeo}
\DeclareMathOperator{\Ric}{Ric}
\DeclareMathOperator{\Span}{span}
\DeclareMathOperator{\dom}{Dom}
\DeclareMathOperator{\ee}{e}
\DeclareMathOperator{\id}{Id}
\DeclareMathOperator{\supp}{supp}
\DeclareMathOperator{\tr}{tr}
\DeclareMathOperator{\vol}{Vol}
\newcommand{\de}{{\mathrm{d}}}
\newcommand{\CD}{\mathsf{CD}}
\newcommand{\Leb}{\mathcal{L}}
\newcommand{\NC}{\mathsf{NC}}
\newcommand{\haus}{\mathcal{H}}
\newcommand{\weak}{\rightharpoonup}
\newcommand{\Prob}{\mathcal P}
\newcommand{\M}{\mathcal{M}}
\newcommand{\dotargument}{\,\cdot\,}
\renewcommand{\L}{\mathcal{L}}
\newcommand{\q}{\mathfrak{q}}
\newcommand{\QQ}{\mathfrak Q}
\newcommand{\cbf}{\mathbf{f}}
\newcommand{\gflow}{\Psi}
\newcommand{\indicator}{\mathbf{1}}
\newcommand{\lparam}{\zeta}
\newcommand{\mm}{\mathfrak m}
\newcommand{\qq}{\mathfrak q}
\newcommand{\relation}{\mathcal{R}}
\newcommand{\vfield}{\mathfrak{X}}
\newcommand{\normal}{\mathscr{N}}
\setlist[enumerate]{leftmargin=1cm}
\setlist[itemize]{leftmargin=1cm}
\title[Optimal transport on null hypersurfaces and the NEC]{Optimal transport on null hypersurfaces and the null energy condition}
\author{Fabio Cavalletti}
\email{fabio.cavalletti@unimi.it}
\author{Davide Manini}
\email{dmanini@campus.technion.ac.il}
\thanks{D.\;M.\;acknowledges support from the European Research Council (ERC)
  under the European Union's Horizon 2020 research and innovation
  programme, grant agreement No.~101001677 ``ISOPERIMETRY''}
\author{Andrea Mondino}
\email{andrea.mondino@maths.ox.ac.uk}
\thanks{A.\;M.\;acknowledges support from the European Research Council (ERC) under the European Union's Horizon 2020 research and innovation programme, grant agreement No.~802689 ``CURVATURE''}
\begin{document}
\begin{abstract}
The goal of the present work is to study optimal transport on null hypersurfaces inside Lorentzian manifolds. The challenge here is that optimal transport along a null hypersurface is completely degenerate, as the cost takes only the two values $0$ and $+\infty$.
The tools developed in the manuscript enable to give an optimal transport characterization of the null energy condition (namely, non-negative Ricci curvature in the null directions) for Lorentzian manifolds in terms of convexity properties of the Boltzmann--Shannon entropy along null-geodesics of probability measures.
 We obtain as applications: a stability result under convergence of spacetimes, a comparison result for null-cones, and the Hawking area theorem (both in sharp form, for possibly weighted measures, and with apparently new rigidity statements).
\end{abstract}

\keywords{null energy condition, null hypersurface, rigged measure,
  optimal transport, degenerate cost, Hawking area theorem, light-cone
  theorem}

\subjclass{83C05; 49Q22}

\maketitle
\setcounter{tocdepth}{1}

\tableofcontents

\section{Introduction}
Optimal transport revealed to be an extremely powerful tool in Riemannian signature.  In particular, one can employ optimal transport in Riemannian manifolds in order to characterize Ricci curvature lower bounds in terms of convexity properties of the Boltzmann--Shannon entropy along $W_2$-geodesics of probability measures,   see McCann~\cite{McCannThesis}, Otto--Villani~\cite{OttoVillani}, Cordero Erausquin--McCann--Schmuckenschl\"ager~\cite{CEMS}, von Renesse--Sturm~\cite{vRS}. Such a point of view has been extremely fruitful; for instance, it has been the starting point for developing a theory of non-smooth metric measure spaces with Ricci curvature bounded below and dimension bounded above in a synthetic sense, the so-called ${\mathsf{CD}}(K,N)$ spaces of Lott--Sturm--Villani~\cite{sturm:I, sturm:II, lottvillani}.

The goal of the present work is to give an optimal transport characterization of Ricci curvature lower bounds in the null directions for Lorentzian manifolds in terms of convexity properties of the Boltzmann--Shannon entropy along null-geodesics of probability measures, in the spirit  of the aforementioned~\cite{McCannThesis, OttoVillani,CEMS, vRS}. The challenge here (see later in the introduction for more details) is that optimal transport in the null directions is completely degenerate (the cost takes only the two values $0$ and $+\infty$). Thus, a major task of the manuscript, is to analyse such a degenerate optimal transport problem and develop tools to characterize lower Ricci bounds (in particular,  non-negative Ricci in the null directions, which corresponds to the renowned  \emph{null energy condition} in general relativity). In order to illustrate the power of the methods developed in the paper, we will obtain as applications: a stability result under convergence of space-times, a comparison result for null-cones and the Hawking area theorem (both in sharp form, for possibly weighted measures, and with apparently new rigidity statements). 
In a forthcoming work~\cite{CMM24b}, we will use the optimal transport characterization of the null energy condition obtained in the present work as a starting point to develop a theory of non-smooth Lorentzian spaces satisfying such Ricci bounds in a synthetic setting, in the spirit of the ${\mathsf{CD}}(K,N)$ spaces of Lott--Sturm--Villani~\cite{sturm:I, sturm:II, lottvillani}.

Before discussing more in detail the main results of the paper, let us give some background and motivations.

The first part of the paper will be devoted to study volume distortion inside null hypersurfaces and optimal transport thereof. Recall that a hypersurface inside a Lorentzian manifold is said to be \emph{null} if the restriction of the ambient Lorentzian metric has a 1-dimensional kernel. Null hypersurfaces are fundamental in general relativity. Indeed, they play an important role in  understanding the propagation of light and gravitational waves, the causal structure of space-time, the nature of black holes and their horizons, and various theoretical constructs that underpin modern gravitational theory, such as holography and AdS/CFT correspondence. 

The second part of the paper will connect optimal transport inside null hypersurfaces with the Null Energy Condition (NEC), which writes as non-negativity of the Ricci curvature in the null directions. It is essentially a mathematical formulation of the idea that  ``energy density" should be non-negative.  The NEC is an important concept in general relativity, and it is expected to be satisfied by all forms of matter (see for instance~\cite[Ch.~4.6]{Carroll}), at least classically when quantum effects are not taken into account. For instance, it plays a pivotal role in:

\begin{enumerate}
\item \emph{Singularity Theorems}: The NEC is a key assumption in the Penrose singularity theorem~\cite{Penrose65} predicting the formation of singularities under gravitational collapse, awarded the 2020 Nobel Prize in Physics.

\item \emph{Black Hole Thermodynamics}: The NEC is used in proving important results in black hole thermodynamics, including the Hawking area theorem~\cite{Haw71}, which states that the area of the event horizon of a black hole cannot decrease over time, a striking analog to the second law of thermodynamics (see also the more recent revisits by Chru{\'s}ciel--Delay--Galloway--Howard~\cite{CDGH-AHP-2001} and Minguzzi~\cite{MinguzziCMP15}, lowering the regularity assumptions).   
\end{enumerate}

\subsection*{Background on optimal transport and Ricci curvature in Lorentzian setting.}

Optimal transport in Loretzian manifolds was studied, among others, by
Eckstein--Miller~\cite{EM17}, Suhr~\cite{Suhr} and
Kell--Suhr~\cite{KellSuhr}. After developing  further tools for
\emph{time-like} optimal transport of probability measures (i.e.,
pairs of probability measures admitting a coupling concentrated on the
time-like relation) McCann~\cite{McCann} and Mondino--Suhr~\cite{MoSu}
gave an optimal transport characterization of \emph{time-like Ricci
  curvature lower bounds} (resp.\ of the Einstein equations) in terms of convexity properties of the Boltzmann--Shannon entropy along time-like geodesics (with respect to a suitable Lorentz--Wasserstein structure) of probability measures. Building on such a characterization in the smooth setting, Cavalletti--Mondino~\cite{CaMo:20} developed a synthetic theory of non-smooth Lorentzian spaces satisfying time-like Ricci curvature lower bounds  (see also the more recent developments \cite{Braun,Braun-Ohta,Braun-McCann}). The non-smooth setting is given by the class of Lorentzian (pre-)length spaces, introduced by Kunzinger--S\"amann~\cite{KS} (after the work by Kronheimer--Penrose~\cite{CausalSpace} on causal spaces), as a Lorentzian counterpart of (length) metric spaces. For variants in the axiomatization of Lorentzian length spaces, see McCann~\cite{McCann-NEC} and Minguzzi--Suhr~\cite{Minguzzi-Suhr}. 
A related line of research has been to investigate convergence of (possibly non-smooth) Lorentzian spaces, see for instance \cite{Sormani-Vega,Allen-Burtscher,Kunzinger-Steinbauer,Minguzzi-Suhr,Muller}.

Let us stress that the aforementioned works analyzed optimal transport in the \emph{time-like} directions, i.e.,  the transport is performed along time-like geodesics.  A special case of time-like Ricci bounds is the one of non-negative lower bound, also known as the \emph{strong energy condition} (SEC). A weaker energy condition is the \emph{null energy condition} (NEC), requiring the non-negativity of the Ricci curvature in null directions. Unlike the SEC, the NEC is expected to be satisfied by all forms of matter (see, for instance, Carroll~\cite[Ch.~4.6]{Carroll}), at least classically when quantum effects are not taken into account.

A first synthetic characterization of the NEC for smooth spacetimes was given by McCann~\cite{McCann-NEC}, as a limiting case of timelike Ricci lower bounds.  The advantage of such a characterization is that it makes sense for non-smooth Lorentzian length spaces; the drawback, as already remarked in~\cite{McCann-NEC},  is that it is not stable under convergence of spacetimes. Moreover, it remains an open question to draw applications from such a characterization.  

The first time that the NEC was characterized -- for smooth (possibly weighted) spacetimes -- by studying \emph{optimal transport along null hypersurfaces} was in the recent work of Ketterer~\cite{Ket24}, who  analyzed displacement convexity of the $(n-2)$-R\'enyi entropy along Wasserstein geodesics of probability measures concentrated on  space-like $(n-2)$-dimensional submanifolds inside a null $(n-1)$-dimensional hypersurface of an $n$-dimensional spacetime. This corresponds to studying optimal transport in codimension 2 along null geodesics (in the spirit of the Riemannian higher codimension optimal transport of~\cite{KettererMondino}). In \cite{Ket24} classical results, such as Penrose's singularity theorem and Hawking's area theorem, are  recovered via optimal transport.

However, as acknowledged in the introduction of~\cite{Ket24}: ``It is an interesting
question whether our reformulation of the null energy condition can be extended to the
nonsmooth setting of Lorentzian length spaces. At the moment, this is not clear because the
definition requires notions of sufficiently regular null hypersurfaces and null geodesic
congruences."

The goal of the present paper (and the forthcoming companion~\cite{CMM24b}) is to propose a different approach to optimal transport inside null hypersurfaces, by studying \emph{diffused} probability measures along a null hypersurface (more precisely, absolutely continuous probability measures with respect to a reference $(n-1)$-dimensional rigged measure on a null hypersurface; see below for more details). The aim is to    exploit such a diffused optimal transport inside null hypersurfaces in order to give a synthetic characterization of the NEC which is fruitful in terms of applications,  suitable to be extended to non-smooth Lorentzian spaces, and stable under convergence of spacetimes.

\subsection*{Challenges}
The question at the heart of the optimal transport problem is, given two probability measures $\mu_0, \mu_1$ on a space $X$, find the ``optimal way to transport $\mu_0$ into $\mu_1$". A  ``transport" from $\mu_0$ to $\mu_1$ is a probability measure $\pi$ on $X\times X$ (called \emph{coupling}) whose marginals are $\mu_0$ and $\mu_1$, and ``optimal"  is translated in mathematical terms by \emph{minimizing} a suitable cost function. In Riemannian signature (see for instance~\cite{villani:oldandnew}) one usually minimizes the integral of the squared  (or another convex function of the) distance among all couplings from $\mu_0$ to $\mu_1$.  In Lorentzian signature, when studying optimal transport between causally related probability measures, one instead maximizes the integral of a suitable power (now less or equal than 1) of the time separation, among all couplings from $\mu_0$ to $\mu_1$ which are concentrated on the causal relation.

Such choices are motivated by the classical fact that geodesics
minimize (resp.\ maximize) length in Riemannian (resp.\ Lorentzian) signature. Moreover, from the technical point of view, such a choice has the advantage that   minimization (resp.\ maximization) problems are particularly well-behaved for convex (resp.\ concave) Lagrangians.

A first challenge in transporting two probability measures inside a null hypersurface is that the cost $c(\dotargument, \dotargument)$ is completely degenerate: $c(x,y)$ is either $0$, if the pair $(x,y)$ is  causally related, or $\infty$, if $(x,y)$ is not causally related. 

A second fundamental point is that the optimal transport characterization of Ricci lower bounds is formulated in terms of displacement convexity of suitable entropy functionals and, in order to compute the entropy of a probability measure, one needs a reference measure. From the physical point of view, this is because the entropy is a relative concept, of a (probability) measure with respect to a reference one. Here the challenge is that, if one restricts the ambient Lorentzian metric to a null hypersurface, the induced quadratic form has a one dimensional kernel, and thus the induced volume form is identically zero.  Therefore, one has to define a natural non-trivial volume form on null hypersurfaces, which captures the geometric properties (in particular, the Ricci curvature) of the space-time. 

\subsection*{Main results} In the next paragraphs, we briefly illustrate the main results of the paper. For the sake of the introduction, the discussion below will be in a simplified form; the reader is referred to the body of the work for the precise (and more general) statements.

\subsubsection*{\textbf{Rigged measure and Ricci curvature, the $\NC^1(N)$ condition}}

Recall that, given an $n$-dimensional Lorentzian manifold $(M^{n}, g)$, a hypersurface $H\subset M$ is said to be null if the restriction of $g$ to $H$, denoted by $g_H$, has non-trivial kernel. In other terms, the tangent space to $H$ at every point contains a null vector, i.e. a vector $v\neq 0$ such that $g(v,v)=0$. As a consequence, the determinant (and thus the volume form) of $g_H$ is identically zero.

\smallskip
For simplicity, throughout the introduction, we will assume that all the objects are smooth. Moreover, we assume that $H$ is causal (i.e.,there are no closed causal loops contained in $H$) and it admits a global space-like and acausal cross section $S$. We refer to the body of the paper for the more general statements.
\smallskip

In order to define a non-trivial volume form on $H$, we fix a  future-directed null vector field $L$ along $H$ satisfying $\nabla_L L=0$. Such fields $L$ will be called \emph{null-geodesic vector fields}. It is not hard to see that, locally, the class of null-geodesic vector fields is non-empty (\Cref{P:local-null-geodesics-smart}).

We prove that, fixed a null hypersurface $H$ and a null-geodesic vector field $L$, there is a canonical volume form $\vol_L$ on $H$, depending on $L$ (\Cref{cor:rigged-volume}). Moreover, the dependence on $L$ is controlled (\Cref{cor:rescale-volume}): roughly, scaling $L$ by a function $\varphi$ will produce a scaling of the volume by $1/\varphi$. 

The construction is compatible, but independent of, the rigging technique~\cite{Rigging} and~\cite[Sect.~2.6]{MinguzziCMP15} (see ~\Cref{SubSec:RiggingTech} for details). For this reason, $\vol_L$ will be called \emph{rigged measure}.

We point out that the rigged measure $\vol_L$ is mutually absolutely
continuous w.r.t.\ the volume measure induced on $H$ by any auxiliary Riemannian metric.
In the sequel, we will say that a measure $\mu\in\M^+(H)$ is
absolutely continuous w.r.t.\ $H$, if it is absolutely continuous
w.r.t.\ the volume measure on $H$ induced by some (hence any) metric.
Accordingly, the family of probability measures absolutely continuous w.r.t.\ $H$
will be denoted by $\Prob_{ac}(H)$.

The distortion of the rigged meausure $\vol_L$  along null geodesics is strictly related to the Ricci curvature of the ambient space-time in null directions (\Cref{eq:riccati-for-jacobi}). In \Cref{Subsec:WeightedRigged}, we also consider the case of a weighted rigged measure $\mm_L= e^{\Phi} \,\vol_L$, for some continuous function $\Phi:M\to \R$.
For the reader's convenience, the presentation 
in~\Cref{S:volume} and~\ref{Sec:VolDist} is as self-contained as
possible, nevertheless, some parts of the arguments may be well-known
to experts (e.g., the estimates involving Jacobi fieds).

In \Cref{sec:CD1}, in order to analyse the interplay between the rigged measure and Ricci curvature, we partition a null hypersurface into null geodesics. Such a partition will be phrased in a terminology borrowed from  optimal transport, in order to be suitable for generalizations to the non-smooth setting (see the forthcoming~\cite{CMM24b}). The main result of the section, \Cref{T:summary} expresses the (resp. weighted) rigged measure in terms of a disintegration compatible with the aforementioned partition in null geodesics, and makes manifest the connection with the (resp. weighted) Ricci curvature of the ambient space-time in the null directions: namely, the null Ricci dictates the concavity properties of the densities of the measures on the null geodesics in the partition. Let us state the result in a simplified form, referring to  \Cref{T:summary} for the more general (and precise) statement. 

\begin{theorem}\label{T:summaryIntro}
  Let $(M^{n},g)$ be a Lorentzian manifold, let $H$ be a causal null hypersurface admitting a space-like and acausal cross-section $S$, and a null-geodesic vector field $L$.

Then the following representation formula holds
$$
\vol_L =  \int_{S} \left( e^{a_z(t)} \de t \right) \, \mathcal{H}^{n-2}(\de z),
$$
where:
\begin{itemize}
\item $\mathcal{H}^{n-2}$ is the $(n-2)$-Hausdorff measure on the  space-like cross-section $S$ with respect to the Riemannian metric induced by the restriction of $g$ on tangent spaces of $S$;
\item for every $z\in S$, there is a unique (maximally extended) null geodesic $\gamma_z$ contained in $H$, satisfying $\gamma_z(0)=z$, $\frac{\de \gamma_z}{\de t}(t)=L(\gamma_z(t))$ for all $t\in {\rm Dom}(\gamma_z)$;
\item for every $z\in S$,  $e^{a_z(t)} \de t$ is a weighted measure on ${\rm Dom}(\gamma_z)\subset \R$;
\item for every $z\in S$, the function $t\mapsto a_z(t)$ satisfies the concavity property 
$$ 
    a_z''(t)
    +\frac{(a_z'(t))^2}{n-2}
    \leq
    -\Ric^{g}_{\gamma_z(t)}(L,L)
    .
$$
\end{itemize}
\end{theorem}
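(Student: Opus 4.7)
\emph{Proof plan.} The approach is to foliate $H$ by the integral curves of $L$ starting from the cross-section $S$, express the canonical rigged measure $\vol_L$ in these flow-adapted coordinates as a disintegration over $S$, and derive the concavity inequality from the null Raychaudhuri equation. Since $L$ is a null-geodesic vector field, for each $z\in S$ its maximal integral curve $\gamma_z$ is a null geodesic with $\gamma_z(0)=z$ and $\gamma_z'(t)=L(\gamma_z(t))$ on $\Dom(\gamma_z)$. Because $S$ is spacelike and acausal, $L(z)$ is transverse to $T_zS$ inside $T_zH$, so the flow map $\Psi:(z,t)\mapsto\gamma_z(t)$ provides a diffeomorphism from an open subset of $S\times\R$ onto a full-measure open subset of $H$.

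To obtain the representation formula, at each $z\in S$ fix an orthonormal basis $(e_1,\dots,e_{n-2})$ of $(T_zS,g|_{TS})$ and push it forward by the flow to a frame $(E_1(z,t),\dots,E_{n-2}(z,t))$ of the screen distribution along $\gamma_z$. By the canonical construction of the rigged measure (\Cref{cor:rigged-volume}), evaluating $\vol_L$ on the $L$-adapted frame $(E_1,\dots,E_{n-2},L)$ yields a quantity depending only on the induced inner product on the screen space $T_{\gamma_z(t)}H/\R L$; at $t=0$ this is the restriction of $g$ to $T_zS$, so $(\Psi^*\vol_L)|_{t=0}=\mathcal{H}^{n-2}|_S$. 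Setting
\[
a_z(t) := \log \sqrt{\det\bigl( g(E_i(z,t),E_j(z,t)) \bigr)_{i,j=1}^{n-2}},
\]
which satisfies $a_z(0)=0$, one obtains $\Psi^*\vol_L = e^{a_z(t)}\,\de t\otimes\mathcal{H}^{n-2}(\de z)$, which is the claimed disintegration.

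The derivative $a_z'(t)$ is precisely the null expansion $\theta(t)$ of the congruence $\{\gamma_z\}_{z\in S}$, namely the trace on the screen space of the null Weingarten map of the leaves $\Psi(\dotargument,t)$. The standard null Raychaudhuri equation for such a congruence reads
\[
\theta'(t) = -\frac{\theta(t)^2}{n-2} - |\sigma(t)|^2 - \Ric^{g}_{\gamma_z(t)}(L,L),
\]
where $\sigma$ is the traceless shear. Discarding the non-positive term $-|\sigma|^2$ and substituting $\theta = a_z'$ yields the claimed pointwise concavity inequality.

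\emph{Main obstacle.} The most delicate step is matching the abstract construction of $\vol_L$ from \Cref{cor:rigged-volume} with its concrete expression in the $L$-flow coordinates, in particular verifying the normalization $\vol_L|_{S}=\mathcal{H}^{n-2}|_S$ and that the density in the disintegration is the screen-area Jacobian of $\Psi$. This is where the null-geodesic property $\nabla_L L = 0$ is crucial: it guarantees that the screen bundle $TH/\R L$ evolves consistently along the flow, so that the rigged measure coincides with the push-forward of $\de t\otimes\mathcal{H}^{n-2}$ weighted by the screen Jacobian. The Raychaudhuri identity is then a classical consequence of the Jacobi equation, and the desired inequality follows by discarding the non-positive shear contribution.
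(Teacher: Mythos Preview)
Your proposal is correct and follows essentially the same route as the paper. The paper's density $W_L(z,t)=\log\det J_L(z,t)$ (\Cref{E:W}) coincides with your screen-area Jacobian $a_z(t)$, and \Cref{eq:riccati-for-jacobi} is precisely the null Raychaudhuri inequality derived via the matrix $U=J^{-1}J'$, with the bound $\tr(\bar U^2)\geq(\tr\bar U)^2/(n-2)$ (obtained from the symmetry of the $(n-2)\times(n-2)$ minor $\bar U$) playing the role of your discarded shear term $-|\sigma|^2$; the representation formula you sketch is \eqref{eq:representation-rigged-volume} combined with \Cref{R:globalcross}.
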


Motivated by such a result, we introduce our first synthetic notion of null energy condition, denoted by $\NC^1$, which amounts to require that the densities of the measures on the null geodesics in the aforementioned partition are $\log$-concave (in a quantified sense). Here, we give a simplified version, referring to \Cref{def:NC1Quadruples} for a more general one, admitting a $C^0$-weight on the rigged measure and weaker assumptions on the null hypersurface.

\begin{definition}[$\NC^{1}(n)$ condition]\label{def:NC1QuadruplesIntro}
  Let $(M^n,g)$, $H,\, S,\, L$ be as in \Cref{T:summaryIntro}. We say that the triplet $(M,g,H)$ satisfies the null energy condition $\NC^1(n)$ if, for all $z\in S$, the function
$t\mapsto a_z(t)$ is locally-Lipschitz and it  satisfies
\begin{equation}
  a_z''
  +
  \frac{(a_z')^2}{n-2}
  \leq 0
  ,
  \qquad
  \text{ in the sense of distributions}.
\end{equation}
  We say that the space-time $(M^n,g)$ satisfies the
   null energy condition $\NC^1(n)$ if, for
  any null hypersurface $H\subset M$ as above, the triplet $(M,g,H)$ satisfies the null
  energy condition $\NC^1(n)$.
\end{definition}
Since a non-negative lower bound on the  Ricci curvature is invariant under scaling, it is not hard to see that the $\NC^1(n)$ condition is independent of $L$ (see \Cref{Rem:indepNC1L}). Clearly, \Cref{T:summaryIntro} shows that the classical NEC implies $\NC^1(n)$. The reverse implication is proved in \Cref{thm:NC1toNEC}. 
Let us stress that this reverse implication is proved by assuming the $\NC^1(n)$ condition only on (local) future light-cones.
This feature differs form~\cite{Ket24} where the displacement
convexity of the entropy is required for certain null hypersurfaces
constructed ad-hoc, whereas here the $\NC^1(n)$ condition is required
only for light-cones, which are null hypersurfaces with a clean
physical interpretation (namely the events spanned by the light radiating from a point).

The advantage of the $\NC^1(n)$ condition, with respect to the NEC, is that it is suitable to be generalized to the non-smooth setting (see the forthcoming paper~\cite{CMM24b}). Moreover, in \Cref{th:stability} we prove that $\NC^1$ is stable under $C^1$-convergence of the Lorentzian metrics and $C^0$-convergence of the weights on the rigged measures. More general stability results, allowing non-smooth limit spaces, will be established in the forthcoming paper~\cite{CMM24b}.

\subsubsection*{\textbf{Optimal transport inside a null hypersurface}}
One of the novel aspects of the present work is the one regarding optimal transport inside a null hypersurface. As mentioned above, the challenge here is that the cost is completely degenerate (taking only the values $0$ and $\infty$). Nevertheless, the ruled geometry of null hypersurfaces allows us to give a quite precise picture. Let us start with a definition (see \Cref{defn:inside}). Let $e_t: C([0,1]; H)\to H$ be the evaluation map, i.e.,\ $e_t(\gamma):=\gamma(t)$; also, denote by $\tau:M\times M\to [0,\infty)$ the time separation function (i.e.,\ the $\sup$ of the proper time of causal curves joining the two points in the argument of $\tau$, with the convention that $\tau(x,y)=0$ if $x\not\leq y$, where $\leq$ denotes the causal relation), and by $\Pi_\leq(\mu_0, \mu_1)$ the set of causal couplings from $\mu_0$ to $\mu_1$ (i.e., the set of couplings  from $\mu_0$ to $\mu_1$ which are concentrated on the causal relation). In the next definition, we give a slight variation of terminology introduced in~\cite{Ket24}.

\begin{definition}[Transport inside a null hypersurface]
  \label{defn:insideIntro}
 Let $(M,g)$ be a Lorentzian manifold, $H$ a null hypersurface and $\mu_i\in\Prob(H)$, $i=0,1$, be
  two probability measures.
  We say that \emph{$\mu_0$ is null connected to $\mu_1$ along $H$}  if there
  exists a probability measure  $\nu \in \mathcal{P}(C([0,1];H ))$  such that
  \begin{equation}\label{eq:CausnuIntro}
  \pi:=(e_{0},e_{1})_{\sharp} \nu\in \Pi_{\leq}(\mu_0, \mu_1)\,,\quad \tau(x,y)=0 \text{ for $\pi$-a.e. $(x,y)$, }\quad \text{and}\quad \nu\text{-a.e. } \gamma {\rm\ is \ causal}.
  \end{equation}
  We also denote
  \begin{equation}\label{def:OptCausIntro}
\OptCaus^H(\mu_0,\mu_1) := \{ \nu \in \mathcal{P}(C([0,1];H )) \text{ satisfying~\eqref{eq:CausnuIntro}}\}.
\end{equation}
\end{definition}
By the very definition, it holds that $\pi:=(e_{0},e_{1})_{\sharp}
\nu$ is an optimal coupling from $\mu_0$ to $\mu_1$ for  any
Lorentz--Wasserstein distance $\ell_p$, for all $p\leq 1$, (see
Eckstein--Miller~\cite{EM17} for $p\leq 1$; Suhr~\cite{Suhr} for $p=1$;
McCann~\cite{McCann} and Cavalletti--Mondino~\cite{CaMo:20} for
$p<1$). Moreover, from~\eqref{eq:CausnuIntro}, it follows that
$\nu$-a.e.\ $\gamma$ is a null pre-geodesic (i.e., it can be re-parameterized into a null geodesic).

When dealing with the dynamical approach to optimal transport in the
Riemannian setting or in the setting of purely time-like transport,
it is well-known that optimal dynamical transport plans are
concentrated on geodesics.
In the case of transport with  degenerate cost, this is not the case.
For instance, as observed above, one can reparameterize the geodesics where a null optimal
dynamical transport plan is concentrated, still finding an admissible
optimal dynamical transport plan.
We therefore introduce the following definition (see \Cref{def:nullgeodDynTP}).

\begin{definition}[Null-geodesic dynamical transport plan]\label{def:nullgeodDynTPIntro}
 Let $(M,g)$ be a Lorentzian manifold and $H$ a null hypersurface.
  Let $\mu_0,\mu_1\in \Prob(H)$ be two probability measures null connected
  along $H$.
  We say that a dynamical transport plan
  $\nu\in\OptCaus^H(\mu_0,\mu_1)$ is \emph{null-geodesic} if it
  is concentrated on null geodesics.
  The set of null-geodesic dynamical transport plans from $\mu_0$ to $\mu_1$ is denoted by $\OptGeo^H(\mu_0,\mu_1)$. In other terms, 
   \begin{equation*}
\OptGeo^H(\mu_0,\mu_1) := \{ \nu \in \OptCaus^H(\mu_0,\mu_1) \colon \text{ $\nu$-a.e.\ $\gamma$ is a null geodesic}\}.
\end{equation*}
\end{definition}
A particularly useful class of transports is the one of \emph{monotone} couplings, defined below (see \Cref{def:MonotoneCoupling}). Let us denote by  $J \subset X^2$ the subset of causally related pairs in $X\times X$.

\begin{definition} [Monotone couplings and plans]
  Let $\mu_0,\mu_1\in \Prob(H)$ be two probability measures null connected along $H$.
  We say that an optimal coupling $\pi$ between $\mu_0$ and
  $\mu_1$ is \emph{monotone}, if:
  \begin{enumerate}
  \item 
  $\pi=(e_0\otimes e_1)_{\sharp}\nu$ for some
  $\nu\in\OptCaus^H(\mu_0,\mu_1)$;
  \item there exists a Borel set
  $A\subset (H\times H)\cap J$ such that $\pi(A)=1$ and
  \begin{equation}
    \forall (x_1,y_1),(x_2,y_2)\in A
    ,
    \quad
    x_1 \leq x_2
    ,\,
    x_1\neq x_2
    \implies
    y_1 \leq y_2
    .
  \end{equation}
  \end{enumerate}
  A null-geodesic dynamical transport plan $\nu\in\OptGeo^H(\mu_0,\mu_1)$ is said to be \emph{monotone} if the  optimal coupling $\pi:=(e_0\otimes e_1)_{\sharp}\nu$ is monotone, in the above sense.
\end{definition}

The main result of \Cref{sec:OTinsideH} is that for every pair  $\mu_0,\mu_1\in \Prob_{ac}(H)$ of probability measures absolutely continuous w.r.t.\ $H$, such that  $\mu_0$ is null connected to $\mu_1$ along $H$, there exists a unique monotone, null-geodesic dynamical transport plan $\nu$ from $\mu_0$ to $\mu_1$; moreover, the associated optimal coupling  $\pi:=(e_0\otimes e_1)_{\sharp}\nu$ is induced by a map (see \Cref{T:good-representation2} for the precise statement).

\subsubsection*{\textbf{NEC as displacement convexity of the Boltzmann--Shannon entropy}}
Building on top of the results about optimal transport inside a null hypersurface 
obtained in \Cref{sec:OTinsideH}, in \Cref{sec:NCe} we  give a synthetic characterization of the NEC  in terms of displacement convexity of the Boltzmann--Shannon entropy along monotone  null-geodesic optimal transport plans. Below, we give a simplified version of the result, referring to \Cref{T:convexity-of-entropy-null} for the general (and more precise) statement. Recall that the Boltzmann--Shannon entropy $\Ent(\mu|\mm)$ of a probability measure $\mu\in \Prob(M)$ with respect to the reference measure $\mm$ is defined by
\begin{equation*}
\Ent(\mu|\mm)=\int \rho \log \rho \, \de\mm,
\end{equation*}
if $\mu=\rho\,\mm$ is absolutely continuous w.r.t.\ $\mm$ and $\left(\rho\, \log \rho\right)_+$ is $\mm$-integrable; otherwise, we adopt the convention that $\Ent(\mu|\mm)=+\infty$. We will consider the dimensional variant 
\begin{equation*}
{\rm U}_N(\mu|\mm):=\exp\left(-\frac{1}{N}\Ent(\mu|\mm) \right),
\end{equation*}
which was studied in~\cite{EKS} in connection to Ricci bounds. This kind of functional is well-known in information theory as the
Shannon entropy power (see e.g.,~\cite{DCT-1991}).
For instance, the entropy power on $\R^N$ is the functional ${\rm U}_{N/2}$.

\begin{theorem}
  \label{T:convexity-of-entropy-null-Intro}
  Let $(M,g)$ be a Lorentzian manifold of
  dimension $n$ satisfying the NEC, and let $H\subset M$ be a null hypersurface endowed with a rigged measure $\vol_L$.
  Let $\mu_0,\mu_1\in \Prob_{ac}(H)$ be two probability measures
  such that $\mu_0$ is null connected to $\mu_1$ along $H$
  and let $\nu\in\OptGeo^H(\mu_0,\mu_1)$ be the unique monotone null-geodesic
  dynamical transport plan joining them.
  Denote by $\mu_t:=(e_t)_{\sharp}\nu$, $t\in [0,1]$, and let  
${\rm u}_{n-1}(t):= {\rm  U}_{n-1}(\mu_t|\vol_L)$.

Then  the function $[0,1]\ni t \mapsto {\rm u}_{n-1}(t)$ satisfies 
\begin{equation}\label{eq:u(t)Conv}
{\rm u}_{n-1}(t) \geq (1-t)\, {\rm u}_{n-1}(0)+t\, {\rm u}_{n-1}(1), \qquad \forall t\in [0,1].
\end{equation}
\end{theorem}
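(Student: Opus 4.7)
The plan is to combine the needle decomposition from \Cref{T:summaryIntro} with a fiber-wise Jacobian analysis, converting NEC into pointwise concavity of $J_t^{1/(n-1)}$ and then into concavity of $u_{n-1}$ via a Cauchy--Schwarz trick. First, I invoke \Cref{T:good-representation2} to obtain the unique monotone null-geodesic dynamical plan $\nu\in\OptGeo^H(\mu_0,\mu_1)$. Since $\nu$ is concentrated on null geodesics of $H$, the quotient map $q\colon H\to S$ sending $x\in H$ to the initial point of its null geodesic satisfies $q_{\sharp}\mu_t=\alpha$ for every $t\in[0,1]$, where $\alpha:=q_{\sharp}\mu_0=q_{\sharp}\mu_1$. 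By \Cref{T:summaryIntro}, we obtain the compatible disintegrations
\begin{equation*}
\vol_L=\int_S \bigl(e^{a_z(s)}\,\de s\bigr)\,\de\mathcal{H}^{n-2}(z)
,
\qquad
\mu_t=\int_S \mu_t^z\,\de\alpha(z)
,
\end{equation*}
where each $\mu_t^z$ is absolutely continuous on the 1D null geodesic $\gamma_z$ and, by NEC, $a_z''+(a_z')^2/(n-2)\le 0$ distributionally.

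Next, the monotonicity of $\nu$ forces the fiber-wise interpolation $t\mapsto\mu_t^z$ to be the 1D monotone rearrangement on the weighted needle $(\gamma_z,e^{a_z(s)}\,\de s)$. The hypothesis on $a_z$ is precisely the 1D $\mathsf{CD}(0,n-1)$ condition, equivalent to concavity of $e^{a_z/(n-2)}$. Hence, if $T_t^z(s):=(1-t)s+t T^z(s)$ denotes the interpolated monotone map, a direct calculation shows that its Jacobian
\begin{equation*}
J_t^z(s):=\partial_s T_t^z(s)\cdot e^{a_z(T_t^z(s))-a_z(s)}
\end{equation*}
with respect to $e^{a_z(s)}\,\de s$ satisfies that $t\mapsto (J_t^z(s))^{1/(n-1)}$ is concave on $[0,1]$. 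Since the global transport $T_t(z,s)=(z,T_t^z(s))$ acts as the identity in the transverse direction, its Jacobian with respect to $\vol_L$ agrees fiberwise with $J_t^z$, so $t\mapsto(J_t(x))^{1/(n-1)}$ is concave for $\mu_0$-a.e.\ $x$.

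The Monge--Amp\`ere change of variables then gives $E(t):=\Ent(\mu_t|\vol_L)=\Ent(\mu_0|\vol_L)-\int \log J_t\,\de\mu_0$. The pointwise concavity $(J_t^{1/(n-1)})''\le 0$ is equivalent to $(\log J_t)''\le -(J_t'/J_t)^2/(n-1)$; integrating against $\mu_0$ yields
\begin{equation*}
E''(t)\ =\ -\!\int(\log J_t)''\,\de\mu_0\ \ge\ \frac{1}{n-1}\int\Bigl(\frac{J_t'}{J_t}\Bigr)^{\!2}\de\mu_0
,
\end{equation*}
while Cauchy--Schwarz (using that $\mu_0$ is a probability) gives $(E'(t))^2=\bigl(\int J_t'/J_t\,\de\mu_0\bigr)^2\le \int (J_t'/J_t)^2\,\de\mu_0$. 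Combining, $E''(t)\ge (E'(t))^2/(n-1)$, which is exactly the differential inequality characterizing concavity of $u_{n-1}(t)=\exp(-E(t)/(n-1))$ on $[0,1]$. Thus $u_{n-1}$ is concave and \eqref{eq:u(t)Conv} follows.

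The main obstacle I expect is justifying the Jacobian computation for general absolutely continuous $\mu_0,\mu_1$: one must assemble the fiber-wise monotone rearrangements into a Borel transport map $T_t\colon H\to H$ with $\mu_0$-a.e.\ twice-differentiable Jacobian and then interchange the second $t$-derivative with the $\mu_0$-integral in $E''(t)$. A standard remedy is to first prove the inequality for smooth, compactly supported densities (exploiting the smoothness of $M$, $g$ and $L$) and then extend by approximation, using the stability of monotone null-geodesic plans encoded in the uniqueness part of \Cref{T:good-representation2}.
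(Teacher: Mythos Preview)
Your proposal is correct and follows essentially the same route as the paper: disintegrate along the null generators, invoke the 1D $\mathsf{CD}(0,n-1)$ displacement convexity on each needle, and then aggregate via Jensen/Cauchy--Schwarz to obtain the global Riccati inequality for the entropy. The only difference is organizational---the paper phrases the 1D step at the level of fiber entropies (\Cref{lem:entropy-1d}) rather than pointwise Jacobians, and handles the differentiation-under-the-integral issue you flag through a mollification argument (\Cref{lem:approximate-riccati}) combined with a lemma on integrating families of Riccati inequalities (\Cref{eq:family-of-riccati}).
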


\Cref{T:convexity-of-entropy-null-Intro} suggests the following two definitions, one
concerning null hypersurfaces and one concerning the ambient space-time. We give here a simplified version, referring to \Cref{D:entropynullCD} and \Cref{def:NCEspace} for the more general (and precise) notions, also allowing a $C^0$-weight on the rigged measure.

\begin{definition}[The $\NC^e(n)$ condition]\label{D:entropynullCD-intro}
  Let $(M,g)$ be a Lorentzian manifold
  and $H\subset M$ be a null hypersurface endowed with a rigged measure $\vol_L$. 

  We say that the triplet $(M,g,H)$ satisfies the
  $\NC^e(n)$ condition if the following holds:
  For every pair of probability measures $\mu_0,\mu_1\in \Prob_{ac}(H)$ 
  such that $\mu_0$ is null connected to $\mu_1$ along $H$,
  there exists  a null-geodesic
  dynamical transport plan $\nu\in\OptGeo^H(\mu_0,\mu_1)$ satisfying~\eqref{eq:u(t)Conv}.

  We say that $(M,g)$ satisfies the  $\NC^e(n)$ condition if every null hypersurface does so.
\end{definition}

It is not hard to see that the definition of $\NC^e(n)$ does not depend on the choice of $L$ (\Cref{rem:NCeNoL}). Moreover NEC, $\NC^1(n)$ and $\NC^e(n)$ are all equivalent for smooth space-times: we already recalled that $\NC^1(n)$ is equivalent to NEC, \Cref{T:convexity-of-entropy-null}  proves the implication $\NC^1(n)\Rightarrow \NC^e(n)$ and, finally, the reverse implication $\NC^e(n)\Rightarrow \NC^1(n)$ is established in \Cref{th:entropic-to-distributional}.

\subsubsection*{\textbf{Applications}} As applications of the theory
developed in the present paper, we obtain weighted versions (in low
regularity for the weight, assumed merely to be continuous) of the
light-cone theorem and of the Hawking area theorem. Both in sharp form
and with an apparently new rigidity statement for the
  former. The proofs build on the tools developed in the paper. Indeed, for a continuous weight, the classical NEC is not well-defined (as it involves a linear term in the second derivatives, and a quadratic term in the first derivatives of the weight); as a replacement, we assume its synthetic counterparts $\NC^1$ or $\NC^e$ discussed above.
\smallskip

\textit{The weighted light-cone theorem}. The light-cone theorem, first established by Choquet-Bruhat, Chru\'sciel and Mart\'in-Garc\'ia~\cite{CBCMG-2009}, states that the area of cross-sections of light-cones, in spacetimes satisfying the Einstein equations with vanishing cosmological constant and with the energy–momentum tensor obeying the dominant energy condition, is bounded above by the area of corresponding  cross-sections of light-cones in Minkowski spacetime. The rigidity question  is addressed in~\cite{CBCMG-2009} under additional assumptions on the energy–momentum tensor. 

In a subsequent work, Grant~\cite{Grant}, revisited the light-cone theorem: the assumption was relaxed to asking the NEC, and the conclusion was sharpened to a monotonicity statement, reminiscent of the Bishop--Gromov  volume comparison in Riemannian geometry. Moreover,~\cite{Grant} proves the sharpness of the result (equality is attained on model spaces), leaving open the question of the rigidity. 

In \Cref{SS:WLCT}, we extend the light-cone theorem in the sharp monotone form to Lorentzian manifolds with a continuous weight on the rigged measure satisfying $\NC^e(N)$ (see \Cref{T:WLCT}), and we address the rigidity question by showing that equality is attained if and if the light-cone is isometric to the light-cone in Minkowski spacetime  (see \Cref{T:WLCT-rigid}).   
\smallskip

\textit{The weighted Hawking area theorem.}
In \Cref{SS:WHAT}, we provide an extension of  the celebrated Hawking's area theorem to Lorentzian manifolds with a continuous weight on the rigged measure, satisfying $\NC^1(N)$. We also obtain an apparently new rigidity statement: equality is attained in the Hawking area theorem if and only if the metric on the horizon of the black hole is static (in the sense that it splits isometrically as a product). Let us briefly frame our result in the perspective of the existing literature.

The area theorem was proved by Hawking~\cite{Haw71} in 1971; in its original formulation it states that, in the setting of smooth space-times satisfying the null energy condition, the area of a smooth black hole horizon can never decrease.

In classical general relativity, the result paved the way to black hole thermodynamics; indeed, the area of a black hole horizon is interpreted as a measure of its entropy, and the second law of thermodynamics states that entropy does not decrease in time.

The result was revisited  by  Chru{\'s}ciel--Delay--Galloway--Howard~\cite{CDGH-AHP-2001} and Minguzzi~\cite{MinguzziCMP15}, lowering the regularity assumptions on the null hypersurface (the ambient spacetime is assumed to be smooth) and discussing rigidity:~\cite{CDGH-AHP-2001} proves that if equality is attained then the second fundamental form of the null hypersurface has to vanish; ~\cite[Th.~12,~Th.~13]{MinguzziCMP15} show that equality holds in the area theorem if and only if the portion of the null hypersurface between the two cross-sections achieving equality can be written as a graph of a $W^{2,1}$-function with vanishing hessian.

In \Cref{Thm:WHAT}, the assumption required in the classical form of Hawking's Theorem that the Ricci curvature is non-negative on all null vectors in the space-time, is replaced by the weaker null energy condition  $\NC^1(N)$ of \Cref{def:NC1QuadruplesIntro} (actually, the more general \Cref{def:NC1Quadruples}). Moreover, we allow a $C^0$ weight on the rigged measure, and  we obtain the rigidity in an apparently new formulation (see also \Cref{rem:InterpRigidity}).

Let us mention that Ketterer~\cite{Ket24} recently extended Hawking's area theorem to smooth weighted space-times satisfying the NEC. The methods in~\cite{Ket24} are also based on optimal transport; however, the approach in the present work is more tailored to extensions to non-smooth spacetimes.  Indeed, building on the tools developed in the present paper, in the forthcoming work~\cite{CMM24b} we will push the results beyond the differentiable setting by extending the $\NC^1$ condition to Lorentzian length spaces, and establishing Hawking's area theorem in such a synthetic framework.

\subsection*{Plan of the paper}
The paper is organized as follows.
In \Cref{S:conventions} we recall a few facts about null
hypersurfaces; in \Cref{S:volume} we investigate the rigged volume
measure, in particular its interplay with the Jacobi fields.
Sections~\ref{Sec:VolDist} and~\ref{sec:CD1} investigate the
distortion of the (weighted) rigged measure, culminating in the
definition of $\NC^1(N)$.
In Sections~\ref{sec:OTinsideH} and~\ref{sec:NCe} we study optimal
transport inside null hypersurfaces and the displacement convexity of
the entropy.
\Cref{S:converse} proves the equivalence of $\NC^1(N)$, $\NC^e(N)$,
and NEC; \Cref{S:stability} presents a stability result for $\NC^1$.
Finally, in \Cref{S:applications} we present two applications: the
light-cone theorem and Hawking area theorem.

\section{Conventions and preliminaries}
\label{S:conventions}

In this section we recall a few facts regarding the geometry of
Lorentzian manifolds.

\subsection{General facts of Lorentzian geometry}

A Lorentzian manifold is a couple $(M,g)$, where $M$ is a smooth
$n$-dimensional manifold (the case $n=4$ corresponds to classical general relativity) and $g\in\vfield (T^*M^{\otimes 2})$ is a Lorentzian
product, i.e., a symmetric non-degenerate bilinear form, whose
signature is $(- +\cdots + )$.
In order to keep the presentation simple, throughout the
paper, $g$ is always tacitly assumed to be of class $C^2$.
However, in certain technical lemmas, we may point out the validity of the result in lower regularity.
For instance, let us just mention that if $g$ is not $C^{1,1}$, one needs a suitable
definition of exponential map (see~\cite{Graf}).

Note that, for $C^{2}$ Lorentzian metrics, the natural class of differentiability of the manifolds is $C^{3}$. However, a $C^{3}$ manifold always possesses a $C^{\infty}$-sub-atlas, and one can choose some such sub-atlas whenever convenient. Thus, the smoothness assumption on $M$ is not restrictive.
A vector $X\in T_xM$ will be called
\begin{itemize}
\item
  time-like, if $g(X,X)<0$;
\item
  space-like, if $g(X,X)>0$ or $X=0$;
\item
  light-like, if $g(X,X)=0$ and $X\neq 0$;
\item
  causal, if it is either time-like or light-like.
\end{itemize}
We say that a Lorentzian manifold $(M,g)$ is \emph{time-oriented}, if there
exists a continuous time-like vector field. 
In this paper, all Lorentzian manifolds are assumed to be time-oriented.
If $X\in T_xM$ is a causal vector, we say that $X$ is \emph{future-directed},
if $g(X,Y)<0$, where $Y$ is a vector giving the temporal orientation to
the manifold.

One can always endow a Lorentzian manifold with an auxiliary smooth
Riemannian metric; locally-Lipschitz regularity will always be understood w.r.t.\ some (hence any) auxiliary smooth Riemannian metric.
We say that a locally-Lipschitz curve $\gamma:I\to M$ is
\emph{causal}, if $\dot\gamma$ is future-directed (at the differentiability points); we say that it is
\emph{chronological}, if it is causal and $g(\dot\gamma,\dot\gamma)<0$. We say that a Lorentzian manifold is \emph{causal} if there exists no periodic
causal curve.

A causal curve is called \emph{inextensible}, if its domain of definition $I$
cannot be extended.
If $x,y\in M$, we say $x\leq y$ (resp.\ $x\ll y$), if there exists a
causal (resp.\ chronological) curve connecting $x$ to $y$.
The (Lorentzian) \emph{length} of a causal curve $\gamma:I\to M$ is defined as
\begin{equation}
  \Length(\gamma)
  :=
  \int_I
  \sqrt{-g(\dot\gamma_t,\dot\gamma_t)}
  \,
  \de t;
\end{equation}
the \emph{time-separation} (or \emph{Lorentzian distance}) between $x, y\in M$ is given by
\begin{equation}
  \tau(x,y)
  :=
  \begin{cases}
    \sup
    \Length(\gamma)
    \quad&
           \text{ if } x\leq y,
    \\
    0
    &\text{ otherwise},
  \end{cases}
\end{equation}
where the supremum is taken among all causal curves $\gamma$ connecting $x$ to $y$.

A causal curve $\gamma:I\to M$ is called a \emph{geodesic} if it 
satisfies the equation of geodesics $\nabla_{\dot\gamma}\dot\gamma=0$, 
and thus it can be expressed
  using the exponential map.

\smallskip
A submanifold $S\subset M$ is called \emph{space-like} (resp.\ \emph{null}) if the metric $g$
restricted to the tangent space $TS$ is positive definite (resp.\
degenerate).
In the following, all submanifolds will be assumed to be
  of class $C^2$, unless otherwise stated.
The normal bundle of a submanifold $S$ is denoted by
$$\normal S:=\{v\in TM: g(v,w)=0,\forall w\in TS\}.$$
The normal bundle is of class $C^1$.
Indeed, a local basis of $\normal S$ can be built as follows: fix
$v_1,\dots, v_j$ a local basis for $TS$ of class $C^{1}$ and add
$v_{j+1},\dots, v_n$ obtaining a local trivialization for $TM|_S$; then
apply the Gram--Schmidt orthogonalization process and obtain a basis
for $\normal S$. 
Throughout the paper, sections of class $C^k$ of a vector bundle $F$ (of at least the same class) will be denoted by $\vfield^k(F)$.
In the sequel, unless otherwise stated, all normal vector
  fields will be assumed to be of class $C^1$.

We now recall a classical fact for submanifolds of co-dimension $1$ in Lorentzian signature.

\begin{proposition}[Pag.~45~in~\cite{HawEll}]\label{Prop:NHinTH}
  Let $(M,g)$ be a Lorentzian manifold.
  Let $H\subset M$ be a hypersurface.
  Then the following are equivalent
  \begin{enumerate}
  \item $H$ is a null hypersurface, i.e., $g|_{TH^{\otimes 2}}$ is
    degenerate;
  \item
    $\normal H\subset TH$.
  \end{enumerate}
\end{proposition}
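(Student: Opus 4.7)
The statement is pointwise in nature, so the plan is to fix $x\in H$ and reduce everything to a linear-algebra fact about the non-degenerate bilinear form $g_x$ on $T_xM$ and its restriction to the hyperplane $T_xH$.

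The first step is to pin down the dimension of the normal space. Since $g_x$ is non-degenerate on $T_xM$, the $g_x$-orthogonal complement of any subspace $W\subset T_xM$ satisfies $\dim W+\dim W^{\perp}=\dim T_xM$ (this is the standard dimension formula for orthogonal complements w.r.t.\ non-degenerate forms, and does \emph{not} require positive-definiteness). Applied to $W=T_xH$, which has codimension $1$, this gives $\dim \normal_x H=1$.

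The second step is the key identification
\begin{equation*}
\Ker\bigl(g|_{T_xH}\bigr)
=
T_xH\cap \normal_x H,
\end{equation*}
which is immediate from the definitions: a vector $v\in T_xH$ lies in the kernel of $g|_{T_xH}$ if and only if $g(v,w)=0$ for every $w\in T_xH$, which is exactly the condition $v\in \normal_x H$. Hence $g|_{T_xH^{\otimes 2}}$ is degenerate if and only if $T_xH\cap \normal_x H\neq \{0\}$. Using the one-dimensionality of $\normal_x H$ established in the previous step, a non-trivial intersection forces $\normal_x H\subset T_xH$, and conversely this inclusion clearly produces a non-zero element of the intersection. This gives the equivalence $(1)\Leftrightarrow(2)$ at the point $x$, and the statement as phrased in the proposition follows by running this argument at every point of $H$ (the regularity assumptions $g\in C^0$, $H\in C^1$ are only used so that $TH$ and $\normal H$ are well defined as continuous vector sub-bundles, but play no role in the pointwise equivalence).

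There is no real obstacle to overcome; the only subtle point worth flagging explicitly is that the dimension formula $\dim W+\dim W^\perp=\dim T_xM$ holds for the indefinite form $g_x$ precisely because of its non-degeneracy — a reader used to Riemannian intuition might be tempted to invoke the orthogonal direct sum decomposition $T_xM=T_xH\oplus \normal_x H$, which \emph{fails} in the Lorentzian setting exactly in the null case we are characterizing.
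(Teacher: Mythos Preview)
Your proof is correct and is the standard linear-algebra argument for this classical fact. Note that the paper does not actually prove this proposition---it is stated with a citation to Hawking--Ellis (p.~45) and no proof is given---so there is nothing to compare your approach against; your argument is precisely the one a reader would supply.
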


  \begin{remark}
    Proposition \ref{Prop:NHinTH} holds  under the milder assumptions $H$
    of class $C^1$ and $g\in C^0$.
  \end{remark}

\subsection{Null hypersurfaces}\label{Ss:null}

In this section we recall some basics on the geometry of null hypersurfaces. 
Let $g$ be Lorentzian metric on $M$, and let $H\subset M$ be a null
hypersurface.
Since $g$ restricted to $TH$ is degenerate, then $H$ does
not possesses a Levi--Civita connection.
However, we can restrict the Levi--Civita connection $\nabla$ of $g$
obtaining a bilinear operator, still denoted by $\nabla$, $\nabla: \vfield^1(TH)\times \vfield^1(TH)
\to\vfield^0 (TM|_{H})$.

More precisely, if $X,Y\in\vfield^1(TH)$ are vector fields tangent to
$H$, we define
$$\nabla_X Y:=\nabla_X \tilde Y\in\vfield^0(TM|_H),$$
where
$\tilde Y\in\vfield^1(TM)$ is any extension of $Y$. 
The
definition is well-posed (see for instance~\cite{GallotHulinLafontaine04}).

%
%

This operator inherits a few properties from the
Levi--Civita connection. For instance, for all  $X,Y,Z\in\vfield^1(TH)$ and $f\in C^1(H)$, it holds:
\begin{align*}
  &
    \nabla_{fX} Y
    =f\nabla_X Y
    ,
    \quad
    \nabla_X (fY)
    =X(f) Y+f\nabla_X Y
    ,
  \\
  &
    X( g(Y,Z))
    =
    g(\nabla_X Y,Z)
    +
    g(Y,\nabla_X Z)
    ,
  \\
  &
    \nabla_X Y
    -\nabla_Y X
    =
    [X,Y]
    .
\end{align*}

The following proposition 
states a fundamental property of vector fields orthogonal 
to a null hypersurface. 
The proof is included for readers' convenience. 

\begin{proposition}
\label{P:proportional}
  Let $g$ be Lorenztian metric on the smooth manifold $M$.
  Let $H \subset M$ be a null hypersurface and let $L\in
  \vfield^1(\normal H)$ be a normal vector field.
  Then it holds that $\nabla_LL\in\vfield^0( \normal H)$, i.e.,
  $\nabla_LL$ is normal to $H$.
\end{proposition}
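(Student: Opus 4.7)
The plan is to compute $g(\nabla_L L, X)$ for an arbitrary $X\in \vfield^1(TH)$ and show it vanishes, thereby establishing that $\nabla_L L$ is orthogonal to $TH$, which by \Cref{Prop:NHinTH} is exactly the statement that $\nabla_L L \in \vfield^0(\normal H)$.

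First, I would record the two consequences of $L\in \vfield^1(\normal H)$ that drive the argument: on the one hand, by \Cref{Prop:NHinTH}, $\normal H \subset TH$, so $L$ itself is tangent to $H$, and consequently $g(L,L)=0$ pointwise on $H$ (since $L$ is simultaneously in $TH$ and $\normal H$). On the other hand, for every $X\in \vfield^1(TH)$ one has the identity $g(L,X)=0$ along $H$.

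Next, I would exploit the compatibility of $\nabla$ with $g$ and the torsion-free property (both of which pass to the restricted connection, as stated after \Cref{Prop:NHinTH}). Differentiating the identity $g(L,X)=0$ in the direction $L$ gives
\begin{equation*}
    0 = L\bigl(g(L,X)\bigr) = g(\nabla_L L, X) + g(L, \nabla_L X),
\end{equation*}
so it suffices to prove that $g(L,\nabla_L X)=0$. Using $\nabla_L X = \nabla_X L + [L,X]$, this term splits as
\begin{equation*}
    g(L,\nabla_L X) = g(L, \nabla_X L) + g(L,[L,X]).
\end{equation*}
The first summand vanishes because $g(L,L)=0$ on $H$ implies $0 = X(g(L,L)) = 2 g(\nabla_X L, L)$. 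The second summand vanishes because $[L,X]$ is the Lie bracket of two vector fields tangent to $H$ and hence is itself tangent to $H$; since $L\in\normal H$, we obtain $g(L,[L,X])=0$.

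Putting the pieces together yields $g(\nabla_L L, X)=0$ for every $X\in \vfield^1(TH)$, which is the desired conclusion. There is no real obstacle here: the only subtlety is ensuring that the relevant identities, in particular the torsion-free relation $\nabla_X Y - \nabla_Y X = [X,Y]$ and metric compatibility, are applied in the restricted sense for vector fields tangent to $H$, which is precisely the content of the properties of the operator $\nabla:\vfield^1(TH)\times\vfield^1(TH)\to\vfield^0(TM|_H)$ recalled just before the statement.
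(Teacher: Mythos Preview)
Your proof is correct and follows essentially the same approach as the paper: both use metric compatibility to reduce to showing $g(L,\nabla_L X)=0$, then apply the torsion-free identity $\nabla_L X=\nabla_X L+[L,X]$ and dispose of the two terms via $X(g(L,L))=0$ and $[L,X]\in TH$. The paper just compresses these steps into a single chain of equalities.
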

\begin{proof}
  Let $X\in \vfield^1(TH)$ and compute
  \begin{align*}
    g(X,\nabla_L L)
    &
      =
      L(g(X,L))
      -
      g(\nabla_L X,L)
      =
      -
      g(\nabla_L X,L)
      =
      -
      g(\nabla_X L,L)
      -g([X,L],L)
    \\&
      =
      -
      g(\nabla_X L,L)
    =-\frac{1}{2}
    X(g(L,L))
    =0,
  \end{align*}
  having used the properties of the operator $\nabla$, that
  $g(X,L)=0$,
  and that $[X,L]$ is a vector field tangent to $H$ (as $L\in TH$, by \Cref{Prop:NHinTH}) hence
  normal to $L$.
\end{proof}
\begin{remark}
Proposition \ref{P:proportional} holds under the weaker
    assumption $g\in C^1$.
  \end{remark}

Since $g$ is non-degenerate, the null
direction is one-dimensional and therefore \Cref{P:proportional} implies 
that $\nabla_L L = h L \in \vfield^0(TH)$ for some real valued function $h$.
Locally, one can reparameterize the vector field $L$ and obtain $h = 0$.

\begin{proposition}
\label{P:local-null-geodesics-smart}
Let $g$ be Lorenztian metric on the smooth manifold $M$, and let
$H\subset M$ be a null hypersurface.
Then locally there exists a normal vector field $L\in\vfield^{1}(\normal H)$, such that 
$\nabla_LL=0$ and $L$ is future-directed.
\end{proposition}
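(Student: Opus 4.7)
The strategy is to start from any $C^1$ future-directed null normal field $L_0$ on $H$ and to exponentiate along it, so that the resulting vector field $L$ automatically satisfies $\nabla_LL=0$. More precisely, since $g\in C^2$ and $H\in C^2$, the discussion preceding the statement ensures that $\normal H$ is a $C^1$ real line bundle, contained in $TH$ by \Cref{Prop:NHinTH}; fix a nowhere-vanishing, $C^1$, future-directed section $L_0$ on a neighborhood of a point $p\in H$, and a $C^1$ codimension-one slice $\Sigma\subset H$ through $p$ transverse to $L_0(p)$ (constructed via a $C^1$ flow-box for $L_0$ on $H$). Consider then
\[
  \Psi\colon (-\epsilon,\epsilon)\times \Sigma \to M,
  \qquad
  \Psi(t,z):=\exp_z\!\bigl(tL_0(z)\bigr).
\]
Since the geodesic vector field on $TM$ is $C^1$ (as $g\in C^2$), its flow is $C^1$; hence both $\gamma_z(t):=\Psi(t,z)$ and $\dot\gamma_z(t)$ are jointly $C^1$ in $(t,z)$. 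Moreover $d\Psi|_{(0,p)}$ sends $\partial_t\mapsto L_0(p)$ and is the identity on $T_p\Sigma$; by transversality this is an isomorphism onto $T_pH$, so $\Psi$ is a local $C^1$ diffeomorphism.

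The main obstacle is to verify that $\Psi$ actually takes values in $H$, i.e.\ that the null geodesics $\gamma_z$ stay in $H$ for small $t$. To this end I would work with a local $C^2$ defining function $F$ for $H$ and set $N:=g^{-1}(dF)$, a $C^1$ vector field in a neighborhood of $H$. On $H$, $N$ is a nonzero section of $\normal H$, hence tangent to $H$ by \Cref{Prop:NHinTH}; thus $N|_H$ defines a $C^1$ vector field on the $C^2$ manifold $H$. Its flow in $H$ is then $C^1$, and by uniqueness of ODE solutions for $C^1$ vector fields the flow of $N$ on $M$ starting from $z\in H$ coincides with its flow in $H$, so the integral curve $\alpha_z$ of $N$ through $z$ remains in $H$. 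By \Cref{P:proportional}, $\nabla_NN = hN$ for some continuous scalar $h$, so $\alpha_z$ is a null pre-geodesic and hence admits a reparametrization $\beta_z$ which is a null geodesic with $\dot\beta_z(0)=N(z)$. Since $L_0(z)$ spans $\normal_zH=\Span\{N(z)\}$ with the same future orientation, $L_0(z)=c(z)N(z)$ for some $c(z)>0$, and by the affine-rescaling invariance of geodesics one has $\gamma_z(t)=\beta_z(c(z)t)$, which lies in $H$ for $|t|$ small.

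It then suffices to define $L$ on the open set $\Psi\bigl((-\epsilon,\epsilon)\times\Sigma\bigr)\subset H$ by $L(\Psi(t,z)):=\dot\gamma_z(t)$; this is a $C^1$ vector field, being the composition $L=(\partial_t\Psi)\circ\Psi^{-1}$ of two $C^1$ maps (the latter by the inverse function theorem applied to $\Psi$). Its integral curves are by construction the geodesics $\gamma_z$ in their affine parametrization, so $\nabla_LL=0$. Future-directedness is preserved along null geodesics and is thus inherited from $L_0$; finally, $L$ is null and tangent to $H$, and since $g|_{TH}$ is positive semi-definite with one-dimensional kernel equal to $\normal H$, any null vector in $TH$ lies in $\normal H$, whence $L\in \vfield^1(\normal H)$ as required.
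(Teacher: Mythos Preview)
Your proof is correct and rests on the same idea as the paper's: construct $L$ as the velocity field of the null geodesics emanating from a local cross-section with a prescribed initial normal direction. The paper realises this by rescaling a fixed unit-norm normal field $\tilde L$ by a factor $\lambda^{-1}$ obtained through the implicit function theorem, whereas you define $L$ more directly as $(\partial_t\Psi)\circ\Psi^{-1}$ and read off $C^1$ regularity from that of the geodesic flow on $TM$; you also spell out, via a defining function and the associated gradient-like normal $N$, why the null geodesics remain in $H$, a point the paper handles more tersely by invoking \Cref{P:proportional}.
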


\begin{proof}
The time-oriented-ness of $(M,g)$ (recall this is a standing assumption) and the regularity of $g$ and $H$ imply the existence of a future-directed $\tilde L \in \vfield^1(\normal H)$ that never vanishes.
  Without loss of generality, we can assume that $\tilde L$ has unit norm, w.r.t.\ an auxiliary smooth
  Riemannian metric.
  Let $S\subset H$ be a local space-like cross-section.
  Since we are looking for a local vector field, up to restricting
  $H$, we can assume that $S$ is a  global $C^2$ cross-section for $H$.
  Let $G:H\to \R$ be a $C^{2}$ function such that 
  $G^{-1}(0)=S$ and $\de G\neq 0$
  along $S$.
  Consider the function 
  $$F:\Dom F\subset H\times \R\to\R, \quad F(x,t)=G(\exp_{x}(t\tilde L(x))).$$ 
   Since $\tilde{L}(x)$ is a null vector, both tangent and normal to the null hypersurface $H$ at $x$, then $\exp_x(t\tilde L(x))\in H$, for $|t|$ small enough. Therefore,
  the domain of definition of $F$
  contains a neighborhood of $S\times\{0\}$. 

  Of course, $F$ is of class $C^1$ and satisfies
  \begin{equation}
    F(x,0)=0
    \qquad
    \text{ and }
    \qquad
    \frac{\partial F}{\partial t}(x,0)=\de G (\tilde L(x))\neq 0
    ,
    \qquad
    \forall x\in S
    .
  \end{equation}
  We are in position to apply the Implicit Function Theorem,  yielding a
  neighborhood $U\subset H$ of $S$ and a $C^1$ function $t:U\to
  \R$, such that $F(x,t(x))=0$.

  Define the function $\lambda:U\to \R$, in such a way that
  \begin{equation}
    \left.\frac{\de}{\de t}\right|_{t=t(x)}
    \exp_x(t\tilde L(x))
    =
    \lambda(x)
    \tilde L_{\exp_x(t(x)\tilde L(x))}
    .
  \end{equation}
  The function $\lambda$ is $C^1$, because it coincides
  with the norm (in the auxiliary metric) of the vector
  $\left.\frac{\de}{\de t}\right|_{t=t(x)} \exp_x(t\tilde L(x))$ (recall $\tilde L$ is of unit norm).
  Moreover, $\lambda(x)=1$ for all $x\in S$ and $\lambda>0$
  everywhere.
  Therefore,  we can define
  $L:=\lambda^{-1} \tilde L\in\vfield^1(\normal H)$.
  Its flow is geodesic, because it is the velocity field of the
  geodesic flow starting from points in $S$ with initial velocity
  $\tilde L$.
  For this reason $\nabla_L L=0$.
\end{proof}

\begin{remark}\label{R:nullgeodesic}
\Cref{P:local-null-geodesics-smart} proves the following: for any point $z \in H$ 
there exists an open (in $H$) neighborhood $Z \subset H$ of $z$
and a section $L\in \vfield^1(\normal H)$ that is also future-directed, different from the zero vector in $Z$
and $\nabla_L L = 0$ in $Z$.
For the elements of this particular class of normal vector fields we will adopt the terminology  of  \emph{null-geodesic vector fields}.
They will be tacitly considered only on the open set where $\nabla_L L = 0$ holds true.
\end{remark}

We now introduce the notion of transverse function.
\begin{definition}\label{D:transverse}
  Given a null hypersurface $H\subset M$, we say that a function $\varphi :H\to\R$ is
  \emph{transverse} if for all $x\in H$ and all $v\in \normal_x H\backslash \{0\}$,
  the composition $\varphi \circ \gamma$ is constant, where $\gamma$ is the
  geodesic in $M$ (and contained in $H$)  with $\gamma_0=x$ and $\dot \gamma_0=v$.
\end{definition}
The set of transverse functions is closed under point-wise sums and products.
Note that if $\varphi:H\to \R$ is $C^1$, being transverse is equivalent
to $L(\varphi)=0$, for some (hence any) normal vector field $L$.
\begin{remark}
  \label{R:change-null-geodesics}
  If $L\in\vfield^1(\normal H)$ is a null-geodesic vector field and
  $\varphi\in C^1(H)$ is a transverse positive function, then $\varphi L$ is still a
  null-geodesic vector field.

  Conversely, if $L_1,L_2\in \vfield^1(\normal H)$ are two null-geodesic vector fields on the same open set $U$, then there exists a 
  $C^1$ function $\varphi$, such that $L_1=\varphi L_2$.
 If in addition the intersection of $U$ with each null geodesic contained in $H$ is connected, then $\varphi$ is transverse.
 \end{remark}

Given a normal vector field $L\in \vfield^1(\normal H)$, it is natural to look for submanifolds whose tangent complement $L$ in $TH$.  
These will be local space-like cross-sections of which we recall the definition.
\begin{definition}
A submanifold $S\subset H$ of dimension $n-2$ is 
a \emph{local cross-section of} $H$  if the metric $g$ is positive definite on $TS$.  We will always assume that $S$ is either closed without boundary, or an open $(n-2)$-manifold.
\end{definition}
It is immediate to verify that at each point $x$ of $S$, it holds that 
$T_xH = T_x S \oplus \Span \{L(x)\}$, where $L(x)\in \normal_x H\backslash \{0\}$.
We say that a cross-section $S$ is \emph{pre-compact} if $\overline
S$ is compact (the closure is intended in $H$) and there exists a cross-section $\tilde S$
such that $\overline{S}\subset \tilde S$.

\smallskip

Given a cross-sections $S$ and a normal vector field $L$ (not necessarily null-geodesic), one can construct a new normal vector field
$\overline{L^S}$  along  $S$ in the following way. 
Define
$\overline{L^S}$  as the unique vector field in $\normal S$ verifying the following conditions:
\begin{align}\label{eq:normalS}
  &
  g(\overline{L^S}, X)=0,
  \qquad
  \forall X\in TS
  ,
  \\&
  \label{eq:pseudo-orthogonality}
  g(\overline{L^S}, L)=-1
  ,
  \quad
  \text{ and }
  \quad
  g(\overline{L^S},\overline{L^S})
  =0.
\end{align}
Notice that the construction of $\overline{L^S}$ depends both on $L$ and $S$.
Since $S$ is of class $C^{2}$ and $L$ of class $C^{1}$, then $\overline{L^S}$ is of class $C^{1}$ as well. 

If $L$ is a null-geodesic vector field (see \Cref{R:nullgeodesic}),  using parallel transport along the flow of $L$, 
$\overline{L^S}$ can be then defined in a suitable neighborhood  $Z \subset H$ of any point of $z$ of the submanifold $S$ keeping~\eqref{eq:pseudo-orthogonality} valid and preserving the  $C^{1}$ regularity. 
Moreover, if $\phi$ is a transverse function, then $\overline{(\phi
  L)^S}=\frac{1}{\phi} \overline{L^S}$.

\begin{remark} \label{rmrk:global-null-geodesics}
Let $S$ be a local cross-section for $H$ such that the (null-)integral lines of $L$ meet with $S$ at most once.   Then it is possible to construct a null-geodesic vector field, coinciding with $L$ along $S$ and whose integral lines have non-empty intersection with $S$, as follows. 
For each $x \in S$ consider 
$\gamma_{x,t} : = \exp_x(tL)$. Then, setting $\tilde L: = d\gamma_{x,t}/dt$, it is straightforward to check that $\tilde L$ is null, see \Cref{P:proportional}, and $\nabla_{\tilde L}\tilde L =0$.
\end{remark}

The three objects considered so far -- $L$, $S$ and $\overline{L^S}$ -- will be extensively used in the computations below. In the next section, we report on a slightly different method to build such objects.

\subsubsection{The rigging technique}\label{SubSec:RiggingTech}
A popular approach used for investigating null hypersurfaces and to construct Riemannian metrics on them, is the \emph{rigging technique}.
The data are a null hypersurface and a transverse vector field to it called the \emph{rigging vector}.
The rigging vector field produces a tangent null vector field, called
the \emph{rigged vector}, and a Riemannian tensor, called the \emph{rigged
  metric}. As a byproduct  one also obtains a volume form.

The approach presented so far in \Cref{Ss:null} followed the opposite direction:
from a normal vector field $L$ and a local cross-section $S$, we obtained  a vector field $\overline{L^S}$ transverse to $H$.

Following~\cite{Rigging} (see also~\cite[Sect.~2.6]{MinguzziCMP15}), we now use the rigging technique by choosing as the rigging vector the
transverse vector field $-\overline{L^S}$ that we assume to be defined on an open set $Z\subset H$; 
motivated by the previous discussion, we assume $-\overline{L^S}$  of class $C^{1}$.
As a final outcome, the rigged vector field will be $L$.

Let $\omega$ be the $g$-dual of $-\overline{L^S}$ in $TM|_Z$, i.e.,
$$
\omega(v)=g(-\overline{L^S},v),
\quad \text{for all } v\in TM|_Z. 
$$ 
Let $\alpha$ be
the restriction of $\omega$ to $TH$, i.e., $\alpha : = i^* \omega$ where $i : Z\subset  H \to M$
is the inclusion map. %
Notice that $\ker\alpha|_S \cap Z=TS$, see 
\eqref{eq:normalS}, and that $\alpha$ is of class $C^{1}$.

Consider the rigged metric $\tilde{g}_L^S:= g+\alpha\otimes\alpha\in
\vfield^{1} (T^*Z^{\otimes 2})$.
Since $\overline{L^S}$ is transverse to $H$, then 
$\tilde{g}_L^S$ is a Riemannian metric for $Z$. 
The rigged vector is obtained as the $\tilde{g}_L^S$-dual of the
form $\alpha$; it is immediate to see that it is of class $C^{1}$ and it coincides with $L$:
\begin{equation}
  \alpha(v)
  =
  g(-\overline{L^S},v)
  =
  g(L,v)
  +
  g(-\overline{L^S},L)\, 
  g(-\overline{L^S},v)
  =
  \tilde{g}_L^S(L,v)
  ,\quad
  \forall v\in TH
  ,
\end{equation}
having used the fact that $L$ is orthogonal to all vectors in $TH$
and~\eqref{eq:pseudo-orthogonality}.  Notice also that
\begin{equation}
  \label{eq:rigged-metric}
  \tilde{g}^S_L
  (v,w)
  =
  g(v,w)
  ,
  \quad
  \tilde{g}^S_L
  (v, L)
  =
  0,
  \quad
  \tilde{g}^S_L
  (L, L)
  =
  1
  ,
  \quad
  \forall v,w\in \ker\alpha
  .
\end{equation}

\begin{definition}\label{def:tildeVol}
We denote by $\tilde{\vol}_L^S \in \mathcal{M}^{+}(Z)$ the volume measure induced by the Riemannian rigged
metric $\tilde{g}_L^S$ defined over the open set $Z \subset H$. 
\end{definition}

We will prove that $\tilde{\vol}_L^S$ depends only on the null-geodesic vector $L$ and not on the rigging vector $-\overline{L^S}$.

\subsection{Local Parameterization}
\label{SS:local-parameterization}
Let $L$ be a local null-geodesic vector field on $H$. We write 
\begin{equation}\label{eq;defPsiL}
\gflow_L(z,t):=\exp_z(tL), \quad \forall z\in H,
\end{equation}
whenever this exponential makes sense and
belongs to $H$.
It is immediate to check that the map $\gflow_L$ satisfies (on its domain):
\begin{align*}
  &
    \gflow_L(z,t+s)
    =
    \gflow_L(\gflow_L(z,t),s)
    \quad
    \text{ and }
    \quad
    \gflow_{\phi L}(z,t)
    =
    \gflow_L(z,\phi(z)t)
    ,
    \qquad
    \forall
    \phi
    \text{ transverse}
    .
\end{align*}
It is also clear that, for small $t$, $(z,t)\mapsto \gflow_L(z,t)$ is a
local diffeomorphism on $H$.

We now construct the local parameterization that will be used in the computations of the next sections.
Let $f:U\subset\R^{n-2}\to S$ be a local parameterization of $S$ (recall that $S$ is assumed to be $C^{2}$).
We define 
$$\lparam(x,t):=\gflow_L(f(x),t), \quad \forall x\in U,$$ 
giving a local
parameterization for $H$. In particular, we denote by $V \subset U\times \R$  an open set where 
$\lparam$ is a diffeomorphism.  
The definition of $\lparam$ depends on the choice of $L$ and  $S$, however,  to ease the notation, we  omit  these dependences.

With a slight abuse of notation, we extend the function $\gflow_L$ to a
larger domain by setting $\gflow_L(z,0,s):=\exp_z(s\overline{L^S}).$
We now transport the vector field $L$ to $\gflow_L(z,0,s)$ by parallel
transport along the geodesic $s\mapsto \gflow_L(z,0,s)$.
Then we define 
$$\gflow_L(z,t,s):=\exp_{\gflow_L(z,0,s)}(tL).$$
This new definition of $\gflow_L$ is compatible with the former one, as
$\gflow_L(z,t,0)=\gflow_L(z,t)$, and the newly defined map is a local
diffeomorphism.
Analogously, we extend $\lparam$ by defining 
\begin{equation}\label{eq:paramM}
\lparam(x,t,s):=\gflow_L(f(x),t,s),
\end{equation}
so that $\lparam(x,t,0)=\lparam(x,t)$.
Thus $\lparam$ is  a local diffeomorphism on $M$. We let 
\begin{equation}\label{eq:defOmegaS}
\Omega_S:=\lparam(V\times\{(0)\})\subset Z \subset H.
\end{equation}
It should be noticed that the definition of $\Omega_S$ does not depend
on the choice of the parameterization $f$.

\section{Volume measures on Null hypersurfaces}

\label{S:volume}

The rigged volume measure $\tilde{\vol}_L^S$ is well-defined on the set $\Omega_S$ as in~\eqref{eq:defOmegaS}; by definition, in local coordinates,  
its density with respect to the Lebesgue measure can be written  in terms of the square root of the determinant of the metric $\tilde g_{L}^{S}$.
A convenient way to write this expression is based on classical 
Jacobi fields calculations. 
 For the reader's convenience, the presentation 
in this section is as self-contained as
possible; nevertheless, some parts of the arguments may be well-known
to experts (e.g., Jacobi fields estimates).

\subsection{Jacobi fields in null hypersurfaces}
\label{S:jacobi}

Let $H\subset M$ be a null hypersurface and let $L\in\vfield^{1}(\normal
H)$ be a null-geodesic vector field.
Given $z\in H$, a Jacobi field starting from $z$ is a $C^2$-function
$J:I\to TM|_H$, such that $J(t)\in T_{\gflow_L(z,t)} M$ and $J$ solves the
Jacobi equation
\begin{equation}
  \label{eq:jacobi}
  J''(t)
  +
  R(L,J(t))L=0,
\end{equation}
where $R$ is the Riemann curvature tensor of the Lorentzian metric
$g$. Since the ODE~\eqref{eq:jacobi} is \emph{linear} in $J$, and both $R$ and $L$ are continuous, the associated initial value problem has a unique local solution, thanks to the classical Cauchy-Lipschitz theorem.  
If $v\in T_zM$, we denote by $J_{v,L}(z,t)$ the Jacobi field
solving~\eqref{eq:jacobi}, coupled with the initial conditions
\begin{equation}
  \label{eq:initial-jacobi}
  J_{v,L}(z,0)=v
  \quad
  \text{ and }
  \quad
  J_{v,L}'(z,0)=\nabla_v L
  .
\end{equation}
Notice that if $\varphi$ is a $C^1$ transverse function, then
$J_{v,\varphi L}(z,t)=J_{v,L}(z,\varphi(z)t)$.
To ease the notation, we will omit the dependence on $z$ and $L$,
whenever no confusion arise.

We recall a few facts about Jacobi fields.
The first is that $L$ itself is a Jacobi field, with
$J_L(t)=L$.
Moreover,  Jacobi fields can be characterized as the solutions of the
following first-order ordinary differential equation:
\begin{equation}
  \label{eq:first-order-jacobi}
  J_v'(t)
  =
  \nabla_{J_v(t)} L
  ,
  \qquad
  J_v(0)=v
  ;
\end{equation}
the proof of this fact can be obtained by deriving once and using
the definition of the Riemann curvature tensor.

Recall the next useful result, often referred as Gauss' Lemma.
\begin{lemma}
  \label{lem:gauss}
  For all $v\in T_zM$, the map $t\mapsto g(J_v(t),L)$ is constant.
\end{lemma}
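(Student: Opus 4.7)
\emph{Setup.} Let $\gamma(t) := \gflow_L(z,t)$, so that $\dot\gamma(t) = L(\gamma(t))$ is parallel along $\gamma$ thanks to $\nabla_L L = 0$. Define $f(t) := g(J_v(t), L(\gamma(t)))$; the goal is to show that $f \equiv f(0) = g(v, L(z))$. The strategy is to prove first that $f$ is affine in $t$, and then that its initial slope vanishes.

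\emph{Step 1: $f$ is affine.} Compatibility of $\nabla$ with $g$ along $\gamma$, together with the parallelism of $L$ along $\gamma$, yields $\dot f(t) = g(J_v'(t), L(\gamma(t)))$ and $\ddot f(t) = g(J_v''(t), L(\gamma(t)))$. Substituting the Jacobi equation~\eqref{eq:jacobi}, $J_v''(t) = -R(L, J_v(t))L$, and invoking the antisymmetry $g(R(X,Y)Z, W) = -g(R(X,Y)W, Z)$ of the Riemann tensor with $Z = W = L$, gives $\ddot f(t) = -g(R(L, J_v)L, L) = 0$. Hence $f(t) = f(0) + t\,\dot f(0)$.

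\emph{Step 2: vanishing of $\dot f(0)$.} One has $\dot f(0) = g(\nabla_v L, L(z))$. Extend $L$ to a neighborhood of $z$ in $M$ by parallel transport along the transverse direction $-\overline{L^S}$, exactly as in the construction of $\gflow_L(z,t,s)$ in Section~\ref{SS:local-parametrization}. Parallel transport preserves $g$-inner products, and $g(L,L) = 0$ holds on $H$, so the extension satisfies $g(L,L) \equiv 0$ in a full open neighborhood of $z$ in $M$. Metric compatibility then gives $2\,g(\nabla_v L, L(z)) = v(g(L,L))|_z = 0$ for every $v \in T_z M$, completing the proof.

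\emph{Main obstacle.} The delicate point is exactly the meaning of $\nabla_v L$ when $v$ is transverse to $H$: a priori $L$ is defined only on $H$, so $\nabla_v L$ depends on the chosen extension. The cure is to commit to an extension of $L$ that remains null (such an extension exists, since the null cone in $TM$ admits smooth local sections through any null vector); equivalently, one may realise $J_v$ as the variation field of a one-parameter family of null geodesics $s \mapsto \exp_{\alpha(s)}(t\,\tilde L(\alpha(s)))$, where $\dot\alpha(0) = v$ and $\tilde L$ is any null extension of $L(z)$ along $\alpha$, and then differentiate the identity $g(\dot\gamma_s, \dot\gamma_s) \equiv 0$ in $s$ to obtain $g(J_v'(t), L(\gamma(t))) \equiv 0$ directly, sidestepping the ambiguity altogether.
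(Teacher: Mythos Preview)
Your proof is correct and follows the same two-step structure as the paper's: differentiate $g(J_v(t),L)$ twice, use the skew-symmetry $g(R(L,J_v)L,L)=0$ to conclude $f$ is affine, and then verify $\dot f(0)=g(\nabla_v L,L)=\tfrac12 v(g(L,L))=0$. The paper simply writes this last step as $\tfrac12 v(g(L,L))=0$ without further comment; you are more careful in making explicit that, for $v$ transverse to $H$, one must commit to the parallel-transport extension of $L$ along the $\overline{L^S}$ direction (as set up in Section~\ref{SS:local-parametrization}) so that $g(L,L)\equiv 0$ holds in an open neighborhood of $z$ in $M$, not merely on $H$.
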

\begin{proof}
  If we derive twice $g(J_v(t),L)$, we obtain $-g(R(L,J_v(t))L,L)$,
  which is null by skew-symmetry of the Riemann curvature tensor.
  The first derivative computed at $t=0$ is:
  \begin{equation*}
    \left.
      \frac{\de}{\de t}
    \right|_{t=0}
    g(J_v(t),L)
    =
    g(J_v'(0),L)
    =
    g(\nabla_v L,L)
    =
    \frac{1}{2}
    v(g(L,L))
    =0.
    \qedhere
  \end{equation*}
\end{proof}

\begin{corollary}
  If $v\in TH$, then $J_v(t)\in TH$ for all $t$.
\end{corollary}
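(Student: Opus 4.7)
The plan is to reduce the statement to an application of the previous Gauss-type lemma, using the characterization of the tangent space of a null hypersurface as the $g$-orthogonal complement of any normal null field.

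First I would observe that, at any point $y\in H$, the tangent space $T_yH$ coincides with the $g$-orthogonal complement $\{w\in T_yM : g(w,L(y))=0\}$. Indeed, $L\in\vfield^1(\normal H)$ implies $T_yH\subseteq L(y)^{\perp}$ by definition of the normal bundle, and both spaces have dimension $n-1$ (recall \Cref{Prop:NHinTH} gives $\normal H\subset TH$, so the kernel of $g|_{TH}$ is exactly the line spanned by $L$). Hence membership in $TH$ at each point along the geodesic $t\mapsto\gflow_L(z,t)$ is detected by the single scalar function $t\mapsto g(J_v(t),L)$.

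Next, since $v\in T_zH$, the very same orthogonality characterization gives $g(v,L(z))=0$. Applying \Cref{lem:gauss} to the Jacobi field $J_v$ along the null geodesic generated by $L$, the function $t\mapsto g(J_v(t),L)$ is constant, and therefore identically equal to $g(v,L(z))=0$. Combining this with the characterization above, we conclude $J_v(t)\in T_{\gflow_L(z,t)}H$ for all $t$ in the domain of definition.

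There is no real obstacle: the only subtle point, which one must verify carefully, is the identification $T_yH=L(y)^\perp$ at every point $y\in H$ along the geodesic, and in particular that this identification persists under the flow of $L$ (which it does, as $L$ remains a null-geodesic vector field along its own integral curve). Everything else is a one-line consequence of \Cref{lem:gauss}.
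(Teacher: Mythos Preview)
Your proof is correct and follows exactly the same approach as the paper: characterize $T_yH$ as $L(y)^{\perp}$, then use \Cref{lem:gauss} to propagate the orthogonality $g(J_v,L)=0$. The paper's version is simply more terse, stating the equivalence of tangency and orthogonality to $L$ without justification and immediately invoking the Gauss lemma.
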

\begin{proof}
  Being tangent to the hypersurface is equivalent to be orthogonal to
  $L$, and the orthogonality to $L$ is conserved by \Cref{lem:gauss}.
\end{proof}

Given a basis  $e_1(z)\dots e_{n}(z)$  for $T_zM$,
using parallel transport along the curves $t \mapsto \gflow_L(z,t)$ we can extend $(e_i(z))_i$ to $\gflow_L(z,t)$.
Define the $i$-th Jacobi vector field as $J_{i,L}(z,t) : =J_{e_i,L}(z,t)$.
We express the field $J_{i,L}$ in the basis $(e_{j}(z))_{j}$ as
\begin{equation}\label{eq:defJL}
  J_{i,L}(z,t)
  =
  \sum_j
  J_{ij,L}(z,t) e_j(z).
\end{equation}
The matrix $J_L(z,t) = (J_{ij,L}(z,t))_{i,j}$ satisfies the following linear ODE
\begin{align}
  &
    J_{L}''(z,t)
    +J_{L}(z,t) R_L(t)=0
    ,
\end{align}
where $R_L$ is a matrix representing the Riemann curvature tensor
$R(L,e_i)L =\sum_{j} R_{ij,L}(t) e_j $.
To ease the notation, we omit the dependence on $z$ and $L$ whenever  no confusion arises.

The initial datum for the matrix $J$ is $J(0)=I$ (here $I$ is the
identity matrix).
We omit to compute the initial datum for the derivative $J'$, for it
is not in our interest. 
Nonetheless, if 
\begin{equation}\label{eq:def-eta}
\eta(z) =(\eta_{ij})_{ij} = (g(e_i,e_j))_{ij},
\end{equation}
i.e., $\eta$ is the matrix representing the Lorentzian product in this basis,  we can prove the following.
\begin{proposition}
  The matrix $J'(0)$ satisfies
\begin{equation*}
  (J'(0)\eta)^T
  = J'(0)\eta.
\end{equation*}
\end{proposition}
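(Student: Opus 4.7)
The plan is to reinterpret the claimed symmetry as the vanishing of a conserved Wronskian along the null geodesic, and then to exploit the geometric structure encoded by the local parametrization \(\lparam\) of~\eqref{eq:paramM}. First, I will show that for any pair of Jacobi fields \(J,K\) along the null geodesic \(t\mapsto \gflow_L(z,t)\), the bilinear form
\[
W(J,K)(t) := g\bigl(J'(t),K(t)\bigr) - g\bigl(J(t),K'(t)\bigr)
\]
is constant in \(t\). This follows by differentiating \(W(J,K)\), substituting the Jacobi equation \(J''+R(L,J)L=0\), and invoking the pair symmetry \(g(R(L,J)L,K)=g(R(L,K)L,J)\) of the Riemann curvature tensor.

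Applied to the Jacobi fields \(J_i=J_{e_i,L}\) of our basis, the initial conditions \(J_i(0)=e_i\) and \(J_i'(0)=\nabla_{e_i}L\), together with the fact that the frame \((e_j(t))\) is parallel-transported along the null geodesic (so that the Gram matrix \(\eta\) is preserved), yield
\[
(J'(0)\eta)_{ij} - (J'(0)\eta)_{ji} = g(\nabla_{e_i}L,e_j) - g(e_i,\nabla_{e_j}L) = W(J_i,J_j)(0).
\]
Hence the claim reduces to \(W(J_i,J_j)(0)=0\) for every pair \(i,j\). Equivalently, I must show that the \(1\)-form \(L^\flat := g(L,\cdot)\) is closed at \(z\); indeed, torsion-freeness of the Levi-Civita connection gives \(g(\nabla_XL,Y)-g(\nabla_YL,X) = dL^\flat(X,Y)\).

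To prove \(dL^\flat|_z=0\), I would work in the coordinates \((x^1,\dots,x^{n-2},t,s)\) provided by \(\lparam\), in which the extension of \(L\) coincides with the coordinate field \(\partial_t\) and the components of \(L^\flat\) are simply \(g_{tj}\). Three elementary computations would then pin down these components: the null condition \(g(L,L)=0\) gives \(g_{tt}\equiv 0\); the pseudo-orthogonality condition~\eqref{eq:pseudo-orthogonality} combined with \Cref{lem:gauss} applied to the Jacobi field \(\partial_s\) yields \(g_{ts}\equiv -1\); and \Cref{lem:gauss} applied to \(\partial_{x^a}\), together with \(L\in\normal H\) along \(H\), gives \(g_{tx^a}\equiv 0\) on \(\{s=0\}\). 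These identities force nearly all entries of \(dL^\flat|_z\) to vanish; the only potentially nontrivial one is the mixed component \(\partial_s g_{tx^a}|_z\).

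The main obstacle I expect is exactly this mixed component. To handle it one would use both the parallel-transport construction of \(L\) off \(H\) (which gives \(\nabla_{\overline{L^S}}L=0\) at \(z\)) and the normalization \(g(L,\overline{L^S})\equiv -1\) on the cross-section \(S\). Differentiating the latter identity tangentially on \(S\) and combining with the defining property of the extension produces the cancellation needed to match \(\partial_s g_{tx^a}|_z\) against \(\partial_{x^a}g_{ts}|_z=0\). This final cancellation is the heart of the argument and depends crucially on the specific null-geodesic normalization of \(L\) introduced in \Cref{Ss:null}.
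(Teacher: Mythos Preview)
Your route coincides with the paper's: both boil down to showing $dL^\flat|_z=0$, i.e., that $L$ is locally the gradient of the function $u=-s$ built from the parametrization $\lparam$. (The Wronskian detour is harmless but unnecessary: directly $(J'(0)\eta)_{ij}=g(\nabla_{e_i}L,e_j)$, so the skew part is $dL^\flat(e_i,e_j)$.) You go further than the paper by correctly isolating the one delicate component, $dL^\flat(e_n,e_a)$ for $a\leq n-2$.

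The gap is in your final ``cancellation''. Using $\nabla_{\overline{L^S}}L|_z=0$ and torsion-freeness gives $\partial_s g_{t x^a}|_z = g(L,\nabla_{e_a}\overline{L^S})|_z$; differentiating $g(L,\overline{L^S})\equiv -1$ along $e_a\in TS$ converts this to $-g(\nabla_{e_a}L,\overline{L^S})|_z$. Your two ingredients therefore only \emph{rewrite} the component, they do not force it to vanish --- and in fact it need not. In Minkowski $\R^4$ with metric $dy_1^2+dy_2^2-2\,dy_3\,dy_4$, take $H=\{y_4=0\}$, $S=\{y_3=y_4=0\}$, and $L=\phi(y_1,y_2)\,\partial_{y_3}$ (which is null-geodesic for any positive $\phi$). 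At $z=0$ with $\phi(0)=1$ one computes $(J'(0)\eta)_{1n}=-\partial_{y_1}\phi(0)$ while $(J'(0)\eta)_{n1}=0$, so the full $n\times n$ symmetry fails whenever $\partial_{y_1}\phi(0)\neq 0$. The paper's assertion that $L$ is locally a gradient has the same weakness: the identity $L^\flat=du$ is verified only along $H$, which gives $dL^\flat|_{TH\times TH}=0$ but says nothing about the transverse direction $e_n$. What \emph{does} hold --- and what your argument already proves via $L^\flat|_{TH}\equiv 0$ --- is the symmetry of the upper-left $(n-1)\times(n-1)$ block (directions in $TH$); that is precisely the part used downstream in \Cref{eq:riccati-for-jacobi}.
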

\begin{proof}
First we notice that $L$ is locally a gradient.
Indeed, let $u$ be the function defined by the equation
\begin{equation*}
  u(\lparam(x,t,s))=-s,
\end{equation*}
where $\lparam$ is the local parameterization~\eqref{eq:paramM}.
Its differential is given by
\begin{equation*}
  \de u(e_i)=0,\,
  i=1\dots n-2
  ,\,
  \quad
  \de u(e_{n-1})=
  \de u(L)
  =0
  ,
  \quad
  \de u(e_{n})
  =
  \de u(\overline{L^S})
  =-1,
  \end{equation*}
yielding that $L$ is the only vector such that $g(L,e_i)=\de u(e_i)$,
$i=1\dots n$.
This implies the  symmetry 
$$g(\nabla_{e_i} L,
e_j)=g(\nabla_{e_j} L, e_i).$$
Therefore
\begin{equation*}
  \sum_k J_{ik}'(0) \eta_{kj}
  =
  g(J_i'(0),e_j)
  =
  g(\nabla_{e_i} L,
  e_j)
  =
  g(\nabla_{e_j} L,
  e_i)
  =
  g(J_j'(0),e_i)
  =
  \sum_k J_{jk}'(0) \eta_{ki}.
  \qedhere
\end{equation*}
\end{proof}

\begin{lemma}\label{lem:Jdzeta}
Let $g$ be a Lorentzian metric and $S$ be a local cross-section for the null hypersurface $H$.

If $f:U\to S$ is a $C^2$-diffeomorphism inducing
the parameterization map $\lparam$ for $H$ as in~\eqref{eq:paramM}, such that $f(x)=z$ and $\lparam(x,0,0) = z$, then the vector field $Y_i(t)= \de\lparam_{(x,t,0)}
  \left(\frac{\partial}{\partial x_i}\right)$ satisfies the Jacobi field equation~\eqref{eq:jacobi} with initial conditions~\eqref{eq:first-order-jacobi}. In particular, 
\begin{equation}\label{E:Jacobi}
  J_{\de\lparam_{(x,0,0)}(\frac{\partial}{\partial x_i})}
  (t)
  =
  \de\lparam_{(x,t,0)}
  \left(\frac{\partial}{\partial x_i}\right),
  \qquad
  i=1\dots n.
\end{equation}
\end{lemma}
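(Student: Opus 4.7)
The plan is to recognize $\lparam(\cdot,\cdot,0)$ (together with its $s$-extension from the paper's construction) as a smooth variation through geodesics, and then invoke the classical fact that the variation vector field of such a family satisfies the Jacobi equation; the initial conditions will pin down the claimed identity uniquely via the unique solvability of~\eqref{eq:jacobi} already recorded in the paper. The starting observation is that for each fixed choice of the remaining parameters, the $t$-curve $t \mapsto \lparam(x',t,s')$ is by construction the geodesic issued from $\lparam(x',0,s')$ with initial velocity $L$ (with the convention that $L$ is extended off $H$ by parallel transport along the $\overline{L^S}$-geodesics, as in Section~\ref{SS:local-parametrization}). In particular $\partial_t \lparam = L|_{\lparam(x',t,s')}$ and the geodesic equation $D_t \partial_t \lparam = 0$ hold on the common domain, so $\lparam$ is a smooth family of geodesics in the $(x_1,\dots,x_{n-2},s)$-parameters.

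Next, for $i\in\{1,\dots,n\}$ I would set $Y_i(t) := \de\lparam_{(x,t,0)}(\partial/\partial x_i)$, and combine the vanishing of the commutator of coordinate vector fields with the torsion-freeness of the Levi--Civita connection to interchange covariant derivatives. Using $\partial_t \lparam = L$, this gives
\[
D_t Y_i(t) \;=\; D_t \partial_{x_i} \lparam \;=\; D_{x_i} \partial_t \lparam \;=\; D_{x_i} L \;=\; \nabla_{Y_i(t)} L,
\]
which is precisely the first-order Jacobi equation~\eqref{eq:first-order-jacobi}. Evaluating at $t=0$ yields the correct initial derivative $Y_i'(0) = \nabla_{Y_i(0)} L$, while $Y_i(0) = \de\lparam_{(x,0,0)}(\partial/\partial x_i)$ is immediate by definition. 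Differentiating the first-order equation once more and using the definition of the Riemann tensor (as in the derivation of~\eqref{eq:jacobi} from~\eqref{eq:first-order-jacobi} recalled in the paper) shows that $Y_i$ also solves the second-order Jacobi equation~\eqref{eq:jacobi}.

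The unique solvability of the linear initial value problem~\eqref{eq:jacobi} then identifies $Y_i$ with $J_{Y_i(0),L}$, which is exactly~\eqref{E:Jacobi}. The main technical point I expect to have to justify carefully is the regularity of the derivative interchange: with $g\in C^2$ and $L\in C^1$ the map $\lparam$ is $C^1$ in all arguments and $C^2$ along each $t$-geodesic, so $D_t D_{x_i} \lparam$ is well-defined and the symmetry $D_t D_{x_i} \lparam = D_{x_i} D_t \lparam$ follows from torsion-freeness applied to the commuting coordinate frame of the parametrization. For the edge case $i=n-1$ (the $t$-coordinate) the identity degenerates to the trivial observation, already noted after~\eqref{eq:jacobi}, that $L$ is a Jacobi field along its own integral curves.
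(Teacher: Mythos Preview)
Your argument is essentially the same as the paper's: both recognize $\lparam$ as a variation through geodesics and interchange covariant derivatives to land on the Jacobi equation, then invoke uniqueness. The one substantive difference is how the regularity is handled. You aim for the first-order identity $D_t\partial_{x_i}\lparam = D_{x_i}\partial_t\lparam$ classically, relying on torsion-freeness of the commuting coordinate frame; but with $g\in C^2$ and $L\in C^1$ the map $\lparam$ is only $C^1$ jointly, so the existence of $D_t\partial_{x_i}\lparam$ and the symmetry of mixed partials is not automatic and your justification (``$C^2$ along each $t$-geodesic'') does not quite close this. The paper sidesteps the issue by carrying out the derivative interchange in the distributional sense (distributional derivatives always commute), obtaining the second-order Jacobi equation distributionally, and then bootstrapping to classical $C^2$ regularity since the ODE has continuous coefficients. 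Your first-order route can be made rigorous in the same way---or via an asymmetric Schwarz-type lemma using that $\partial_t\lparam = L\circ\lparam$ is $C^1$---but as written the regularity step is the one point that needs tightening.
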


\begin{proof}
Since by \Cref{P:local-null-geodesics-smart} (see also \Cref{R:nullgeodesic}), the null geodesic vector field $L$ is of class $C^1$, then the parameterization map $\lparam$ for $H$ given in~\eqref{eq:paramM} is of class $C^1$ as well.
Denote $Y(t):=\frac{\partial}{\partial x_i} \lparam_{(x,t,0)}$ and notice this is a continuous vector field along $H$. Using that distributional derivatives commute and the very definition of Riemann curvature tensor, we infer that
\begin{align*}
Y''(t)&= \nabla_{\frac{\partial}{\partial t}} \nabla_{\frac{\partial}{\partial t}}\frac{\partial}{\partial x_i} \lparam_{(x,t,0)} \quad \text{in distributional sense} \\
& =\nabla_{\frac{\partial}{\partial x_i}} \nabla_{\frac{\partial}{\partial t}}\frac{\partial}{\partial t} \lparam_{(x,t,0)}- R\left(\frac{\partial}{\partial t}, \frac{\partial}{\partial x_i}  \right) \frac{\partial}{\partial t}  \quad \text{in distributional sense}.
\end{align*}
Since, by construction, the curve $t\mapsto \lparam_{(x,t,0)}$ is a geodesic, then the first term in the right hand side vanishes. Therefore, $Y$ is a distributional solution of the Jacobi fields ODE~\eqref{eq:jacobi} with initial conditions~\eqref{eq:initial-jacobi}. A standard bootstrap argument yields that $t\mapsto Y(t)$ is a $C^2$ classical solution of such a Cauchy problem, i.e. it is a Jacobi field. By the Cauchy-Lipschitz theorem, uniqueness for such an initial value problem holds, thus ~\eqref{E:Jacobi} holds.
\end{proof}

Motivated by \Cref{lem:Jdzeta}, we  choose as basis for $T_{z}M$ the vectors $e_{i}(z)
  :=
  \de\lparam_{(x,0,0)}
  \left( \partial /\partial x_i \right)
$, 
with $i = 1, \dots, n$;
in particular $e_{n-1}(z)=L$ and $e_{n}(z)=\overline{L^S}$.

The orthogonality properties of $L$ and $\overline{L^S}$ imply that,
under this choice of coordinates,
the matrix $\eta$ as in~\eqref{eq:def-eta} takes the form
\begin{equation}\label{E:eta-non-orthogonal}
  \eta
  :=
  \begin{bmatrix}
    \eta_{11} & \dots &\eta_{1,n-2} & 0 &0\\
    \vdots & \ddots &\vdots &\vdots&\vdots\\
    \eta_{n-2,1}&\dots & \eta_{n-2,n-2} &0 &0 \\
    0&\dots & 0 &0 &-1 \\
    0&\dots & 0 &-1 &0 \\
  \end{bmatrix}.
\end{equation}

Moreover with this choice of basis, we deduce some additional property for the matrix $J$. 

\begin{lemma}\label{L:minorJ}
  The matrix $J(t)$ satisfies the following identities
\begin{equation} \label{eq:J-null-transpose}
 J(t)^T \cbf_{n-1}
  =
  \cbf_{n-1}, 
\qquad 
  J(t) \cbf_n
  = \cbf_n,
\end{equation}
with the convention that $(\cbf_i)_i$ is the canonical basis
of $\R^n$. In other words, the matrix $J(t)$ is of the form
\begin{equation}
  \label{eq:J-appearence}
  J(t)
  :=
  \begin{bmatrix}
    J_{11} & \dots & J_{1,n-2} & J_{1,n-1} &0\\
    \vdots & \ddots &\vdots &\vdots&\vdots\\
    J_{n-2,1}&\dots & J_{n-2,n-2} &J_{n-2,n-1} &0 \\
    0&\dots & 0 &1 &0 \\
    J_{n,1}&\dots & J_{n,n-2} &J_{n,n-1} &1 \\
  \end{bmatrix}
  .
\end{equation}
\end{lemma}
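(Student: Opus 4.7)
The plan is to prove the two identities separately. The first, $J(t)^{T}\cbf_{n-1} = \cbf_{n-1}$, is an assertion about the $(n-1)$-th row of $J$: it forces the Jacobi field $J_{n-1}$ to be, at every time, the $(n-1)$-th basis vector in the parallel-transported frame. This will follow from the fact that $L$ is itself a Jacobi field, combined with $\nabla_{L}L = 0$. The second, $J(t)\cbf_{n} = \cbf_{n}$, is an assertion about the $n$-th column: it forces $J_{in}(t) = \delta_{in}$ for every $i$. This will be obtained by combining Gauss' Lemma (\Cref{lem:gauss}) with the explicit form~\eqref{E:eta-non-orthogonal} of $\eta$ in the chosen basis.

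For the $(n-1)$-th row, I start from the observation that $e_{n-1}(z) = L(z)$. Since the basis $(e_{j})$ is parallel-transported along $t\mapsto \gflow_{L}(z,t)$ and $\nabla_{L}L = 0$, the parallel transport of $L(z)$ coincides with $L(\gflow_{L}(z,t))$; hence $e_{n-1}(\gflow_{L}(z,t)) = L(\gflow_{L}(z,t))$. The Jacobi field $J_{n-1}$ is by definition the solution of~\eqref{eq:jacobi} with initial data $J_{n-1}(0) = e_{n-1}(z) = L(z)$ and $J_{n-1}'(0) = \nabla_{L}L = 0$. The vector field $t\mapsto L(\gflow_{L}(z,t))$ also solves this Cauchy problem: its second covariant derivative vanishes (parallel transport of the already parallel vector $L$), and the curvature term $R(L,L)L$ is zero by the skew-symmetry of the Riemann tensor. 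Uniqueness for the linear ODE~\eqref{eq:jacobi} forces $J_{n-1}(t) = e_{n-1}(\gflow_{L}(z,t))$, and expanding in the parallel-transported basis gives $J_{n-1,j}(t) = \delta_{j,n-1}$, i.e.\ $J(t)^{T}\cbf_{n-1} = \cbf_{n-1}$.

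For the $n$-th column, I apply \Cref{lem:gauss} to each $J_{i}$: the function $t\mapsto g(J_{i}(t),L)$ is constant. Writing $J_{i}(t) = \sum_{j}J_{ij}(t)e_{j}$ and using $L = e_{n-1}$, I get $g(J_{i}(t),L) = \sum_{j}J_{ij}(t)\eta_{j,n-1}$. Inspecting the $(n-1)$-th column of~\eqref{E:eta-non-orthogonal}, the only nonzero entry is $\eta_{n,n-1} = -1$, so $g(J_{i}(t),L) = -J_{in}(t)$. Evaluating at $t = 0$, where $J_{i}(0) = e_{i}$, gives $g(e_{i},e_{n-1}) = \eta_{i,n-1} = -\delta_{in}$. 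Combining, $J_{in}(t) = \delta_{in}$ for all $i$ and $t$, which is $J(t)\cbf_{n} = \cbf_{n}$. Together the two identities fix the $(n-1)$-th row and the $n$-th column of $J(t)$ to produce the displayed shape~\eqref{eq:J-appearence}.

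I do not expect any genuine obstacle: each part is a short, direct consequence of a structural fact already established in the text. The only point that requires care is the bookkeeping between the conventions for $J_{ij}$ (field index versus basis-component index) and the row/column interpretation of the matrix identities, but once this is fixed both identities reduce to the arguments above.
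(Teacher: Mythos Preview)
Your proof is correct and follows essentially the same approach as the paper: the first identity comes from the fact (already noted in the text) that $J_{L}(t)=L$, hence $J_{n-1,j}(t)=\delta_{n-1,j}$, and the second identity is obtained exactly as you do, by combining \Cref{lem:gauss} with the form~\eqref{E:eta-non-orthogonal} of $\eta$ to conclude $-J_{in}(t)=g(J_i(t),L)=\eta_{i,n-1}=-\delta_{in}$. The only difference is that you spell out the uniqueness argument for $J_{n-1}(t)=L$ explicitly, while the paper simply invokes the earlier remark that $L$ is itself a Jacobi field.
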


\begin{proof}
Since $J_L(t)=L$, then $J_{n-1}(t)=L$ or in coordinates
$J_{n-1,j}(t)=\delta_{n-1,j}$, which
proves the first identity in~\eqref{eq:J-null-transpose}.
Then,  using \Cref{lem:gauss} and
\eqref{E:eta-non-orthogonal}, we compute
\begin{align*}
  g(J_i(t),L)
  &
    =
  g(J_i(0),L)
  =
  g(J_i(0),e_{n-1})
  =
  g(e_i,e_{n-1})
  =
    \eta_{i\,n-1}
  \\
 g(J_i(t),L) &
  =
  \sum_{j}
  g(J_{ij}(t) e_j,
  e_{n-1})
  =
    \sum_{j}
    J_{ij}(t) \eta_{j \,n-1}
    =
    -J_{in}(t)
  ,
\end{align*}
thus $J_{in}(t)=\delta_{in}$, which proves the second identity of~\eqref{eq:J-null-transpose}.
\end{proof}
As a consequence of~\eqref{eq:J-null-transpose} we have that
\begin{equation}
  \label{eq:properties-J}
  J(t)^{-1} \cbf_n= \cbf_n
  ,
  \quad
  (J(t)^{-1})^T \cbf_{n-1}= \cbf_{n-1}
  ,
  \quad
  J'(t) \cbf_n= 0
  ,
  \quad
  \text{ and }
  \quad
  J'(t)^T \cbf_{n-1}= 0.
\end{equation}


\subsection{Local expression of the rigged Volume} 
 
 We now  write in a convenient way the rigged volume measure induced by the rigged metric over $H$.
As it involves the $(n-2)$-dimensional Hausdorff measure on space-like cross-sections,
we point out that on a space-like cross-section, the restriction
of the Lorentzian metric and of the rigged metric associated to it coincide; thus also the Hausdorff measures coincide.

\begin{proposition}
  Let $H$ be a null hypersurface and $S$ be a space-like local
  cross-section.
  Consider $L$ a null-geodesic vector field over $H$ and $\Omega_{S}$ 
  the open coordinate patch of $H$ defined in~\eqref{eq:defOmegaS}. Denote by $\gflow_{L}$ the flow map of  $L$ as in~\eqref{eq;defPsiL}, by $\tilde{\vol}_{L}^{S}$ the rigged volume form as in \Cref{def:tildeVol}, and by $J=J_L$ the Jacobi field matrix as in~\eqref{eq:defJL}.
Then 
$$
\tilde{\vol}_{L}^{S}\llcorner_{\Omega_{S}} = (\gflow_{L} )_{\sharp} (\det (J(z,t))  \haus^{n-2}\llcorner_{S} \otimes \mathcal{L}^{1} ) \llcorner_{\Omega_{S}},
$$
where  $ \haus^{n-2}$ denotes the Hausdorff measure of dimension $n-2$ (over $S$) and  $\mathcal{L}^{1}$ the one-dimensional Lebesgue measure. 

In particular, for all $\phi \in C^{0}_c(\Omega_S)$
\begin{equation}
    \label{eq:representation-rigged-volume}
    \int_{\Omega_S}
    \phi(z)
    \,
    \tilde{\vol}_L^S
    (\de z)
    =
    \int_S
    \int_{\R}
    \phi(\gflow_L(z,t))
    \,
\det (J(z,t))
    \,
    \de t
    \,
    \haus^{n-2}(\de z).
\end{equation}
\end{proposition}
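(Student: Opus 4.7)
The plan is to work in the coordinates provided by the parametrization $\lparam$ introduced in Section~\ref{SS:local-parametrization}, to write the rigged volume locally as the square root of the determinant of the rigged metric $\tilde g_L^S$ expressed in these coordinates, and then to identify that determinant with $(\det J)^{2}\det\hat\eta$, where $\hat\eta$ is the upper-left $(n-2)\times(n-2)$ block of $\eta$.

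First I would pass to coordinates. Using $\lparam(x,t) = \gflow_L(f(x),t)$ as a diffeomorphism from an open set $V\subset U\times\R$ to $\Omega_S$, the defining identity for the Riemannian volume gives
\begin{equation*}
  \tilde{\vol}_L^S\llcorner_{\Omega_S}
  =
  \lparam_\sharp\left(\sqrt{\det\tilde G(x,t)}\, \de x\, \de t\right),
\end{equation*}
where $\tilde G(x,t)$ is the $(n-1)\times(n-1)$ Gram matrix of $\tilde g_L^S$ in the coordinate frame $\bigl(\partial/\partial x_1,\dots,\partial/\partial x_{n-2},\partial/\partial t\bigr)$. By \Cref{lem:Jdzeta}, these frame vectors are exactly the Jacobi fields $J_1(t),\dots,J_{n-2}(t)$ along the null geodesic and $J_{n-1}(t)=L$.

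Next I would compute the entries of $\tilde G$ using the formula $\tilde g_L^S = g+\alpha\otimes\alpha$ together with the special structure of $\eta$ in~\eqref{E:eta-non-orthogonal} and the block structure of $J$ in~\eqref{eq:J-appearence}. For $i,j\leq n-2$, since $\eta_{k,n-1}=0=\eta_{n-1,\ell}$, only the principal $(n-2)\times(n-2)$ block $\hat\eta$ contributes and one gets $g(J_i,J_j)=(\hat J\,\hat\eta\,\hat J^{T})_{ij}$, where $\hat J$ is the upper-left $(n-2)\times(n-2)$ block of $J$. The pseudo-orthogonality conditions~\eqref{eq:pseudo-orthogonality} give $\alpha(J_i)=J_{i,n-1}$, $\alpha(L)=1$, and $\tilde g_L^S(L,L)=1$. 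Collecting everything, $\tilde G$ takes the block form
\begin{equation*}
  \tilde G
  =
  \begin{bmatrix}
    \hat J\,\hat\eta\,\hat J^{T}+vv^{T} & v\\
    v^{T} & 1
  \end{bmatrix},
  \qquad
  v=(J_{1,n-1},\dots,J_{n-2,n-1})^{T}.
\end{equation*}
The Schur complement with respect to the $(1)$ corner then yields $\det\tilde G=\det(\hat J\,\hat\eta\,\hat J^{T})=(\det\hat J)^{2}\det\hat\eta$, and inspection of~\eqref{eq:J-appearence} shows $\det J=\det\hat J$ (expanding along the last column and then the last row). Since $J(z,0)=I$ and $\det J(z,t)$ is continuous and non-zero on the set where $\lparam$ is a diffeomorphism, we have $\det J(z,t)>0$ there, so the square root is unambiguous.

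Finally I would recognize the remaining factor as a Hausdorff measure: on the cross-section $S$, the metric $g$ and the rigged metric $\tilde g_L^S$ coincide (because $\alpha|_{TS}=0$ by~\eqref{eq:normalS}), so $f_\sharp\bigl(\sqrt{\det\hat\eta(f(x))}\,\de x\bigr)=\haus^{n-2}\llcorner_S$. Substituting $\sqrt{\det\tilde G}=\det J(z,t)\sqrt{\det\hat\eta}$, applying Fubini, and pushing forward by $\gflow_L$, I would obtain~\eqref{eq:representation-rigged-volume}. The main technical point is the Schur complement identification, and the main conceptual point is the consistent use of the Jacobi field description of the coordinate frame supplied by \Cref{lem:Jdzeta}, together with the block identities of \Cref{L:minorJ}, so that the $\alpha\otimes\alpha$ contribution exactly cancels in the determinant.
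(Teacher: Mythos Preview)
Your proof is correct and follows essentially the same route as the paper's own argument: pass to the $\lparam$-coordinates, identify the coordinate frame with the Jacobi fields via \Cref{lem:Jdzeta}, compute the Gram matrix of $\tilde g_L^S$, and recognize the leftover factor $\sqrt{\det\hat\eta}$ as the density of $\haus^{n-2}$ on $S$. The only organizational difference is in the determinant step: the paper first observes that in the parallel frame $(e_1,\dots,e_{n-1})$ the rigged metric is block-diagonal (this is \eqref{E:tilde-eta-non-orthogonal}), writes the Gram matrix as $\tilde J\,\tilde\eta\,\tilde J^{T}$ with $(n-1)\times(n-1)$ blocks, and reads off $\det\tilde G=(\det\tilde J)^2\det\tilde\eta=(\det J)^2|\det\eta|$; you instead compute $\tilde G$ entry by entry, obtain the rank-one perturbation $vv^T$ coming from $\alpha\otimes\alpha$, and cancel it via a Schur complement. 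Both reductions yield $(\det J)^2\det\hat\eta$ and are equivalent pieces of linear algebra.
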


\begin{proof}
  The integral w.r.t.\ the rigged volume can be computed in
  coordinates with the formula
  \begin{equation}
    \label{eq:rigged-volume-coordinates}
    \int_{\Omega_S}
    \phi(z)
    \,
    \tilde{\vol}_L^S
    (\de z)
    =
    \int_{U}
    \int_{\R}
    \phi(\lparam(x,t))
    D(x,t)
    \,
    \de t\,
    \de x
    ,
  \end{equation}
  where, using~\eqref{E:Jacobi},
  \begin{equation}
    \begin{aligned}
    D(x,t)
      :=
      &
        \left(
    \det
    \left(
      \tilde{g}_L^S\left(
        \frac{\partial\lparam}{\partial x_i}(x,t)
        ,
        \frac{\partial\lparam}{\partial x_j}(x,t)
      \right)_{i,j=1,\dots,n-1}
        \right)
        \right)^{\frac{1}{2}}
      \\
        =
      &
    \det
    \left(
    \left(
      \tilde{g}_L^S\left(
        J_{e_{i}(z)}(t)
        ,
        J_{e_{j}(z)}(t)
      \right)_{i,j=1,\dots,n-1}
        \right)
        \right)^{\frac{1}{2}}
    .
    \end{aligned}
  \end{equation}
  By Equation~\eqref{eq:rigged-metric}
  and~\eqref{E:eta-non-orthogonal}, we deduce that the matrix
  $\tilde\eta$ representing the rigged metric in the basis
  $(e_i)_{i=1,\dots,n-1}$ is of the form
  \begin{equation}\label{E:tilde-eta-non-orthogonal}
  \tilde{\eta}
  :=
  \begin{bmatrix}
    \eta_{11} & \dots &\eta_{1,n-2} & 0 \\
    \vdots & \ddots &\vdots &\vdots\\
    \eta_{n-2,1}&\dots & \eta_{n-2,n-2} &0  \\
    0&\dots & 0 &1  \\
  \end{bmatrix}
  .
\end{equation}
In particular, from ~\eqref{E:eta-non-orthogonal}, it follows that $\det\tilde\eta=-\det\eta$. 
Denote by $\tilde J$ the upper-left $(n-1)\times(n-1)$ minor of the
matrix $J$. Taking into account~\eqref{eq:J-appearence}, if $f(x)=z$, we deduce  that
\begin{equation}\label{eq:D(x,t)detJ}
  \begin{aligned}
    D(x,t)
    &
  =
  (\det(\tilde J^T(z,t) \tilde\eta (\gflow_{L}(z,t)) \tilde J(z,t)))^{\frac{1}{2}}
  =
  \det \tilde J^T(z,t)
      (\det \tilde\eta (\gflow_{L}(z,t)) )^{\frac{1}{2}}
      \\&
  =
  \det  J^T(z,t)
    (|\det \eta (\gflow_{L}(z,t)) |)^{\frac{1}{2}}
    =
  \det  J(z,t)
    (|\det \eta (\gflow_{L}(z,t)) |)^{\frac{1}{2}}.
  \end{aligned}
\end{equation}
Plugging the identity~\eqref{eq:D(x,t)detJ}
into~\eqref{eq:rigged-volume-coordinates}, yields
\begin{equation*}
  \begin{aligned}
     \int_{\Omega_S}
    \phi(z)
    \,
    \tilde{\vol}_L^S
    (\de z)
    &
    =
    \int_{U}
    \int_{\R}
    \phi(\lparam(x,t))
    D(x,t)
    \,
    \de t
      \de x
    \\
    &
      =
      \int_U
    \int_{\R}
    \phi(\gflow_L(f(x),t))
  \det  J(f(x),t)
  (|\det \eta (f(x)) |)^{\frac{1}{2}}
    \,
    \de t \,
      \de x
    .
 \end{aligned}
  \end{equation*}
  The thesis follows immediately by observing that
  \begin{equation*}
     \int_{S}
    \psi(z)
    \,
    \haus^{n-2}(\de z)
    =
    \int_{U}
    \psi(f(x))
    (\det \tilde\eta(f(x)))^{\frac{1}{2}}
    \,
    \de t
    \,
    \de x
    ,
    \quad
    \forall
    \psi
    \in
    C^0_c(S),
  \end{equation*}
  which is the integration formula in the
  parameterization given by $f$ for the volume on the submanifold $S$.
\end{proof}

We now point out that if  $\varphi $ is a transverse function (recall \Cref{D:transverse}), then 
it is not hard to check that 
\begin{equation}\label{eq:w-transverse}
\det J_{\varphi L}(z,t) = \det J_{L}(z,\varphi(z)t).
\end{equation}
Combining then the representation  formula~\eqref{eq:representation-rigged-volume}
  with~\eqref{eq:w-transverse} it is immediate to establish the following:

\begin{corollary} \label{cor:rescale-volume}
  Let $L$ be a local null-geodesic vector field, $S$ a local space-like
  cross-section for $H$ and $\varphi$ a $C^1$ transverse function.
  Then it holds that
  $\tilde{\vol}_{\varphi L}^S=\frac{1}{\varphi}\tilde{\vol}_{L}^S$.
\end{corollary}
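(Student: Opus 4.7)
The goal is to show that the Borel measures $\tilde{\vol}_{\varphi L}^S$ and $\frac{1}{\varphi}\tilde{\vol}_L^S$ agree on the coordinate patch $\Omega_S$ on which both are defined by \eqref{eq:representation-rigged-volume}. By a standard density argument it is enough to verify, for every $\phi\in C^0_c(\Omega_S)$, the identity $\int \phi \, \de \tilde{\vol}_{\varphi L}^S = \int (\phi/\varphi) \, \de \tilde{\vol}_L^S$. The plan is to write the left-hand side using \eqref{eq:representation-rigged-volume} applied to the null-geodesic vector field $\varphi L$, and then reduce everything to an integral along the $L$-flow via two reparametrization identities already recorded in the paper, plus a change of variables.

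Concretely, \eqref{eq:representation-rigged-volume} with $\varphi L$ in place of $L$ gives
\[
\int_{\Omega_S}\!\!\phi(z)\, \tilde{\vol}_{\varphi L}^S(\de z)
=\int_S\!\int_\R \phi(\gflow_{\varphi L}(z,t))\,\det J_{\varphi L}(z,t)\,\de t\,\haus^{n-2}(\de z).
\]
From the composition/rescaling property of the flow recorded in Section~\ref{SS:local-parametrization}, namely $\gflow_{\varphi L}(z,t)=\gflow_L(z,\varphi(z)t)$, and from the Jacobian identity \eqref{eq:w-transverse}, the integrand on the right equals $\phi(\gflow_L(z,\varphi(z)t))\,\det J_L(z,\varphi(z)t)$.

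Next, using that $\varphi$ is a positive transverse function, perform the change of variable $s=\varphi(z)t$ in the inner integral (with $z\in S$ held fixed); the factor $1/\varphi(z)$ appears. Since $\varphi$ is constant along the null geodesic $s\mapsto \gflow_L(z,s)$ (this is the very definition of transversality, \Cref{D:transverse}), we may replace $1/\varphi(z)$ by $1/\varphi(\gflow_L(z,s))$ inside the integral. Applying \eqref{eq:representation-rigged-volume} backwards, now with test function $\phi/\varphi$ and vector field $L$, yields $\int (\phi/\varphi)\, \de \tilde{\vol}_L^S$, as required.

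I do not expect any serious obstacle: the argument is essentially bookkeeping, provided one checks that the three ingredients (the flow composition rule, the Jacobian identity \eqref{eq:w-transverse}, and the invariance $\varphi\circ\gflow_L(z,\cdot)\equiv \varphi(z)$) hold precisely on the parameter set over which the representation formula is valid. Positivity of $\varphi$ ensures that the change of variable preserves orientation and that $\varphi L$ remains future-directed null-geodesic, so that $S$ is a legitimate cross-section for $\varphi L$ as well.
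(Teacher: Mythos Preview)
Your argument is correct and is exactly the approach the paper has in mind: the paper's ``proof'' is the single sentence that the result follows immediately by combining the representation formula \eqref{eq:representation-rigged-volume} with the Jacobian identity \eqref{eq:w-transverse}, and you have simply spelled out the change of variables $s=\varphi(z)t$ and the use of transversality that make this combination work. Your remark that positivity of $\varphi$ is needed (so that $\varphi L$ is again future-directed and null-geodesic, cf.\ \Cref{R:change-null-geodesics}) is a useful clarification that the paper leaves implicit.
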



\subsection{Independence from local cross-sections}
\label{Ss:mesures-on-H}

By its very definition (see \Cref{def:tildeVol}), 
the rigged volume measure $\tilde \vol_{L}^{S}$ on $H$ 
depends both on $L$ and on the chosen cross-section $S$. 
We will now prove 
that it actually does not depend on $S$. 
Subsequently, we will show that 
its dependency on $L$ is controlled. 
As a consequence, $\tilde \vol_{L}^{S}$ will be globally well-defined on $H$.

We start by establishing a representation formula for the Hausdorff measure
of cross-sections that are graphs over other
cross-sections.

\begin{proposition}
  \label{P:graph-surface}
  Let $H$ be a null hypersurface and let $S$ be a local space-like
  cross-section.
  %
  Let $\bar S\subset \Omega_S$ be another local space-like
  cross-section.
  Assume that there exists a $C^2$ function $t_L:\Dom(t_L)\subset S \to\R$,
  such that $\gflow_L(z,t_L(z))\in \bar S$ and that $z\mapsto
  \gflow_L(z,t_L(z))$ is a surjection onto $\bar{S}$. Then, for every $\varphi\in C^0_c(\bar{S})$, the following identity holds :
  \begin{equation}
    \label{eq:graph-surface}
    \int_{\bar S}
    \phi(z)
    \,
    \haus^{n-2}(\de z)
    =
    \int_{\Dom(t_L)}
    \phi(\gflow_L(z,t_L(z)))
    \,
\det J (z,t_L(z))
    \,
    \haus^{n-2}(\de z)
    .
  \end{equation}
\end{proposition}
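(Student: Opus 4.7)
The plan is to view $F \colon \Dom(t_L) \subset S \to \bar S$ given by $F(z) := \gflow_L(z, t_L(z))$ as a $C^1$-map between $(n-2)$-dimensional Riemannian manifolds (with the metrics inherited from $g$) and compute its Jacobian with respect to these metrics. Since $\bar S$ is also a space-like cross-section cut transversally by the null geodesics of $L$, $F$ is a local diffeomorphism wherever $dF_z$ has full rank, and the surjectivity hypothesis together with the classical area formula reduce the proposition to showing
\begin{equation*}
|J_F(z)| = \det J(z, t_L(z)) \qquad \forall z\in \Dom(t_L).
\end{equation*}

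The key computation is the derivative: for $v \in T_z S$, the chain rule combined with $\partial_t \gflow_L(z,t) = L(\gflow_L(z,t))$ yields
\begin{equation*}
dF_z(v) = J_{v,L}(z, t_L(z)) + (dt_L)_z(v) \cdot L(F(z)),
\end{equation*}
where $J_{v,L}$ is the Jacobi field identified in \Cref{lem:Jdzeta}. The decisive observation is that the $L$-contribution is invisible to the $g$-pairing: $L(F(z)) \in \normal H$ is $g$-orthogonal to every $J_{w,L}(z, t_L(z)) \in T H$ (by \Cref{lem:gauss}, since $g(w, L(z)) = 0$ for $w \in T_z S$) and it is itself $g$-null. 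Hence all cross-terms and the $LL$-term drop, leaving
\begin{equation*}
g(dF_z(v), dF_z(w)) = g\bigl(J_{v,L}(z, t_L(z)), \, J_{w,L}(z, t_L(z))\bigr).
\end{equation*}

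To extract the determinant, I would pick a basis $e_1(z), \dots, e_{n-2}(z)$ of $T_z S$, complete it with $e_{n-1}(z) := L(z)$ and $e_n(z) := \overline{L^S}(z)$, and extend by parallel transport along $t \mapsto \gflow_L(z, t)$ as in Section~\ref{S:jacobi}. Parallel transport preserves $g$, so the matrix $\eta$ in~\eqref{E:eta-non-orthogonal} is constant in $t$. Using that $J_{in}(t) = 0$ for $i \leq n-2$ from \Cref{L:minorJ}, together with the vanishing of the $(n-1)$-th row and column of $\eta$, one obtains
\begin{equation*}
\bigl( g(J_{e_i,L}, J_{e_j,L}) \bigr)_{i,j=1}^{n-2} = \hat J(z, t_L(z))\, \hat\eta_S(z)\, \hat J(z, t_L(z))^T,
\end{equation*}
where $\hat J$ and $\hat \eta_S$ denote the $(n-2) \times (n-2)$ upper-left blocks. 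Taking determinants and dividing by $\det \hat\eta_S(z)$ (the density of $\haus^{n-2}|_S$ in this frame) yields $|J_F(z)| = |\det \hat J(z, t_L(z))|$; by the block structure of $J$ in \Cref{L:minorJ} (expand along the last column, then the second-to-last row) one has $\det J = \det \hat J$, which is positive throughout the connected component of $\{t=0\}$ in $\Dom(t_L)$ since $\det J(z,0) = 1$. The area formula then gives~\eqref{eq:graph-surface}.

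The main subtlety is the identity for $dF_z$: keeping track of the decomposition into a Jacobi-field contribution plus an $L$-contribution, and recognizing that only the former survives the $g$-pairing. The rest is bookkeeping with the frame from Section~\ref{S:jacobi} and the block form of $J$.
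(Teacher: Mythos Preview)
Your proposal is correct and follows essentially the same route as the paper: both compute the differential of $z\mapsto \gflow_L(z,t_L(z))$ as a Jacobi field plus an $L$-term, observe that the $L$-contributions vanish in the $g$-pairing, and identify the resulting Gram matrix with the $(n-2)\times(n-2)$ block $\hat J\,\hat\eta\,\hat J^{T}$, whence the Jacobian equals $\det J(z,t_L(z))$. The paper phrases this via an explicit coordinate chart $f:U\to S$ and the parametrization $\bar f(x)=\lparam(x,s(x))$, whereas you work intrinsically with the map $F$ and the area formula, but the substance is identical.
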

\begin{proof}
Consider the local parameterization of $H$ induced by  $f:U\subset \R^{n-2} \to S$ (i.e., $\lparam(x,t)=\gflow_L(f(x),t)$). 
Denote by $s(x) : =t_L\circ f$ and notice that $\bar{f}(x): =\lparam(x,s(x))$ induces a parameterization for $\bar{S}$.
Then
  \begin{equation}
    \label{eq:cross-section-coordinates}
    \int_{\bar S}
    \phi(z)
    \,
    \haus^{n-2}(\de z)
    =
    \int_{\dom \bar{f}}
    \phi(\bar{f}(x))
    \,
    D(x)
    \,
    \de x
    ,
  \end{equation}
  where $D(x)$ is the determinant of the matrix
  \begin{equation}
    \begin{aligned}
    A_{ij}
    :=\,
    &
    g\left(
      \frac{\partial \bar{f}}{\partial x_i}(x)
      ,
      \frac{\partial \bar{f}}{\partial x_j}(x)
    \right)
    \\
    =\,
    &
    g\left(
      \frac{\partial \lparam}{\partial x_i}(x,s(x))
      +
      \frac{\partial \lparam}{\partial t}(x,s(x))      \frac{\partial s}{\partial x_i}(x)
      ,
      \frac{\partial \lparam}{\partial x_j}(x,s(x))
      +
      \frac{\partial \lparam}{\partial t}(x,s(x))      \frac{\partial s}{\partial x_j}(x)
    \right)
    \\
    =\,
    &
    g\left(
      J_{\frac{\partial f}{\partial x_i}(x)}(s(x))
      +
      \frac{\partial s}{\partial x_i}(x)
      L
      ,
      J_{\frac{\partial f}{\partial x_j}(x)}(s(x))
      +
      \frac{\partial s}{\partial x_j}(x)
      L
      \right)
      \\
      =\,
      &
        g\left(
      J_{\frac{\partial f}{\partial x_i}(x)}(s(x))
      ,
      J_{\frac{\partial f}{\partial x_j}(x)}(s(x))
    \right), \quad \forall\, i, j = 1,\dots, n-2.
\end{aligned}
\end{equation}
 In the last identity, we have tacitly used the same notation for the metric $g$ restricted to $T\bar S$ and to $TH$; 
notice indeed that the  vectors $J_{{\partial f / \partial x_i}(x)}(s(x))$ are no longer tangent to $\bar S$.
Expanding the identity and setting $z = f(x)$, we obtain: 
\begin{align*}
 A_{ij} =      &~   g\left( \sum_{h \leq n-1}J_{i,h}(z,s(x)) e_{h} , \sum_{k \leq n-1} J_{j,k}(z,s(x)) e_{k}    \right) \\ 
      =      &~ \sum_{h,k \leq n-1} J_{i,h} (z,s(x)) \eta_{h,k}(\bar f (x)) J_{j,k}(z,s(x)).
\end{align*}
Hence  $A$ is the upper left $(n-2)\times (n-2)$ minor of $J (z,s(x)) \eta(\bar f (x)) J^{T} (z,s(x))$. 
As $L$ is a null vector for $g$, one can directly check that  $A = \bar J(s(x)) \bar \eta(\bar f (x) )\bar J^{T} (z,s(x))$, 
where $\bar J$ and $\bar \eta$ denotes the the upper left $(n-2)\times (n-2)$ minors of $J$ and $\eta$, respectively.
Therefore $D(x)= \det J (f(x),s(x)) (|\det \eta(\bar f(x)) |)^{\frac{1}{2}}$.
Plugging this identity in~\eqref{eq:cross-section-coordinates}, we
conclude that
  \begin{align*}
    \int_{\bar S}
    \phi(z)
    \,
    \haus^{n-2}(\de z)
    &
    =
    \int_{\dom \bar{f}}
    \phi(\bar{f}(x))
    \,
    D(x)
    \,
    \de x
    \\&
    =
    \int_{\dom \bar{f}}
    \phi(\gflow_L(f(x),t_L(f(x)))
    \,
\det J(f(x),t_L(f(x))) (|\det \eta(\bar f(x))|)^{\frac{1}{2}}
    \,
    \de x
    \\
    &
    =
    \int_{\dom t_L}
    \phi(\gflow_L(z,t_L(z))
    \,
\det J (z,t_L(z))
    \,
    \haus^{n-2}(\de z),
  \end{align*}
having used that $\det \eta(\bar f(x)) = \det \eta( f(x)) = \det \bar \eta (f(x))$, which follows by parallel transport.
\end{proof}

Next, we show that the rigged volume induced by two
different cross-sections coincides on the intersection of their
domain of definition (recall \Cref{SS:local-parameterization}).
To prove this claim, we will tacitly use the standard fact that the determinant of the matrix representing the Jacobi field $\det (J(z,t))$ used to represent the rigged volume measure does not depend on the choice of the basis $(e_i)_i$.
We will also use the following semi-group property of the determinant
of the matrix given by the linearity of the Jacobi equation
\begin{equation}
  \det (J(\gflow_L(z,s),t))
  \det (J(z,s))
  =
  \det (J(z,t +s)), 
\end{equation}
for all  $s\in \Dom(J(z, \dotargument))$ and $t\in \Dom (J(\gflow_L(z,s),\dotargument)$.

\begin{proposition}
[Independence from $S$]
\label{P:volumeunique}
Let $H$ be a null hypersurface, let $L$ be a null
  geodesic vector field and let $S_1,S_2\subset H$ be two
  space-like cross-sections.
  %
  Then it holds that
$$\tilde{\vol}_L^{S_1}\llcorner_{\Omega_{S_1}\cap\Omega_{S_2}} =
\tilde{\vol}_L^{S_2}\llcorner_{\Omega_{S_1}\cap\Omega_{S_2}}
$$
\end{proposition}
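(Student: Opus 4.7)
The strategy is to reduce the claim to the representation formula~\eqref{eq:representation-rigged-volume} applied to each of $S_1$ and $S_2$, and to show that, on the common domain, the two resulting expressions coincide by a change-of-variables along the null geodesic flow. Since a Borel measure is determined by its integrals against continuous, compactly supported test functions, it is enough to prove that
\begin{equation*}
  \int \phi \, \de \tilde{\vol}_L^{S_1}
  =
  \int \phi \, \de \tilde{\vol}_L^{S_2}
  ,
  \qquad
  \forall\,\phi\in C^0_c(\Omega_{S_1}\cap\Omega_{S_2}).
\end{equation*}

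By a standard partition of unity argument, we may further assume that $\phi$ is supported in a small open set $W\subset\Omega_{S_1}\cap\Omega_{S_2}$ such that each integral curve of $L$ meeting $W$ intersects both $S_1$ and $S_2$ at exactly one point (this holds by the very construction of $\Omega_{S_i}$ as the image of $V\times\{0\}$ under the local diffeomorphism $\lparam$). Then, via the Implicit Function Theorem, following verbatim the argument of \Cref{P:local-null-geodesics-smart} (using a $C^2$ defining function of $S_2$ along the geodesic flow of $L$), we obtain a $C^2$ function $t_L:\Dom(t_L)\subset S_1\to\R$ such that $\gflow_L(z_1,t_L(z_1))\in S_2$ and the map $z_1\mapsto \gflow_L(z_1,t_L(z_1))$ is a $C^2$ diffeomorphism onto the portion of $S_2$ swept out by the null geodesics that cross $W$.

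Now expand $\int\phi\,\de\tilde{\vol}_L^{S_2}$ via the representation formula~\eqref{eq:representation-rigged-volume}, and use \Cref{P:graph-surface} (with the roles of $S$ and $\bar S$ played by $S_1$ and $S_2$) to rewrite the integral over $S_2$ as an integral over $S_1$ with the factor $\det J(z_1,t_L(z_1))$. The inner $t$-integral is then, after the change of variable $\sigma=s+t_L(z_1)$,
\begin{equation*}
  \int_{\R} \phi\bigl(\gflow_L(\gflow_L(z_1,t_L(z_1)),\sigma - t_L(z_1))\bigr)
  \det J(\gflow_L(z_1,t_L(z_1)),\sigma-t_L(z_1))\,\de\sigma.
\end{equation*}
The semigroup property of the flow gives $\gflow_L(\gflow_L(z_1,t_L(z_1)),\sigma - t_L(z_1))=\gflow_L(z_1,\sigma)$, while the corresponding multiplicative identity for the Jacobi determinant,
\begin{equation*}
  \det J(\gflow_L(z_1,t_L(z_1)),\sigma - t_L(z_1))\cdot \det J(z_1,t_L(z_1)) = \det J(z_1,\sigma),
\end{equation*}
collapses the two Jacobi factors into a single $\det J(z_1,\sigma)$. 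Applying~\eqref{eq:representation-rigged-volume} backwards with $S_1$ in place of $S_2$ yields $\int\phi\,\de\tilde{\vol}_L^{S_1}$, as desired.

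The main (minor) obstacle is the global-to-local reduction: the function $t_L$ need not be single-valued on all of $\Omega_{S_1}\cap\Omega_{S_2}$ if the intersection is topologically complicated. This is cleanly bypassed by the partition-of-unity argument above, which restricts attention to neighborhoods where $t_L$ is well-defined via the Implicit Function Theorem. Beyond that, everything reduces to combining the representation formula~\eqref{eq:representation-rigged-volume}, the graph formula~\eqref{eq:graph-surface}, and the multiplicative identity for $\det J$ along the null flow.
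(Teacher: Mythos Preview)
Your proposal is correct and follows essentially the same approach as the paper: both arguments rest on the representation formula~\eqref{eq:representation-rigged-volume}, the graph formula~\eqref{eq:graph-surface}, the semigroup property of $\gflow_L$, and the multiplicative identity $\det J(\gflow_L(z,s),t)\det J(z,s)=\det J(z,s+t)$. The only difference is in the localization step: the paper, rather than trying to write $S_2$ directly as a graph over $S_1$, introduces an auxiliary third cross-section $S_3$ through each point $z\in\Omega_{S_1}\cap\Omega_{S_2}$ (small enough that $S_3\subset\Omega_{S_1}\cap\Omega_{S_2}$), applies the special case twice (once for $(S_1,S_3)$, once for $(S_2,S_3)$), and glues via \Cref{L:gluing}. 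This sidesteps the hypothesis $\bar S\subset\Omega_S$ of \Cref{P:graph-surface}, which your direct $S_1\to S_2$ approach does not literally verify (the portion of $S_2$ hit by geodesics through $W$ need not lie in $\Omega_{S_1}$). Your IFT construction of $t_L$ effectively recovers what that hypothesis is used for in the proof of \Cref{P:graph-surface}, so the argument still goes through, but the paper's $S_3$ trick makes the appeal to \Cref{P:graph-surface} cleaner.
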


\begin{proof}
  Assume first that $S_2\subset \Omega_{S_1}$, and that $S_2$ can be
  parameterized by a certain function $t_L$, i.e., $\gflow_L(z,t_L(z))\in S_2$, as
  in the hypothesis of \Cref{P:graph-surface}.
  Fix $\phi\in C^{0}_c(\Omega_{S_1}\cap\Omega_{S_2})$.
  A direct computation gives
  \begin{align*}
   \int_{\Omega_{S_1}\cap\Omega_{S_2}} &
    \phi
    \,
    \de\tilde\vol_L^{S_2} \\
      \stackrel{\text{\eqref{eq:representation-rigged-volume}}}{=}
    &
      \int_{S_2}
      \int_{\R}
      \phi(\gflow_L(z,t))
      \,
      \det (J(z,t))
      \,
      \de t
      \,
      \haus^{n-2}(\de z)
    \\
    \stackrel{\text{\eqref{eq:graph-surface}}}{=}
    &
      \int_{S_1}
      \int_{\R}
      \phi(\gflow_L(\gflow_L(z,t_L(z)),t))
      \,
      \det (J(\gflow_L(z,t_L(z)),t))
      \,
      \de t
      \,
      \det (J(z,t_{L}(z)))
      \,
      \haus^{n-2}(\de z)
    \\
    %
    =
       \,\,\,
    &
      \int_{S_1}
      \int_{\R}
      \phi(\gflow_L(z,t_L(z)+t))
      \,
\det (J(z,t+t_L(z)))
      \,
      \de t
      \,
      \haus^{n-2}(\de z)
    \\
    =
    \,\,\,
    &
      \int_{S_1}
      \int_{\R}
      \phi(\gflow_L(z,t))
      \,
\det (J(z,t))
      \,
      \de t
      \,
      \haus^{n-2}(\de z) 
      \\
      \stackrel{\text{\eqref{eq:representation-rigged-volume}}}{=}
      &
    \int_{\Omega_{S_1}\cap\Omega_{S_2}}
    \phi
    \,
      \de\tilde\vol_L^{S_1}
      .
  \end{align*}
  We now consider the general case. \\
    By the following \Cref{L:gluing}, it is sufficient to show that, for every
  $z\in\Omega_{S_1}\cap\Omega_{S_2}$ there exists a neighborhood $U$
  of $z$ such that the two measures coincide therein.
  Fix $z\in \Omega_{S_1}\cap\Omega_{S_2}$ and let $S_3$ be a
  space-like local cross-section containing $z$.
  Up to taking a smaller cross-section, we can assume that $S_3$ is parameterized
  by $t_L^1:\Dom(t_L^1)\subset S_1\to\R$, w.r.t.\ the cross-section
  $S_1$.
  Analogously, we can also assume that there exists
  $t_L^2:\Dom(t_L^1)\subset S_2\to\R$ a parameterization of $S_3$
  w.r.t.\ $S_2$.
  The cross-section $S_3$ produces the rigged volume
  $\tilde{\vol}_{L}^{S_3}\in\M^{+}(\Omega_{S_3})$.
  We choose as a neighborhood of $z$ the open set
  $U=\Omega_{S_1}\cap\Omega_{S_2}\cap\Omega_{S_3}$.
  The fact that the two measures coincide on $U$ follows from the
  previous part.
\end{proof}
The following well-known gluing lemma can be proved by a standard argument via continuous partition of unity (see for instance~\cite[Th.~2.13]{Rudin}).
\begin{lemma}\label{L:gluing}
  Let $X$ be a locally-compact Hausdorff space and let $(U_i)_{i\in
    I}$ be an open cover.
  Let $\mu_i\in \M^{+}(U_i)$ be a family of locally finite positive
  measures.
  Assume that 
  $$\mu_i\llcorner_{U_i\cap U_j}=\mu_j\llcorner_{U_i\cap
    U_j}, \quad \text{for all $i,j\in I$.} $$ 
  Then there exists a positive, locally-finite measure
  $\mu\in\M^{+}(X)$, such that 
  $$\mu\llcorner_{U_i}=\mu_i ,\quad \text{for all $i\in I$}.$$
\end{lemma}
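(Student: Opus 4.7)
My approach is to construct $\mu$ by defining a positive linear functional on $C_c(X)$ and invoking the Riesz--Markov representation theorem. Given $\phi \in C_c(X)$, compactness of $\supp(\phi)$ yields a finite subcover $U_{i_1}, \dots, U_{i_n}$ from $(U_i)_{i\in I}$, and local compactness plus Hausdorffness produces a continuous partition of unity $(\psi_k)_{k=1}^n$ subordinate to it with $\psi_k \in C_c(U_{i_k})$ and $\sum_k \psi_k = 1$ on $\supp(\phi)$. I then set
$$\Lambda(\phi) := \sum_{k=1}^n \int_{U_{i_k}} \phi\, \psi_k \, d\mu_{i_k},$$
each summand being finite by local finiteness of $\mu_{i_k}$ and compactness of $\supp(\phi\psi_k)$.

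The key step, in which the compatibility hypothesis enters, is showing that $\Lambda(\phi)$ does not depend on the choice of subcover or partition. Given a second such choice $(\tilde\psi_\ell)_\ell$ subordinate to $(U_{j_\ell})_\ell$, I refine both decompositions through the common product partition $\{\psi_k \tilde\psi_\ell\}_{k,\ell}$. Each function $\phi\, \psi_k\, \tilde\psi_\ell$ is compactly supported inside $U_{i_k}\cap U_{j_\ell}$, so the pairwise compatibility $\mu_{i_k}\llcorner_{U_{i_k}\cap U_{j_\ell}} = \mu_{j_\ell}\llcorner_{U_{i_k}\cap U_{j_\ell}}$ permits the replacement of $\mu_{i_k}$ by $\mu_{j_\ell}$ in the integrals containing these refined terms; a finite interchange of summations then yields equality of the two expressions for $\Lambda(\phi)$. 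Linearity follows by taking a common partition of unity that simultaneously subordinates the supports of any two test functions, and positivity is immediate from the positivity of each $\mu_i$.

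Having established that $\Lambda$ is a positive linear functional on $C_c(X)$, the Riesz--Markov theorem produces a unique locally-finite Radon measure $\mu \in \M^+(X)$ with $\Lambda(\phi) = \int_X \phi\, d\mu$ for every $\phi \in C_c(X)$. To verify $\mu\llcorner_{U_i} = \mu_i$, it suffices to test against $\phi \in C_c(U_i)$: the trivial one-element partition of unity consisting of a single cutoff compactly supported in $U_i$ and equal to $1$ on $\supp(\phi)$ forces $\Lambda(\phi) = \int \phi\, d\mu_i$, and agreement on $C_c(U_i)$ determines the two Radon measures on $U_i$. The main obstacle is the well-definedness of $\Lambda$: the bookkeeping must guarantee that every intermediate product $\phi\,\psi_k\,\tilde\psi_\ell$ lies in $C_c(U_{i_k}\cap U_{j_\ell})$, which is precisely what legitimizes invoking the pairwise-compatibility hypothesis; everything else -- linearity, positivity, the application of Riesz--Markov, and the restriction identity -- is routine.
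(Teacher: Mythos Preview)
Your proof is correct and follows exactly the approach the paper indicates: the paper does not spell out a proof but merely remarks that the lemma ``can be proved by a standard argument via continuous partition of unity'' and cites Rudin for the existence of such partitions. Your construction of a positive linear functional on $C_c(X)$ via a subordinate partition of unity, followed by Riesz--Markov, is precisely that standard argument.
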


As highlighted by
\Cref{cor:rigged-volume} below, \Cref{P:volumeunique}  implies in particular that the rigged volume
measure depends only on the null-geodesic vector field, and not on
the cross-section $S$.

\begin{corollary}
  \label{cor:rigged-volume}
  Let $g$ be a Lorentzian metric on $M$.
  Given a null hypersurface $H$ of class $C^2$ and a global null-geodesic vector
  field $L$ of class $C^1$, there exists a unique measure $\vol_L$, such that for any space-like local cross-section $S$ of class $C^2$, it holds that
  $\tilde{\vol}_L^S=\vol_L\llcorner_{\Omega_S}$.
\end{corollary}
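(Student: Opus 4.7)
The plan is to exhibit $\vol_L$ as the unique measure arising from gluing the local rigged measures $\tilde{\vol}_L^S$ via \Cref{L:gluing}, using \Cref{P:volumeunique} as the compatibility condition.

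First, I would verify that the family $\{\Omega_S\}_S$, with $S$ ranging over all $C^2$ space-like local cross-sections of $H$, forms an open cover of $H$. For every $z\in H$, one can construct a small $(n-2)$-dimensional submanifold $S_z\subset H$ through $z$ transverse to $L$ at $z$: picking a codimension-$1$ subspace $W\subset T_zH$ complementary to $\Span\{L(z)\}$ and taking the image under the exponential map of a small neighborhood of $0$ in $W$ yields a $C^2$ submanifold of $H$ on which $g$ is positive definite, since $g|_W$ is positive definite and positivity is an open condition. By construction, $\Omega_{S_z}$ is an open neighborhood of $z$ in $H$ (as $\zeta$ is a local diffeomorphism for small $t$), so $H=\bigcup_S \Omega_S$.

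Next, I would apply \Cref{L:gluing} to the family $\{\tilde{\vol}_L^S\}_S$, viewed as locally finite positive measures on the open sets $\Omega_S$; here $H$ is locally compact and Hausdorff since it is a $C^2$ manifold. The compatibility condition
\[
\tilde{\vol}_L^{S_1}\llcorner_{\Omega_{S_1}\cap\Omega_{S_2}}
=
\tilde{\vol}_L^{S_2}\llcorner_{\Omega_{S_1}\cap\Omega_{S_2}}
\]
is precisely the content of \Cref{P:volumeunique}. Local finiteness of each $\tilde{\vol}_L^S$ on $\Omega_S$ follows from the representation formula~\eqref{eq:representation-rigged-volume}, since $\det J(z,t)$ is continuous on the relevant domain. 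Thus \Cref{L:gluing} produces a locally finite positive measure $\vol_L\in\M^+(H)$ with $\vol_L\llcorner_{\Omega_S}=\tilde{\vol}_L^S$ for every $S$.

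Finally, uniqueness is automatic: if $\mu\in\M^+(H)$ is any measure satisfying $\mu\llcorner_{\Omega_S}=\tilde{\vol}_L^S$ for every local space-like cross-section $S$, then $\mu$ and $\vol_L$ agree on every element of the open cover $\{\Omega_S\}$, hence on all of $H$. I do not anticipate any serious obstacle: the nontrivial geometric content has already been absorbed into \Cref{P:volumeunique}, and the remaining argument is a routine sheaf-theoretic gluing; the only point requiring a brief verification is the existence of cross-sections through each point, which is a standard transversality construction based on the splitting $T_zH=\Span\{L(z)\}\oplus W$ with $g|_W$ positive definite.
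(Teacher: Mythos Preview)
Your proposal is correct and follows exactly the approach the paper intends: the corollary is stated without proof precisely because it is meant to be an immediate consequence of \Cref{P:volumeunique} (compatibility) and \Cref{L:gluing} (gluing), which is what you carry out. Your extra verification that the $\Omega_S$ cover $H$ is a reasonable detail to include.
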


We point out that the measure $\vol_L$ is mutually absolutely
continuous w.r.t.\ the volume measure induced on $H$ by any auxiliary Riemannian metric.
In the sequel, we will say that a measure $\mu\in\M^+(H)$ is
absolutely continuous w.r.t.\ $H$, if it is absolutely continuous
w.r.t.\ the volume measure on $H$ induced by some (hence any) metric.
Accordingly, the family of probability measures absolutely continuous w.r.t.\ $H$
will be denoted by $\Prob_{ac}(H)$.

\section{Volume distortion of the (weighted) rigged measure}\label{Sec:VolDist}

For the reader's convenience, the presentation 
in this section is as self-contained as
possible, nevertheless, some parts of the arguments may be well-known
to experts (e.g., Jacobi fields estimates).
As we will have to deal with weights for the volume measures, it will be convenient  
to adopt the following notation. 
\begin{definition}
  Let $(M,g)$ be a Lorentzian manifold and let $H\subset M$ be a null hypersurface.
  Let $L$ be a null-geodesic vector field.
  We define the function $W_{L}: \Dom(W_{L}) \subset H\times \R\to \R$ as
\begin{equation}\label{E:W}
  W_{L}(z,t):=   \log(\det J(z,t))  .
\end{equation}
\end{definition}

The domain of definition of $W_{L}$ is the maximal set where the
definition makes sense, i.e., where $\gflow_{L}(z,t)\in H$ and where the
determinant is not null; in principle, this set could be larger than
$\Omega_S$, the set where the rigged volume is defined. 
We  next  show that
$$\dom(W_{L}) = \{ (z,t) \colon \det J(z,t) > 0 \}=\dom(\gflow_{L}).$$

\begin{lemma}\label{L:domainJ}
If we define
  $I_{z}$ to be the connected component of $(\gflow_{L}(z,\dotargument))^{-1} (H)$ containing $0$, then
$\det(J(z,t)) > 0$ in the interval $I_{z}$.
\end{lemma}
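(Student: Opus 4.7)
My plan is to combine the first-order form~\eqref{eq:first-order-jacobi} of the Jacobi equation with the elementary fact that the fundamental solution of a linear matrix ODE with continuous coefficients is everywhere invertible. Since $J(z,0)=I$, so $\det J(z,0)=1>0$, continuity and the non-vanishing of $\det J$ will force the conclusion, provided the coefficient matrix of this ODE is known to be continuous along the whole of $I_z$.

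Concretely, I would fix $z\in H$ and extend the basis $(e_i(z))_i$ to a parallel-transported frame $(e_i(z,t))_i$ along the null geodesic $\gflow_L(z,\cdot)$. Writing $J_i(t)=\sum_k J_{ik}(t)\,e_k(z,t)$ and using that parallel transport kills the covariant derivative of the frame along $\gflow_L(z,\cdot)$, the first-order Jacobi equation~\eqref{eq:first-order-jacobi} becomes the matrix ODE $J'(t)=J(t)\,A(t)$, where $A(t)$ is the matrix of the $(1,1)$-tensor $v\mapsto\nabla_v L$ at $\gflow_L(z,t)$, expressed in the frame $(e_k(z,t))_k$. Since $g$ is of class $C^{2}$ (so the Christoffel symbols are $C^{1}$) and $L\in\vfield^1(\normal H)$, the tensor $\nabla L$ is continuous on $H$; and by the very definition of $I_z$ one has $\gflow_L(z,s)\in H$ for every $s\in I_z$, so $s\mapsto A(s)$ is continuous on $I_z$.

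Once the matrix ODE $J'=JA$ with continuous $A$ is in place, the standard theory of linear systems -- or, equivalently, Liouville's identity $\det J(z,t)=\exp\bigl(\int_0^t\tr A(s)\,\de s\bigr)$, which is well-defined on any compact subinterval of $I_z$ by continuity of $\tr A$ -- shows that $\det J(z,\cdot)$ never vanishes on $I_z$. Combined with $\det J(z,0)=1$ and the continuity of $t\mapsto\det J(z,t)$, this forces $\det J(z,t)>0$ for every $t\in I_z$, which is the claim. The only potential obstacle is regularity -- ensuring that $A$ does not blow up at an interior point of $I_z$ -- but this is precisely what is excluded by the $C^1$ hypothesis on $L$ together with the defining property $\gflow_L(z,s)\in H$ for $s\in I_z$.
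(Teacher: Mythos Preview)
Your approach is correct and takes a genuinely different route from the paper's. The paper argues by contradiction: assuming $\det J(z,\bar t)=0$ at some first time $\bar t>0$, it invokes the semi-group identity $\det J(z,t+s)=\det J(\gflow_L(z,s),t)\,\det J(z,s)$ (stated just before the lemma) to write $\det J(z,\bar t)$ as a product of two nonzero factors, a contradiction. Your argument via the first-order form~\eqref{eq:first-order-jacobi} and Liouville's formula is more direct and yields the explicit expression $\det J(z,t)=\exp\bigl(\int_0^t\tr A\bigr)$, while the paper's route avoids any discussion of the regularity of $A$ and works entirely with the already-established semi-group identity.

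One point deserves care: you apply the first-order form to the full $n\times n$ matrix, so $A(t)$ involves $\nabla_{e_n(t)}L$ with $e_n(t)\notin TH$. Since $L$ is a priori only a $C^1$ field \emph{on} $H$, making sense of $\nabla_{e_n}L$ requires extending $L$ off $H$, which the paper does only locally near a cross-section (Section~\ref{SS:local-parametrization}); so ``$L\in\vfield^1(\normal H)$'' alone does not quite give continuity of the full $(1,1)$-tensor $\nabla L$ on all of $I_z$. The fix is immediate: by~\eqref{eq:J-appearence}, $\det J$ equals the determinant of the upper-left $(n-1)\times(n-1)$ block $\tilde J$, whose rows correspond to $J_{e_i}$ with $e_i\in TH$. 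Since $v\in TH$ implies $\nabla_v L\in TH$ (as $g(\nabla_v L,L)=\tfrac12\,v(g(L,L))=0$), one obtains $\tilde J'=\tilde J\,\tilde A$ with $\tilde A$ built solely from $L|_H$, hence continuous on $I_z$, and Liouville applies to $\tilde J$.
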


\begin{proof}
  Assume on the contrary that $\det J(z,t)=0$ for some $t$. Without
  loss of generality, we can assume $t>0$.
  Let $\bar t:=\inf\{t>0: \det J(z,t)=0\}>0$ and let $\bar
  z=\gflow_L(z,\bar t)$.
  By continuity, there exists $\epsilon>0$, such that $\det J(\bar z,t)>0$
  for all $t\in(-2\epsilon,2\epsilon)$.
  An iterated use of the semi-group property of the determinant gives
  a contradiction
  \begin{align*}
    \det J(z,\bar t)
    &=
      \det J(z,\bar t-\epsilon)
      \,
      \det J(\gflow_{L}(z,\bar t-\epsilon),\epsilon)
      =
      \det J(z,\bar t-\epsilon)
      \,
      \det J(\gflow_{L}(\bar z,-\epsilon),\epsilon)
    \\
    &
      =
      \det J(z,\bar t-\epsilon)
      \,
      (\det J(\bar z,-\epsilon))^{-1}
      .
      \qedhere
  \end{align*}
\end{proof}

Capitalizing on \Cref{S:jacobi}, 
we show that $W_{L}$ satisfies a
Riccati-type inequality.

\begin{proposition}
  \label{eq:riccati-for-jacobi}
  The function $W_{L}(z,\dotargument)$  is twice differentiable and it satisfies the following differential
  inequality
  \begin{equation}
    W_{L}''(z,t)
    +\frac{(W_{L}'(z,t))^2}{n-2}
    \leq
   - \Ric_{\gflow_L(z,t)}(L,L)
    .
  \end{equation}
\end{proposition}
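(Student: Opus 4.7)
The plan is to introduce the matrix shape operator $B := J^{-1}J'$, derive a matrix Riccati equation from the Jacobi equation, exploit the block structure of $B$ to reduce the trace computation to an $(n-2)\times(n-2)$ minor $\tilde B$, and conclude via Cauchy--Schwarz using the $\tilde\eta$-self-adjointness of $\tilde B$.

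Since $g\in C^{2}$ the Riemann tensor is continuous, so the linear ODE $J'' + JR_L = 0$ yields $J\in C^{2}$ in $t$, and $\det J > 0$ on $I_z$ by \Cref{L:domainJ}; hence $W_L\in C^{2}$. Jacobi's formula gives $W_L' = \tr(J^{-1}J') = \tr(B)$. A direct computation using $(J^{-1})' = -J^{-1}J'J^{-1}$ and $J'' = -JR_L$ yields the Riccati equation $B' = -B^2 - R_L$, so $W_L'' = -\tr(B^2) - \tr(R_L)$. The endomorphism $v\mapsto R(L,v)L$ has matrix $R_L^{T}$ in the basis $(e_j)$, so $\tr(R_L) = \Ric(L,L)$.

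Next, relations \eqref{eq:properties-J} give $B\cbf_n = 0$ and $B^{T}\cbf_{n-1} = 0$, so the $n$-th column and the $(n-1)$-st row of $B$ vanish. A direct block-matrix calculation shows $\tr(B) = \tr(\tilde B)$ and $\tr(B^2) = \tr(\tilde B^2)$, where $\tilde B$ is the upper-left $(n-2)\times(n-2)$ minor of $B$. From the first-order Jacobi equation $J_v' = \nabla_{J_v}L$, the matrix $B(t)$ represents the operator $v\mapsto\nabla_v L$ in the parallel-transported frame $(e_k(t))$; since $L$ is locally a gradient (as exploited in the derivation of \eqref{eq:properties-J}), the tensor $(X,Y)\mapsto g(\nabla_X L, Y)$ is symmetric on $TH\times TH$. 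Parallel transport along the null flow preserves both the metric and the relations $g(e_i,L)=0$ for $i\le n-2$, hence $e_i(t)\in T_{\gflow_L(z,t)}H$ and the induced metric matrix $\tilde\eta$ is constant in $t$. Translating the symmetry to matrices yields $\tilde B\tilde\eta = \tilde\eta\tilde B^{T}$, so $\tilde B$ is self-adjoint with respect to the positive definite inner product $\tilde\eta$; in particular its eigenvalues $\lambda_1,\dots,\lambda_{n-2}$ are real. Cauchy--Schwarz then gives $(\tr\tilde B)^{2} \le (n-2)\tr(\tilde B^2)$, which substituted back produces the announced inequality.

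The most delicate step is verifying that the symmetry of $\nabla L$, initially established on $H$ through a local potential $u$, transfers along the null geodesic flow to the parallel-transported space-like frame; this relies on $u$ being defined on an open neighborhood of $M$ (not only on $H$), so that the Hessian symmetry applies pointwise along the flow to all pairs of frame vectors $e_i(t), e_j(t)$ with $i,j\le n-2$. A secondary care concerns the bookkeeping of block-zero structures for $B$ and $R_L$, which both inherit zero rows/columns from the degenerate direction $L$.
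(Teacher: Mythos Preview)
Your proof is correct and follows essentially the same path as the paper's: the matrix Riccati equation for $B=J^{-1}J'$, the block reduction via \eqref{eq:properties-J} to an $(n-2)\times(n-2)$ minor, real eigenvalues from self-adjointness, and Cauchy--Schwarz. The one notable variation is how you establish the $\tilde\eta$-self-adjointness of $\tilde B(t)$: the paper proves it only at $t=0$ (this is exactly the proposition $(J'(0)\eta)^T=J'(0)\eta$) and then propagates it forward in $t$ via uniqueness for the reduced Riccati ODE $\bar U'=-\bar U^2-\bar R$, whereas you identify $B(t)$ directly with the matrix of $v\mapsto\nabla_v L$ in the parallel frame and invoke the Hessian symmetry of the potential $u$ at every point along the flow. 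Your route is slightly more geometric and bypasses the ODE-uniqueness step; in fact, for $X,Y\in TH$ the identity $g(\nabla_X L,Y)=g(\nabla_Y L,X)$ already follows from $L\perp TH$ and torsion-freeness alone (differentiate $g(L,Y)\equiv 0$ along $X$, do the same with roles swapped, subtract, and use $[X,Y]\in TH$), so the gradient property of $L$ is not even strictly needed for the $(n-2)\times(n-2)$ block you actually use.
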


\begin{proof}
First of all, since $t\mapsto J(z,t)$ is of class $C^2$, then also the map $t\mapsto W_L(z,t)$ is so.

Differentiate $W_{L}$ once and obtain
\begin{equation*}
  W_{L}'(z,t):=
  \tr(J(t)^{-1} J'(t))
  .
\end{equation*}
Define the matrix
\begin{equation*}
  U(t):=J(t)^{-1}\, J'(t)
  .
\end{equation*}
Differentiating $U$, we obtain
\begin{align}
  \label{eq:ode-for-U}
  U'(t)
  &=
    -J(t)^{-1}J'(t)
    J(t)^{-1}J'(t)
    + J(t)^{-1}
    J''(t)
    =
    - U(t)^2
    -R(t)
    .
\end{align}
We now claim that $\tr(U^2)\geq\frac{(\tr U)^2}{n-2}$.
If this is the case, one can differentiate $W_{L}$ twice, obtaining
\begin{align*}
  W_{L}''(t)
  &=
  \tr(U'(t))
  =
 -[\tr(U(t)^2)
  +
  \tr(R(t))]
     \leq
    - \frac{(\tr\, U(t))^2}
     {n-2}
     -
     \Ric_{\gflow_L(z,t)}(L,L)
  \\
   &
     =
    - \frac{W_{L}'(t)^2}{n-2}
     -
     \Ric_{\gflow_L(z,t)}(L,L)
     .
\end{align*}

At this point we prove the claim.
First notice that Equation~\eqref{eq:properties-J} yields
\begin{equation}
  U(t)\cbf_{n}=0
  \quad\text{ and }\quad
  U(t)^T\cbf_{n-1}=0
  ,
\end{equation}
and the same holds true for the matrix $U(t)^2$.
We deduce that the matrix $U$ is of the form
\begin{equation}
  U(t)
  :=
  \begin{bmatrix}
    U_{11} & \dots  & U_{1,n-2} & U_{1,n-1} &0\\
    \vdots & \ddots &\vdots &\vdots&\vdots\\
    U_{n-2,1}&\dots & U_{n-2,n-2} &U_{n-2,n-1} &0 \\
    0&\dots & 0 &0 &0 \\
    U_{n,1}&\dots & U_{n,n-2} &U_{n,n-1} &0 \\
  \end{bmatrix}
  ,
\end{equation}
and that $\tr U(t)=\tr \bar U(t)$, where $\bar U(t)$ is the
upper-left $(n-2)\times (n-2)$-minor of the matrix $U(t)$.
Fix $i,j=1\dots n-2$ and compute
\begin{align*}
  (U(t)^2)_{i,j}
  =
  \sum_{k=1}^{n-2}
  U_{ik}(t)
  U_{kj}(t)
  +
  U_{i,n-1}(t)
  U_{n-1,j}(t)
  +
  U_{in}(t)
  U_{nj}(t)
  =
  \sum_{k=1}^{n-2}
  \bar U_{ik}(t)
  \bar U_{kj}(t)
  =
  (\bar U(t)^2)_{ij}
  ,
\end{align*}
that is, the upper-left $(n-2)\times (n-2)$-minor of the matrix
$U(t)^2$ is precisely $\bar U(t)^2$.
The identity~\eqref{eq:ode-for-U} implies that $\bar U$ solves the
following ODE (here $\bar R$ is defined accordingly)
\begin{equation}\label{eq:ODEU'}
  \bar U'(t)
  =
  -\bar U(t)^2
  -\bar R(t)
  .
\end{equation}
Since the ODE~\eqref{eq:ODEU'} is solved also by $\bar U^T$ and the initial datum
$\bar U(0)$ is a symmetric matrix (recall that $U(0)=J'(0)$ and that
$(J'(0)\eta)^T=J'(0)\eta$), then $\bar U(t)$ is symmetric for all $t$, by the uniqueness of the solution for the initial value problem for~\eqref{eq:ODEU'}. 
In particular its eigenvalues are real. Therefore, one can apply Jensen
inequality finding that $\tr (\bar U(t)^2)\geq \frac{(\tr \bar
  U(t))^2}{n-2}$.
\end{proof}

\subsection{Weighted rigged measure}\label{Subsec:WeightedRigged}

As Ricci curvature is often used to 
describe the interplay between the volume measure $\vol_g$
 and the Riemannian metric, 
in the case an additional weight is considered, 
the generalized $N$-Ricci tensor (also known as Barky--\'Emery tensor)
is the natural object to consider. 

In the case of a null hypersurface $H$
endowed with a null-geodesic vector field $L$, given a weight $\Phi: H \to \R$ of class $C^2$, we consider 
the generalized $N$-Ricci tensor 
\begin{equation}
  \Ric^{g,\Phi,N}
  =
  \begin{cases}
  \Ric
  -
  \Hess_g\Phi
  -
  \frac{1}{ N-n}
  \nabla_g\Phi
  \otimes
  \nabla_g\Phi
      \qquad
    &
      \text{ if }N>n,
    \\
    \Ric
      \qquad
    &
      \text{ if $N=n$ and }
      \de \Phi=0
    \\
    -\infty
    &
      \text{ otherwise;}
  \end{cases}
\end{equation}
where $N$ plays the role of upper bound on the dimension.
We will consider the weighted rigged measure
$\mm_L:=e^{\Phi}\vol_L$  on the set  $\Omega_{S}$.
In particular, if $S$ is a local 
cross-section and the flow map $\gflow_{L}$ is a diffeomorphism on $\Omega_{S}$, then
 \begin{equation}\label{E:formulamL}
    \int_{\Omega_S}
    \phi(z)
    \,
    \mm_L
    (\de z)
    =
    \int_S
    \int_{\R}
    \phi(\gflow_L(z,t))
    \,
    e^{\Phi(\gflow_L(z,t))+W_{L}(z,t)}
    \,
    \de t
    \,
    \haus^{n-2}(\de z)
    ,
    \quad
    \forall
    \phi
    \in
    C^{0}_c(\Omega_S)
    .
  \end{equation}

We now complement \Cref{eq:riccati-for-jacobi} in the weighted case.

\begin{proposition}
  \label{P:convexity-weight}
  Let $L$ be a null-geodesic vector field and $S$ be a space-like cross
  section.
Then, for every $z\in S$, the function $a(t):=\Phi(\gflow_L(z,t))+W_{L}(z,t)$ is twice differentiable and it satisfies the
  following differential inequality
  \begin{equation}
    a''(t)
    +\frac{(a'(t))^2}{N-2}
    \leq
    -\Ric^{g,\Phi,N}_{\gflow_{L}(z,t)}(L,L)
    .
  \end{equation}
\end{proposition}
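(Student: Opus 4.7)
\medskip

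\noindent\textbf{Proof plan.} The plan is to reduce the weighted Riccati inequality to the unweighted one established in \Cref{eq:riccati-for-jacobi}, by expanding $a(t) = \Phi(\gflow_L(z,t)) + W_L(z,t)$ and treating the extra terms coming from the weight with a Young-type inequality.

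First, I would verify differentiability. The map $t\mapsto \gflow_L(z,t)$ is $C^2$ (it is a geodesic with $C^1$ initial data and $C^2$ metric), and $\Phi$ is $C^2$, so $t\mapsto \Phi(\gflow_L(z,t))$ is $C^2$. By \Cref{eq:riccati-for-jacobi}, $t\mapsto W_L(z,t)$ is twice differentiable, hence so is $a$. Using that $\nabla_L L = 0$, standard computations give
\begin{equation*}
  \frac{\de}{\de t}\Phi(\gflow_L(z,t))
  =
  \de\Phi(L)\big|_{\gflow_L(z,t)},
  \qquad
  \frac{\de^{2}}{\de t^{2}}\Phi(\gflow_L(z,t))
  =
  \Hess_{g}\Phi(L,L)\big|_{\gflow_L(z,t)}.
\end{equation*}
Write $x:=\de\Phi(L)$ and $y:=W_L'(z,t)$ (both evaluated at $\gflow_L(z,t)$), so that $a'=x+y$ and $a''=\Hess_g\Phi(L,L) + W_L''$.

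Next, I would split into cases according to the definition of $\Ric^{g,\Phi,N}$. In the regular case $N>n$, the desired inequality, after cancelling $\Hess_g\Phi(L,L)$ from both sides and subtracting $-\Ric(L,L)$, reduces to
\begin{equation*}
  W_L''(z,t)
  +
  \frac{(x+y)^{2}}{N-2}
  -
  \frac{x^{2}}{N-n}
  \;\leq\;
  -\Ric_{\gflow_L(z,t)}(L,L).
\end{equation*}
Applying \Cref{eq:riccati-for-jacobi} to bound $W_L''$ by $-\Ric(L,L)-\frac{y^{2}}{n-2}$, the problem reduces to the purely algebraic inequality
\begin{equation*}
  \frac{(x+y)^{2}}{N-2}
  \;\leq\;
  \frac{x^{2}}{N-n}
  +
  \frac{y^{2}}{n-2},
\end{equation*}
which is Cauchy--Schwarz (or equivalently the concavity of $t\mapsto t^{2}$ under the weights $N-n$ and $n-2$ summing to $N-2$). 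If $N=n$ and $\de\Phi=0$, then $x=0$ and $\Hess_g\Phi(L,L)=0$ at $\gflow_L(z,t)$ (since $L\in \ker\de\Phi$ along the curve), so $a'=y$, $a''=W_L''$, and the claim is literally \Cref{eq:riccati-for-jacobi}. In the remaining case, $\Ric^{g,\Phi,N}=-\infty$ by definition and the inequality is vacuous.

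The main (and only) non-routine step is the algebraic inequality in the weighted case; this is the standard trick used in the synthetic treatment of Bakry--\'Emery tensors, so I expect no real obstacle. The regularity of $a$ and the computation of $\phi''$ via $\nabla_L L=0$ are straightforward given the standing smoothness assumptions.
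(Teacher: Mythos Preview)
Your proposal is correct and follows essentially the same approach as the paper: differentiate $a$, apply \Cref{eq:riccati-for-jacobi} to bound $W_L''$, and then use the algebraic inequality $\frac{x^2}{N-n}+\frac{y^2}{n-2}\geq\frac{(x+y)^2}{N-2}$, which is exactly the paper's \Cref{lem:useful-inequality}. One cosmetic remark: in the case $N=n$ with $\de\Phi=0$, the vanishing of $\Hess_g\Phi(L,L)$ follows simply because $\de\Phi\equiv 0$ forces $\Hess_g\Phi\equiv 0$; your parenthetical ``since $L\in\ker\de\Phi$ along the curve'' is not the right justification (a critical point of $\Phi$ need not have vanishing Hessian).
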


\begin{proof}
  By differentiating $a(\dotargument)$, we obtain
  \begin{align}
    a'(t)
    =
    W_{L}'(t)
    +
    g(L,\nabla \Phi)_{\gflow_L(z,t)}
    \qquad
    \text{ and }
    \qquad
    a''(t)
    =
    W_{L}''(t)
    +
    \Hess(\Phi)_{\gflow_L(z,t)}[L,L]
    .
  \end{align}
  A direct computation gives
  \begin{align*}
    a''(t)
    &
      =
    W_{L}''(t)
    +
      \Hess(\Phi)_{\gflow_L(z,t)}[L,L]
      \leq
     - \frac{(W_{L}'(t))^2}{n-2}
      -
      \Ric_{\gflow_L(z,t)}(L,L)
      +
      \Hess(\Phi)_{\gflow_L(z,t)}[L,L]
    \\
    &
      =
      -\frac{(W_{L}'(t))^2}{n-2}
      -
      \Ric^{g,\Phi,N}_{\gflow_L(z,t)}(L,L)
      -
      \frac{(g(\nabla \Phi,L)_{\gflow_L(z,t)})^2}{N-n}
    \\
    &
      \leq
      - \frac{(W_{L}'(t)
      +g(\nabla \Phi ,L)_{\gflow_L(z,t)})^2}{N-2}
      -
      \Ric^{g,\Phi,N}_{\gflow_L(z,t)}(L,L)
      =
     - \frac{(a'(t))^2}{N-2}
      -
      \Ric^{g,\Phi,N}_{\gflow_L(z,t)}(L,L)
      ,
  \end{align*}
  having taken into account Proposition~\ref{eq:riccati-for-jacobi}
  and Lemma~\ref{lem:useful-inequality}.
\end{proof}
\begin{lemma}
  \label{lem:useful-inequality}
 For all $\alpha, \beta>0$, and $x,y\in \R$,  it holds that
  \begin{equation}\label{xyab}
    \frac{x^2}{\alpha}
    +
    \frac{y^2}{\beta}
    \geq
    \frac{(x+y)^2}{\alpha+\beta}.
  \end{equation}
\end{lemma}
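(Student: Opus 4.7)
The plan is to prove this elementary inequality (often known as Titu's inequality or Cauchy--Schwarz in Engel form) by reducing it to a non-negative square. The cleanest route is via the Cauchy--Schwarz inequality in $\R^2$: writing
\begin{equation*}
x+y
=
\frac{x}{\sqrt{\alpha}}\cdot\sqrt{\alpha}
+
\frac{y}{\sqrt{\beta}}\cdot\sqrt{\beta},
\end{equation*}
and applying Cauchy--Schwarz to the vectors $(x/\sqrt{\alpha},\,y/\sqrt{\beta})$ and $(\sqrt{\alpha},\,\sqrt{\beta})$ immediately yields
\begin{equation*}
(x+y)^{2}
\leq
\Bigl(\frac{x^{2}}{\alpha}+\frac{y^{2}}{\beta}\Bigr)
(\alpha+\beta),
\end{equation*}
and dividing by $\alpha+\beta>0$ gives \eqref{xyab}.

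If one prefers a self-contained verification avoiding Cauchy--Schwarz, I would instead clear denominators by multiplying both sides of \eqref{xyab} by the positive quantity $\alpha\beta(\alpha+\beta)$. A direct expansion shows that the resulting inequality is equivalent to
\begin{equation*}
\beta^{2}x^{2}-2\alpha\beta\,xy+\alpha^{2}y^{2}\geq 0,
\end{equation*}
which is just $(\beta x-\alpha y)^{2}\geq 0$. Either approach also makes transparent the case of equality, namely $\beta x=\alpha y$, although this is not needed in the application to Proposition~\ref{P:convexity-weight}.

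No step presents any real obstacle here; the only minor point is to double-check that the algebraic identity after clearing denominators indeed collapses to a perfect square, but this is a one-line expansion. For the paper I would simply record the Cauchy--Schwarz proof, since it is the shortest and most transparent, and also clarifies the geometric content of the inequality used in the dimensional splitting $n-2$ vs.\ $N-n$ in \Cref{P:convexity-weight}.
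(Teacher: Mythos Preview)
Your proposal is correct, and your second approach---clearing denominators and reducing to $(\beta x-\alpha y)^{2}\geq 0$---is exactly the paper's one-line proof. The Cauchy--Schwarz argument is of course also valid and amounts to the same thing.
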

\begin{proof}
The claimed bound~\eqref{xyab} is equivalent to the trivial inequality $(\beta x- \alpha y)^2\geq 0$.
\end{proof}

\section{Rigged measure and Ricci curvature: the \texorpdfstring{$\NC^1(N)$}{NC1(N)} condition}\label{sec:CD1}

Building on top of the volume distortion estimates
recalled in~\Cref{S:volume} and~\ref{Sec:VolDist}, we next
investigate the relation between optimal transport, Ricci curvature
in the null directions, and the distortion of the rigged measure.
The present and next sections constitute the heart of the paper. In order to analyse the interplay between the rigged measure and Ricci curvature, it is convenient to partition a null
hypersurface into null geodesics also known as the generators of the null hypersurface.
Such a partition will be phrased in a terminology borrowed from optimal transport.

\begin{definition}\label{D:Transport Order}
Fix $H\subset M$ a null hypersurface in $M$.
Define the relation
\begin{equation}
  \label{eq:defn-gamma}
  \Gamma:=
  \{
  (x,y)\in H\times H:
  y=\exp_x(v), v\in\normal_xH
  \text{ future-directed},
  \forall s\in[0,1] \, \exp_x(sv)\in H 
  \}
  .
\end{equation}
\end{definition}

\begin{proposition}\label{prop:Gammapreorder}
Let $(M,g)$ be a Lorentzian manifold, 
$H$ a null hypersurface and $\Gamma$ as above.
Then the set $\Gamma$ is a pre-order on $H$, i.e., it is reflexive and
transitive.
\end{proposition}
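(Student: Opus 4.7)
The plan is to verify reflexivity and transitivity directly from \Cref{D:Transport Order}.

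For reflexivity, I would simply take $v = 0 \in \normal_x H$, treating the zero vector as an admissible degenerate case (equivalently, the constant curve $s \mapsto x$ is a degenerate null pre-geodesic lying in $H$). Then $\exp_x(0) = x$ and $\exp_x(s \cdot 0) = x \in H$ for every $s \in [0,1]$, whence $(x,x) \in \Gamma$.

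For transitivity, suppose $(x,y),(y,z) \in \Gamma$ are witnessed respectively by future-directed null $v_1 \in \normal_x H$ and $v_2 \in \normal_y H$. Consider $\gamma_1(t) := \exp_x(tv_1)$, which is defined and contained in $H$ for all $t \in [0,1]$. The key geometric observation is that $\dot\gamma_1(1) \in \normal_y H$: since $\gamma_1$ is a null geodesic inside $H$, its velocity $\dot\gamma_1(t)$ is null and tangent to $H$; but by \Cref{Prop:NHinTH} the normal bundle $\normal H$ is contained in $TH$ and, by non-degeneracy of $g$, it coincides with the (one-dimensional) null line inside $TH$. Hence $\dot\gamma_1(1)$ and $v_2$ are two future-directed non-zero vectors in the one-dimensional space $\normal_y H$, so $v_2 = \lambda\, \dot\gamma_1(1)$ for some $\lambda > 0$.

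I would then conclude by concatenation. Setting $v_3 := (1+\lambda)v_1 \in \normal_x H$ (still future-directed null), the standard reparametrization identity gives $\exp_x(sv_3) = \gamma_1(s(1+\lambda))$ for all $s$ where both sides are defined. At $s = 1$ this yields $\gamma_1(1+\lambda) = \exp_y(\lambda\,\dot\gamma_1(1)) = \exp_y(v_2) = z$. For $s \in [0,1]$, the point $\gamma_1(s(1+\lambda))$ remains in $H$: if $s(1+\lambda) \leq 1$ it lies on $\gamma_1([0,1]) \subset H$, while if $s(1+\lambda) \geq 1$ it equals $\exp_y\bigl(\tfrac{s(1+\lambda)-1}{\lambda}\, v_2\bigr)$ with parameter in $[0,1]$, so it lies in $H$ by the hypothesis on $v_2$. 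Thus $(x,z) \in \Gamma$.

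I do not expect any serious obstacle. The only non-trivial input is the identification $\dot\gamma_1(1) \in \normal_y H$, i.e., the standard fact that a null geodesic tangent to a null hypersurface is in fact one of its generators; this is immediate from \Cref{Prop:NHinTH}. Everything else reduces to a routine concatenation and reparametrization of geodesics, relying on the uniqueness of geodesics guaranteed by the regularity assumption $g \in C^{1,1}_{loc}$.
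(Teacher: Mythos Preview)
Your proof is correct and follows essentially the same route as the paper. The paper phrases the key step as ``let $w\in\normal_{x_2}H$ be the parallel transport of $v_1$ along the geodesic $t\mapsto\exp_{x_1}(tv_1)$'', while you use $\dot\gamma_1(1)$; these are the same vector since the velocity of a geodesic is parallel along itself, and your appeal to \Cref{Prop:NHinTH} to place it in $\normal_yH$ is exactly the content of the paper's one-line ``since $\dim\normal H=1$''. Your concatenation and case split for $s(1+\lambda)\lessgtr 1$ make explicit what the paper leaves implicit.
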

\begin{proof}
  The fact that $\Gamma$ is reflexive is trivial. We next show the
  transitivity.
  Let $x_1,x_2,x_3\in H$, such that $(x_1,x_2), (x_2,x_3)\in\Gamma$.
  Let $v_i\in \normal_{x_{i}} H$, such that $x_{i+1}=\exp_{x_i}(v_1)$,
  $i=1,2$.
  Let $w\in \normal_{x_2}H$ be the parallel transport of $v_1$ along the geodesic
  $t\mapsto \exp_{x_1}(t v_1)$.
  Since $\dim \normal H =1$, then $v_2=a w$, for some $a>0$.
  Therefore $x_3=\exp_{x_1}((1+a)v_1)$ and $\exp_{x_1}(t(1+a)v_1)\in
  H$, for all $t\in[0,1]$.
\end{proof}
We say that a null hypersurface $H\subset M$ is  \emph{causal} if it does not contain periodic causal curves.

\begin{proposition}\label{prop:MapV}
Let $(M,g)$ be a Lorentzian manifold, let $H$ be a causal null
hypersurface and $\Gamma$ as above.
Then:
\begin{enumerate}
\item  The relation $\Gamma$ as in~\eqref{eq:defn-gamma} is anti-symmetric, hence a partial order  on $H$.

\item  Let $(x,y)\in\Gamma$, and $v_1,v_2\in\normal_x H$ be such that
$$
y=\exp_x(v_1)
    \qquad
    \text{ and }
    \qquad
    y=\exp_x(v_2).
$$
Then $v_1=v_2$.
In particular for any $(x,y)\in \Gamma$ the choice of $v$ in~\eqref{eq:defn-gamma}
is unique, and one can define the map
$V:\Gamma\to \normal H$ where $V(x,y)$ is the unique element in
$\normal_x H$, such that $\exp_x(V(x,y))=y$.
In other words, $V$ is the left inverse of the map $(P_H\otimes\exp):\normal
H\to H\times H$, $v\mapsto (P_H(v),\exp (v))$, where $P_H:\normal H\to H$
is the projection on the base point. Finally, the map $V$ is Borel.
  \end{enumerate}
\end{proposition}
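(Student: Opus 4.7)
The plan is to handle the three claims in sequence: antisymmetry of $\Gamma$, uniqueness of the connecting vector $v$, and Borel measurability of the map $V$. Antisymmetry and uniqueness will both follow from ruling out non-constant closed causal curves in $H$ (i.e.\ invoking the causality hypothesis), combined with the one-dimensionality of $\normal_x H$. Measurability will be obtained by invoking the Lusin--Souslin theorem on a suitable continuous Borel injection.

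For antisymmetry, suppose $(x,y),(y,x)\in\Gamma$ with $x\neq y$. The definition of $\Gamma$ produces a non-constant null geodesic from $x$ to $y$ contained in $H$ and a second non-constant null geodesic from $y$ to $x$ contained in $H$; concatenating them yields a non-constant closed causal curve in $H$, contradicting causality. For uniqueness (item (2)), let $v_1,v_2\in\normal_x H$ be future-directed with $\exp_x(v_i)=y$ and $\exp_x(sv_i)\in H$ for all $s\in[0,1]$. Using $\dim\normal_x H=1$, I can write $v_2=\lambda v_1$ for some $\lambda\geq 0$ (the degenerate case $v_1=0$ is analogous and handled by the same loop argument below). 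Swapping $v_1$ and $v_2$ if necessary, I may assume $\lambda\geq 1$. Then $\gamma(t):=\exp_x(tv_1)$ is a null geodesic satisfying $\gamma(1)=y=\gamma(\lambda)$ and remaining in $H$ throughout $[0,\lambda]$ (by rescaling the hypothesis on $v_2$). If $\lambda>1$, then $\gamma|_{[1,\lambda]}$ is a non-constant closed null curve in $H$, again contradicting causality; hence $\lambda=1$ and $v_1=v_2$.

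For the Borel measurability of $V$, I would introduce
\begin{equation*}
D:=\{v\in\normal H : v\text{ future-directed and } \exp_{P_H(v)}(sv)\in H\ \text{for every } s\in[0,1]\}.
\end{equation*}
The set $D$ is Borel: the cone of future-directed causal vectors is open, and the constraint that the geodesic segment lies in $H$ can be expressed as a countable intersection over rational $s$, using continuity of the exponential map (ensured by $g\in C^{1,1}_{loc}$) and local closedness of $H$ in $M$ (since $H$ is a $C^1$ embedded hypersurface). The evaluation map $F\colon D\to H\times H$, $v\mapsto(P_H(v),\exp(v))$, is then continuous, has image exactly $\Gamma$ by construction, and is injective by part (2). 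Since $D$ is a Borel subset of the Polish space $\normal H$ and $H\times H$ is Polish, the Lusin--Souslin theorem yields that $\Gamma$ is Borel in $H\times H$ and that $F^{-1}\colon\Gamma\to D\subset\normal H$ is Borel; this inverse is precisely $V$. The main obstacle I anticipate lies in this last step: one must carefully verify that $D$ is genuinely Borel and arrange the Polish/standard-Borel structures so as to legitimize the appeal to Lusin--Souslin, whereas items (1) and (2) are essentially immediate consequences of causality of $H$ and one-dimensionality of the normal bundle.
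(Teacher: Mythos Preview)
Your arguments for (1) and (2) match the paper's: both exploit one-dimensionality of $\normal_x H$ to write one vector as a scalar multiple of the other, then obtain a closed null curve in $H$ contradicting causality.

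For Borel measurability of $V$ the paper takes a more elementary route than your Lusin--Souslin argument. It observes that the graph of $v\mapsto(P_H(v),\exp v)$ is $\sigma$-compact (being homeomorphic to its $\sigma$-compact domain), swaps the factors and intersects with $\Gamma\times\normal H$ to obtain the graph of $V$, and then uses that a map with $\sigma$-compact (hence $F_\sigma$) graph is Borel. Your approach via Lusin--Souslin is perfectly legitimate and yields Borelness of $\Gamma$ as a byproduct, while the paper's $\sigma$-compactness argument avoids invoking any deep descriptive-set-theoretic result.

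You correctly flag the verification that $D$ is Borel as the delicate point, but the specific argument you sketch has a gap: reducing the constraint ``$\exp(sv)\in H$ for all $s\in[0,1]$'' to a countable intersection over rational $s$ fails when $H$ is not closed in $M$. For a concrete obstruction, take a null line in two-dimensional Minkowski space with a single point at an irrational parameter removed; a null geodesic along this line lies in $H$ at every rational time yet exits $H$ at the missing irrational time. The repair is to use local closedness more carefully: write $H=C\cap U$ with $C\subset M$ closed and $U\subset M$ open. Then the constraint splits into ``$\exp(sv)\in C$ for all $s\in[0,1]$'', which is a closed condition on $v$ (by continuity of the geodesic flow and closedness of $C$), and ``$\exp(sv)\in U$ for all $s\in[0,1]$'', which is an open condition on $v$ (by compactness of $[0,1]$ and openness of $U$). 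This exhibits $D$ as locally closed in $\normal H$, hence Borel, and your Lusin--Souslin step then goes through cleanly.
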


\begin{proof}
First we prove that $\Gamma$ is anti-symmetric. 
Assume on the contrary that $(x,y),(y,x)\in \Gamma$, but 
$x\neq y$.
Then $y=\exp_x (v_1)$ and $x=\exp_y (v_2)$.
Let $w\in\normal_y H$ be the parallel transport of $v_1$ along
$t\mapsto \exp_x(tv_1)$; of course $v_2=a w$, for some $a>0$.
We deduce that
the causal geodesic $t\mapsto\exp_x (tv_1)$  is periodic of period $1+a$ and is contained in $H$, contradicting that $H$ is causal.

Concerning the second part, if $v_1\neq v_2$, then it is clear that $v_1=b v_2$, for some
$b\neq 1$. Up to swapping $v_1$ and $v_2$, we can assume that $b>1$.  The curve $t\mapsto\exp (t v_2)$ intersects
$y$ for $t=1$ and $t=b$; therefore, the restriction $(1, b)\ni t\mapsto \exp (t v_2)$ defines a causal periodic curve in $H$, contradicting  that $H$ is causal.

Finally, we prove that $V$ is Borel.
Let $G:=\Graph(P_H\otimes \exp)$ be the graph of the map $v\in\normal
H\mapsto (P_{H}(v),\exp (v))$; $G$ is closed, for it is a graph of a continuous function.
Moreover, since it is a closed subset of a manifold, it is also
$\sigma$-compact.
Observe that
$$
\Graph(V)= \left(\Gamma\times\normal H \right) \cap \{(v,w):(w,v)\in G\}.
$$
Thus $\Graph(V)$ is a $\sigma$-compact set.
Since a map with a $\sigma$-compact graph is $F_\sigma$-measurable,
then $V$ is $F_\sigma$-measurable, thus Borel measurable.
\end{proof}

\begin{definition}[Transport Relation]
Define $\relation=\Gamma\cup\Gamma^{-1}$ (here $\Gamma^{-1}$ is given
by swapping the two entries of all the couples in $\Gamma$).
The relation $\relation$ can also be
characterized as
\begin{equation}
  \label{eq:defn-relation}
  \relation=
  \{
  (x,y)\in H\times H:
  \exists v\in\normal_xH
  :
  y=\exp_x(v),
  \text{ and }
  \forall s\in[0,1] :\exp_x(sv)\in H 
  \}.
\end{equation}
\end{definition}

\begin{proposition}
Let $(M,g)$ be a Lorentzian manifold and $H$ a causal null 
hypersurface.
Then the set $\relation$ is an equivalence relation on $H$.
\end{proposition}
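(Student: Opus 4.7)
The plan is to verify the three defining properties of an equivalence relation. Reflexivity of $\relation$ is inherited from $\Gamma$, which was shown to be reflexive in \Cref{prop:Gammapreorder}, and symmetry is built into the definition $\relation = \Gamma \cup \Gamma^{-1}$. All the content therefore lies in transitivity.

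Given $(x,y), (y,z) \in \relation$, I would distinguish four cases according to whether each pair lies in $\Gamma$ or in $\Gamma^{-1}$. The two pure cases (both pairs in $\Gamma$, or both in $\Gamma^{-1}$) reduce immediately to the transitivity of $\Gamma$ from \Cref{prop:Gammapreorder}: directly in the first case, and applied to the reversed chain $z\,\Gamma\,y\,\Gamma\,x$ in the second, yielding $(x,z) \in \Gamma^{-1} \subset \relation$.

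The mixed cases contain all the geometry. Consider $(x,y) \in \Gamma$ and $(y,z) \in \Gamma^{-1}$: by definition there exist future-directed $v_1 \in \normal_x H$ and $v_2 \in \normal_z H$ with $y = \exp_x(v_1) = \exp_z(v_2)$ and with both geodesic segments $\{\exp_x(sv_1)\}_{s\in[0,1]}$ and $\{\exp_z(sv_2)\}_{s\in[0,1]}$ contained in $H$. Parallel-transporting each initial velocity to $y$ produces two future-directed null vectors in the one-dimensional space $\normal_y H$, so they are positive multiples of each other; hence both arcs are segments of a single maximal null geodesic $\tilde\gamma$ inside $H$. Parameterizing $\tilde\gamma$ backward from $y$ as $\tilde\gamma(t) = \exp_y(-tL)$ for some future-directed $L \in \normal_y H$, I would write $x = \tilde\gamma(a)$ and $z = \tilde\gamma(b)$ with $a,b > 0$, and the hypotheses translate to $\tilde\gamma|_{[0,\max(a,b)]} \subset H$. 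If $a \leq b$, reparameterizing $\tilde\gamma|_{[a,b]}$ forward from $z$ to $x$ yields a future-directed null geodesic in $H$ emanating from $z$, giving $(z,x) \in \Gamma$ and hence $(x,z) \in \relation$; the case $a \geq b$ gives $(x,z) \in \Gamma$ by the symmetric argument. The remaining mixed case $(x,y) \in \Gamma^{-1}$, $(y,z) \in \Gamma$ is handled identically, with the analogous $\tilde\gamma$ now parameterized forward from $y$ and both $x, z$ lying in its future portion.

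The main technical nuisance will be the bookkeeping with parallel transport and with the sign flip introduced by parameterizing a null geodesic backward from $y$; all of it reduces to the dimensional constraint $\dim \normal_y H = 1$ together with the containment of the two initial arcs in $H$. Note that the causality assumption on $H$ is not actually invoked at this stage: it played a role only in the anti-symmetry of $\Gamma$ proved in \Cref{prop:MapV}, and is dispensable for the equivalence-relation properties of $\relation$.
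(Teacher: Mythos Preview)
Your proof is correct and rests on the same geometric fact as the paper's: the normal bundle $\normal H$ is one-dimensional, so the two null geodesic arcs meeting at the middle point must lie on a single null geodesic inside $H$, and the concatenation (or overlap) yields the desired arc from the first point to the third.

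The only organizational difference is that you keep the future-directedness convention on $v$ throughout and therefore split into four cases, whereas the paper invokes the alternative characterization~\eqref{eq:defn-relation} of $\relation$ (which drops the future-directed requirement on $v$) and thereby treats all cases at once: picking $v_i\in\normal_{x_i}H$ of either sign, parallel-transporting $v_1$ to $w\in\normal_{x_2}H$, writing $v_2=aw$ with $a\neq 0$ of unrestricted sign, and concluding $x_3=\exp_{x_1}((1+a)v_1)$ with the segment in $H$. Your case split is more explicit and arguably easier to check; the paper's version is more compact. Your closing remark that causality of $H$ is not actually used here is also correct---the paper carries the hypothesis but the proof only invokes \Cref{prop:Gammapreorder} and $\dim\normal H=1$.
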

\begin{proof}
Thanks to \Cref{prop:Gammapreorder},  the only non-trivial part is the transitivity.
  Let $x_1,x_2,x_3\in H$ be such that $$(x_1,x_2),\; (x_2,x_3)\in\relation.$$
  Let $v_i\in \normal_{x_{i}} H$ be such that $x_{i+1}=\exp_{x_i}(v_1)$,
  $i=1,2$.
  Let $w\in \normal_{x_2}H$ be the parallel transport of $v_1$ along
  $t\mapsto \exp_{x_1}(t v_1)$.
  Since $\dim \normal H =1$, then $v_2=a w$, for some $a\neq 0$.
  Therefore $x_3=\exp_{x_1}((1+a)v_1)$ and $\exp_{x_1}(t(1+a)v_1)\in
  H$, for $t\in[0,1]$.
\end{proof}
Accordingly, if $\alpha\in H$, we will denote by $H_\alpha$ the equivalence class
of $\relation$ containing $\alpha$.
Each equivalence class is the image of a maximal (in $H$) 
light-like geodesic. 
The sets $H_\alpha$ are usually called the 
\emph{generators} of $H$. The set parameterizing the family of equivalence classes will be denoted by $Q$, i.e. $Q=H \big/ \relation$, and the partition will be written as $\{H_\alpha\}_{\alpha\in Q}$. 

\begin{remark}
 Notice that the differentiability of  $H$ is key to have that $\relation$ is an equivalence
 relation on all $H$.
 For instance, in the light cone in Minkowski spacetime, $\relation$ would fail transitivity, as two different causal geodesics can intersect at the tip of the cone, in case it is the initial point of both.  The setting of lower regularity will be investigated in a forth-coming work~\cite{CMM24b}. \end{remark}

\begin{proposition}
\label{P:order-gamma}
Let $(M,g)$ be a Lorentzian manifold 
and $H\subset M$ be a causal null hypersurface.
%
  %
Then, for all $(x,y)\in\relation$ it holds that
\begin{equation}
    (x,y)\in\Gamma
    \quad
    \iff
      \quad
      x\leq y      .
  \end{equation}
\end{proposition}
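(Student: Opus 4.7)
The implication $(\Rightarrow)$ is immediate from the definition of $\Gamma$: if $(x,y)\in\Gamma$ with future-directed null $v\in\normal_xH$ and $y=\exp_x(v)$, then $s\mapsto\exp_x(sv)$ is a future causal curve from $x$ to $y$, witnessing $x\leq y$.

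For $(\Leftarrow)$, assume $(x,y)\in\relation$ with $x\leq y$ and suppose toward contradiction that $(x,y)\notin\Gamma$. Since $\relation=\Gamma\cup\Gamma^{-1}$, this forces $(y,x)\in\Gamma$, yielding a future null geodesic $\eta\subset H$ from $y$ to $x$. If $x=y$, reflexivity of $\Gamma$ already gives $(x,y)\in\Gamma$, so assume $x\neq y$. From $x\leq y$ we obtain a future causal curve $\sigma:[0,1]\to M$ with $\sigma(0)=x$, $\sigma(1)=y$. The strategy is to show that $\sigma$ lies entirely in $H$ and is itself a null generator, so that the closed loop obtained by following $\sigma$ from $x$ to $y$ and then $\eta$ from $y$ to $x$ is a non-trivial closed causal curve in $H$, contradicting the causality of $H$.

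The crucial step is $\sigma\subset H$. Near any $z\in H$, \Cref{Prop:NHinTH} gives $\normal_zH\subset T_zH$, so $H$ is locally cut out by a $C^1$ function $u$ whose $g$-gradient is proportional to the future-directed null field $L$. The reverse Cauchy--Schwarz of Lorentzian signature gives $g(L,v)\leq 0$ for every future causal $v$, so $s\mapsto u(\sigma(s))$ is non-increasing whenever $\sigma$ stays in the chart. Whenever $\sigma(s_1),\sigma(s_2)\in H$ with $s_1<s_2$ and $\sigma([s_1,s_2])$ contained in a single such chart, the monotonicity together with $u(\sigma(s_1))=u(\sigma(s_2))=0$ forces $u\circ\sigma\equiv 0$ on $[s_1,s_2]$, hence $\sigma([s_1,s_2])\subset H$. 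Using $\sigma(0),\sigma(1)\in H$ and the compactness of $\sigma([0,1])$, one covers $\sigma$ by finitely many such charts and iterates, provided the continuous future null normal $L$ on $H$ can be used to orient the defining functions $u$ consistently along $\sigma$.

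Once $\sigma\subset H$, the remaining step is to identify $\sigma$ with a null generator. For $v\in T_zH$ decomposed as $v=aL+w$ with $w$ in a local space-like cross-section, one has $g(v,v)=|w|^2\geq 0$, so $v$ causal forces $w=0$ and $v\parallel L$. Therefore $\dot\sigma\parallel L$ along $\sigma$, so $\sigma$ is (up to reparametrization) the future null generator of $H$ issuing from $x$, closing the argument. The main obstacle is the globalization step $\sigma\subset H$: while each local monotonicity step is immediate from the sign of $g(L,\dot\sigma)$, stitching local charts together demands consistent orientations of the defining functions $u$ along $\sigma$, which rests on the $C^1$-regularity of $H$ and on a continuously chosen future-directed null normal $L$ defined along $\sigma$.
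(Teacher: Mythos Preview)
Your forward implication matches the paper. For the backward implication, however, the paper's argument is a two-liner that you have overcomplicated: from $(y,x)\in\Gamma$ the forward implication already gives $y\leq x$, and combining this with the assumed $x\leq y$ forces $x=y$ by causality---contradicting $x\neq y$. There is no need to examine the curve $\sigma$ at all, nor to show that it lies in $H$.

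Your detour through the claim $\sigma\subset H$ does not go through. You correctly flag the globalization as the main obstacle, but it is fatal rather than merely technical. The local defining function $u$ lives only in a tubular neighborhood of $H$ in $M$, while $\sigma$ is an arbitrary causal curve in $M$ with endpoints on $H$; nothing in the hypotheses prevents $\sigma$ from exiting every such neighborhood, and then no finite cover of $\sigma([0,1])$ by charts adapted to $H$ can exist. Even when $\sigma$ stays near $H$, your iteration requires intermediate points of $\sigma$ already known to lie in $H$ to re-anchor the monotonicity argument, and you have produced none beyond $\sigma(0)$ and $\sigma(1)$. There is also a local issue you gloss over: for a generic $C^1$ defining function, $\nabla u$ is null (and hence causal) only \emph{on} $H$, so the sign condition $g(\nabla u,\dot\sigma)\leq 0$ is not guaranteed off $H$ without a more careful eikonal-type choice of $u$, which you do not construct. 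The paper's route avoids all of these difficulties by never attempting to constrain $\sigma$.
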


\begin{proof}
  The ``$\Longrightarrow$ implication'' is trivial: given $(x,y)\in\Gamma$, let $v\in\normal_xH$ be as in ~\eqref{eq:defn-gamma}; then the causal curve $s\mapsto \exp_x(s v)$ connects $x$ to $y$.
  
  Regarding the ``$\Longleftarrow$ implication'', assume that $x\leq y$, but
  $(x,y)\not\in\Gamma$.
  Then $x\neq y$ and, since $(x,y)\in \relation$, $(y,x)\in\Gamma$. 
  Therefore, by the ``$\Longrightarrow$ implication'', $y\leq x$ hence, by the causality of $H$, $x=y$,  which is a contradiction.
\end{proof}

We next introduce the following notation: if 
 $H$ is a causal null hypersurface and $S$  a local space-like
cross-section, then 
$$
H_S : = \bigcup_{\alpha \in S} H_{\alpha},
$$ 
i.e. $H_S$  is the set of all points $y$ such that $(x,y)\in\relation$,
for some $x\in S$.
We next show that $H_S$  is an open subset of $H$.

  \begin{remark}
    The results in this section presented  so far can be proven
    under the milder assumptions $g\in C^{1,1}_{loc}$ and $H$ of class
    $C^1$.
  \end{remark}

\begin{proposition}\label{prop:HSopen}
Let $(M,g)$ be a Lorentzian manifold, let $H$ be a causal null hypersurface and let $S$ be a local space-like cross-section for $H$. 
Then  $H_S$ is an open subset of  $H$.
\end{proposition}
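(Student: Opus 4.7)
The plan is to fix an arbitrary $y \in H_S$ and construct an open neighborhood of $y$ in $H$ contained in $H_S$. The construction will transport a known open piece of $H_S$ around a base point $x \in S$ (on the same generator as $y$) to the point $y$ along the connecting null generator, using flows of local null-geodesic vector fields. The key features I exploit are that such a flow is a local diffeomorphism of $H$ and that it moves each point along its own null generator, hence preserves the equivalence relation $\relation$ and the set $H_S$.

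Concretely, I fix $y \in H_S$ and pick $x \in S$ with $(x,y) \in \relation$. By the definition of $\relation$ there exists $v \in \normal_x H$ with $y = \exp_x(v)$ and $\sigma(s) := \exp_x(sv) \in H$ for every $s \in [0,1]$. Near $x$, \Cref{P:local-null-geodesics-smart} furnishes a null-geodesic vector field $L_0$, and the local parametrization described in \Cref{SS:local-parametrization} identifies an open neighborhood $U_x$ of $x$ in $H$ of the form $\gflow_{L_0}(N_x \times I)$, where $N_x$ is an open neighborhood of $x$ in $S$. Every point of $U_x$ thus lies on a generator meeting $S$, hence $U_x \subset H_S$.

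To transfer this open set to $y$, I cover the compact segment $\sigma([0,1])$ by finitely many open sets $Z_0, \dots, Z_k \subset H$, with $\sigma([s_i, s_{i+1}]) \subset Z_i$ for a partition $0 = s_0 < \dots < s_{k+1} = 1$, each carrying a null-geodesic vector field $L_i \in \vfield^1(\normal H|_{Z_i})$ (invoking \Cref{P:local-null-geodesics-smart} at each $\sigma(s_i)$). For every $i$ let $\Phi_i := \gflow_{L_i}^{\tau_i}$ be the time-$\tau_i$ flow, with $\tau_i \in \R$ chosen so that $\Phi_i(\sigma(s_i)) = \sigma(s_{i+1})$; such a $\tau_i$ exists because $L_i$ and the velocity of $\sigma$ span the same null direction. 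Since $L_i$ is a $C^1$ vector field tangent to $H$ (recall \Cref{Prop:NHinTH}), each $\Phi_i$ is a local diffeomorphism of open subsets of $H$. The composition $\Phi := \Phi_k \circ \cdots \circ \Phi_0$ is therefore a local diffeomorphism of an open neighborhood of $x$ onto an open neighborhood of $y$ in $H$ with $\Phi(x) = y$; moreover each $\Phi_i$ moves a point $z$ along the generator through $z$, so $\Phi$ preserves the relation $\relation$, and hence sends $H_S$ into $H_S$. Applying $\Phi$ to $U_x \cap \Dom\Phi$ then produces an open neighborhood of $y$ contained in $H_S$.

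The main technical point I anticipate is the covering/composition step: at each stage one must shrink the domain so that the image of $\Phi_i$ lies inside the domain of $\Phi_{i+1}$, while keeping the final image a genuine neighborhood of $y$. This is handled by continuous dependence of the flow on the initial point, combined with the fact that $\sigma(s_{i+1}) \in Z_i \cap Z_{i+1}$. An alternative would be to splice the $L_i$'s into a single null-geodesic vector field on an open neighborhood of $\sigma([0,1])$ using the transverse-rescaling freedom of \Cref{R:change-null-geodesics}, but the covering/composition approach avoids the global gluing.
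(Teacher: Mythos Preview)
Your argument is correct and takes a genuinely different route from the paper's proof. The paper reduces to showing that $H_{S'}$ is open for a small coordinate patch $S'\subset S$, then builds a \emph{single} parametrization $\lparam(x,t)=\exp_{f(x)}(tL)$ with $L$ defined only along $S'$, and shows that $\lparam$ is a local diffeomorphism on its entire maximal domain by invoking \Cref{L:domainJ} (positivity of the Jacobi determinant along the whole generator). You instead fix a target point $y\in H_S$, take a small product neighborhood $U_x\subset H_S$ of the base point $x\in S$, and then \emph{transport} $U_x$ to $y$ by a finite composition of time-$\tau_i$ flows of local null-geodesic fields covering the connecting generator. The paper's approach yields a global chart for $H_{S'}$ in one stroke but leans on the Jacobi-field machinery of \Cref{S:jacobi}; your approach is more elementary---each $\Phi_i$ is a local diffeomorphism simply because it is the flow of a $C^1$ vector field tangent to $H$, with no need for \Cref{L:domainJ}---at the cost of the covering/shrinking bookkeeping you flag at the end.

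One caveat, shared with the paper's own proof: you invoke \Cref{P:local-null-geodesics-smart}, whose hypotheses are $g,H\in C^2$, while the proposition only assumes $g\in C^{1,1}_{loc}$ and $H,S\in C^1$. Under the latter regularity, $\normal H$ is a priori only a $C^0$ line bundle, so producing a $C^1$ section (null-geodesic or not) is not immediate. The paper's argument has the same gap (it takes $L\in\vfield(\normal H|_{S'})$ of class $C^1$ and later appeals to \Cref{L:domainJ}, which lives in a $C^2$ section), so this is not a defect of your strategy relative to the paper; but if you want the proof to match the stated hypotheses, you should either upgrade the regularity assumptions or argue separately that a $C^1$ null section exists.
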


\begin{proof}
  It is sufficient to prove that for all $z\in S$, there exists a local cross-section $S'\subset S$ still containing $z$, such that $H_{S'}$ is open in $H$.
  Fix $z\in S$ and find $S'$, a coordinated open set (inside $S$),
  whose coordinates are given by $f:U'\subset \R^{n-2}\to S'$.
  Take $L\in\vfield(\normal H|_{S'})$ of class $C^1$.
  We can construct the standard coordinates system by taking
  $\lparam(x,t):=\gflow_L(f(x),t)=\exp_{f(x)}(tL)$, as described in
  Section~\ref{SS:local-parameterization}.
 Let $U\subset \R^{n-1}$ be the maximal definition domain of $\lparam$; using that $H$ is causal, one can prove that $U\subset \R^{n-1}$ is open and  $\lparam(U)=H_{S'}$.

  If $\lparam$ is a local diffeomorphism, then its image
  is open and we conclude.
  The fact that $\lparam$ is a local diffeomorphism is a consequence of
  \Cref{L:domainJ}.
\end{proof}

 Motivated by \Cref{prop:HSopen}, we give the following:
\begin{definition}\label{def:denseSk}
Let $(M,g)$ be a Lorentzian manifold, let $H$ be a causal null hypersurface and let $(S_k)_k$ be a sequence of local space-like acausal cross sections for $H$. We say that $S_k$ is \emph{a dense sequence of local space-like acasual cross sections for $H$} if $H=\bigcup_k H_{S_k}$.
\end{definition}

Notice that if $H$ admits a global  space-like and acausal cross section $S$, then $S$ is dense in the sense of definition \Cref{def:denseSk}.

\begin{remark}[Global definition of $\mm_{L}$ and $L$] \label{R:globalcross}
If $S$ is  acausal inside $H$, i.e. any causal curve
in $H$ intersects $S$ at most once,  as pointed out in
Remark~\ref{rmrk:global-null-geodesics},
one can produce a global null-geodesic vector field $L$ on $H_S$, by defining it on $S$ and then extending it by parallel transport.
The null hypersurface $H_S$ can be globally
parameterized using the map $\gflow_L$ taking $\alpha\in S$ and $t\in\R$ as
parameters, implying that $S$ is a global cross-section for $H_S$. 
Below, we use results from the previous sections.
 \Cref{L:domainJ} guarantees moreover that $W_{L}$ as in~\eqref{E:W} is
  defined on the whole domain of definition of $\gflow_L$.
  Hence we can chose $H_S$ to play the role of $\Omega_S$.
  We recall that the open set $\Omega_S$, introduced in
  Section~\ref{SS:local-parameterization}, is a neighborhood of $S$ in $H$, where
  the parameterization given by $\gflow_L$ is a diffeomorphism.
  
  Therefore, the representation formula for the rigged volume
  holds true for the whole set $H_S$:
  \begin{equation}\label{E:representingV}
    \int_{H_S}
    \phi(z)
    \,
    \vol_L
    (\de z)
    =
    \int_S
    \int_{\R}
    \phi(\gflow_L(z,t))
    \,
    e^{W_{L}(z,t)}
    \,
    \de t
    \,
    \haus^{n-2}(\de z)
    ,
    \quad
    \forall
    \phi
    \in
    C^{0}_c(H_S)
    .
  \end{equation}
Combining this observation with \Cref{def:denseSk} we deduce that: if $H$ is causal and $(S_k)_k$ is a dense sequence of local space-like and acausal cross-sections, we obtained the following objects:  
\begin{equation}\label{E:summary}
H = \bigcup_{k} H_{S_{k}},  \quad H_{S_{k}}\subset H \textrm{ open }, \quad L_{k} \in \vfield^{1}(\normal
  H_{S_{k}}) \textrm{ null-geodesic vector field}. 
\end{equation}
such that $\nabla_{L_{k}}L_{k} = 0$ and $L_{k} \neq 0 $ over  $H_{S_{k}}$.  
Hence the measures $\vol_{L_{k}}, \mm_{L_{k}} \in \mathcal{M}_{+}(H_{S_{k}})$ (not depending on the choice of $S_{k}$ by \Cref{cor:rigged-volume}) are well defined, 
they can be represented by~\eqref{E:formulamL} and verify the conclusion of \Cref{P:convexity-weight}. 

By considering the (saturated) set
$B_k:=H_{S_k}\backslash \bigcup_{h=1}^{k-1} H_{S_h}$ 
one can then define
\begin{equation}\label{E:globalL}
 L:= \sum_{k}  \indicator_{B_k} L_k,
\qquad \vol_{L} : = \sum_k \vol_{L_k}\llcorner_{B_{k}}.
\end{equation}
The previous set of assumptions on $H$ yields that $L$ is a global null-geodesic vector field of Borel  regularity 
with a well-defined associated flow map $\gflow_L$;
finally $\vol_{L} \in \mathcal{M}_{+}(H)$.
(retaining 
~\eqref{E:formulamL} and the conclusion of \Cref{P:convexity-weight}).

Accordingly, one defines  the weighted measure 
$\mm_{L}$. 
We summarize this properties in the following 
\end{remark}

\begin{theorem}\label{T:summary}
  Let $(M,g)$ be a Lorentzian manifold, let $H$ be a causal null-hypersurface and $(S_k)_k$
be a dense sequence of local space-like and acausal cross-sections   
for $H$.
Let also $L$ be a null-geodesic vector field.

Then the following representation formula
$$
\vol_L = \sum_k \int_{B_k \cap S_k} (\Psi_L(z,\dotargument))_\sharp 
\left( e^{W_L(z,t)} dt \right) \, \mathcal{H}^{n-2}(\de z),
$$
is well-posed and defines a non-negative Radon measure over $H$ (see \Cref{SS:local-parameterization} for the definition of $\gflow_L$ and~\eqref{E:W} for the definition of $W_L$). 
Moreover if $\Phi: H \to \R$ is a $C^2$-function, 
defining for each $z\in S_k$
 the map $a_z(t):=\Phi(\gflow_L(z,t))+W_{L}(z,t)$,  then
$$ 
    a_z''(t)
    +\frac{(a_z'(t))^2}{N-2}
    \leq
    -\Ric^{g,\Phi,N}_{\gflow_{L}(z,t)}(L,L)
    .
$$
\end{theorem}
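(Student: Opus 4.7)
The plan is to assemble the theorem from the pieces already developed: the local representation formula for the rigged volume (equation~\eqref{E:representingV}), the uniqueness of the rigged measure established in Corollary~\ref{cor:rigged-volume}, and the Riccati-type inequality of Proposition~\ref{P:convexity-weight}. The statement is essentially a ``global assembly'' result, and no fundamentally new estimates are required; the work lies mostly in the measure-theoretic bookkeeping.

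First I would fix the local structure on each $H_{S_k}$. Since $L$ is given as a global $C^{1}$ null-geodesic vector field, its restriction $L_k := L|_{H_{S_k}}$ is a $C^1$ null-geodesic vector field on the open set $H_{S_k}\subset H$, and by construction $S_k$ is an acausal global cross-section for $H_{S_k}$ (cf.\ Remark~\ref{R:globalcross}); in particular the flow map $\gflow_{L}$ restricted to $S_k\times\R$ is a diffeomorphism onto $H_{S_k}$, thanks to Lemma~\ref{L:domainJ}. Then~\eqref{E:representingV} directly yields
\begin{equation*}
  \vol_{L_k}
  =
  \int_{S_k}
  (\gflow_L(z,\dotargument))_{\sharp}\bigl(e^{W_L(z,t)}\,\de t\bigr)
  \,
  \haus^{n-2}(\de z)
  \qquad \text{on } H_{S_k}.
\end{equation*}
Uniqueness of the rigged measure (Corollary~\ref{cor:rigged-volume}) guarantees that $\vol_{L_k}$ is just $\vol_L\llcorner_{H_{S_k}}$, so the local formulas are mutually consistent on the pairwise intersections $H_{S_k}\cap H_{S_h}$.

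Next, I would use the disjoint Borel partition $B_k := H_{S_k}\setminus\bigcup_{h<k}H_{S_h}$ (which is Borel since each $H_{S_k}$ is open by Proposition~\ref{prop:HSopen}) to glue the local expressions into a global formula. Testing against $\phi\in C^0_c(H)$, writing $\phi=\sum_k \indicator_{B_k}\phi$, and applying the local representation on each $H_{S_k}$ restricted to $B_k\cap S_k$, one obtains the announced formula
\begin{equation*}
  \vol_L
  =
  \sum_k
  \int_{B_k\cap S_k}
  (\gflow_L(z,\dotargument))_{\sharp}\bigl(e^{W_L(z,t)}\,\de t\bigr)
  \,
  \haus^{n-2}(\de z).
\end{equation*}
Each summand is a non-negative Borel measure on the Borel set $B_k$, so the sum is a non-negative Borel measure; its Radon character follows from the local finiteness of $\vol_L$, which is already ensured by Corollary~\ref{cor:rigged-volume} and the fact that $\vol_L$ is mutually absolutely continuous with the volume induced by any auxiliary Riemannian metric.

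Finally, the differential inequality for $a_z(t)=\Phi(\gflow_L(z,t))+W_L(z,t)$ is immediate: since $L$ is $C^1$ and null-geodesic on $H_{S_k}$, Proposition~\ref{P:convexity-weight} applies verbatim to every $z\in S_k$, yielding
\begin{equation*}
  a_z''(t)+\frac{(a_z'(t))^2}{N-2}
  \leq
  -\Ric^{g,\Phi,N}_{\gflow_L(z,t)}(L,L).
\end{equation*}
The main obstacle, if any, is purely bookkeeping: one must check that the $\sigma$-additive sum over the partition $\{B_k\}$ genuinely produces a Radon measure coinciding with $\vol_L$ on open sets, which relies on Lemma~\ref{L:gluing} applied to the open cover $\{H_{S_k}\}$ and the already-established compatibility $\vol_{L_k}\llcorner_{H_{S_k}\cap H_{S_h}} = \vol_{L_h}\llcorner_{H_{S_k}\cap H_{S_h}}$. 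Once this consistency is in place, the theorem follows by linearity.
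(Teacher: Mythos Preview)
Your proposal is correct and matches the paper's approach exactly: the theorem is stated in the paper as a summary with no separate proof block, relying precisely on the assembly of Remark~\ref{R:globalcross} (the representation formula~\eqref{E:representingV} on each $H_{S_k}$, the Borel partition $\{B_k\}$, and the gluing via Corollary~\ref{cor:rigged-volume} and Lemma~\ref{L:gluing}) together with Proposition~\ref{P:convexity-weight} for the differential inequality. There is nothing to add.
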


\Cref{T:summary} suggests 
the following two definitions, one for
null hypersurfaces and one for manifolds.

\begin{definition}[$\NC^{1}(N)$ for a null hypersurface]\label{def:NC1Quadruples}
  Let $(M,g)$ be a Lorentzian manifold, 
   let $H\subset M$ be a causal null hypersurface, 
    $(S_k)_k$ be a dense sequence of local space-like and acausal cross-sections for $H$
  and $\Phi:H \to\R$ be a $C^0$ function. Assume $g, H, S_k$ to be of class $C^2$.
  
We say that the quadruple $(M,g,H, \Phi)$ satisfies the null energy condition
  $\NC^1(N)$ if and only if for all $z\in S_{k}$ the function
$a_z(t):=\Phi(\gflow_L(z,t)) + W_{L}(z,t)$ is locally-Lipschitz and it  satisfies
\begin{equation}
  a_z''
  +
  \frac{(a_z')^2}{N-2}
  \leq 0
  ,
  \qquad
  \text{ in the sense of distributions}.
\end{equation}
\end{definition}

\begin{definition}[$\NC^{1}(N)$ for a space-time]\label{def:NC1Quadruplesspace}
  Let $(M,g)$ be a Lorentzian manifold with $g\in C^2$ and let $\Phi:M\to\R$
  be a $C^0$ function.

  We say that the triple $(M,g,\Phi)$ satisfies the
   null energy condition $\NC^1(N)$, if and only if, for
  any causal null hypersurface $H\subset M$ of class $C^2$ admitting a dense sequence of
  local space-like and acausal cross-sections of class $C^2$, 
  the quadruple $(M,g,H,\Phi)$ satisfies the null
  energy condition $\NC^1(N)$.
\end{definition}

\begin{remark}[Independence from $L$ in \Cref{def:NC1Quadruples}] \label{Rem:indepNC1L}
\Cref{def:NC1Quadruples}  is well-posed.  
To verify this statement it is first necessary to 
show its independence from the choice of $L$. 
So given $M$, $H\subset M$ and
$(S_k)_k$ be a dense sequence of local space-like 
and acausal cross-sections for $H$.

It is clear that is sufficient to 
argue for each single $S_k$ and $H_{S_k}$. 
By \Cref{R:change-null-geodesics}, any two null-geodesic 
vector fields $L_1$ and $L_2$ (firstly defined  over $S_k$ and the extended by parallel transport to the whole $H_{S_k}$) differ by a transverse function $\varphi$.  
Then the results in \Cref{SS:local-parameterization} give that
$$
\gflow_{L_2}(z,t) =
\gflow_{\phi L_1}(z,t)
    =
    \gflow_{L_1}(z,\phi(z)t).
$$   
Analogously, from~\eqref{E:W} and
\eqref{eq:w-transverse}: 
$$
W_{L_2}(z,t) = \log(\det (J_{\varphi L_1} (z,t)))
= 
\log(\det (J_{L_1} (z,\varphi(z)t))) 
=
W_{L_1}(z, \varphi(z)t). 
$$
Therefore, 
if
$
a_{L_i}(t):=\Phi(\gflow_{L_i}(z,t)) + W_{L_i}(z,t)
$
for $i = 1,2$, we conclude that 
$$
a_{L_2}'' + \frac{(a'_{L_2})^2}{N-2} = 
\varphi^2(z) \left( a_{L_1}'' + \frac{(a'_{L_1})^2}{N-2}
\right),
$$
proving the claim. 
To conclude, we recall that the independence of the construction on a particular choice of local cross-sections was already established in \Cref{P:volumeunique}.
\end{remark}

\begin{remark}
The global null-geodesic vector field $L$ of \Cref{T:summary}, can be used as a ``gauge'' for the map $V$.
This means that we can uniquely determine the function $t_L:\Gamma\to[0,\infty)$ such that
\begin{equation}\label{E:definitiontL}
V(x,y) = t_L(x,y)\,L.
\end{equation}
In other words, $\gflow_L(x,t_L(x,y))=y$.
If $L$ is Borel regular, then $t_L$ is Borel as well and,
moreover, if $\varphi$ is a  $C^1$ transverse
  function, $t_{\varphi L}=\frac{1}{\varphi} t_L$.
Finally, we extend $t_L$ to $\Gamma^{-1}$, by
setting $t_L(y,x)=-t_L(x,y)$.
\end{remark}

\section{Optimal Transport inside null hypersurfaces}\label{sec:OTinsideH}

In \Cref{sec:CD1}, we gave a synthetic characterization of the null energy condition (see \Cref{T:summary} and Definitions~\ref{def:NC1Quadruples}--\ref{def:NC1Quadruplesspace}), based on concavity properties of 1-dimensional densities along (null) rays of the transport set. In the next Section~\ref{sec:NCe}, we will give another synthetic characterization of the null energy condition in terms of displacement convexity of the entropy relative to the rigged measure along null hypersurfaces.  To this aim, in the present section, we establish some results about optimal transport inside null hypersurfaces.

For some basics on optimal transport in the Lorentzian setting, see, e.g., Eckstein--Miller~\cite{EM17}, Suhr~\cite{Suhr}, McCann~\cite{McCann}, and Cavalletti--Mondino~\cite{CaMo:20}.

Let us start with an example, which will serve as a motivation for the next definitions.

\begin{example}
  Consider the two-dimensional Minkowski space-time $M=\R^2$, whose
  coordinates $x_1$ and $x_2$ are the space and time coordinates
  respectively.
  We take as null hypersurface $H$ the disjoint union of the lines
  $\{x_2=x_1\}$ and $\{x_2=x_1+2\}$.
  Let $\mu_0=\delta_{(0,0)}$ and
  $\mu_1=\delta_{(-1,1)}$ be the starting and ending measures,  respectively.
  It is trivial to see that $\ell_{p}(\mu_0,\mu_1)=0$; in particular, the
  measure $\mu_0$ can be transported along causal curves to $\mu_1$.
  Nonetheless, $\mu_0$ cannot be transported to $\mu_1$ along curves
  inside $H$.
\end{example}

The example above suggests the following definition.

\begin{definition}
  \label{defn:inside}
Let $(M,g)$ be a Lorentzian manifold.
  Let $H$ be a null hypersurface and let $\mu_i\in\Prob(H)$, $i=0,1$ be
  two probability measures.
  We say that \emph{$\mu_0$ is null connected to $\mu_1$ along $H$}  if there
  exists a probability measure  $\nu \in \mathcal{P}(C([0,1];H ))$  such that
  \begin{equation}\label{eq:Causnu}
  \pi:=(\ee_{0},\ee_{1})_{\sharp} \nu\in \Pi_{\leq}(\mu_0, \mu_1)\,,\quad \tau(x,y)=0 \text{ for $\pi$-a.e. $(x,y)$, }\quad \text{and}\quad \nu\text{-a.e. } \gamma {\rm\ is \ causal}.
  \end{equation}
  We also denote
  \begin{equation}\label{def:OptCaus}
\OptCaus^H(\mu_0,\mu_1) := \{ \nu \in \mathcal{P}(C([0,1];H )) \text{ satisfying~\eqref{eq:Causnu}}\}.
\end{equation}
\end{definition}
By the very definition, it holds that $\pi:=(\ee_{0},\ee_{1})_{\sharp}
\nu$ is an optimal coupling from $\mu_0$ to $\mu_1$ for  any
Lorentz--Wasserstein distance $\ell_p$, for all $p\leq 1$.

From~\eqref{eq:Causnu}, it follows that $\nu$-a.e.\ $\gamma$ is a null pre-geodesic 
(i.e. it can be re-parameterized into a null geodesic).

When dealing with the dynamical approach to optimal transport in the
Riemannian setting or in the setting of purely time-like transport,
it is well-known that optimal dynamical transport plans are
concentrated on geodesics.
In the case of transport with null cost this is not the case.
For instance, as observed above, one can reparameterize the geodesics where a null optimal
dynamical transport plan is concentrated, still finding an admissible
optimal dynamical transport plan.
We therefore introduce the following definition.

\begin{definition}\label{def:nullgeodDynTP}
Let $(M,g)$ be a Lorentzian manifold.
  Let $H$ be a null hypersurface.
  Let $\mu_0,\mu_1\in \Prob(H)$ be two probability measures null connected
  along $H$.
  We say that a dynamical transport plan
  $\bar\nu\in\OptCaus^H(\mu_0,\mu_1)$ is \emph{null-geodesic} if it
  is concentrated on the set
  \begin{equation*}
    \begin{aligned}
      D=\{
      &
        \gamma\in C([0,1]; H),\text{ such that }
    \exists V\in \normal_{\gamma_0} H
        \text{ light-like and future-directed,}
      \\
      &
        \text{ such that }
    \gamma_t=\exp(tV)
    ,
    t\in[0,1]
    \}
    .
    \end{aligned}
  \end{equation*}
  The set of null-geodesic dynamical transport plans from $\mu_0$ to $\mu_1$ is denoted as $\OptGeo^H(\mu_0,\mu_1)$. In other terms, 
   \begin{equation}
\OptGeo^H(\mu_0,\mu_1) := \{ \nu \in \OptCaus^H(\mu_0,\mu_1) \colon \text{ $\nu$-a.e.\ $\gamma$ is a null geodesic}\}.
\end{equation}
\end{definition}

The next two lemmas recall some elementary properties of optimal transport on the real line, that will be useful in the rest of the work. We include the proofs for the reader's convenience.

\begin{lemma}
  \label{lem:monotone}
  Let ${\mathcal B}=\{(x,y)\in\R^2: y\geq x\}$.
  Let $\mu_0,\mu_1\in \Prob(\R)$ be two probability measures.
  Assume that there exists a coupling $\pi\in\Pi(\mu_0,\mu_1)$, such
  that $\pi(\mathcal{B})=1$.
  Let $\nu$ be the monotone rearrangement plan of the two measures.
  Then $\nu(\mathcal{B})=1$.

  Moreover, if $\nu$ is induced by a map $\mathrm{T}$, then
  $\mathrm{T}(t)\geq t$, for all $t\in\supp\mu_0$.
\end{lemma}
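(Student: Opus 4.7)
The plan is to reduce the statement entirely to an inequality between cumulative distribution functions. Set $F_i(t):=\mu_i((-\infty,t])$. First I would observe that the hypothesis $\pi(\mathcal{B})=1$ is equivalent to $\pi(\{y<x\})=0$, and in particular $\pi(\{x>t,\,y\leq t\})=0$ for every $t\in\R$. Therefore
\begin{equation*}
  F_1(t)=\pi(\{y\leq t\})=\pi(\{x\leq t,\,y\leq t\})\leq \pi(\{x\leq t\})=F_0(t),
  \qquad \forall\, t\in\R.
\end{equation*}
This is the only place where the coupling $\pi$ is actually used; after this step the argument depends only on the marginals.

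Next, I would invoke the classical explicit representation of the monotone rearrangement on the real line:
\begin{equation*}
  \nu=(F_0^{-1},F_1^{-1})_\sharp\bigl(\mathcal{L}^1\llcorner_{(0,1)}\bigr),
  \qquad
  F_i^{-1}(s):=\inf\{u\in\R:F_i(u)\geq s\}.
\end{equation*}
From $F_1\leq F_0$ one immediately deduces, by the very definition of the generalized inverse of a non-decreasing function, that $F_0^{-1}(s)\leq F_1^{-1}(s)$ for every $s\in(0,1)$. Hence the map $s\mapsto (F_0^{-1}(s),F_1^{-1}(s))$ takes values in $\mathcal{B}$, and since $\mathcal{B}$ is closed, this gives $\nu(\mathcal{B})=1$, proving the first part.

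For the second assertion, if $\nu=(\mathrm{id},\mathrm{T})_\sharp\mu_0$, then $\nu(\mathcal{B})=1$ translates into $\mu_0(\{t:\mathrm{T}(t)\geq t\})=1$, i.e.\ $\mathrm{T}(t)\geq t$ for $\mu_0$-a.e.\ $t$. To upgrade this to every $t\in\supp\mu_0$, I would use that the monotone transport map $\mathrm{T}$ is non-decreasing on $\supp\mu_0$: given $t_0\in\supp\mu_0$, pick a sequence $t_n\in\supp\mu_0$ with $t_n\to t_0$ and $\mathrm{T}(t_n)\geq t_n$ (possible because the ``good'' set has full $\mu_0$-measure and $\supp\mu_0$ is the smallest closed set of full measure), then pass to the limit exploiting monotonicity of $\mathrm{T}$ (taking $t_n\uparrow t_0$ if $t_0$ is a left-accumulation point and the symmetric choice otherwise, and using the appropriate one-sided limit of $\mathrm{T}$).

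The only delicate point is the last limiting argument, since one must make sure the correct one-sided version of $\mathrm{T}$ is used, but this is routine given the monotonicity of $\mathrm{T}$ and the definition of support. The whole proof is short: the real content is the CDF comparison $F_1\leq F_0$, and from there everything is an exercise about monotone functions and their generalized inverses.
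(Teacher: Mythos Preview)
Your argument is correct and is actually more streamlined than the paper's. The key observation you make---that the hypothesis $\pi(\mathcal{B})=1$ forces the stochastic-dominance inequality $F_1\le F_0$ between the CDFs, and that this in turn gives $F_0^{-1}\le F_1^{-1}$ for the quantile functions---immediately yields $\nu(\mathcal{B})=1$ via the Hoeffding--Fr\'echet representation $\nu=(F_0^{-1},F_1^{-1})_\sharp(\mathcal{L}^1\llcorner_{(0,1)})$.

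The paper takes a different route: it first treats the case of empirical measures $\mu_i=\tfrac1n\sum_k\delta_{x^i_k}$ by writing the given coupling as a bistochastic matrix, invoking Birkhoff's theorem to extract a permutation $\tau$ with $y_{\tau(k)}\ge x_k$, and then applying a separate combinatorial lemma (their Lemma~\ref{L:permutation}) to deduce $y_k\ge x_k$ after sorting. The general case is then obtained by approximating $\pi$ weakly by such empirical couplings and passing to the limit, using that monotonicity and the closed set $\mathcal{B}$ are stable under weak convergence. Your CDF/quantile approach bypasses both the Birkhoff decomposition and the approximation step, and makes transparent that the real content of the hypothesis is stochastic dominance of $\mu_0$ over $\mu_1$; the paper's approach, on the other hand, is more self-contained in that it does not rely on the explicit quantile formula for the monotone plan. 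For the ``moreover'' part about the map $\mathrm{T}$, your a.e.\ conclusion is immediate and is all that is actually used later in the paper; the upgrade to every point of $\supp\mu_0$ via monotonicity is fine once one fixes a monotone representative of $\mathrm{T}$.
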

\begin{proof}
Consider first the case when $\mu_0$ and $\mu_1$ are of the form
\begin{align}
    \mu_0
    =
    \frac{1}{n}
    \sum_{i=1}^{n}
    \delta_{x_i}
    ,
    \qquad
        \mu_1
    =
    \frac{1}{n}
    \sum_{i=1}^{n}
    \delta_{y_i}
    ,
\end{align} 
with $x_i\leq x_{i+1}$ and $y_i\leq y_{i+1}$.
In this case 
\begin{align}
    \pi=
    \sum_{i,j}
    A_{ij}
    \delta_{(x_i,y_j)}
    ,
\end{align}
where $(A_{ij})_{ij}$ is a bistochastic matrix.
We have then that $A=\sum_{\sigma\in \mathcal{S}_n} b_\sigma \sigma$,
where $b_\sigma\geq 0$, $\sum_{\sigma\in \mathcal{S}_n} b_\sigma=1$;
the sums are taken over the set of rank-$n$ permutations $\mathcal{S}_n$
and a permutation is identified by its matrix representation.
Let $\tau\in \mathcal{S}_n$ be such that $b_\tau>0$.
We claim that $y_{\tau(k)}\geq x_{k}$, for all $k = 1, \dots, n$; 
if on the contrary $y_{\tau(k)}< x_{k}$ for some $k$, then
\begin{align*}
    \pi(\R^2\backslash \mathcal{B})
    &
    =
    \sum_{i,j} A_{ij}\delta_{(x_i,y_j)}(\R^2\backslash \mathcal{B})
    =
    \sum_{\sigma\in \mathcal{S}_n} b_\sigma
    \sum_{i} \delta_{(x_i,y_{\sigma(i)})} (\R^2\backslash \mathcal{B})
    \geq b_\tau \delta_{(x_k,y_{\tau(k)})}  (\R^2\backslash \mathcal{B})
    = b_\tau>0,
\end{align*}
a contradiction.
We can therefore apply  \Cref{L:permutation} below to 
$\tau$, deducing that 
$y_k\geq x_k$, for all $k$.
Let $\nu=\sum_{i=1}^{n} \frac{1}{n}\delta_{(x_i,y_i)}$; 
since $\nu$ is clearly the monotone rearrangement measure, we conclude.

We consider the general case by approximating $\pi$ with mixtures of
Dirac's deltas.
Fix $\pi$ as in the statement, and let $\pi_n\in\Prob(\mathcal{B})$ be a sequence such that $\pi_n\weak \pi$
and $\pi_n=\frac{1}{k_n}\sum_{i=1}^{k_n}\delta_{(x_{n,i},y_{n,i})}$.
To show that such a sequence exists, we proceed as follows.
Fix $\epsilon$ and let $K\subset \mathcal{B}$ be a compact, given by Prokhorov's
theorem, such that $\pi(K)\geq 1-\epsilon$.
Let $(E_i)_{i}$ be a finite partition of $K$, $x_i\in E_i$, such that
$E_i\subset B_{\epsilon}(x_i)$.
Define $\theta_i:=\pi(E_i)$.
Up to modifying $\theta_i$ by at most $2\epsilon$, we can assume $\theta_i$ to be
rational and summing to $1$.
Define $\tilde \pi:=\sum_i\theta_i\delta_{x_i}$.
It is then easy to see that the L\'evy--Prokhorov distance between
$\pi$ and $\tilde\pi$ is at most $5\epsilon$.
Since $\tilde\pi$ is of the desired form and the L\'evy--Prokhorov
distance induces the weak convergence, then we can approximate $\pi$
in the desired way.

Let $\mu_{n,i}:=(P_i)_{\sharp}\pi_n$, $i=1,2$ be the marginal probability 
measures for $\pi_n$.
We are in position to apply the previous part deducing that $\nu_n$, 
the monotone rearrangement plan between $\mu_{n,0}$ and $\mu_{n,1}$,
is concentrated on $\mathcal{B}$.
We now take the limit.
Clearly, $\mu_{n,i}\weak \mu_i$, $i=0,1$, because weak convergence is
stable under the push-forward operation.
Theorem~5.20 of~\cite{villani:oldandnew} guarantees that, up to a
not-relabeled subsequence, $\nu_n\weak\nu$, for some $c$-cyclical
monotone transport plan $\nu$ between $\mu_0$ and $\mu_1$ (here for
the cost $c$, one can take the squared distance).
In the real line, $c$-cyclical monotonicity coincides with the
monotone rearrangement, therefore, $\nu$ is the monotone rearrangement
between $\mu_0$ and $\mu_1$.
Finally, $\nu(\mathcal{B})\geq\limsup_n \nu_n(\mathcal{B})=1$, by weak
convergence and because $\mathcal{B}$ is closed.
\end{proof}

\begin{lemma}
\label{L:permutation}
    Let $n\in \N$. Let  
    $$x_1\leq\ldots\leq x_i\leq x_{i+1}\leq \dots \leq x_n \quad \text{and} \quad  y_1\leq\ldots\leq y_i\leq y_{i+1}\leq\ldots\leq y_n$$
    be two non-decreasing sequences in $\R$. 
    Assume that there exists a permutation $\sigma\in \mathcal{S}_n$ such that $y_{\sigma(k)}\geq x_k$,
    for all $k=1,\dots, n$.
    Then $y_k \geq x_k$, for all $k=1,\dots, n$.
\end{lemma}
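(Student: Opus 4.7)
\textbf{Proof proposal for \Cref{L:permutation}.} The plan is to fix an arbitrary index $k \in \{1, \ldots, n\}$ and construct a single index $j$ that bridges $x_k$ and $y_k$ by chaining the monotonicity of the two sequences with the hypothesis $y_{\sigma(\cdot)} \geq x_{\cdot}$. The key observation is directional: to produce a lower bound on $y_k$, I want an index $j$ with $j \geq k$ (so that $x_j \geq x_k$ by monotonicity of $x$) and $\sigma(j) \leq k$ (so that $y_{\sigma(j)} \leq y_k$ by monotonicity of $y$, i.e., $y_k \geq y_{\sigma(j)}$).

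The existence of such a $j$ is a simple pigeonhole argument. The preimage $\sigma^{-1}(\{1,\ldots,k\})$ has exactly $k$ elements since $\sigma$ is a bijection, so these $k$ elements cannot all lie in the $(k-1)$-element set $\{1,\ldots,k-1\}$. Hence there exists $j \in \sigma^{-1}(\{1,\ldots,k\})$ with $j \geq k$, i.e., an index $j \geq k$ satisfying $\sigma(j) \leq k$. With this $j$ in hand, the three-step chain
\begin{equation*}
  y_k \;\geq\; y_{\sigma(j)} \;\geq\; x_j \;\geq\; x_k,
\end{equation*}
where the middle inequality is the hypothesis and the outer ones come from monotonicity, gives the claim.

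There is essentially no obstacle: the whole argument is combinatorial and one line once the pigeonhole index is identified. The only thing to be careful about is to pick the correct direction (finding $j \geq k$ with $\sigma(j) \leq k$, rather than the other way around), since the reverse choice yields an upper bound on $y_k$ and is useless for the conclusion.
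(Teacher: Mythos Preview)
Your proof is correct. The pigeonhole step is sound: $\sigma^{-1}(\{1,\dots,k\})$ has $k$ elements and cannot fit inside $\{1,\dots,k-1\}$, so some $j\geq k$ has $\sigma(j)\leq k$, and then the chain $y_k\geq y_{\sigma(j)}\geq x_j\geq x_k$ finishes it.

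The paper argues differently, by contradiction: assuming $y_k<x_k$, it shows that for every $l>k$ one has $y_{\sigma(l)}\geq x_l\geq x_k>y_k$, forcing $\sigma(l)>k$; hence $\sigma$ restricts to a permutation of $\{k+1,\dots,n\}$ and therefore also of $\{1,\dots,k\}$, but the same estimate applied to $l=k$ gives $\sigma(k)>k$, a contradiction. Your approach is more direct and slightly cleaner---you isolate the single bridging index via pigeonhole rather than deducing a block structure for $\sigma$---while the paper's version makes the obstruction (that $\sigma$ cannot simultaneously fix $\{1,\dots,k\}$ setwise and send $k$ above $k$) more explicit. Both are equally elementary.
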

\begin{proof}
    Assume on the contrary that $y_k< x_k$, for some $k$.
    For all $l> k$, it holds that
    \begin{equation}
         y_k
        < x_k
        \leq
        x_l
        \leq y_{\sigma(l)}
        ,
    \end{equation}
    therefore $\sigma(l)>k$, for all $l>k$.
    This means that $\sigma$ splits in two permutations, one of 
    them permuting the elements $\{k+1,\dots,n\}$.
    On the other hand $y_k<x_k\leq y_{\sigma(k)}$, therefore 
    $\sigma(k)>k$, which is a contradiction.
\end{proof}

\begin{lemma}
  \label{lem:structure-transport-plans}
Let $(M,g)$ be a Lorentzian manifold.
  Let $H$ be a null hypersurface  and let $\nu\in\Prob(C([0,1];H))$ be a
  measure concentrated on the set of causal curves.  
  Let $C$ be the set
  \begin{equation}
    C=\{\gamma\in C([0,1];H): (\gamma_s,\gamma_t)\in\Gamma,\; \forall\,
    0\leq s\leq t\leq1
    \},
  \end{equation}
  where $\Gamma$ was defined in~\eqref{eq:defn-gamma}.
  Then $\nu(C)=1$.
\end{lemma}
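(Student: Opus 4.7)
The plan is to show that every causal curve $\gamma\in C([0,1];H)$ belongs to $C$; since $\nu$ is concentrated on causal curves, this immediately yields $\nu(C)=1$. Fix such a $\gamma$ and $0\le s\le t\le 1$, and set $\alpha:=\gamma_s$, $\beta:=\gamma_t$. The task is to produce $v\in\normal_{\alpha}H$ future-directed null with $\exp_{\alpha}(v)=\beta$ and $\exp_{\alpha}(\sigma v)\in H$ for every $\sigma\in[0,1]$ (the case $s=t$ being trivial by the reflexivity of $\Gamma$).

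First I would establish a pointwise rigidity for $\dot\gamma$. Pick a local future-directed null-geodesic vector field $L$ on $H$, whose existence near every point is guaranteed by \Cref{P:local-null-geodesics-smart}. At every differentiability point $r$ the velocity $\dot\gamma_r\in T_{\gamma_r}H$ is future-directed causal; but $g|_{T_{\gamma_r}H}$ is positive semi-definite with one-dimensional kernel $\R L(\gamma_r)$, so the only causal vectors in $T_{\gamma_r}H$ are the positive multiples of $L(\gamma_r)$. Hence $\dot\gamma_r=\lambda(r)L(\gamma_r)$ for some $\lambda(r)>0$, with $\lambda\in L^{\infty}_{loc}$ by the local Lipschitz regularity of $\gamma$.

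Next I would show that $\gamma|_{[s,t]}$ reparameterizes a single generator of $H$ emanating from $\alpha$. Working in the coordinates $(z,\sigma)\in S_0\times\R$ induced by the flow $\gflow_L$ with respect to a space-like cross-section $S_0$ through $\gamma_{r_0}$ (see Section~\ref{SS:local-parametrization}, where $\gflow_L$ is a local diffeomorphism onto a neighbourhood of $\gamma_{r_0}$ in $H$), the identity $\dot\gamma=\lambda L(\gamma)$ forces the $S_0$-component of $\gamma$ to be constant in $r$; hence $\gamma$ follows the generator through $\gamma_{r_0}$ locally. Covering the compact image $\gamma([s,t])$ by finitely many such charts, and invoking \Cref{R:change-null-geodesics} to reconcile the local choices of $L$ on overlaps, one obtains a non-decreasing continuous function $\mu:[s,t]\to[0,\infty)$ with $\mu(s)=0$ and $\gamma_r=\exp_{\alpha}(\mu(r)L(\alpha))$ for every $r\in[s,t]$; in particular $\exp_{\alpha}(\sigma L(\alpha))\in H$ for all $\sigma\in[0,\mu(t)]$, as these points lie on the single generator through $\alpha$.

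Setting $v:=\mu(t)L(\alpha)\in\normal_{\alpha}H$ then yields a future-directed null vector with $\exp_{\alpha}(v)=\beta$ and $\exp_{\alpha}(\sigma v)=\exp_{\alpha}(\sigma\mu(t)L(\alpha))\in H$ for every $\sigma\in[0,1]$, so $(\alpha,\beta)\in\Gamma$. The main obstacle is the rigidity in the second step: rigorously upgrading the a.e.\ pointwise identity $\dot\gamma=\lambda L(\gamma)$, with $\lambda$ merely measurable and nonnegative, to the assertion that $\gamma$ is a reparameterization of an integral curve of $L$. This relies on ODE uniqueness for the null-geodesic vector field $L$, available under the regularity assumptions $g\in C^{1,1}_{loc}$ and $H\in C^1$ (which provide a sufficiently regular representative of $L$ in appropriate charts), together with the compatibility afforded by \Cref{R:change-null-geodesics} when patching local choices across overlaps.
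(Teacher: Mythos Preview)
The paper states this lemma without proof, so there is nothing to compare against directly; your argument is the natural one and is essentially what the authors presumably have in mind. The core idea---that a causal curve in $H$ must have null velocity tangent to the generator at almost every point, and therefore traces out a single generator---is correct and is exactly what is needed.

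Two small points are worth tightening. First, you invoke \Cref{P:local-null-geodesics-smart} and the local parametrization of \Cref{SS:local-parametrization} to produce the chart $\gflow_L$, but those statements are proved under $g\in C^2$ and $H\in C^2$, whereas the lemma assumes only $g\in C^{1,1}_{loc}$ and $H\in C^1$; under the latter the normal bundle, hence $L$, is a priori merely $C^0$. Second, and relatedly, appealing to ``ODE uniqueness for the null-geodesic vector field $L$'' is slightly misleading at this regularity: Picard--Lindel\"of does not apply to a $C^0$ field. The fix is to bypass the vector-field viewpoint and use geodesic uniqueness directly. For $g\in C^{1,1}_{loc}$ the geodesic equation has locally Lipschitz coefficients, so null geodesics are unique; hence the generator through any point of $H$ is well defined, and two generators meeting at a point coincide. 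Using a local cross-section $S_0$ one still obtains the map $\gflow_L:S_0\times\R\to H$, which is continuous, locally injective (distinct generators cannot intersect), and therefore a local homeomorphism by invariance of domain. In these coordinates the null direction is $\partial_\sigma$, and your argument that the $S_0$-component of $\gamma$ is constant goes through. With this adjustment the proof is complete.
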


Next we prove a useful decomposition of an optimal transport plan along
a null hypersurface into a countable family of optimal transport plans,
each of them along a null hypersurface having a global cross-section.

\begin{proposition}
  \label{P:decomposition}
Let $(M,g)$ be a Lorentzian manifold.
  Let $H$ be a $C^1$ null hypersurface, such that there exists   a
  dense sequence of local space-like and acausal $C^1$ cross-sections $(S_k)_k$, in the sense of \Cref{def:denseSk}.
  Let $\mu_0,\mu_1\in \Prob(H)$ be two probability measures that are
  null connected along $H$.

  Then, up to taking a not relabeled subsequence in $(S_k)_k$, there
  exists a (possibly finite) real sequence $(w_k)_k\subset (0,\infty)$ and two sequences
  of probability measures $(\mu_i^k)_k$, $i=0,1$, such that the
  following hold.
  For all $k\in\N$, the measure $\mu_0^k$ is null connected to $\mu_1^k$
  along $H_{S_k}$.
  For $i\in[0,1]$, the measures $\mu_i^k$ and $\mu_i^h$ are orthogonal and
  $\mu_i=\sum_k w_k \mu_i^k$.

  Moreover, if  $\nu\in\OptCaus^{H}(\mu_0,\mu_1)$ (i.e., $\nu$
  realizes \Cref{defn:inside}), then we can choose a measure
 $\nu_k\in\OptCaus^{H_{S_k}}(\mu_{0}^{k},\mu_{1}^{k})$ so that $\nu=\sum_k w_k \nu_k$.
  Furthermore, having defined $\mu_t^k:=(e_t)_{\sharp} \nu^k$,
  $t\in[0,1]$, it holds that the measures $\mu_t^k$ and $\mu_t^h$ are
  orthogonal.
\end{proposition}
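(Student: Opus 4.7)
The plan is to partition the curves in the support of $\nu$ according to which null generator of $H$ they traverse, then regroup these generators using the open cover $(H_{S_k})_k$, and finally restrict and normalize $\nu$ accordingly.

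First, by \Cref{lem:structure-transport-plans}, $\nu$-a.e.\ curve $\gamma$ satisfies $(\gamma_s,\gamma_t)\in\Gamma$ for all $0\leq s\leq t\leq 1$, which forces $\gamma([0,1])$ to lie in a single equivalence class $H_\alpha$ of $\relation$. In particular, whenever $A\subset H$ is \emph{saturated} (i.e.\ a union of such equivalence classes), any curve in the support of $\nu$ meeting $A$ at $t=0$ is entirely contained in $A$. I would then observe that each $H_{S_k}=\bigcup_{\alpha\in S_k} H_\alpha$ is both saturated and open in $H$ (by \Cref{prop:HSopen}), so the sets
\[
B_k := H_{S_k}\setminus\bigcup_{h<k} H_{S_h}
\]
are Borel, saturated, pairwise disjoint, and cover $H$ by the density assumption on $(S_k)_k$.

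Next I would lift this partition to the space of curves: letting $C_k:=\ee_0^{-1}(B_k)\subset C([0,1];H)$, the family $\{C_k\}$ is a Borel partition of $C([0,1];H)$, and by the saturation observation, $\nu$-a.e.\ $\gamma\in C_k$ is entirely contained in $B_k\subset H_{S_k}$. I would then set
\[
w_k:=\nu(C_k), \qquad \nu_k:=\tfrac{1}{w_k}\,\nu\llcorner C_k, \qquad \mu_t^k:=(\ee_t)_\sharp\nu_k,
\]
discarding the indices with $w_k=0$ and relabeling (this is the ``subsequence'' mentioned in the statement). The decomposition $\nu=\sum_k w_k\nu_k$ is then tautological, and pushing forward by $\ee_i$ yields $\mu_i=\sum_k w_k \mu_i^k$. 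Since each $\mu_t^k$ is concentrated on $B_k$ and the $B_k$ are pairwise disjoint, the mutual singularity of $\mu_t^k$ and $\mu_t^h$ for $k\neq h$ follows at once, for every $t\in[0,1]$.

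It remains to verify that $\nu_k\in\OptCaus^{H_{S_k}}(\mu_0^k,\mu_1^k)$: $\nu_k$-a.e.\ curve is causal and, by the saturation argument, contained in $H_{S_k}$; the endpoint coupling $(\ee_0,\ee_1)_\sharp\nu_k$ inherits both causality and the property $\tau=0$ on its support from the corresponding properties of $\nu$, since both pass to restrictions. The only conceptual step is the saturation-by-generators observation: the whole construction hinges on the fact that $H_{S_k}$ is a union of full (maximal) null generators, which propagates through the set-theoretic operations to $B_k$ and thereby ensures that \emph{entire} curves, not merely their initial points, land in a single piece of the partition; without this, both the mutual singularity at intermediate times $t\in(0,1)$ and the required containment $\spt\nu_k\subset C([0,1];H_{S_k})$ would fail.
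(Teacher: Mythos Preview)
Your proof is correct and follows essentially the same route as the paper's: define $B_k:=H_{S_k}\setminus\bigcup_{h<k}H_{S_h}$, lift the partition to curves, set $w_k=\nu(C_k)$, drop the null-mass indices, and normalize. The only cosmetic difference is that the paper takes $C_k$ to be the set of causal curves \emph{contained in} $B_k$ whereas you take $C_k=\ee_0^{-1}(B_k)$; by the saturation argument you spell out (which the paper leaves implicit), these agree $\nu$-a.e., so the two constructions coincide.
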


\begin{proof}
  Let $B_k:=H_{S_k}\backslash \bigcup_{h=1}^{k-1} H_{S_h}$, and let
  $C_k$ the set of causal curves contained in $B_k$.
  Since $H=\bigcup_k B_k$ and causal curves in $H$ can only move inside the generators of $H$, 
  then $(C_k)_{k}$ covers the set of causal
  curves in $H$; in particular, $\nu(\bigcup_k C_k)=1$.
  Moreover, as the sets $(B_k)_k$ are pairwise disjoint,  also the sets
  $(C_k)_k$ are pairwise disjoint and therefore the sequence
  $w_k:= \nu(C_k)$ sums to $1$.

  We extract (without relabeling) a subsequence, by dropping the
  elements such that $w_k=0$.
  Let $\nu^k:=\nu\llcorner_{C_k}/w_k$, and let
  $\mu_t^k:=(e_i)_{\sharp}(\bar\nu_k)$, $t\in[0,1]$.
  The fact that $\mu_0^k$ is null connected to $\mu_1^k$ along $H_{S_k}$
  is trivial, since
  $\nu^k\in\OptGeo_{\ell_p}^{H_{S_k}}(\mu_0^k,\mu_1^k)$.
  The orthogonality of $\mu_t^k$ and $\mu_t^h$ follows from
  the fact that $\nu_k$ gives full measure to $C_k$, thus $\mu_t^k$
  gives full measure to $B_k$ and the sets $B_k$ are disjoint.
\end{proof}

\Cref{P:decomposition} guarantees that, in
presence of a dense sequence of local space-like and acausal cross-sections,
a light-like optimal transport plan on a null hypersurface can be decomposed into a family
of optimal transport plans, each on a null hypersurface possessing a  global, space-like,
acausal cross-section.
Conversely, it is trivial to see that, given a sequence of null-cost
optimal transport plans on the same null hypersurface, we can sum them
obtaining a null-cost optimal transport transport plan.
%

\subsection{Existence and uniqueness of monotone plans along $H$.}

We will now single out a class of well-behaved optimal dynamical plans. 
We start with two technical facts. 

\begin{lemma}
  \label{lem:integral-transverse}
Let $(M,g)$ be a Lorentzian manifold.
  Let $H$ be a null hypersurface and 
  let $\mu_0,\mu_1\in \Prob(H)$ be two probability measures, such that
  $\mu_0$ is null connected to $\mu_1$ along $H$.
  Let  $\nu\in\OptCaus^H(\mu_0,\mu_1)$, and let
  $\mu_t:=(e_t)_{\sharp}\nu$, for all $t\in [0,1]$.

  If $\varphi:H\to\R$ is a transverse function,
  then it holds that
  \begin{equation}
    \label{eq:integral-transverse}
    \int_H
    \varphi
    \,\de\mu_t
    =
    \int_H
    \varphi
    \,\de\mu_s
    ,
    \qquad
    \text{ for all }
    t,s\in[0,1]
    .
  \end{equation}
  Assume also that $\varphi$ is non-negative and the integrals in~\eqref{eq:integral-transverse} are equal to $1$.
  Define $\tilde \mu_t:= \varphi\mu_t$.
  Then $\tilde\mu_0$ is null connected to $\tilde\mu_1$ along $H$, and
  $\tilde\nu:=(\varphi \circ e_0)\; \nu \in  \OptCaus^H(\tilde\mu_0,\tilde\mu_1)$ 
  is a dynamical transport plan,
  such that
  $\tilde\mu_t=(e_t)_{\sharp}\tilde\nu$ for all $t\in [0,1]$.
\end{lemma}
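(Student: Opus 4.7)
My overall strategy is to exploit the fact that, by Lemma~\ref{lem:structure-transport-plans}, $\nu$ is concentrated on curves $\gamma$ whose consecutive points satisfy $(\gamma_s,\gamma_t)\in\Gamma$ for $0\leq s\leq t\leq 1$. Such a curve is causal and stays inside a single generator $H_\alpha$ of $H$ (equivalence class of $\relation$), i.e., its image is contained in the image of a single null pre-geodesic. Since a transverse function $\varphi$ is, by \Cref{D:transverse}, constant along every null geodesic in $H$, we conclude that $\varphi\circ\gamma$ is constant for $\nu$-a.e.\ curve $\gamma$. This observation is the engine of both claims.

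For the identity~\eqref{eq:integral-transverse}, I would simply write, for any $t\in[0,1]$,
\begin{equation*}
  \int_H \varphi \, \de\mu_t
  =
  \int_{C([0,1];H)} \varphi(\gamma_t)\, \de\nu(\gamma)
  =
  \int_{C([0,1];H)} \varphi(\gamma_0)\, \de\nu(\gamma)
  =
  \int_H \varphi \, \de\mu_0,
\end{equation*}
using the push-forward formula for $\mu_t=(e_t)_\sharp\nu$ together with the fact that $\varphi(\gamma_t)=\varphi(\gamma_0)$ for $\nu$-a.e.\ $\gamma$ (which holds on the full-measure set given by \Cref{lem:structure-transport-plans}).

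For the second part, I would first verify that $\tilde\nu=(\varphi\circ e_0)\,\nu$ is a probability measure by computing its total mass, which is $\int \varphi(\gamma_0)\,\de\nu=\int\varphi\,\de\mu_0=1$ by hypothesis. Next, I would check $(e_t)_\sharp\tilde\nu=\tilde\mu_t$: for any bounded Borel $f:H\to\R$,
\begin{equation*}
  \int_H f\,\de((e_t)_\sharp\tilde\nu)
  =
  \int f(\gamma_t)\varphi(\gamma_0)\,\de\nu(\gamma)
  =
  \int f(\gamma_t)\varphi(\gamma_t)\,\de\nu(\gamma)
  =
  \int_H f\varphi\,\de\mu_t
  =
  \int_H f\,\de\tilde\mu_t,
\end{equation*}
where we again used $\varphi(\gamma_0)=\varphi(\gamma_t)$ on the $\nu$-full-measure set where it applies. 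Finally, the three conditions in~\eqref{eq:Causnu} for $\tilde\nu$ follow directly from the corresponding conditions for $\nu$: the causal coupling condition and the $\tau$-vanishing condition transfer because $\tilde\pi:=(e_0,e_1)_\sharp\tilde\nu$ is absolutely continuous w.r.t.\ $\pi:=(e_0,e_1)_\sharp\nu$ (the density being $\varphi(x)$ depending only on the first coordinate, so $\tilde\pi$ is supported in $\supp\pi$), and the causality of $\tilde\nu$-a.e.\ curve follows from that of $\nu$-a.e.\ curve since $\tilde\nu\ll\nu$. Hence $\tilde\nu\in\OptCaus^H(\tilde\mu_0,\tilde\mu_1)$ as required.

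The only subtle point I anticipate is making sure that ``$\varphi$ constant along null geodesics in $H$'' applies to the restriction $\varphi\circ\gamma$ for $\nu$-a.e.\ curve $\gamma$. This is not immediate from \Cref{D:transverse} alone (which only states constancy on pre-geodesics starting at points of $H$ with normal initial velocity), but \Cref{lem:structure-transport-plans} together with \Cref{prop:Gammapreorder} guarantees that such $\gamma$ traces the image of a maximally extended null geodesic in $H$, so constancy of $\varphi$ along the geodesic passes to constancy along the reparametrized $\gamma$.
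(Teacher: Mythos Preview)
Your proposal is correct and follows essentially the same approach as the paper's proof. The paper is slightly terser---it simply asserts that $\varphi\circ\gamma$ is constant for every causal curve $\gamma$ in $H$ (without explicitly invoking \Cref{lem:structure-transport-plans}) and computes the push-forward identity $(e_t)_\sharp\tilde\nu=\tilde\mu_t$ in one line, omitting the verification of the $\OptCaus^H$ conditions---but your more detailed justification via \Cref{lem:structure-transport-plans} and the explicit check that $\tilde\nu\ll\nu$ inherits the causality and $\tau$-vanishing conditions are welcome elaborations of the same argument.
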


\begin{proof}
  Let $C$ be the set of causal curves in $H$.
  Since $\varphi$ is transverse, then $\varphi \circ\gamma$ is constant for all
  $\gamma\in C$.
   Clearly $\nu(C)=1$,  thus 
  $\varphi \circ e_t=\varphi \circ e_s$, $\nu$-a.e..
  By definition of $\mu_t$ and $\mu_s$, we have that
  \begin{equation}
    \int_H
    \varphi
    \,\de\mu_t
    -
    \int_H
    \varphi
    \,\de\mu_s
    =
    \int_{C([0,1];H)}
    (
    \varphi\circ e_t-
    \varphi\circ e_s
    )
    \,
    \de \nu
    =0
    .
  \end{equation}
  For the second part, a direct computation gives
  \begin{equation}
    \tilde\mu_t=\varphi\mu_t
    =
    \varphi(e_t)_{\sharp}(\nu)
    =
    (e_t)_{\sharp}(\varphi\circ e_t \nu)
    =
    (e_t)_{\sharp}(\varphi\circ e_0 \nu)
    =
    (e_t)_{\sharp}(\tilde\nu).
    \qedhere
  \end{equation}
\end{proof}

Before stating the next lemma, recall that a subset $B\subset \R\times \R$ is said to be \emph{monotone}, provided  the following holds: if $(s_1,t_1)\in B$ and $(s_2,t_2)\in B$ with $s_1\leq s_2$, then $t_1\leq t_2$.

\begin{lemma}
  \label{lem:monotonicity-of-sets} Let $(M,g)$ be a Lorentzian manifold.
  Let $H$ be a $C^1$ be a null hypersurface.
  Let $A\subset \Gamma$.
  The following are equivalent.
  \begin{enumerate}
  \item
    For all $(x_1,y_1),(x_2,y_2)\in A$, if 
    $(x_1,x_2)\in\Gamma$ and $x_1\neq x_2$ then
    $(y_1,y_2)\in\Gamma$.
  \item
    for all $\alpha\in Q$, there exists a monotone set
    $B_\alpha\subset\R\times\R$, such that
    \begin{equation}
      A\cap (H_\alpha\times H_\alpha)
      \subset
      \gflow_L(\alpha,\dotargument)
      \otimes
      \gflow_L(\alpha,\dotargument)
      (B_\alpha)
    \end{equation}
  \end{enumerate}
\end{lemma}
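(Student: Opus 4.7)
The plan is to leverage the fact that each generator $H_\alpha$ (where $\alpha\in Q$) can be identified, via the parametrization $\gflow_L(\alpha,\dotargument)$, with an interval $I_\alpha\subset\R$, and under this identification the relation $\Gamma$ restricted to $H_\alpha\times H_\alpha$ corresponds exactly to the standard order $\{(s,t):s\leq t\}$ on $I_\alpha\times I_\alpha$. The key ingredient is \Cref{prop:MapV} together with \Cref{P:order-gamma}: causality of $H$ makes $\gflow_L(\alpha,\dotargument)$ injective, hence a bijection onto $H_\alpha$; and the preorder $\Gamma$ along the generator coincides with the time-order along the null geodesic, which is precisely the standard order on its parameter. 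Once this identification is in hand, both implications reduce to essentially tautological checks.

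For (1)$\Rightarrow$(2), I would set
$$
B_\alpha
:=
\bigl\{(s,t)\in I_\alpha\times I_\alpha:(\gflow_L(\alpha,s),\gflow_L(\alpha,t))\in A\bigr\},
$$
so that by the injectivity of $\gflow_L(\alpha,\dotargument)$ one has
$(\gflow_L(\alpha,\dotargument)\otimes\gflow_L(\alpha,\dotargument))(B_\alpha)=A\cap(H_\alpha\times H_\alpha)$. To verify monotonicity, pick $(s_1,t_1),(s_2,t_2)\in B_\alpha$ with $s_1<s_2$, and set $x_i:=\gflow_L(\alpha,s_i)$, $y_i:=\gflow_L(\alpha,t_i)$; then $x_1\neq x_2$ and $(x_1,x_2)\in\Gamma$ (as this is exactly $s_1<s_2$ under the identification), so hypothesis (1) gives $(y_1,y_2)\in\Gamma$, i.e.\ $t_1\leq t_2$.

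For (2)$\Rightarrow$(1), the argument is nearly symmetric: given $(x_1,y_1),(x_2,y_2)\in A$ with $(x_1,x_2)\in\Gamma$ and $x_1\neq x_2$, the first condition places $x_1,x_2$ on a common generator $H_\alpha$; since $A\subset\Gamma$ and $\Gamma$ only relates points in the same generator, $y_1,y_2\in H_\alpha$ as well. Pulling back to parameters one gets $s_1<s_2$ and $(s_i,t_i)\in B_\alpha$ for the monotone set $B_\alpha$ provided by (2); its monotonicity forces $t_1\leq t_2$, which translates back to $(y_1,y_2)\in\Gamma$, as desired.

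The main technical obstacle is not in the combinatorial argument itself, but in making precise the order-preserving bijection between $I_\alpha$ and $H_\alpha$ (and in particular maximally extending the parametrization using \Cref{L:domainJ} and the causality hypothesis), together with the careful bookkeeping of the borderline case $s_1=s_2$ in the definition of monotonicity, which corresponds to pairs $(x,y_1),(x,y_2)\in A$ sharing their first coordinate; once the correct interpretation of $B_\alpha$ as a graph-like object is adopted, consistently with the usage of monotone couplings earlier in the paper, the verification reduces to the comparable-case argument above.
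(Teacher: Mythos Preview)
Your proposal is correct and follows essentially the same approach as the paper's proof: for (1)$\Rightarrow$(2) you set $B_\alpha$ to be the pullback of $A$ under $\gflow_L(\alpha,\dotargument)\otimes\gflow_L(\alpha,\dotargument)$ and verify monotonicity, and for (2)$\Rightarrow$(1) you pull back to parameters and use monotonicity of $B_\alpha$---exactly as the paper does. You are in fact more careful than the paper, which declares the monotonicity of $B_\alpha$ ``trivial'' without addressing the $s_1=s_2$ borderline case you flag; your remark that this is resolved by the graph-like interpretation consistent with the paper's use of monotone couplings is the right reading.
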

\begin{proof}
  (1) $\implies$ (2).
  \quad
  Take as
  $B_\alpha:=( \gflow_L(\alpha,\dotargument) \otimes \gflow_L(\alpha,\dotargument))^{-1}(A)$.
  The fact that $B_\alpha$ is monotone is trivial.

  (2) $\implies$ (1).
  \quad
  Fix $(x_1,y_1),(x_2,y_2)\in A$.
  If $(x_1,x_2)\in\Gamma$, then all these four points belong to the
  same class $H_\alpha$.
  Then $x_i=\gflow_L(\alpha, s_i)$ and $y_i=\gflow_L(\alpha, t_i)$, for some
  $(s_i,t_i)\in B_\alpha$, $i=1,2$.
  The assumption $(x_1,x_2)\in\Gamma$ gives $s_1\leq s_2$ and the
  monotonicity of $B_\alpha$ gives $t_1\leq t_2$, concluding the
  proof.
\end{proof}

In order to obtain a uniqueness result, it will be useful to consider \emph{monotone} optimal coupling, defined below.

\begin{definition}\label{def:MonotoneCoupling} Let $(M,g)$ be a Lorentzian manifold.
  Let $H$ be a null hypersurface.
  Let $\mu_0,\mu_1\in \Prob(H)$ be two probability measures null connected
  along $H$.
  We say that an optimal coupling $\pi$ between $\mu_0$ and
  $\mu_1$ is \emph{monotone}, if:
  \begin{enumerate}
  \item 
  $\pi=(e_0\otimes e_1)_{\sharp}\nu$ for some
  $\nu\in\OptCaus^H(\mu_0,\mu_1)$;
  \item there exists a Borel set
  $A\subset\Gamma$ such that $\pi(A)=1$ and
  \begin{equation}
    \forall (x_1,y_1),(x_2,y_2)\in A
    ,
    \quad
    (x_1,x_2)\in\Gamma
    ,
    x_1\neq x_2
    \implies
    (y_1,y_2)\in\Gamma
    .
  \end{equation}
  \end{enumerate}
\end{definition}

The next proposition guarantees the uniqueness of monotone optimal couplings. It should be compared with the uniqueness of monotone optimal couplings for the $L^1$ optimal transport obtained by Feldman--McCann \cite{FeldMc-CVPDE} in $\mathbb{R}^n$.
The existence  will be proved in
\Cref{T:good-representation2}.

\begin{proposition}
  \label{P:uniqueness-monotone}
Let $(M,g)$ be a Lorentzian manifold.
  Let $H$ be a causal null hypersurface such that there exists a countable
  dense family of local
  space-like and acausal cross-sections of $H$. 
  Let $\mu_0,\mu_1\in \Prob(H)$ be two probability measures, such that
  $\ell_p(\mu_0,\mu_1)=0$,  for some (and thus every) $p\in (0,1]$.

  If a monotone optimal coupling between $\mu_0$ and $\mu_1$
  exists, then it is unique.
\end{proposition}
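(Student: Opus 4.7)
The plan is to disintegrate the two monotone couplings along the foliation of $H$ by its generators, reducing the uniqueness question to the classical fact that a monotone coupling of two probability measures on $\R$ is uniquely determined by its marginals.

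First, I would use \Cref{P:decomposition} to reduce to the setting where $H$ admits a global space-like and acausal cross-section. Concretely, fixing any $\nu^i\in\OptCaus^H(\mu_0,\mu_1)$ whose endpoint marginal is the given monotone optimal coupling $\pi^i$ ($i=1,2$), the proposition produces a partition of $H$ into saturated Borel sets $B_k\subset H_{S_k}$ together with weights $w_k=\mu_0(B_k)$ and marginals $\mu_j^k=w_k^{-1}\mu_j\llcorner_{B_k}$ depending only on $(\mu_0,\mu_1)$. The restrictions $\pi^{i,k}:=w_k^{-1}\pi^i\llcorner_{B_k\times B_k}$ inherit monotonicity, so it suffices to prove uniqueness inside each $H_{S_k}$. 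From now on assume $H$ already carries a global space-like acausal cross-section $S$, fix a global null-geodesic vector field $L$ as in \Cref{R:globalcross}, and denote by $\pi_S:H\to S$ the associated Borel quotient map sending a point to the label of its generator (this is essentially the inverse of the bijective flow parameterization $\gflow_L$).

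Next, I would build the disintegration. Because each $\pi^i$ is concentrated on $\Gamma$ by \Cref{lem:structure-transport-plans}, and because $\Gamma\subset\{(x,y):\pi_S(x)=\pi_S(y)\}$, both marginals push forward under $\pi_S$ to the common measure $q:=(\pi_S)_\sharp\mu_0=(\pi_S)_\sharp\mu_1$. Rokhlin disintegration then yields
\begin{equation*}
  \pi^i=\int_S \pi^i_\alpha\,dq(\alpha),
  \qquad
  \mu_j=\int_S \mu_j^\alpha\,dq(\alpha),
\end{equation*}
with $\pi^i_\alpha$ concentrated on $H_\alpha\times H_\alpha$ and having marginals $(\mu_0^\alpha,\mu_1^\alpha)$; crucially, by uniqueness of the disintegration of the fixed measures $\mu_0,\mu_1$, the fiber marginals $\mu_j^\alpha$ do not depend on $i$. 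By \Cref{lem:monotonicity-of-sets}, the monotonicity of the Borel set on which $\pi^i$ is concentrated transfers fiber by fiber: the pulled-back coupling $(\gflow_L(\alpha,\cdot)^{-1}\otimes\gflow_L(\alpha,\cdot)^{-1})_\sharp\pi^i_\alpha$ on $\R\times\R$ is concentrated on a monotone subset. Invoking the classical one-dimensional fact that a coupling of two probability measures on $\R$ concentrated on a monotone set is uniquely determined by its marginals (this is the monotone rearrangement, in the spirit of \Cref{lem:monotone}; equivalently, the unique $c$-cyclically monotone coupling for the cost $(x-y)^2$), I deduce $\pi^1_\alpha=\pi^2_\alpha$ for $q$-a.e.\ $\alpha$, whence $\pi^1=\pi^2$ after integration against $q$.

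The main obstacle, which is precisely what the reduction to a global cross-section is designed to bypass, is to obtain a sufficiently regular quotient map to perform the disintegration: on a general causal null hypersurface the quotient by $\relation$ need not inherit a natural Borel structure. Once a global cross-section $S$ is in place, $S$ itself plays the role of the quotient and the flow $\gflow_L$ provides explicit Borel coordinates, so the Rokhlin argument becomes routine; the only other delicate point — that $\pi^1$ and $\pi^2$ might a priori disintegrate against different base measures — is dispatched by the saturatedness observation that both marginals of each $\pi^i$ push forward to $(\pi_S)_\sharp\mu_0$, a consequence of $\pi^i$ being concentrated on $\Gamma$.
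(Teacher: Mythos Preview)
Your proposal is correct and follows essentially the same argument as the paper: reduce via \Cref{P:decomposition} to the case of a global acausal cross-section, disintegrate both couplings along the quotient map to $S$ using that $\Gamma$ is saturated with respect to the generators, invoke \Cref{lem:monotonicity-of-sets} to push monotonicity down to each fiber, and conclude by uniqueness of the one-dimensional monotone rearrangement. The paper presents the special case first and the reduction second, but the content is the same.
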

\begin{proof}
  Let $\pi^1$ and $\pi^2$ be two monotone optimal transport plans, induced by
  $\nu^1$ and $\nu^2$, respectively, i.e., $\pi^k=(e_0\otimes
  e_1)_{\sharp} \nu^k$, $k=1,2$.

  We assume first that there exists $S$, a global space-like and
  acausal cross-section of $H$.
  Let $L$ be a global null-geodesic vector field over $H$.
  Let $A^1,A^2\subset\Gamma$, be two subsets given by the monotonicity
  of $\pi^1$ and $\pi^2$ respectively.
  Let $K:=\bigcup_{\alpha\in S} H_\alpha\times H_\alpha$.
  Lemma~\ref{lem:structure-transport-plans} implies that
  $(e_0,e_1)(\gamma)\in K$ for $\nu^k$-a.e.\ $\gamma$, $k=1,2$,
  therefore $\pi^1(K)=\pi^2(K)=1$.
  Let $\QQ:H\to S$ be the map such that $\QQ(\gflow_L(\alpha,t))=\alpha$, for
  all $(\alpha,t)\in\dom \gflow_L$.
  With a slight abuse of notation, we also denote by $\QQ$ the map $\QQ:  K \to
  S$ given by $\QQ\circ P_1$.
  It is easy to see that
  \begin{equation}
    \QQ_{\sharp}\pi^1
    =
    \QQ_{\sharp}\pi^2
    =
    \QQ_{\sharp}\mu^0
    =
    \QQ_{\sharp}\mu^1
    =:
    \q
    .
  \end{equation}
  We can therefore apply the disintegration theorem, finding that
  \begin{equation}
    \omega
    =
    \int_S
    \omega_\alpha
    \,
    \q(\de\alpha)
    ,
  \end{equation}
  where $\omega$ can be either $\pi^1$, $\pi^2$, $\mu_0$, or $\mu_1$, and
  the measure $\omega_\alpha$ is concentrated on $H_\alpha\times\
  H_\alpha$ or $H_\alpha$, accordingly.
  Notice that $\pi^k_\alpha(A^k)=1$ for $\q$-a.e.\ $\alpha$.
  Let
  $\tilde\pi^k_\alpha:=((\gflow_L(\alpha,\dotargument))^{-1}\otimes
  (\gflow_L(\alpha,\dotargument))^{-1})_{\sharp}\pi^k$, and
  $\tilde\mu_{i,\alpha}:=(\gflow_L(\alpha,\dotargument))^{-1}_{\sharp}
  \mu_{i,\alpha}$.
  It is then clear that $\tilde\pi^k_\alpha$ is a transport plan
  between the measures $\tilde\mu_{0,\alpha}$ and
  $\tilde\mu_{1,\alpha}$.
  Lemma~\ref{lem:monotonicity-of-sets} implies that
  $\tilde\pi^k_\alpha$ is concentrated on some monotone set
  $B_\alpha^k$, therefore both $\tilde\pi_\alpha^1$ and
  $\tilde\pi_\alpha^2$ are monotone rearrangements of
  $\tilde\mu_{0,\alpha}$ into $\mu_{1,\alpha}$, thus these two
  transport plans coincide.
  By integrating, we conclude that $\pi^1=\pi^2$.

  We now drop the assumption that a global acausal cross-section
  exists.
  In this case we use \Cref{P:decomposition}, obtaining
  the sequences $(w_j)_j$, $\mu_i^j$, $\nu^j_1$, and $\nu^j_2$.
  In particular $\nu=\sum_{j} w_j \nu_k^j$, $k=1,2$.
  If we define $\pi^k_j:=(e_0\otimes e_1)_{\sharp} \nu_j$, $k=1,2$, then
  both $\pi^1_j$ and $\pi^2_j$ are monotone optimal transport plans. Applying the previous part, we get that
  $\pi^1_j=\pi^2_j$, which implies $\pi^1=\pi^2$.
\end{proof}

We are now in position to prove the existence of a null-geodesic
dynamical transport plan and a corresponding monotone optimal coupling  between two probability measures null connected along $H$.

\begin{theorem}
  \label{T:good-representation2}
Let $(M,g)$ be a Lorentzian manifold.
  Let $H$ be a causal, null hypersurface such that there exists a countable
  dense family of local space-like and acausal cross-sections. 
  Let $\mu_0,\mu_1\in \Prob_{ac}(H)$ be two probability measures,
  absolutely continuous w.r.t.\ $H$, such that
  $\mu_0$ is null connected to $\mu_1$ along $H$.  Then 
  \begin{itemize}
 \item   There exists a unique null-geodesic dynamical transport plan
  $\nu\in\OptGeo^H(\mu_0,\mu_1)$, such that the optimal coupling $\pi:=(e_0\otimes e_1)_{\sharp}\nu$ is monotone.  
  \item Such an optimal coupling $\pi$ is induced by a map.
  \item There exists a unique future-directed vector field
  $V$, such that $\dot\gamma_0 = V(\gamma_0)$,
  for $\nu$-a.e.\ $\gamma$.  If $L$ is a global null-geodesic vector field as defined in~\eqref{E:globalL}, then
  $V(\gamma_0)=t_L(\gamma_0,\gamma_1)L$ for $\nu$-a.e.\ $\gamma$, where $t_{L}$ is defined by~\eqref{E:definitiontL};
  \item The measure $(e_t)_{\sharp}\nu$ is absolutely continuous
  w.r.t.\ $H$, for all $t\in [0,1]$.
  \end{itemize}
\end{theorem}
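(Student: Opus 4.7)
The plan is to reduce the problem to a one-dimensional monotone rearrangement on each null generator of $H$. By \Cref{P:decomposition}, we may assume without loss of generality that $H$ admits a global space-like and acausal cross-section $S$, a global null-geodesic vector field $L$, and a flow map $\gflow_L$ diffeomorphic onto $H$. Let $\QQ:H\to S$ be the projection sending each point of $H_\alpha$ to $\alpha$. Since any witness dynamical plan in $\OptCaus^H(\mu_0,\mu_1)$ is concentrated on curves contained in single generators (by \Cref{lem:structure-transport-plans}), one has $\QQ_{\sharp}\mu_0=\QQ_{\sharp}\mu_1=:\q$, and by the disintegration theorem $\mu_i=\int_S \mu_{i,\alpha}\,\q(\de\alpha)$, $i=0,1$. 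Pulling $\mu_{i,\alpha}$ back through $\gflow_L(\alpha,\dotargument)$ to intervals $I_\alpha\subset\R$, and using the representation formula for $\vol_L$ from \Cref{T:summary}, the absolute continuity of $\mu_i$ w.r.t.\ $H$ yields that $\tilde\mu_{0,\alpha},\tilde\mu_{1,\alpha}$ are absolutely continuous w.r.t.\ $\Leb^1$ for $\q$-a.e.\ $\alpha$.

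On each such ray, let $T_\alpha:I_\alpha\to I_\alpha$ denote the (unique) monotone rearrangement map pushing $\tilde\mu_{0,\alpha}$ onto $\tilde\mu_{1,\alpha}$, which is well-defined thanks to the absolute continuity of $\tilde\mu_{0,\alpha}$. Since the witness plan induces a coupling between $\tilde\mu_{0,\alpha}$ and $\tilde\mu_{1,\alpha}$ concentrated on $\{(s,t):s\leq t\}$, \Cref{lem:monotone} gives $T_\alpha(t)\geq t$ on $\supp\tilde\mu_{0,\alpha}$. Define the null geodesic $\gamma^{\alpha,t}(s):=\gflow_L(\alpha,(1-s)t+sT_\alpha(t))$ and let $\nu_\alpha\in\Prob(C([0,1];H))$ be the law of $t\mapsto\gamma^{\alpha,t}$ under $\tilde\mu_{0,\alpha}$; then set $\nu:=\int_S\nu_\alpha\,\q(\de\alpha)$. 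By construction $\nu$ is concentrated on future-directed null geodesics in $H$, satisfies $(e_0,e_1)_{\sharp}\nu\in\Pi_{\leq}(\mu_0,\mu_1)$ with $\tau$-vanishing coupling, and the induced optimal coupling $\pi$ is monotone in the sense of \Cref{def:MonotoneCoupling}: indeed on each generator it is the monotone rearrangement, so \Cref{lem:monotonicity-of-sets} produces the required Borel set $A\subset\Gamma$.

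Uniqueness of the monotone coupling $\pi$ is then exactly \Cref{P:uniqueness-monotone}. Given this uniqueness, each $(x,y)\in\supp\pi\subset\Gamma$ determines, via \Cref{prop:MapV}, a unique initial velocity $V(x,y)\in\normal_x H$, and $\nu$ is the pushforward of $\pi$ under $(x,y)\mapsto[s\mapsto\exp_x(sV(x,y))]$, hence $\nu$ is unique as well. Defining $T:H\to H$ $\mu_0$-a.e.\ by $T|_{H_\alpha}=\gflow_L(\alpha,T_\alpha(\gflow_L(\alpha,\dotargument)^{-1}))$ and setting $V(x):=t_L(x,T(x))\,L(x)$ furnishes the stated vector field, which is future-directed because $T_\alpha(t)\geq t$.

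For the absolute continuity of $\mu_s:=(e_s)_{\sharp}\nu$, observe that on the ray indexed by $\alpha$ we have $(e_s)_{\sharp}\nu_\alpha=F^{\alpha,s}_\sharp\tilde\mu_{0,\alpha}$ (modulo the chart $\gflow_L(\alpha,\dotargument)$), where $F^{\alpha,s}(t)=(1-s)t+sT_\alpha(t)$. Since $T_\alpha$ is non-decreasing and $T_\alpha(t)\geq t$, the map $F^{\alpha,s}$ is strictly increasing with $(F^{\alpha,s})'\geq 1-s$ for $s\in[0,1)$ (and $F^{\alpha,1}=T_\alpha$, which pushes forward to the a.c. measure $\tilde\mu_{1,\alpha}$), so $(e_s)_{\sharp}\nu_\alpha$ is a.c. w.r.t.\ $\Leb^1$ on $I_\alpha$. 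Integrating over $\q$ and invoking once more the disintegration of $\vol_L$ from \Cref{T:summary} gives $\mu_s\ll\vol_L$, hence $\mu_s\in\Prob_{ac}(H)$. The main technical obstacle is ensuring that the ray-wise monotone rearrangements assemble into a genuine element of $\OptGeo^H(\mu_0,\mu_1)$ with measurable dependence on $\alpha$, and that the intermediate maps $F^{\alpha,s}$ retain enough monotonicity (despite $T_\alpha$ being only non-decreasing) to propagate absolute continuity through every time slice; both issues are resolved by the strict positivity $(F^{\alpha,s})'\geq 1-s>0$ and by standard measurable selection arguments applied to the family $\alpha\mapsto T_\alpha$.
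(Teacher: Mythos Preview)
Your proposal is correct and follows essentially the same route as the paper: reduce via \Cref{P:decomposition} to the case of a global acausal cross-section, disintegrate $\mu_0,\mu_1$ along the generators, apply the one-dimensional monotone rearrangement on each ray, reassemble via $\gflow_L$, and then verify monotonicity through \Cref{lem:monotonicity-of-sets}, future-directedness through \Cref{lem:monotone}, uniqueness through \Cref{P:uniqueness-monotone}, and absolute continuity of the interpolants via the lower Lipschitz bound on $F^{\alpha,s}$. The only cosmetic difference is that the paper packages the construction as $\nu=(\Exp(V))_\sharp\mu_0$ rather than $\nu=\int_S\nu_\alpha\,\q(\de\alpha)$, and establishes the equality of ray masses directly via transverse test functions rather than via $\QQ_\sharp\mu_0=\QQ_\sharp\mu_1$; these are equivalent formulations of the same argument.
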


\begin{proof}
We consider the case when there exists a global acausal cross-section $S$.
Since it follows the line
  presented in the previous proofs (see \Cref{T:summary}),  we omit the proof of the general case.

  Let  $L$ be a global null-geodesic vector field.
  Let $\rho_i$ be the density of $\mu_i$ w.r.t.\ the measure
  $\vol_L$, $i=0,1$.
  First we claim that
  \begin{equation}\label{eq:intrho0=rho1}
    \int_{\R}
    \rho_0(\gflow_L(z,t))
    \,
    e^{W_{L}(z,t)}
    \,
    \de t
    =
    \int_{\R}
    \rho_1(\gflow_L(z,t))
    \,
    e^{W_{L}(z,t)}
    \,
    \de t
    ,
    \qquad
    \text{ for a.e.\ }
    z\in S
    .
  \end{equation}
  Indeed, fix $\varphi$ a transverse function and compute
  \begin{align*}
    &
    \int_S
    \varphi(z)
    \int_{\R}
    \rho_0(\gflow_L(z,t))
    \,
    e^{W_{L}(z,t)}
    \,
    \de t
    \,
    \haus^{d-2}(\de z)
    \\
    &\qquad
    =
    \int_S
    \int_{\R}
    \varphi(\gflow_L(z,t))
    \rho_0(\gflow_L(z,t))
    \,
    e^{W_{L}(z,t)}
    \,
    \de t
    \,
    \haus^{d-2}(\de z)
    \\
    &\qquad
      =
      \int_H
      \varphi(z)
      \rho_0(z)
      \,
      \vol_L(\de z)
      =
      \int_H
      \varphi(z)
      \rho_1(z)
      \,
      \vol_L(\de z)
    \\
    &\qquad
    =
    \int_S
    \int_{\R}
    \varphi(\gflow_L(z,t))
    \rho_1(\gflow_L(z,t))
    \,
    e^{W_{L}(z,t)}
    \,
    \de t
    \,
      \haus^{d-2}(\de z)
    \\
    &\qquad
      =
    \int_S
    \varphi(z)
    \int_{\R}
    \rho_0(\gflow_L(z,t))
    \,
    e^{W_{L}(z,t)}
    \,
    \de t
    \,
      \haus^{d-2}(\de z)
      .
  \end{align*}
  By arbitrariness of $\varphi$, we deduce the validity of~\eqref{eq:intrho0=rho1}.
  For $z\in S$, let $\tilde\mu_i^z\in\M^+(\R)$ be the measure given by
  \begin{equation}
    \tilde\mu_i^z
    :=
    \rho_i(\gflow_L(z,\dotargument))
    \,e^{W_{L}(z,\dotargument)}
    \,
    \mathcal{L}^{1}(\de t)
    ,
    \qquad
    i=0,1
    .
  \end{equation}
  Let $T(z,\dotargument):\R\to\R$ be the map transporting $\tilde\mu_0^z$
  to $\tilde\mu_1^z$ given by the monotone rearrangement, and let
  $R(z,t)=T(z,t)-t$.
  Define $V$ in the following way: if $z=\gflow_L(z',t)$, for some
  $z'\in S$, then $V(z)=R(z',t) L$.
  In other words, $V(z)=R(\QQ(z),t_L^S(z)) L$, where $\QQ:H\to S$ and
  $t_L^S:H\to \R$ are the maps uniquely defined by the relations
  \begin{equation}
    \QQ(z)\in S,
    \qquad
    (\QQ(z),z)\in\relation,
    \qquad
    \gflow_L(\QQ(z),t_L^S(z))=z
    .
  \end{equation}
  Let $\Exp(V):H\to C([0,1];H)$ be the function mapping  $z$ to the
  curve $s\mapsto \exp_z(sV(z))$.
  Define $$\nu:=(\Exp(V))_{\sharp}\mu_0.$$
  We now check that $\nu$ and $V$ satisfy the desired properties.
  First we notice that
  \begin{align*}
    \Exp(V)(\gflow_L(z,t))(s)
    &
      =
      \exp_{\gflow_L(z,t)}(sV)
      =
      \exp_{\gflow_L(z,t)}(sR(z,t) L)
    \\&
      =
      \gflow_L(z,t+s R(z,t))
      =
    \gflow_L(z,T(z,t,s))
    ,
  \end{align*}
  where $T(z,t,s)=t+sR(z,t)=(1-s)t + sT(z,t)$, or in other words
  $$(e_s\circ \Exp(V))(\gflow_L(z,t))=    \gflow_L(z,T(z,t,s)).$$
  It is clear that
  \begin{equation}
    \mu_i
    =
    \int_S
    \gflow_L(z,\dotargument)_{\sharp} \tilde\mu_i^z
    \,
    \haus^{n-2}(\de z)
    ,
    \qquad
    i=0,1
    .
  \end{equation}
  We can therefore compute
  \begin{equation}
    \label{eq:monotone-interpolation}
    \begin{aligned}
    (e_s)_{\sharp}\nu
    &
    =
      (e_s\circ\Exp(V))_{\sharp}
      \mu_0
    =
    \int_S
    (e_s\circ\Exp(V))_{\sharp} \,\gflow_L(z,\dotargument)_{\sharp} \tilde\mu_0^z
    \,
    \haus^{n-2}(\de z)
    \\
    &
          =
    \int_S
      (\gflow_L(z,T(z,\dotargument,s)))_\sharp
      \tilde\mu_0^z
    \,
      \haus^{n-2}(\de z)
           =
    \int_S
      \gflow_L(z,\dotargument)_{\sharp} T(z,\dotargument,s)_\sharp
      \tilde\mu_0^z
    \,
      \haus^{n-2}(\de z)
      .
  \end{aligned}
  \end{equation}
  The formula above has two consequences.
  If we specialize to the case $s=0,1$, we see that $(e_s)_{\sharp}\nu=\mu_s$,
  and therefore $\nu\in\OptGeo^H(\mu_0,\mu_1)$ (the fact that
  $\nu$ is optimal is trivial).
  Moreover, if $s\in[0,1]$, the formula above implies that
  $(e_s)_{\sharp}\nu$ is absolutely continuous.
  Following similar computations, one can see that the plan
  $\pi=(e_0\otimes e_1)_{\sharp}\nu$ can be represented as
  \begin{align*}
    \pi
    &
    =
    \int_S
      \gflow_L(z,\dotargument)_{\sharp} (\id\otimes T(z,\dotargument))_\sharp
      \tilde\mu_0^z
    \,
      \haus^{n-2}(\de z)
      .
  \end{align*}
  Therefore, Lemma~\ref{lem:monotonicity-of-sets} implies that $\pi$
  is monotone. It is clear from the construction that $\pi$ is induced by a map.
  The fact that $V(\gamma_0)=t_L(\gamma_0,\gamma_1)L=\dot\gamma_0$ for
  $\nu$-a.e.\ $\gamma$ follows almost by definition.
  The fact that $\mu_t$ is absolutely continuous is a
  consequence of~\eqref{eq:monotone-interpolation} and the fact that
  monotone interpolation in $\R$ preserves the absolute continuity
  w.r.t.\ the Lebesgue measure.
  
  Finally, we check that $V$ is future directed.
  Clearly, it is sufficient to check that it is future-directed
  $\mu_0$-a.e., otherwise, one can change $V$ on a negligible set, preserving the already-proven properties.
  Consider a dynamical transport plan
  $\widehat\nu\in  \OptCaus^H(\widehat\mu_0,\widehat\mu_1)$.
  Define $\widehat\pi:=(e_0\otimes e_1)_{\sharp}\widehat\nu$.
  By disintegrating the measure $\pi$  with respect to the projection on the first factor $P_1:H\times H\to H$, following the argument in the
  proof of \Cref{P:uniqueness-monotone}, one can produce a family of coupling $\widehat \pi_z$, between their marginals $\tilde\mu_0^z$ and
  $\tilde\mu_1^z$.
  Since $\pi$ gives full measure to $\Gamma$, it follows that for  for $\mu_0$-a.e. $z$, $\pi_z$ gives full measure to the set $\{(x,y) \colon y\geq
  x\}$.
  Therefore, \Cref{lem:monotone} implies that the monotone
  rearrangement of $\tilde\mu_0^z$ into $\tilde\mu_1^z$, given by the
  function $T(z,\dotargument)$, satisfies $T(z,t)\geq t$.
  In other words $R(z,t)\geq 0$, and since $V$ is obtained by
  multiplying $R$ with $L$, we conclude that $V$ is future directed.
\end{proof}

\begin{definition}
 Let $H$ be a causal, null hypersurface such that there exists a countable
  dense family of local space-like and acausal cross-sections. 
  Let $\mu_0,\mu_1\in \Prob_{ac}(H)$ be two probability measures,
  absolutely continuous w.r.t.\ $H$, such that
  $\mu_0$ is null connected to $\mu_1$ along $H$.  
  
  The unique null-geodesic dynamical transport plan
  $\nu\in\OptGeo^H(\mu_0,\mu_1)$, such that the optimal coupling $\pi:=(e_0\otimes e_1)_{\sharp}\nu$ is monotone given in Theorem \ref{T:good-representation2}, will be called \emph{monotone null-geodesic dynamical transport plan.}
\end{definition}

  \begin{remark}
    Under a finer analysis, one can see that the results of this
    section are available under the weaker assumption $g\in C^{1,1}_{loc}$ and
    $H$ of class $C^1$.
  \end{remark}

\section{Null energy condition as displacement convexity}\label{sec:NCe}

Building on top of the results about optimal transport along a null hypersurface obtained in Section~\ref{sec:OTinsideH}, in the present section we will provide a synthetic characterization of the null energy condition based on displacement convexity of the Boltzmann--Shannon entropy relative to the rigged measure along monotone null-geodesic optimal transport plans (cf.\ \Cref{T:good-representation2}) contained in null hypersurfaces.
Recall that the Boltzmann--Shannon entropy $\Ent(\mu|\mm)$ of a probability measure $\mu\in \Prob(M)$ with respect to the reference measure $\mm$ is defined by
\begin{equation}\label{eq:defSBEnt}
\Ent(\mu|\mm)=\int \rho \log \rho \, \de\mm,
\end{equation}
if $\mu=\rho\,\mm$ is absolutely continuous w.r.t.\ $\mm$ and $\left(\rho \, \log \rho \right)_+$ is $\mm$-integrable; otherwise, we adopt the convention that $\Ent(\mu|\mm)=+\infty$.

Firstly we notice that the convexity properties of the entropy along monotone null-geodesic optimal transport plans do not depend on the choice of null-geodesic vector field $L$, used to define the volume meausure $\vol_L$ on $H$ (cf.\ \Cref{T:summary}).

\begin{proposition}
    \label{P:entropy-convexity}
    Let $(M,g)$ be a Lorentzian manifold, and let $H$ be a null hypersurface.
  Let $L_1$, $L_2$ be two null-geodesic vector fields and let $\vol_{L_1}, \vol_{L_2}$ be the associated volume measures on $H$, as in \Cref{T:summary}. Let $\Phi:H\to \R$ be a $C^0$ function and let $\mm_{L_1}:=e^{\Phi}\vol_{L_1},\; \mm_{L_2}:=e^{\Phi}\vol_{L_2}$ be the corresponding weighted measures on $H$.
  Let $\mu_0,\mu_1\in \Prob(H)$ be two probability measures null connected
  along $H$ and let $\mu_t:=(e_t)_{\sharp}\nu$, $t\in [0,1]$,
  for some  $\nu\in\OptCaus^H(\mu_0, \mu_1)$.
  Let $\ee_k(t):=\Ent(\mu_t|\mm_{L_k})$, $k=1,2$, and assume that both
  functions are well-defined on $[0,1]$.

  Then the function $\ee_1-\ee_2$ is constant on $[0,1]$.
  In particular, $\ee_1$ and $\ee_2$ enjoy the same convexity
  properties.
\end{proposition}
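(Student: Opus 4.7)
The strategy is to reduce the statement to an elementary change-of-reference-measure computation, after which the constancy in $t$ follows directly from \Cref{lem:integral-transverse} on transverse functions. The fact that both vector fields are null-geodesic forces their ratio to be transverse, and transverse functions have constant mass against $\mu_t$ along a null-connected transport.

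First I would note that, because $L_1$ and $L_2$ are both sections of the (one-dimensional) normal bundle $\normal H$ and are nowhere vanishing on their common domain, there exists a positive $C^1$ function $\varphi$ with $L_1 = \varphi\, L_2$. The condition $\nabla_{L_1} L_1 = 0$ together with $\nabla_{L_2} L_2 = 0$ gives
\begin{equation*}
  0 = \nabla_{\varphi L_2}(\varphi L_2) = \varphi\, L_2(\varphi)\, L_2 + \varphi^2\, \nabla_{L_2} L_2 = \varphi\, L_2(\varphi)\, L_2,
\end{equation*}
so $L_2(\varphi) = 0$, i.e.\ $\varphi$ is transverse in the sense of \Cref{D:transverse}. (In the general situation where $L_1,L_2$ arise from a patching as in \Cref{R:globalcross}, this identification is carried out on each piece $B_k$, and yields a globally defined Borel transverse function.) By \Cref{cor:rescale-volume} we then have $\vol_{L_1} = \varphi^{-1} \vol_{L_2}$ and, consequently,
\begin{equation*}
  \mm_{L_1} = e^{\Phi}\vol_{L_1} = \varphi^{-1}\, \mm_{L_2}.
\end{equation*}

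Next, if $\mu_t = \rho_t^{(k)}\, \mm_{L_k}$ denotes the density of $\mu_t$ with respect to $\mm_{L_k}$ for $k=1,2$ (both exist, since $\ee_k(t)$ is assumed finite, hence in particular $\mu_t \ll \mm_{L_k}$), then from $\mm_{L_1} = \varphi^{-1}\mm_{L_2}$ one obtains $\rho_t^{(1)} = \varphi\, \rho_t^{(2)}$ $\mm_{L_2}$-a.e. Plugging this into the definition~\eqref{eq:defSBEnt} of the Boltzmann--Shannon entropy gives
\begin{equation*}
  \ee_1(t) = \int \rho_t^{(1)} \log \rho_t^{(1)}\, \de\mm_{L_1}
  = \int \rho_t^{(2)} \bigl( \log \varphi + \log \rho_t^{(2)} \bigr)\, \de\mm_{L_2}
  = \int \log \varphi\, \de\mu_t + \ee_2(t).
\end{equation*}
Hence
\begin{equation*}
  \ee_1(t) - \ee_2(t) = \int_H \log \varphi\, \de\mu_t, \qquad t\in[0,1].
\end{equation*}

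Finally, since $\varphi$ is transverse and positive, $\log \varphi$ is also transverse, so \Cref{lem:integral-transverse} applies (the integrability of $\log \varphi$ against $\mu_t$ is guaranteed by the finiteness of $\ee_1(t)$ and $\ee_2(t)$) and gives
\begin{equation*}
  \int_H \log \varphi\, \de\mu_t = \int_H \log \varphi\, \de\mu_0, \qquad \forall t\in[0,1].
\end{equation*}
Thus $\ee_1 - \ee_2$ is constant on $[0,1]$, proving the proposition. No step presents a real obstacle: the only mildly delicate point is checking that the transverse function $\varphi$ is well-defined (and transverse) on the full open set where both $L_1$ and $L_2$ are considered, which is immediate from the one-dimensionality of $\normal H$ together with the geodesic condition on $L_1$, $L_2$.
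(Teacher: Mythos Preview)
Your proof is correct and follows essentially the same route as the paper's: write $L_1=\varphi L_2$ with $\varphi$ transverse, use \Cref{cor:rescale-volume} to relate the two reference measures, expand the entropy to isolate $\int_H \log\varphi\,\de\mu_t$, and invoke \Cref{lem:integral-transverse} to conclude this integral is independent of $t$. The only cosmetic difference is that you spell out the derivation of transversality of $\varphi$ from $\nabla_{L_i}L_i=0$, whereas the paper simply cites \Cref{R:change-null-geodesics}.
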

\begin{proof}
Since $L_1$ and $L_2$ are null-geodesic vector fields, we can assume without any loss of generality that there exists
 a positive transverse function $\varphi$, such that $L_2=\varphi L_1$ on the union of the supports of the measures $\mu_{t}, \;t\in [0,1]$.
  Let $\rho_{k,t}$ be the density of $\mu_t$ w.r.t. the measure
  $\mm_{L_k}$, $k=1,2$, $t\in [0,1]$.
  Since $\rho_{2,t}=\rho_{1,t}/\varphi$, a direct
  computation gives
  \begin{align*}
    \ee_{2}(t)
    &
      =
      \Ent(\mu_t|\mm_{L_2})
    =
    \int_H
    \log(\rho_{2,t})
    \,
    \de\mu_t
    =
    \int_H
    \log(\rho_{1,t})
    \,
    \de\mu_t
    -
      \int_H
    \log \varphi
    \,
      \de\mu_t
    \\
    &
      =
      \Ent(\mu_t|\mm_{L_1})
    -
      \int_H
    \log \varphi
    \,
    \de\mu_t
      \stackrel{\text{\eqref{eq:integral-transverse}}}{=}
      \Ent(\mu_t|\mm_{L_1})
    -
      \int_H
    \log \varphi
    \,
    \de\mu_0
      =
      \ee_1(t)
    -
      \int_H
    \log \varphi
    \,
      \de\mu_0
      .
      \qedhere
  \end{align*}
\end{proof}

We now recall a classical one-dimensional result on the displacement convexity of the entropy functional.
We include a quick proof for readers' convenience. 

\begin{lemma}
  \label{lem:entropy-1d}
  Let $\mu_0\in \Prob_{ac}(\R)$ be an absolutely continuous
  probability measure, and let $T:\R\to\R$ be a non-decreasing function.
  Let $\mu_t :  =((1-t)\id+t T)_{\sharp}(\mu_0)$, 
  for all $t\in (0,1]$.
  Then $\mu_t$ is absolutely continuous for all $t\in(0,1)$.

  Moreover,
  if $\Phi:\R\to\R$ is a locally-Lipschitz function  satisfying
  \begin{equation}\label{E:CDlog}
    \Phi''
    +\frac{(\Phi')^2}{N-1}
    \leq 
    - K
    \qquad
    \text{ in the sense of distributions}
    ,
  \end{equation}
  then, having defined $\ee(t):=\Ent(\mu_t|e^{\Phi}\L^1)$, it holds
  that $\ee(\dotargument)$ is locally-Lipschitz and it satisfies
  \begin{equation}\label{E:CDentropic1d}
    \ee''
    -\frac{(\ee')^2}{N}
    \geq
    K
    \int_{\R}
    |x-T(x)|^2
    \,
    \mu_0(\de x)
    ,
    \qquad
    \text{ in the sense of distributions.}
  \end{equation}
  Conversely, if for any $(\mu_t)$ as above the entropic inequality~\eqref{E:CDentropic1d} holds, then $\Phi$ is locally-Lipschitz and satisfies~\eqref{E:CDlog}.
\end{lemma}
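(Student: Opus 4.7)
The plan is to reduce the claimed convexity of $\ee$ to a pointwise-in-$x$ differential inequality for the one-variable functions $a_x(t) := \log T_t'(x) + \Phi(T_t(x))$, where $T_t := (1-t)\id + tT$, and then integrate against $\mu_0$. For the absolute continuity assertion, observe that the monotonicity of $T$ gives $T_t(y)-T_t(x) \geq (1-t)(y-x)$ whenever $y\geq x$; hence $T_t$ admits a $\tfrac{1}{1-t}$-Lipschitz right inverse on its image, and the pushforward $\mu_t = (T_t)_\sharp \mu_0$ is absolutely continuous for every $t\in(0,1)$ as soon as $\mu_0$ is. The one-dimensional change of variables then yields, for $\mu_0 = \rho_0\L^1$,
\[
    \ee(t)
    =
    \Ent(\mu_0|\L^1)
    -
    \int_\R a_x(t)\,\de\mu_0(x),
\]
so that the convexity of $\ee$ is dictated by the behaviour of the family $x\mapsto a_x(\cdot)$.

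Differentiating $a_x$ formally in $t$, with $v(x):=T(x)-x$ and $T_t'(x) = 1 + t\,v'(x)$, one obtains
\[
    a_x'(t) = \frac{v'(x)}{T_t'(x)} + \Phi'(T_t(x))\,v(x),
    \qquad
    a_x''(t) = -\frac{v'(x)^2}{T_t'(x)^2} + \Phi''(T_t(x))\,v(x)^2.
\]
Combining the hypothesis $\Phi'' + (\Phi')^2/(N-1) \leq -K$ with \Cref{lem:useful-inequality} applied with weights $1$ and $N-1$, namely
\[
    \left(\frac{v'}{T_t'}\right)^{\!2} + \frac{(\Phi'(T_t)\,v)^2}{N-1}
    \,\geq\, \frac{(a_x'(t))^2}{N},
\]
one obtains, in the distributional sense in $t$, the pointwise bound
\[
    a_x''(t) + \frac{(a_x'(t))^2}{N} \,\leq\, -K\,v(x)^2.
\]
Integrating against $\mu_0$, using $\ee'(t)=-\int a_x'(t)\,\de\mu_0$ and $\ee''(t)=-\int a_x''(t)\,\de\mu_0$, and applying Jensen's inequality $\int (a_x')^2 \,\de\mu_0 \geq \left(\int a_x'\,\de\mu_0\right)^2$ produces~\eqref{E:CDentropic1d}; the local Lipschitz regularity of $\ee$ on $(0,1)$ follows from local boundedness in $t$ of $a_x$ and $a_x'$ uniformly in $x$ over $\supp\mu_0$.

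The main technical obstacle will be the rigorous distributional justification under the stated minimal regularity: $T'$ exists only $\L^1$-a.e.\ (so the composition $T\circ T_t^{-1}$ is delicate), $\Phi''$ exists only as a distribution, and $\Phi''\circ T_t$ must be handled with care. My plan to overcome this is to mollify $\Phi$ by a standard non-negative kernel and simultaneously approximate $T$ by smooth non-decreasing maps, carry out the smooth computation, and pass to the limit tested against functions of $t$. The bound~\eqref{E:CDlog} is equivalent to the linear comparison inequality $(e^{\Phi/(N-1)})'' + \tfrac{K}{N-1}\,e^{\Phi/(N-1)} \leq 0$ distributionally, which is trivially preserved under convolution with any non-negative kernel, regardless of the sign of $K$. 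For the converse direction, I would test the entropic inequality against the family $\mu_0 := \tfrac{1}{\varepsilon}\chi_{[x_0-\varepsilon/2,\,x_0+\varepsilon/2]}\L^1$ with $T(x):=x+v(x)$, $v\in C^\infty_c(\R)$, and let $\varepsilon\to 0$ to obtain, at each $x_0$, a scalar inequality in the four quantities $\Phi'(x_0), \Phi''(x_0), v(x_0), v'(x_0)$; optimising over $v'(x_0)$ for fixed $v(x_0)$ (the optimum being $v'(x_0) = \Phi'(x_0)v(x_0)/(N-1)$) extracts precisely the denominator $N-1$ and produces~\eqref{E:CDlog}. Local Lipschitzness of $\Phi$ must be recovered first, for instance by testing the entropic inequality against pure translations $T(x)=x+c$, which forces local $L^\infty_{loc}$-control of $\Phi'$ distributionally.
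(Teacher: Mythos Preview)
Your computational approach is correct and is precisely the ``direct computation'' the paper alludes to but does not carry out: the paper's proof of this lemma is a one-paragraph reduction to existing literature, namely the equivalence between the $\CD(K,N)$ condition for the weighted real line $(\R,|\cdot|,e^{\Phi}\L^1)$ and the entropic $\CD^e(K,N)$ condition, citing \cite[Th.~3.12, Lemma~2.2, Lemma~2.8]{EKS}. So the two routes are genuinely different in style: the paper black-boxes the equivalence, while you unpack it from first principles via the Jacobian formula $\ee(t)=\Ent(\mu_0|\L^1)-\int a_x(t)\,\de\mu_0$, the pointwise Riccati bound $a_x''+(a_x')^2/N\le -Kv(x)^2$ (obtained exactly as in \Cref{P:convexity-weight} using \Cref{lem:useful-inequality}), and Jensen. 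Your approach has the advantage of being self-contained and of making transparent why the dimensional shift $N-1\mapsto N$ occurs (one extra degree of freedom from the Jacobian $\log T_t'$); the paper's approach is shorter and avoids the approximation work.

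Two small points worth tightening. First, your assertion that $\ee$ is locally Lipschitz because $a_x,a_x'$ are ``locally bounded uniformly in $x$ over $\supp\mu_0$'' is not justified as stated: $\mu_0$ need not be compactly supported and $v'=T'-1$ need not be bounded. The paper sidesteps this by observing the claim is local in $t$, so one may without loss assume $\mu_0,\mu_1\in\mathcal P_2(\R)$ (or even compactly supported), after which your uniform bounds do hold. Second, for the converse you propose to optimise freely over $v'(x_0)$, but the constraint that $T$ be non-decreasing forces $v'\ge -1$; this is harmless since one may rescale $v\mapsto \epsilon v$ and divide through by $\epsilon^2$ before optimising, but it is worth saying explicitly.
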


\begin{proof}
The first claim follows from the fact that 
$(\mu_t)_{t \in [0,1]}$ is a $W_2$-geodesic along 
$\mathcal{P}_2(\R)$ (the space of probability measures with finite second moment over $\R$) as soon as 
$\mu_0, \mu_1 \in \mathcal{P}_2(\R)$; since the claim is about a local property, it is not restrictive to 
assume $\mu_0, \mu_1 \in \mathcal{P}_2(\R)$.
Hence the first claims follows, see~\cite{villani:oldandnew}.

 The equivalence of~\eqref{E:CDlog} and~\eqref{E:CDentropic1d} can be checked by a direct computation (since it involves optimal transport on the real line with a weight). For sake of brevity, we refer to existing literature.
The inequality
\eqref{E:CDlog}
is equivalent to asking that $(\R, |\dotargument|, e^{\Phi}\mathcal{L}^1)$ satisfies the $\CD(K,N)$ condition. From 
\cite[Th.~3.12]{EKS}, this is equivalent to 
$(\R, |\dotargument|, e^{\Phi}\mathcal{L}^1)$ satisfying the 
entropic $\CD^{e}(K,N)$ (see~\cite[Definition~3.1]{EKS}) that by~\cite[Lemma~2.8,~Lemma~2.2]{EKS} is equivalent to~\eqref{E:CDentropic1d}. 
\end{proof}

We are now in position to prove 
 the convexity of the entropy along a Wasserstein geodesic
induced by a monotone null-geodesic dynamical transport
plan, provided a null Ricci lower bound holds true. 

\begin{theorem}
  \label{T:convexity-of-entropy-null}
  Let $(M,g)$ be a Lorentzian manifold of
  dimension $n$, with $g\in C^2$,  and   let $H\subset M$ be a causal null $C^2$-hypersurface that
  admits a dense  sequence $(S_i)_i$ of local space-like and
  acausal cross-sections of class $C^2$.
  Consider $\Phi:H\to \R$ a $C^{0}$
  weight function and  assume  that $H$ satisfies the
    $\NC^1(N)$ condition (see \Cref{def:NC1Quadruples}).

  Let $\mu_0,\mu_1\in \Prob_{ac}(H)$ be two probability measures
  such that $\mu_0$ is null connected to $\mu_1$ along $H$
  and let $\nu\in\OptGeo^H(\mu_0,\mu_1)$ be the unique monotone null-geodesic
  dynamical transport plan (see \Cref{T:good-representation2}).

Let $L$ be a null-geodesic vector field of class $C^1$  and let $\vol_L$ be the associated volume measure on $H$, as in \Cref{T:summary}. Set $\mm_L:=e^{\Phi} \vol_L$. Denote by $\mu_t:=(e_t)_{\sharp}\nu$, $t\in [0,1]$, and let $\ee(t):=\Ent(\mu_t|\mm_L)$.

Then  the function $[0,1]\ni t \mapsto \ee(t)$ is convex (thus, in particular, locally-Lipschitz on $(0,1)$) and it satisfies
  \begin{equation}
    \label{eq:convexity-entropy-0}
    \ee''
    -
    \frac{    (\ee')^2 }{N-1}
    \geq
    0
    ,
    \qquad
    \text{ 
  in the sense of distributions on $(0,1)$.}
  \end{equation}
\end{theorem}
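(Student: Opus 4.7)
The plan is to reduce to a pointwise concavity statement on each null generator of $H$ and then to integrate it back up by a Cauchy--Schwarz argument that produces the sharp dimensional factor $(N-1)$. Throughout I use $s\in[0,1]$ as the interpolation parameter (matching the notation in the proof of \Cref{T:good-representation2}) and $t$ as the parameter along a generator, so that $\mu_s=(e_s)_\sharp\nu$ and $\ee(s)=\Ent(\mu_s\mid\mm_L)$.

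First I would apply \Cref{P:decomposition} to split $\nu=\sum_k w_k\nu^k$ with each $\nu^k$ supported on $H_{S_k}$ and with interpolations $\mu^k_s:=(e_s)_\sharp\nu^k$ mutually singular across $k$ for every $s$; by \Cref{P:uniqueness-monotone} each $\nu^k$ is the monotone null-geodesic transport from $\mu_0^k$ to $\mu_1^k$. The disjointness of the supports $B_k$ yields $\ee(s)=\sum_k w_k\ee_k(s)+\sum_k w_k\log w_k$ with $\ee_k(s):=\Ent(\mu^k_s\mid\mm_L)$, and the Jensen-type bound $\bigl(\sum_k w_k\ee_k'\bigr)^2\leq\sum_k w_k(\ee_k')^2$ ensures that the target inequality passes through the convex combination. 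This reduces the proof to the case of a single global space-like acausal cross-section $S$ for $H$.

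In that case, \Cref{T:good-representation2} provides, for each $z\in S$, the monotone rearrangement $T(z,\dotargument)\colon\R\to\R$ of the one-dimensional marginals $\tilde\mu_0^z,\tilde\mu_1^z$ of $\mu_0,\mu_1$ on the generator through $z$ with respect to $e^{a_z(t)}\,\de t$, where $a_z(t):=\Phi(\gflow_L(z,t))+W_L(z,t)$. Setting $Y_s(t):=(1-s)t+s\,T(z,t)$, conservation of mass along each generator gives the Monge--Amp\`ere-type identity
\begin{equation*}
  \rho_s\bigl(\gflow_L(z,Y_s(t))\bigr)\,e^{a_z(Y_s(t))}\,Y_s'(t)=\rho_0\bigl(\gflow_L(z,t)\bigr)\,e^{a_z(t)},
\end{equation*}
where $\rho_s$ is the density of $\mu_s$ with respect to $\mm_L$. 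Defining
\begin{equation*}
  F_{x_0}(s):=\bigl[e^{a_z(Y_s(t))-a_z(t)}\,Y_s'(t)\bigr]^{1/(N-1)}\quad\text{for }x_0=\gflow_L(z,t),
\end{equation*}
with $F_{x_0}(0)=1$, this rewrites as $\rho_s^{-1/(N-1)}\bigl(\gflow_L(z,Y_s(t))\bigr)=\rho_0^{-1/(N-1)}(x_0)\,F_{x_0}(s)$. The main obstacle, and the step in which $\NC^1(N)$ is crucially used, is to show that $s\mapsto F_{x_0}(s)$ is concave on $[0,1]$. Writing $F_{x_0}=e^{G/(N-1)}$ with $G(s)=a_z(Y_s(t))-a_z(t)+\log Y_s'(t)$, concavity is equivalent to $G''+(G')^2/(N-1)\leq 0$; direct differentiation in $s$ combined with the distributional bound $a_z''\leq-(a_z')^2/(N-2)$ supplied by $\NC^1(N)$ collapses the result into
\begin{equation*}
  G''+\frac{(G')^2}{N-1}\,\leq\,-\frac{1}{N-1}\left[\frac{a_z'(Y_s(t))\,(T(z,t)-t)}{\sqrt{N-2}}-\sqrt{N-2}\,\frac{T'(z,t)-1}{Y_s'(t)}\right]^2\leq 0.
\end{equation*}
Since $a_z$ is only concave and $T(z,\dotargument)$ only non-decreasing, the identities and inequalities must be read distributionally in $s$, with the conclusion that $F_{x_0}''$ is a non-positive Radon measure on $(0,1)$ for $\mu_0$-a.e.\ $x_0$.

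Finally, from $\ee(s)=\ee(0)-(N-1)\int\log F_{x_0}(s)\,\de\mu_0(x_0)$ I differentiate formally twice to get
\begin{equation*}
  \ee''(s)=-(N-1)\!\int\!\frac{F_{x_0}''(s)}{F_{x_0}(s)}\,\de\mu_0(x_0)+(N-1)\!\int\!\left(\frac{F_{x_0}'(s)}{F_{x_0}(s)}\right)^{\!2}\de\mu_0(x_0).
\end{equation*}
The first integral is non-negative by concavity of $F_{x_0}$, and Cauchy--Schwarz against the probability measure $\mu_0$ bounds $(\ee'(s))^2=(N-1)^2\bigl(\int F_{x_0}'/F_{x_0}\,\de\mu_0\bigr)^2$ above by $(N-1)^2\int(F_{x_0}'/F_{x_0})^2\,\de\mu_0$. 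Combining, $\ee''(s)-(\ee'(s))^2/(N-1)\geq 0$ in the sense of distributions on $(0,1)$, which in particular implies convexity, hence local Lipschitz regularity, of $\ee$ on $(0,1)$.
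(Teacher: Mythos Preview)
Your proof is correct and follows a genuinely different route from the paper's.

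The paper first disintegrates the entropy over the cross-section $S$: it writes $\ee(s)=\int_S \ee_z(s)\,\haus^{n-2}(\de z)$, where $\ee_z(s)$ is the relative entropy of the one-dimensional marginal $\tilde\mu_{s,z}$ with respect to $e^{a_z(t)}\,\de t$. It then invokes the known equivalence between the density inequality $a_z''+(a_z')^2/(N-2)\leq 0$ and the entropic inequality $\ee_z''-(\ee_z')^2/(c_z(N-1))\geq 0$ for one-dimensional optimal transport (\Cref{lem:entropy-1d}, which in turn cites \cite{EKS}), and finally integrates these ray-wise Riccati inequalities back to the global one via a mollification-and-Jensen lemma (\Cref{eq:family-of-riccati}, prepared by \Cref{lem:approximate-riccati}).

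You instead work pointwise in the starting point $x_0\in H$ rather than in $z\in S$: you extract the Jacobian factor $F_{x_0}(s)=\bigl(e^{a_z(Y_s(t))-a_z(t)}Y_s'(t)\bigr)^{1/(N-1)}$ from the Monge--Amp\`ere relation, prove its concavity by the explicit square-completion
\[
G''+\frac{(G')^2}{N-1}\leq -\frac{1}{N-1}\Bigl[\tfrac{A}{\sqrt{N-2}}-\sqrt{N-2}\,B\Bigr]^2,
\]
and then run Cauchy--Schwarz against the probability measure $\mu_0$. This is closer in spirit to the smooth Riemannian arguments of Cordero-Erausquin--McCann--Schmuckenschl\"ager and von~Renesse--Sturm, and it is more self-contained: you do not need to quote the $\CD^e$ theory of \cite{EKS}, nor the auxiliary Lemmas~\ref{lem:approximate-riccati}--\ref{eq:family-of-riccati}. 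The price is that the distributional bookkeeping (concavity of $a_z$, BV regularity of $T(z,\dotargument)$, differentiation under the integral) is only sketched in your last two paragraphs; the paper's modular route hides exactly these issues inside the cited lemmas. Since $s\mapsto Y_s(t)$ and $s\mapsto Y_s'(t)$ are affine in $s$, the distributional chain rule you need is straightforward, so this is a matter of exposition rather than a gap.
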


The l.h.s.\ in \Cref{eq:convexity-entropy-0} does not depend on the choice of $L$:
 \Cref{P:entropy-convexity} guarantees that the
convexity properties of the entropy along Wasserstein geodesics do not
depend on the null-geodesic vector field $L$.

\begin{proof}
Following 
\Cref{P:decomposition}, we first assume the existence a global space-like and acausal cross-section $S$ for $H$.
 For each $t\in [0,1]$,  let $\rho_{t}$ be the Radon--Nikodym derivative of $\mu_t$ w.r.t. $\mm_L$.
The entropy is given by
  \begin{equation}
    \label{eq:global-entropy}
    \begin{aligned}
      \ee(t)
      &
    =
    \Ent(\mu_t|\mm_L)
    =
    \int_H
    \rho_{t}
    \log \rho_{t}
    \,
        \de\mm_L
      \\
      &
        =
        \int_S\int_{\R}
        \rho_t(\gflow_L(z,s))
        \log (\rho_t(\gflow_L(z,s)))
        \,
        e^{W_{L}(z,s)+\Phi(\gflow_L(z,s))}
        \,
        \de s
        \,
        \haus^{n-2}(\de z)
        .
        \end{aligned}
  \end{equation}
Let  $\tilde \mm_L^z :=e^{W_{L}(z,\dotargument)+\Phi(\gflow_L(z,\dotargument))} \mathcal{L}^{1}(\de s)$ and 
$\tilde  \mu_{t,z} := \rho_t(\gflow_L(z,\dotargument)) \tilde \mm_L^z $.  
  If we define
$$  
\ee_z(t)
    :=
    \Ent(\tilde\mu_{t,z}|\tilde\mm_L^z)
    =
    \int_{\R}
    \rho_t(\gflow_L(z,s))
        \log (\rho_t(\gflow_L(z,s)))
        \,
        e^{W_{L}(z,s)+\Phi(\gflow_L(z,s))}
        \,
        \de s
    ,  
$$
then~\eqref{eq:global-entropy} can be rewritten as
 $$
    \ee(t)
    =
    \int_S
    \ee_z(t)
    \,
    \haus^{n-2}(\de z).
$$
  Let $V$ be the vector field given by
\Cref{T:good-representation2}.
  Let $h :H\to[0,\infty)$ be the function such that $V=h L$ and let
  $R(z,s)=h(\gflow_L(z,s))$, for $z\in S$.
  We claim that  the function $[0,1]\ni t \to \ee_z(t)$ is convex (thus, in particular, locally-Lipschitz on $(0,1)$) and it satisfies
  \begin{equation}
    \label{eq:entropy-rays}
    \ee_z''
    -
    \frac{(\ee_z')^2}{\tilde\mu_{0,z}(\R)(N-1)}
    \geq
    0  \quad \text{ 
  in the sense of distributions on $(0,1)$.}
  \end{equation}
  Let $T(z,s)=s+R(z,s)$.
  The map $T(z,\dotargument)$ is monotone by construction and one can see that
  $\tilde\mu_{t,z}=((1-t)\id+t T(z,\dotargument))_{\sharp}(\tilde\mu_{0,z})$ (this
  computation was indeed done in the proof of
\Cref{T:good-representation2}).
  Therefore the Ricci lower bound, 
  \Cref{lem:entropy-1d} and 
  \Cref{eq:riccati-for-jacobi}
  imply  the convexity of $\ee_z(\dotargument)$ and the validity of~\eqref{eq:entropy-rays}.
  In case of a global cross-section, the thesis follows by combining~\eqref{eq:entropy-rays} with \Cref{eq:family-of-riccati} below.

In the general case,
\Cref{P:decomposition} produces a sequence of mutually
  orthogonal probability measures $(\mu_i^k)_k$, $i=0,1$, such that
  $\mu_i=\sum_{k=1}^{\infty} w_k \mu_i^k$, for some sequence $\mu_k$.
  Each measure $\mu_0^k$ is null connected to $\mu_1^k$ along
  $H$ by the dynamical transport plan
  $\nu_k\in\OptGeo^H(\mu_0^k,\mu_1^k)$ and $\sum_k
  w_k\nu_k=\nu$.
  Let $\mu_t^k:=(e_t)_{\sharp}\nu$ and notice that for all $t\in [0,1]$ the
  measure $\mu_t^k$ and $\mu_t^h$ are mutually orthogonal.
  Define $\pi^k=(e_0\otimes e_1)_{\sharp} \nu^k$ and notice that
  $\pi=\sum_k w_k\pi^k$.
  If we define $\ee_k(t):=\Ent(\mu_t^k|\mm_L)$, then the first part of the proof implies that
  \begin{equation}
    \ee_k''
    -
    \frac{(\ee_k')^2}{N-1}
    \geq
    0    .
  \end{equation}
  Let $\tilde \ee_k(t):=\Ent(w_k \mu_t^k|\mm_L)=w_k \ee_k(t)+w_k\log w_k$, and notice that
  \begin{equation}
    \tilde \ee_k''
    -
    \frac{(\ee_k')^2}{w_k(N-1)}
    \geq 0
    .
  \end{equation}
  Since the measures $\mu_{t}^k$ are mutually orthogonal, then
  $\ee(t)=\sum_{k=1}^{\infty} \tilde \ee_k(t)$, and    we conclude the proof by invoking \Cref{eq:family-of-riccati} below.
\end{proof}

The following are two technical lemmas used in the proof of \Cref{T:convexity-of-entropy-null}.

\begin{lemma}
  \label{lem:approximate-riccati}
  Let $\ee:(0,1)\to\R$ be a continuous function; let $a\geq 0$ and $b\in\R$.
  Let $\ee_\epsilon:=\ee*\rho_\epsilon$, where $\rho_\epsilon$ is a
  mollifier with support in $[-\epsilon,\epsilon]$.
  Then 
  \begin{equation}\label{eq:e''ab}
  \text{$\ee$ is locally-Lipschitz\qquad and\qquad }
    \ee''
    \geq a (\ee')^2
    + b
    ,
    \quad
    \text{ in the sense of distributions}
  \end{equation}
  if and only if
  \begin{equation}
    \label{eq:mollified-riccati}
    \ee_\epsilon''(t)
    \geq a (\ee_\epsilon'(t))^2
    + b
    ,
    \qquad
    \text{ for all $t\in(\epsilon,1-\epsilon)$, for all }\epsilon>0
    .
  \end{equation}
\end{lemma}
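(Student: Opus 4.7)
The plan is to handle the two implications separately. For the forward direction, I would convolve the distributional inequality with the non-negative mollifier $\rho_\epsilon$ and then apply Jensen's inequality; for the reverse direction, I would exploit the hypothesis $a\geq 0$ to first extract a uniform convexity property that yields the local Lipschitz regularity of $\ee$, and then pass to the distributional limit.

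For the implication \eqref{eq:e''ab} $\Rightarrow$ \eqref{eq:mollified-riccati}, note that $\ee_\epsilon$ is smooth, so both $\ee_\epsilon''$ and the convolutions below are continuous functions on $(\epsilon,1-\epsilon)$. Convolving the distributional inequality $\ee''\geq a(\ee')^2+b$ with the non-negative function $\rho_\epsilon$ gives, pointwise on $(\epsilon,1-\epsilon)$,
\begin{equation*}
\ee_\epsilon''(t)
=(\ee''*\rho_\epsilon)(t)
\geq a\,\bigl((\ee')^2*\rho_\epsilon\bigr)(t)+b.
\end{equation*}
Since $\rho_\epsilon$ is a probability density and the map $s\mapsto s^2$ is convex, Jensen's inequality yields
\begin{equation*}
(\ee_\epsilon'(t))^2
=\bigl((\ee'*\rho_\epsilon)(t)\bigr)^2
\leq \bigl((\ee')^2*\rho_\epsilon\bigr)(t),
\end{equation*}
and combining the two displays gives \eqref{eq:mollified-riccati}.

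For the reverse implication, the key observation is that, since $a\geq 0$, the mollified inequality implies $\ee_\epsilon''(t)\geq b$ on $(\epsilon,1-\epsilon)$. Hence $t\mapsto \ee_\epsilon(t)-\tfrac{b}{2}t^2$ is convex on $(\epsilon,1-\epsilon)$. As $\epsilon\downarrow 0$, $\ee_\epsilon\to \ee$ uniformly on compact subsets of $(0,1)$ (because $\ee$ is continuous), and a uniform limit of convex functions is convex, so $t\mapsto \ee(t)-\tfrac{b}{2}t^2$ is convex on $(0,1)$; in particular, $\ee$ is locally Lipschitz, $\ee'\in L^\infty_{\mathrm{loc}}((0,1))$, and by Lebesgue's differentiation theorem $\ee_\epsilon'\to \ee'$ a.e. Moreover, since convolution with a probability density never increases the Lipschitz constant, the bounds $\|\ee_\epsilon'\|_{L^\infty(K)}$ are uniform in $\epsilon$ on any compact $K\Subset (0,1)$.

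It remains to pass to the distributional limit. Fix a non-negative test function $\phi\in C_c^\infty((0,1))$; for $\epsilon$ small enough so that $\mathrm{supp}\,\phi\subset(\epsilon,1-\epsilon)$, integrate \eqref{eq:mollified-riccati} against $\phi$ to obtain
\begin{equation*}
\int \ee_\epsilon(t)\,\phi''(t)\,\de t
=\int \ee_\epsilon''(t)\,\phi(t)\,\de t
\geq \int \bigl[a(\ee_\epsilon'(t))^2+b\bigr]\phi(t)\,\de t.
\end{equation*}
The left-hand side converges to $\int \ee\,\phi''\,\de t=\langle \ee'',\phi\rangle$ by uniform convergence $\ee_\epsilon\to \ee$ on $\mathrm{supp}\,\phi$, and the right-hand side converges to $\int[a(\ee')^2+b]\phi\,\de t$ by the dominated convergence theorem (using $\ee_\epsilon'\to \ee'$ a.e.\ and the uniform local $L^\infty$ bound on $\ee_\epsilon'$). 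This gives \eqref{eq:e''ab}.

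The main obstacle is the reverse implication, specifically deducing the local Lipschitz regularity of $\ee$: without it, the quantity $(\ee')^2$ would not even be defined. The trick is that the hypothesis $a\geq 0$ allows us to discard the quadratic term in the mollified inequality and retain a uniform lower bound on $\ee_\epsilon''$, from which convexity of $\ee-\tfrac{b}{2}(\cdot)^2$ (and hence the needed regularity) follows by passing to the uniform limit.
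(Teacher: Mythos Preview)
Your proof is correct and follows essentially the same approach as the paper's: in both directions the key ingredients are the same (Jensen's inequality for the forward implication, and the observation that $a\geq 0$ forces $b$-convexity of $\ee_\epsilon$, hence local Lipschitz regularity of $\ee$, for the reverse). The only cosmetic difference is that the paper passes to the limit in the quadratic term via $L^2_{loc}$-convergence of $\ee_\epsilon'$, whereas you use a.e.\ convergence together with uniform local $L^\infty$ bounds and dominated convergence; both are equally valid.
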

\begin{proof}
  ``If'' part.\quad  
  Clearly, $\ee_\epsilon\to \ee$ pointwise.
  Since $a\geq 0$, then each $\ee_\epsilon$ is $b$-convex. 
  Therefore, on every compact sub-interval of $(0,1)$, the functions
  $\ee_\epsilon$ are equi-Lipschitz, 
  for every $\epsilon>0$.
  As a consequence $\ee$ is locally-Lipschitz, the convergence 
  is locally uniform, and $\ee_\epsilon'=\ee'\star\rho_\epsilon$.
  Regarding the derivative, $u_\epsilon'$ is in $L^\infty_{loc}$, thus in
  $L^2_{loc}$ and $\ee_\epsilon'\to \ee'$ in $L^2_{loc}$.
  Therefore
  $\int_0^1 (\ee_\epsilon'(t))^2 \psi \to\int_0^1 (\ee'(t))^2 \psi$,
  $\forall \psi\in C^\infty_c((0,1))$, i.e.,
  $(\ee_\epsilon')^2\to (\ee')^2$ in the sense of distributions.
  Thus we can take the limit in~\eqref{eq:mollified-riccati} and obtain~\eqref{eq:e''ab}.
  
``Only if'' part.
  \quad
  Fix $\psi\in C^{\infty}_c((0,1))$ and $\epsilon>0$.
  A direct computation gives
  \begin{align*}
    \int
    \ee_\epsilon''(t)
    \psi(t)
    \,
    \de t
    &
    \geq
      \int
      (
      a[(\ee')^2]_{\epsilon}
      +b
      )
      \psi(t)
    \,
    \de t
      =
    \int
      a
      \int (\ee'(t))^2
      \rho_{\epsilon}(s-t)
      \,\de s\, 
      \psi(t)
      \,\de t
      +b
      \int
    \psi(t)
    \,
      \de t
    \\
    &
      \geq
    \int
      a
      \left(
      \int \ee'(t)
      \rho_{\epsilon}(s-t)
      \,\de s
      \right)^2
      \psi(t)
      \,\de t
      +b
      \int
    \psi(t)
    \,
      \de t
    \\
    &
      =
      \int
      (
      a
      \left(
      \ee_\epsilon'(t)
      \right)^2
      +b)
      \psi(t)
    \,
      \de t
      ,
  \end{align*}
  having used the convention that $[(\ee')^2]_\epsilon$ is the
  mollification of $(\ee')^2$, the fact that $\ee'$ is an $L^1_{loc}$
  function, and Jensen inequality.
\end{proof}
\begin{lemma}
  \label{eq:family-of-riccati}
  Let $(Q, \mathcal{Q}, \qq)$ be a measure space and let $\ee:Q\times (0,1)\to\R$, be a function satisfying the following assumptions:
  \begin{itemize}
\item For $\mathcal{L}^1$-a.e.\ $t\in \R$, the map $Q\ni \alpha\mapsto \ee_\alpha(t)$ is $\qq$-integrable;
\item For $\qq$-a.e.\ $\alpha\in Q$, the map $t\mapsto \ee_\alpha(t)$ is locally-Lipschitz on $(0,1)$ and it satisfies
 \begin{equation}
    \ee''_\alpha\geq a\frac{(\ee_\alpha')^2}{c(\alpha)}
    +b(\alpha)
    ,
    \quad
    \text{ in the sense of distributions,}
  \end{equation}
  where $a\geq 0$, $b\in L^1(\qq), \; b\geq 0$ $\qq$-a.e., $\int_Q c(\alpha)
  \,\qq(\de\alpha)=1$ with $c>0$ $\qq$-a.e..
\end{itemize}
  Denote by
  $\ee(t):=\int_Q \ee_\alpha(t)\,\qq(\de\alpha)$ and $B:=\int_Q b(\alpha)\,\qq(\de\alpha)$. Then the map $t\mapsto \ee(t)$ is  locally-Lipschitz on $(0,1)$ and it satisfies
  \begin{equation}
    \ee''\geq a(\ee')^2
    +B
    ,
    \quad
    \text{ in the sense of distributions.}
  \end{equation}
\end{lemma}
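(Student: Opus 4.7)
The plan is to reduce the distributional Riccati-type inequality to a pointwise one after mollification (via Lemma~\ref{lem:approximate-riccati}) and then absorb the square of the derivative using a weighted Cauchy--Schwarz inequality, exploiting the fact that $c(\alpha)\,\qq(\de\alpha)$ is a probability measure on $Q$.

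As a preliminary step, I would observe that since $a\geq 0$ and $b(\alpha)\geq 0$ $\qq$-a.e., the hypothesis yields $\ee_\alpha''\geq 0$ distributionally for $\qq$-a.e.\ $\alpha$, so each such $\ee_\alpha$ is convex on $(0,1)$. Together with the integrability of $\alpha\mapsto\ee_\alpha(t)$ for $\mathcal{L}^1$-a.e.\ $t$, convexity (applied slice by slice to $\ee_\alpha$) and a Fatou/monotone-difference-quotient argument show that $\ee(t)=\int_Q\ee_\alpha(t)\,\qq(\de\alpha)$ extends to a convex function on all of $(0,1)$, and hence is locally Lipschitz there.

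For the main inequality, fix $\epsilon>0$ and a smooth mollifier $\rho_\epsilon$ supported in $[-\epsilon,\epsilon]$. The ``only if'' direction of Lemma~\ref{lem:approximate-riccati}, applied to each $\ee_\alpha$ with constants $a/c(\alpha)$ and $b(\alpha)$, gives for $\qq$-a.e.\ $\alpha$
\begin{equation*}
\ee_{\alpha,\epsilon}''(t)\;\geq\; a\,\frac{(\ee_{\alpha,\epsilon}'(t))^2}{c(\alpha)}+b(\alpha),\qquad \forall\,t\in(\epsilon,1-\epsilon).
\end{equation*}
Since $\ee_{\alpha,\epsilon}'=\ee_\alpha\ast\rho_\epsilon'$ and $\ee_{\alpha,\epsilon}''=\ee_\alpha\ast\rho_\epsilon''$ with $\rho_\epsilon,\rho_\epsilon',\rho_\epsilon''$ bounded and compactly supported, Fubini justifies both $\ee_\epsilon(t)=\int_Q\ee_{\alpha,\epsilon}(t)\,\qq(\de\alpha)$ and the fact that the first two $t$-derivatives of $\ee_\epsilon$ commute with the integral over $Q$. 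Integrating the inequality above over $Q$, the term $\int_Q b(\alpha)\,\qq(\de\alpha)=B$ appears, and Cauchy--Schwarz with respect to the probability measure $c(\alpha)\,\qq(\de\alpha)$ yields
\begin{equation*}
(\ee_\epsilon'(t))^2=\left(\int_Q\frac{\ee_{\alpha,\epsilon}'(t)}{\sqrt{c(\alpha)}}\sqrt{c(\alpha)}\,\qq(\de\alpha)\right)^{\!2}\leq\int_Q\frac{(\ee_{\alpha,\epsilon}'(t))^2}{c(\alpha)}\,\qq(\de\alpha),
\end{equation*}
so that $\ee_\epsilon''(t)\geq a(\ee_\epsilon'(t))^2+B$ pointwise on $(\epsilon,1-\epsilon)$. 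Applying the ``if'' direction of Lemma~\ref{lem:approximate-riccati} then upgrades this to the claimed distributional inequality for $\ee$.

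The main technical obstacle is the Fubini/derivative-exchange step in the mollification: it is what allows one to identify $\ee_\epsilon^{(k)}$ with $\int_Q\ee_{\alpha,\epsilon}^{(k)}\,\qq(\de\alpha)$ for $k=0,1,2$. This works precisely because the convolution transfers all $t$-derivatives onto the bounded, compactly supported kernels $\rho_\epsilon^{(k)}$, reducing everything to joint integrals of $\ee_\alpha(s)$ against bounded kernels, for which the a.e.-in-$t$ integrability hypothesis on $\alpha\mapsto\ee_\alpha(t)$ is sufficient. All the remaining work is the weighted Jensen/Cauchy--Schwarz bound, which is where the normalization $\int_Q c(\alpha)\,\qq(\de\alpha)=1$ is crucial.
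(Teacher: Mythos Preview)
Your approach is essentially the paper's: mollify, apply Lemma~\ref{lem:approximate-riccati} fiberwise, integrate over $Q$, use Jensen/Cauchy--Schwarz against the probability measure $c(\alpha)\,\qq(\de\alpha)$, then pass back via Lemma~\ref{lem:approximate-riccati}.

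One point needs tightening. You assert that ``the a.e.-in-$t$ integrability hypothesis on $\alpha\mapsto\ee_\alpha(t)$ is sufficient'' for the Fubini step identifying $\ee_\epsilon=\int_Q\ee_{\alpha,\epsilon}\,\qq(\de\alpha)$ and its derivatives. This is not quite true on its own: knowing $\int_Q|\ee_\alpha(s)|\,\qq(\de\alpha)<\infty$ for $\mathcal{L}^1$-a.e.\ $s$ does not by itself guarantee that $s\mapsto\int_Q|\ee_\alpha(s)|\,\qq(\de\alpha)$ is locally integrable (it could blow up non-integrably while remaining a.e.\ finite). The paper closes this by using the convexity of each $\ee_\alpha$---which you have already established---to bound $\ee_\alpha(s)$ on $[t-\epsilon,t+\epsilon]$ from above by the secant through $\ee_\alpha(t\pm\epsilon)$ and from below by a supporting line controlled by $\ee_\alpha(t-\epsilon)$ and $\ee_\alpha(t-2\epsilon)$. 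Choosing these three reference times in the full-measure set where $\alpha\mapsto\ee_\alpha(\cdot)$ is integrable then gives the joint integrability needed for Fubini. You should invoke convexity explicitly here rather than attribute the step to a.e.\ integrability alone.
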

\begin{proof}
  Since $a,b,c$ are non-negative, then each $\ee_\alpha$ is convex on $[0,1]$, hence
  $\ee$ is convex on $[0,1]$, thus locally-Lipschitz on $(0,1)$. Let $\rho_\epsilon$ be a mollifier and define
  \begin{equation*}
    \ee_{\alpha,\epsilon}(t)
    :=\ee_\alpha*\rho_\epsilon(t)
    =
    \int_{\R}
    \ee_\alpha(t-s)
    \rho_\epsilon(s)
    \,\de s,
    \qquad
    \ee_{\epsilon}(t)
    :=\ee*\rho_\epsilon(t)
    =
    \int_{\R}
    \int_Q
    \ee_\alpha(t-s)
    \rho_\epsilon(s)
    \,\qq(\de\alpha)
    \,\de s.
  \end{equation*}
  We claim that $\ee_\epsilon(t)=\int_Q
  \ee_{\alpha,\epsilon}(t)\,\qq(\de\alpha)$.
  Indeed, it is sufficient to check that $e$ is integrable in
  $Q\times[t-\epsilon,t+\epsilon]$, and then apply Fubini theorem.
  This fact is a consequence of the following two bounds given by the
  convexity in the second variable:
  \begin{align}
    &
    \ee_\alpha(s)
    \geq
    \ee_\alpha(t-\epsilon)
    +
    (s-t+\epsilon)\frac{\ee_\alpha(t-\epsilon)-\ee_\alpha(t-2\epsilon)}{2\epsilon}
      ,
      \quad
      \forall s\geq t-\epsilon
      ,
    \\
    &
      \ee_\alpha(s)
      \leq
      \ee_\alpha(t-\epsilon)
      \frac{t+\epsilon-s}{2\epsilon}
      +
      \ee_\alpha(t+\epsilon)
      \frac{s-t+\epsilon}{2\epsilon}
      ,
      \quad
      \forall s\in [t-\epsilon,t+\epsilon]
      .
  \end{align}

  Lemma~\ref{lem:approximate-riccati} gives that $\ee_{\alpha,\epsilon}$
  solves the equation
  \begin{equation}
    \ee''_{\alpha,\epsilon}(t)\geq a\frac{(\ee_{\alpha,\epsilon}'(t))^2}{c(\alpha)}
    +b(\alpha)
    .
  \end{equation}
  We integrate over $Q$, obtaining
  \begin{align*}
    e_{\epsilon}''(t)
    &
      =
    \int_Q \ee''_{\alpha,\epsilon}(t)
    \,\qq(\de\alpha)
      \geq
      a
      \int_Q \frac{(\ee_{\alpha,\epsilon}'(t))^2}{c(\alpha)}
      \,
      \qq(\de\alpha)
      +\int_Q b(\alpha)
      \,\qq(\de\alpha)\nonumber\\
     & =
      a
      \int_Q
      \left(
      \frac{\ee_{\alpha,\epsilon}'(t)}{c(\alpha)}
      \right)^2
      c(\alpha)
      \,
      \qq(\de\alpha)
      +
      B
    \\
    &
      \geq
        a
      \left(
      \int_Q
      \frac{\ee_{\alpha,\epsilon}'(t)}{c(\alpha)}
      c(\alpha)
      \,
      \qq(\de\alpha)
      \right)^2
      +
      B
      =
      a
      (\ee_{\epsilon}'(t))^2
      +B
      ,
  \end{align*}
  where we have used the Jensen inequality.
  We conclude by taking the limit as $\epsilon\to0$ using
  Lemma~\ref{lem:approximate-riccati}.
\end{proof}

\Cref{T:convexity-of-entropy-null} suggests the following two definitions, one
concerning null hypersurfaces and one concerning the ambient manifold. In order to state them in a more concise form, let us  consider the following dimensional variant of the Boltzmann--Shannon entropy
\begin{equation}\label{eq:defSPEnt}
{\rm U}_N(\mu|\mm):=\exp\left(-\frac{1}{N}\Ent(\mu|\mm) \right),
\end{equation}
which was studied in~\cite{EKS} in connection to Ricci bounds. This kind of functional is well-known in information theory as the
Shannon entropy power, see e.g.~\cite{DCT-1991}. For instance, the entropy power on $\R^N$ is the functional ${\rm U}_{N/2}$.

\begin{definition}\label{D:entropynullCD}
  Let $(M,g)$ be a Lorentzian manifold with $g\in C^2$
  and $H\subset M$ be a causal null $C^2$-hypersurface
  admitting a dense sequence of local space-like and
  acausal cross-sections of class $C^2$. Let $\vol_L$ be as in  \Cref{T:summary}. 
  Let $\Phi: H \to\R$  be a $C^0$ function
  and define $\mm_L:=  e^{\Phi}\vol_L$.

  We say that the quadruple $(M,g,H, \Phi)$ satisfies the null energy condition
  $\NC^e(N)$ if
  and only if the following holds:

  For every $\mu_0,\mu_1\in \Prob_{ac}(H)$ probability measures
  such that $\mu_0$ is null connected to $\mu_1$ along $H$,
  there exists  a null-geodesic
  dynamical transport plan $\nu\in\OptGeo^H(\mu_0,\mu_1)$,
  such that the function  ${\rm u}_{N-1}(t):={\rm U}_{N-1} ((e_t)_\sharp
    \nu|\mm_L)$ satisfies 
  \begin{equation}\label{eq:defNCE}
    {\rm u}_{N-1}(t)\geq
    (1-t)
    {\rm u}_{N-1}(0)
    +
    t\, 
    {\rm u}_{N-1}(1)
    ,
    \qquad
    \forall t\in[0,1]
    .
  \end{equation}
  \end{definition}

\begin{definition}\label{def:NCEspace}
  Let $(M,g)$ be a Lorentzian manifold with $g\in C^2$ and $\Phi:M\to\R$ a $C^0$ function.
  We say that the triple $(M,g,\Phi)$ satisfies the null energy
  condition $\NC^e(N)$ if and only if for any null causal $C^2$-hypersurface
  $H\subset M$ admitting a dense sequence of local space-like and
  acausal cross-sections of class $C^2$, the quadruple $(M,g,H, \Phi)$ satisfies the null energy condition $\NC^e(N)$.
\end{definition}

\begin{remark}\label{rem:NCeNoL}
\Cref{D:entropynullCD} is well-posed, i.e. it does not depend on the
particular choice of the global Borel null vector field $L$. Indeed,
\Cref{P:entropy-convexity} implies that on each 
saturated set $H_{S_k}$ the convexity properties of the entropy along $\mu_t$ do not depend on $L$. 
To obtain the full claim, one can argue as in the proof of \Cref{T:convexity-of-entropy-null}.
\end{remark}

\begin{remark}\label{R:implications}
With the just-introduced definitions,
\Cref{T:convexity-of-entropy-null} can be stated as $\NC^1(N)$ implies
$\NC^e(N)$ for null hypersurfaces, and therefore for spaces.
In next section we will prove the converse implication.
\end{remark}

\begin{remark}
  Note that \Cref{D:entropynullCD} is slightly different from
  the thesis of \Cref{T:convexity-of-entropy-null}, in three
  aspects. The first one is merely cosmetic: \Cref{T:convexity-of-entropy-null} is phrased as a convexity property of the Boltzmann--Shannon entropy $\Ent$, see~\eqref{eq:defSBEnt}, while \Cref{D:entropynullCD} writes as a concavity condition on the Shannon entropy power ${\rm U}_N$,  see~\eqref{eq:defSPEnt}. However one can easily pass from one formulation to the other via a change of variables. The two slightly more substantial differences are the following.
  On the one hand, \Cref{T:convexity-of-entropy-null} deals with monotone plans, whereas \Cref{D:entropynullCD} does not require plans to be monotone. In this regard, \Cref{D:entropynullCD} is less restrictive on the allowed transports.
  On the other hand, \Cref{T:convexity-of-entropy-null} guarantees a full convexity of the
  entropy along monotone transports (which always exist, thanks to \Cref{T:good-representation2}), whereas the condition requested in \Cref{D:entropynullCD}  is a concavity-type inequality only at
 the end-points.

The choice
  for \Cref{D:entropynullCD} is motivated by the subsequent
  \Cref{th:entropic-to-distributional} showing that checking convexity
  along a causal plan is enough to obtain the $\NC^{1}$ condition 
  \end{remark}

\section{Equivalence of the various formulations}\label{S:converse}

\subsection{Equivalence of the synthetic energy conditions $\NC^e(N)$ and $\NC^1(N)$}

In the previous section (see \Cref{T:convexity-of-entropy-null} and \Cref{R:implications}), we proved that $\NC^1(N)$ implies $\NC^e(N)$. In the present section, we prove the converse implication  
$\NC^e(N)\implies \NC^1(N)$, under 
the  assumptions 
$\Phi\in C^0$ and $g\in C^2$.
We start with a few technical lemmas.

\begin{lemma}
  \label{lem:mixture-riccati}
  Let $(Q, \mathcal{Q}, \qq)$ be a measure space, $t\in (0,1)$, and
  $\ee:Q\times \{0,t,1\}\to \R$ a measurable function.
  Fix $G:\overline{\R}\to\overline{\R}$ a continuous, strictly
  decreasing function.
  Let $F=\{w\in L^\infty(Q,\q): w\geq 0, \int_Q w\,\de\q=1\}$.
  If $w\in F$, let $\ee_w(t)=\int_Q w(\alpha)\,
  \ee(\alpha, t)\,\q(\de\alpha)$.
  Define
  \begin{equation}
    u_w= G(\ee_w)
    \qquad
    \text{ and }
    \qquad
    u_\alpha=
    G(\ee_\alpha)
    .
  \end{equation}
  Assume that for all
  $w\in F$ satisfies
  \begin{equation}
    u_w(t)
    \geq
    (1-t)
    u_w(0)
    +
    t
    u_w(1)    .
  \end{equation}
  Then, for $\q$-a.e.\ $\alpha\in Q$, it holds that
    \begin{equation}
    u_\alpha(t)
    \geq
    (1-t)
    u_\alpha(0)
    +
    t
    u_\alpha(1)    .
  \end{equation}  
\end{lemma}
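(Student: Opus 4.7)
The strategy is a localization via indicator weights, followed by a differentiation-of-averages argument, passing from the integral inequality (the hypothesis) to a pointwise one (the conclusion).

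First, I would specialize the hypothesis to indicator weights. Assuming $\q$ is $\sigma$-finite (as is the case for the disintegration quotient spaces where this lemma will be applied), for any measurable $A\subset Q$ with $0<\q(A)<\infty$ the choice $w_A := \indicator_A/\q(A) \in F$ makes $\ee_{w_A}(s) = \frac{1}{\q(A)}\int_A \ee(\cdot,s)\,\de\q$ a plain average, and the hypothesis becomes
\begin{equation*}
G\left(\tfrac{1}{\q(A)}\int_A \ee(\cdot,t)\,\de\q\right) \geq (1-t)\,G\left(\tfrac{1}{\q(A)}\int_A \ee(\cdot,0)\,\de\q\right) + t\,G\left(\tfrac{1}{\q(A)}\int_A \ee(\cdot,1)\,\de\q\right).
\end{equation*}

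Next, assuming $\mathcal{Q}$ is countably generated modulo $\q$-null sets (again automatic in the intended applications, where $Q$ is a Polish quotient space), I would fix a countable filtration $(\mathcal{F}_n)_n$ generated by nested finite partitions of $Q$ into sets of positive finite $\q$-measure with $\bigvee_n \mathcal{F}_n = \mathcal{Q}$, and invoke Doob's martingale convergence theorem: for each fixed $s\in\{0,t,1\}$, the conditional expectations of $\ee(\cdot,s)$ with respect to $\mathcal{F}_n$ converge to $\ee(\cdot,s)$ $\q$-almost everywhere. Since $\{0,t,1\}$ is finite, there is a single $\q$-full set $Q'\subset Q$ on which the convergence holds simultaneously for all three values of $s$.

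For $\alpha\in Q'$, letting $A_n(\alpha)$ denote the atom of $\mathcal{F}_n$ containing $\alpha$, we have $\tfrac{1}{\q(A_n(\alpha))}\int_{A_n(\alpha)}\ee(\cdot,s)\,\de\q \to \ee(\alpha,s)$ as $n\to\infty$, for each $s\in\{0,t,1\}$. Applying the displayed inequality above with $A=A_n(\alpha)$ and passing to the limit using the continuity of $G$ yields
\begin{equation*}
G(\ee(\alpha,t)) \geq (1-t)\,G(\ee(\alpha,0)) + t\,G(\ee(\alpha,1)),
\end{equation*}
i.e.\ $u_\alpha(t)\geq (1-t)u_\alpha(0)+t\,u_\alpha(1)$ for $\q$-a.e.\ $\alpha\in Q$, as required.

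The main obstacle is purely measure-theoretic: the shrinking of averages to pointwise values needs some regularity of $(Q,\q)$, since on a wholly abstract measure space no differentiation theorem is available. In the context in which this lemma will be used, however, $(Q,\mathcal{Q},\q)$ arises from the disintegration of a Radon measure on a Polish space and therefore carries the required structure; the remainder of the argument is then essentially Jensen-style localization combined with the strict monotonicity and continuity of $G$.
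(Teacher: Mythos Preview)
Your argument is correct (with the acknowledged caveats on $\sigma$-finiteness and a countably generated $\sigma$-algebra, plus the implicit local integrability of $\ee(\cdot,s)$ needed for the conditional expectations to make sense), and it takes a genuinely different route from the paper's own proof.

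The paper does not use any filtration or martingale/differentiation machinery. Instead, it argues via \emph{level sets}: for rational thresholds $h,k$, one considers $A_{h,k}=\{\alpha: u_\alpha(0)\geq h,\ u_\alpha(1)\geq k\}$; for any measurable $B\subset A_{h,k}$ of positive finite measure, the weight $w=\indicator_B/\q(B)$ gives $u_w(0)\geq h$ and $u_w(1)\geq k$ (here the strict monotonicity of $G$, via $G^{-1}$, is used), whence the hypothesis yields $u_w(t)\geq(1-t)h+tk$, i.e.\ $\tfrac{1}{\q(B)}\int_B\ee_\alpha(t)\,\de\q\leq G^{-1}((1-t)h+tk)$. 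Arbitrariness of $B$ forces $u_\alpha(t)\geq(1-t)h+tk$ for $\q$-a.e.\ $\alpha\in A_{h,k}$, and a countable intersection over rational $(h,k)$ finishes the argument by approximating $u_\alpha(0),u_\alpha(1)$ from below.

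What each approach buys: the paper's level-set argument avoids the countably-generated hypothesis altogether and uses only the invertibility and continuity of $G$; your approach is arguably more transparent as a ``localize and pass to the limit'' scheme, and in fact only needs continuity (not strict monotonicity) of $G$, but trades this for the extra measure-theoretic structure you flagged. In the intended application both sets of assumptions are available, so either proof is fit for purpose.
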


\begin{proof}
  Fix $h,k\in G(\overline{\R})$ and let
  \begin{equation*}
    A_{h,k}
    =
    \{
    \alpha\in Q
    :
    u_\alpha(0)\leq h
    ,
    u_\alpha(1)\leq k
    \}
    =
    \{
    \alpha\in Q
    :
    \ee_\alpha(0)\geq G^{-1}(h)
    ,
    \ee_\alpha(1)\geq G^{-1}(k)
    \}
    .
  \end{equation*}
  Fix $B\subset A_{h,k}$ and take $w=\frac{1}{\q(B)}\indicator_B$.
  A direct computation gives
  \begin{align*}
    u_w(t)
    &
      \leq
      (1-t) u_w(0) + tu_w(0)
    \\
    &
      =
      (1-t)G
      \left(
      \frac{1}{\q(B)}
      \int_{B} \ee_\alpha(0)\,\q(\de\alpha)
      \right)
      +
      tG
      \left(
      \frac{1}{\q(B)}
      \int_{B} \ee_\alpha(1)\,\q(\de\alpha)
      \right)
    \\
    &
      \leq
      (1-t) h+ t k
      ,
  \end{align*}
thus
$$
    \frac{1}{\q(B)}
    \int_B \ee_\alpha(t)
    \geq
    G^{-1}
    \left(
      (1-t) h +t k
    \right)
    .
$$
By arbitrariness of $B$, we deduce
$$
    \ee_\alpha(t)
    \geq
    G^{-1}
    \left(
      (1-t) h +t k
    \right)
    ,
    \qquad
    \text{ for $\q$-a.e.\ }
    \alpha\in A_{h,k}
    .
$$
Hence the set
$$
    \tilde A_{h,k}
    :=
    \{
    \alpha\in Q:
    \alpha \in A_{h,k}
    \implies
    u_\alpha(z)\leq (1-t) h+t k
    \}
$$
has full measure in $Q$.
  Define the full-measure set
  $\tilde A:=\bigcap_{h,k\in G(\overline{\R})\cap \Q} \tilde A_{h,k}$.
  Assume by contradiction that for some $\epsilon>0$, for some
  $\alpha\in \tilde A$ it holds that
  \begin{equation}
    \label{eq:mixture-contradiction}
    u_\alpha(t) \geq
    (1-t) u_\alpha(0)
    +
    t u_\alpha(1)
    +2\epsilon
    .
  \end{equation}
  In this case, choose $h\in [u_\alpha(0),u_\alpha(0)+\epsilon]\cap \Q
  \subset G(\overline{\R})$ and
  $k\in [u_\alpha(1),u_\alpha(1)+\epsilon]\cap \Q \subset G(\overline{\R})$.
  By construction, $\alpha\in \tilde A_{h,k}$, therefore
$$
    u_\alpha(t)\leq (1-t) h+t k
    \leq
    (1-t) (u_\alpha(0)+\epsilon)+t (e_\alpha(y)+\epsilon)
    =
    (1-t) u_\alpha(0)+t e_\alpha(y)+\epsilon
    ,
$$
which contradicts 
\eqref{eq:mixture-contradiction}.
\end{proof}

The next lemma is a sort of ``Brunn--Minkowski inequality implies
log-concavity'' that suits our setting.

\begin{lemma}
  \label{lem:brunn-minkowski-to-cd}
  Fix $N>1$.
  Let $I\subset \R$ be an interval and $\Phi:I\to \R$ be continuous
  function and let $\mm:=e^{-\Phi}\L^1$.
  Given $x_0 < x_1 $ in $I\cap \Q$, define $x_t:= (1-t)x_0 + t x_1$.
  Similarly, for $\epsilon_0,\epsilon_1>0$ in $\Q$, define
  $\epsilon_t=(1-t) \epsilon_0+t \epsilon_1$.
  Assume that for all $x_i$ and $\epsilon_i$ as above, $i=0,1$, it
  holds that
  \begin{equation}
    \mm([x_t,x_t+\epsilon_t])^{\frac{1}{N}}
    \geq
    (1-t)\mm([x_0,x_0+\epsilon_0])^{\frac{1}{N}}
    +
    t\mm([x_1,x_1+\epsilon_1])^{\frac{1}{N}}
    ,
    \qquad
    \forall t\in [0,1]
    \cap \Q
    .
  \end{equation}

  Then $\Phi$ is locally-Lipschitz and satisfies
  \begin{equation}
    \Phi''
    +
    \frac{(\Phi')^2}{N-1}
    \leq
    0
    ,
    \qquad
    \text{ in the sense of distributions}.
  \end{equation}
  \end{lemma}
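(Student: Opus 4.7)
The plan is to extract from the Brunn--Minkowski-type hypothesis a sharp pointwise concavity of an explicit power of $e^{-\Phi}$, and then to turn that concavity into the required distributional Riccati inequality via a mollification argument.

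First I would apply the hypothesis with $\epsilon_i=\delta\alpha_i$ for $\alpha_0,\alpha_1\in\Q_{>0}$ and $\delta\in\Q_{>0}$, so that $\epsilon_t=\delta\alpha_t$ with $\alpha_t=(1-t)\alpha_0+t\alpha_1$. Dividing by $\delta^{1/N}$ and using continuity of $\Phi$ to pass to $\delta\to 0^+$ along rationals, I obtain the infinitesimal inequality
\begin{equation*}
  \alpha_t^{1/N}\, e^{-\Phi(x_t)/N}
  \;\geq\;
  (1-t)\,\alpha_0^{1/N}\, e^{-\Phi(x_0)/N}
  \,+\,
  t\,\alpha_1^{1/N}\, e^{-\Phi(x_1)/N}.
\end{equation*}
Joint continuity of both sides in $(x_0,x_1,\alpha_0,\alpha_1,t)$ and density of the rationals let me extend this to all $\alpha_0,\alpha_1>0$, $x_0,x_1\in I$ and $t\in[0,1]$.

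Next, for each fixed $(x_0,x_1,t)$, I would maximise the right-hand side over $(\alpha_0,\alpha_1)>0$ subject to the normalisation $(1-t)\alpha_0+t\alpha_1=1$. A direct Lagrange-multiplier computation yields the maximum value
\begin{equation*}
  \Bigl((1-t)\, e^{-\Phi(x_0)/(N-1)} \,+\, t\, e^{-\Phi(x_1)/(N-1)}\Bigr)^{(N-1)/N}.
\end{equation*}
Raising both sides of the inequality to the power $N/(N-1)$ then shows that $f:=e^{-\Phi/(N-1)}$ is midpoint-concave on $I$, and therefore concave on $I$ because it is continuous. From here, concavity of the positive continuous function $f$ immediately implies local Lipschitz regularity of $f$ on the interior of $I$, whence $\Phi=-(N-1)\log f$ is locally Lipschitz as claimed.

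Finally, I would pass from concavity of $f$ to the distributional Riccati inequality for $\Phi$ by mollification, in the spirit of Lemma~\ref{lem:approximate-riccati}. Let $f_\epsilon:=f*\rho_\epsilon$, which is smooth, positive and concave on slightly shrunken subintervals, and set $\Phi_\epsilon:=-(N-1)\log f_\epsilon$. A direct pointwise computation from $f_\epsilon'=-\tfrac{1}{N-1}\Phi_\epsilon'\,f_\epsilon$ and $f_\epsilon''\leq 0$ yields the desired Riccati inequality for $\Phi_\epsilon$. Passing to $\epsilon\to 0^+$ via locally uniform convergence $\Phi_\epsilon\to\Phi$ together with $L^2_{loc}$-convergence $\Phi_\epsilon'\to\Phi'$ (which holds because $\Phi$ is locally Lipschitz) allows the quadratic term $(\Phi_\epsilon')^2$ to pass to the distributional limit $(\Phi')^2$, giving the claimed inequality for $\Phi$. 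The main technical obstacle is precisely this last limiting step; the remaining ingredients amount to a sharpened Brunn--Minkowski-to-log-concavity argument.
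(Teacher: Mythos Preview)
Your approach is essentially identical to the paper's: let the interval widths shrink proportionally and pass to the limit to obtain a pointwise inequality, then optimize over the proportionality constants (the paper simply plugs in the explicit optimizer $\lambda_i=e^{\Phi(x_i)/(N-1)}$, which is exactly the choice your Lagrange computation produces) to deduce concavity of the $(N-1)$-th root of the density; the paper stops there, while you add a routine mollification to spell out the passage to the distributional Riccati inequality. One caveat: your final chain-rule step has a sign slip---concavity of $e^{-\Phi/(N-1)}$ actually gives $\Phi''\ge(\Phi')^2/(N-1)$, not the stated inequality---but this traces back to a typo in the lemma's statement (the reference measure should be $e^{\Phi}\L^1$, consistent with the rest of the paper and with how the lemma is invoked later), and with that correction both your argument and the paper's align.
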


  \begin{proof}
  Fix $x_0<x_1$ as in the hypothesis.
  Let $\lambda=(\lambda_0,\lambda_1)$, such that
  $\lambda_0,\lambda_1>0$ to be fixed later.
  Let $(\lambda^n)_n$ be a sequence in $\Q^2$ converging to $\lambda$
  and let $\lambda^n_t=(1-t)\lambda^n_0+t \lambda^n_1$.
  Let $(\delta_n)_n$ be an infinitesimal sequence.
  We define
  \begin{equation}
    u_n(t)
    :=
    \delta_n^{-\frac{1}{N}}
    \mm([x_t,x_t+\delta_n\lambda^n_t])^{\frac{1}{N}}
    .
  \end{equation}
  We can easily compute the limit
  \begin{align*}
    u(t)
    :=
    \lim_{n\to\infty}
    u_n(t)
    =
    \left(
    \lim_{n\to\infty}
    \delta_n^{-1}
    \int_{x_t}^{x_t+\delta_n\lambda^n_t}
    \Phi(y)
    \,
    \de y
    \right)^{\frac{1}{N}}
      =
      (\lambda_t
      e^{\Phi(x_t)}
      )^{\frac{1}{N}}
      .
  \end{align*}
  Since the concavity passes to pointwise limit, we have that
  \begin{align*}
    e^{\Phi(x_t)/N}
    \geq
    \lambda_t^{-\frac{1}{N}}
    \left(
    (1-t)
    (
    \lambda_0
    e^{\Phi(x_0)}
    )^{\frac{1}{N}}
    +
    t
    (
    \lambda_1
    e^{\Phi(x_1)}
    )^{\frac{1}{N}}
    \right)
    .
  \end{align*}
  The choice $\lambda_i=e^{\Phi(x_i)/(N-1)}$, for $i=0,1$, yields
  \begin{align*}
        e^{\Phi(x_t)/N}
    &
      \geq
    \lambda_t^{-\frac{1}{N}}
    \left(
    (1-t)
      (e^{\Phi(x_0)})^{
      (\frac{1}{N-1}+1)
      \frac{1}{N}
      }
    +
    t
      (e^{\Phi(x_1)})^{
      (\frac{1}{N-1}+1)
      \frac{1}{N}
      }
    \right)
    \\
    &
      =
    \lambda_t^{-\frac{1}{N}}
    \left(
    (1-t)
      e^{\Phi(x_0)/(N-1)}
    +
    t
      e^{\Phi(x_1)/(N-1)}
      \right)
    \\
    &
      =
    \lambda_t^{-\frac{1}{N}}
    \left(
    (1-t)
      \lambda_0
    +
    t
      \lambda_1
      \right)
      =
      \lambda_t^{\frac{N-1}{N}}
      =
          \left(
    (1-t)
      e^{\Phi(x_0)/(N-1)}
    +
    t
      e^{\Phi(x_1)/(N-1)}
      \right)^{\frac{N-1}{N}}
      .
  \end{align*}
  By arbitrariness of $x_0$ and $x_1$, we deduce
  \begin{equation*}
    e^{\Phi((1-t)x_0+t x_1)/(N-1)}
    \geq
    (1-t)
    e^{\Phi(x_0)/(N-1)}
    +
    t
    e^{\Phi(x_1)/(N-1)}
    ,
    \qquad
    \forall
    x_0,x_1\in I\cap \Q
    ,
    \forall t\in [0,1]\cap \Q
    .
  \end{equation*}
  The continuity of $\Phi$ guarantees that the constraint of
  $x_0,x_1,t$ to be rational can be dropped, or in other words
  $e^{\Phi/(N-1)}$ is concave, which is the thesis.
\end{proof}

\begin{lemma}
\label{lem:wasserstein-mixture}
Let $(Q,\mathcal{Q},\q)$ be a measure space, $I\subset \R$ an 
interval, and
$\Phi:Q\times I\to \R$ a function, measurable in the first
variable and continuous in the second variable.
Let 
$\mm:=e^{\Phi}\,\q\otimes\L^1\llcorner_{I}\in\M^{+}(Q\times
  I)$ and fix $N>0$. 
  Define $F_t:Q\times I\times I\to Q\times I$ as
  $F_t(\alpha,x,y)=(\alpha,(1-t)x+t y)$.
  For a given measure $\pi\in\Prob(Q\times I\times I)$, define
  \begin{align*}
    &
      \ee_\pi(t)
      :=
      \Ent((F_t)_{\sharp}\pi|\mm)
      \in[-\infty,\infty]
      ,
      \qquad
      u_\pi(t)
      :=
      \exp
      \left(
      -\frac{\ee_\pi(t)}{N}
      \right)
      .
  \end{align*}
  Assume that, for every $w\in L^\infty(Q)$, with $\int_Q w\,\de\q=1$ and
  $w\geq 0$,
  and for every $\hat \mu_0,\hat \mu_1\in\Prob(I)$, such that $\supp
  \mu_0\subset (-\infty,a)$ and $\supp \mu_1\subset (a,+\infty)$, for
  some $a\in I$,
  there exists
  $\pi\in\Prob(Q\times I\times I)$, such that
  \begin{equation}
    (P_1, P_2)_{\sharp}\pi
    =
    w\q\otimes\hat\mu_0
    \qquad
    \text{ and }
    \qquad
    (P_1, P_3)_{\sharp}\pi
    =
    w\q\otimes\hat\mu_1
  \end{equation}
  and that the function $u_\pi$ is satisfies
  \begin{equation}\label{E:convexitylemma}
    u_\pi(t)
    \geq
    (1-t)
    u_\pi(0)
    +
    t
    u_\pi(1)
    ,
    \qquad
    \forall t\in[0,1]
    .
  \end{equation}

Then, for $\qq$-a.e.\ $\alpha\in Q$, it holds that  the function $t\mapsto \Phi(\alpha,t)$ is locally-Lipscitz and it satisfies
\begin{equation}
    \label{eq:riccati-for-fibers}
\Phi(\alpha,\dotargument)''
    +
\frac{(\Phi(\alpha,\dotargument)')^2}{N-1}
    \leq
    0
    ,
    \qquad
    \text{ in the sense of distributions.}
\end{equation}
\end{lemma}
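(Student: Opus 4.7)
The plan is to combine \Cref{lem:mixture-riccati}, which reduces concavity of $w$-integrated entropies to fiberwise concavity, with the argument in the proof of \Cref{lem:brunn-minkowski-to-cd}, which converts an infinitesimal Brunn--Minkowski inequality into the Riccati bound. The main obstacle is that the plan $\pi$ provided by the hypothesis depends on $w$, so the fiberwise interpolants $\mu_t^\alpha$, and hence the fiberwise entropies $\ee_\alpha(t)$, depend on $w$ as well. This prevents a direct application of \Cref{lem:mixture-riccati}; the way around is to replace the $w$-dependent $\ee_\alpha$ by a plan-independent Jensen-type lower bound that agrees with $\ee_\alpha$ at $t=0,1$.

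Fix rational parameters $x_0<x_1$ in the interior of $I$, $\epsilon_0,\epsilon_1>0$, $t\in(0,1)\cap\Q$, and a scale $\delta=1/n$ small enough that $[x_0,x_0+\delta\epsilon_0]$ and $[x_1,x_1+\delta\epsilon_1]$ are disjoint. Apply the hypothesis with $\hat\mu_i^\delta:=(\delta\epsilon_i)^{-1}\indicator_{[x_i,x_i+\delta\epsilon_i]}\L^1$. Disintegrating the resulting plan as $\pi=\int w(\alpha)\,\delta_\alpha\otimes\pi_\alpha\,\q(d\alpha)$ with $\pi_\alpha\in\Prob(I\times I)$ having marginals $\hat\mu_0^\delta,\hat\mu_1^\delta$, and setting $\mu_s^\alpha:=((1-s)P_2+sP_3)_\sharp\pi_\alpha$, Fubini gives $\ee_\pi(s)=C_w+\int_Q w(\alpha)\,\ee_\alpha(s)\,\q(d\alpha)$ with $C_w:=\int w\log w\,d\q$ and $\ee_\alpha(s):=\Ent(\mu_s^\alpha\,|\,e^{\Phi(\alpha,\cdot)}\L^1)$. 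Since $\supp\mu_s^\alpha\subset[x_s,x_s+\delta\epsilon_s]$ (Minkowski sum), Jensen's inequality yields $\ee_\alpha(s)\geq -\log\mm_\alpha([x_s,x_s+\delta\epsilon_s])=:\ee_\alpha^{\min,\delta}(s)$ for $s\in(0,1)$; at $s\in\{0,1\}$, $\mu_s^\alpha=\hat\mu_s^\delta$ is plan-independent and we set $\ee_\alpha^{\min,\delta}(s):=\Ent(\hat\mu_s^\delta\,|\,e^{\Phi(\alpha,\cdot)}\L^1)$, giving $\ee_\alpha(s)=\ee_\alpha^{\min,\delta}(s)$. Crucially, $\ee_\alpha^{\min,\delta}$ does not depend on $w$.

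Define $\tilde u^{\min,\delta}_w(s):=\exp\bigl(-\tfrac1N\int_Q w\,\ee_\alpha^{\min,\delta}(s)\,d\q\bigr)$. Combining the hypothesis $u_\pi(t)\geq(1-t)u_\pi(0)+t\,u_\pi(1)$ with the equalities at the endpoints and the inequality $u_\pi(t)\leq e^{-C_w/N}\tilde u^{\min,\delta}_w(t)$, and cancelling the $s$-independent factor $e^{-C_w/N}$, yields
\begin{equation*}
\tilde u^{\min,\delta}_w(t)\geq(1-t)\tilde u^{\min,\delta}_w(0)+t\,\tilde u^{\min,\delta}_w(1),\qquad\forall w\in F.
\end{equation*}
Since $\ee_\alpha^{\min,\delta}$ is $w$-independent, \Cref{lem:mixture-riccati} applied with $G(x)=e^{-x/N}$ yields a $\q$-full-measure set $A_{x_0,x_1,\epsilon_0,\epsilon_1,t,\delta}\subset Q$ on which the fiberwise three-point concavity $u_\alpha^{\min,\delta}(t)\geq(1-t)u_\alpha^{\min,\delta}(0)+t\,u_\alpha^{\min,\delta}(1)$ holds. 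Intersecting over the countable family (rational parameters, $\delta=1/n$) produces a single $\q$-full-measure set $A$ on which all such inequalities hold simultaneously.

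On $A$, divide by $\delta^{1/N}$ and send $\delta\to0$: continuity of $\Phi(\alpha,\cdot)$ gives $\delta^{-1/N}\mm_\alpha([x_s,x_s+\delta\epsilon_s])^{1/N}\to(\epsilon_s\,e^{\Phi(\alpha,x_s)})^{1/N}$ for $s\in(0,1)$, while $\tfrac1{\delta\epsilon_s}\int_{x_s}^{x_s+\delta\epsilon_s}\Phi(\alpha,y)\,dy\to\Phi(\alpha,x_s)$ likewise yields $\delta^{-1/N}u_\alpha^{\min,\delta}(s)\to(\epsilon_s\,e^{\Phi(\alpha,x_s)})^{1/N}$ for $s=0,1$. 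Passing to the limit, for all rational $(x_0,x_1,\epsilon_0,\epsilon_1,t)$ and every $\alpha\in A$,
\begin{equation*}
(\epsilon_t\,e^{\Phi(\alpha,x_t)})^{1/N}\geq(1-t)(\epsilon_0\,e^{\Phi(\alpha,x_0)})^{1/N}+t(\epsilon_1\,e^{\Phi(\alpha,x_1)})^{1/N}.
\end{equation*}
This is exactly the inequality reached mid-proof of \Cref{lem:brunn-minkowski-to-cd} after its own $\delta$-limit; following the remainder of that argument---choosing $\epsilon_i=e^{\Phi(\alpha,x_i)/(N-1)}$ via rational approximation and extending from rationals to all of $I$ by continuity---shows that $e^{\Phi(\alpha,\cdot)/(N-1)}$ is concave, equivalently that $\Phi(\alpha,\cdot)$ is locally-Lipschitz and satisfies $\Phi''+(\Phi')^2/(N-1)\leq0$ in the sense of distributions for $\q$-a.e.\ $\alpha\in Q$.
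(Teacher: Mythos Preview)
Your proof is correct and follows the same strategy as the paper: compute the entropy of the product-type measures, use Jensen to replace the fiberwise entropy by the plan-independent quantity $-\log\mm_\alpha([x_t,x_t+\epsilon_t])$, apply \Cref{lem:mixture-riccati} to pass from the $w$-integrated concavity to fiberwise concavity, and then feed into the argument of \Cref{lem:brunn-minkowski-to-cd}. Your introduction of the extra scale $\delta$ and the separate definition of $\ee_\alpha^{\min,\delta}$ at $s\in\{0,1\}$ is actually a refinement over the paper: the paper asserts that the Jensen lower bound is \emph{attained} at $t=0,1$, but this would require $\Phi(\alpha,\cdot)$ to be constant on $[x_i,x_i+\epsilon_i]$; your device of taking the true endpoint entropies and then letting $\delta\to0$ is precisely what makes the endpoint and midpoint expressions agree in the limit and closes this gap.
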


\begin{proof}
  Fix $x=(x_0,x_1)\in(\Q\cap I)^2$, with $x_0<x_1$ and
  $\epsilon=(\epsilon_0,\epsilon_1)\in \Q^2$ with
  $\epsilon_0,\epsilon_1>0$ small enough; define  $x_t=(1-t)x_0+t x_1$ and
  $\epsilon_t=(1-t)\epsilon_0+t\epsilon_1$.
  Fix also $w\in L^\infty(Q)$, as above.
  We define
  \begin{equation*}
    \hat\mu_i^{x,\epsilon}
    =
    \frac{1}{\epsilon_i}
    \L^1\llcorner_{[x_i,x_i+\epsilon_i]},
    \qquad
    \mu_i^{x,\epsilon,w}
    :=
    w\q\otimes
    \hat\mu_i^{x,\epsilon}
    ,
    \qquad
    i=0,1
    .
  \end{equation*}
  The measures $\hat\mu_i^{x,\epsilon,w}$, $i=0,1$, satisfy the
  assumption, thus there exists $\pi_w$, as above.
  In particular, since $\mu_i^{x,\epsilon,w}$, $i=0,1$, are absolutely
  continuous, then inequality~\eqref{E:convexitylemma} implies that
  also $\mu_t^{x,\epsilon,w}:=(F_t)_{\sharp}\pi_w$,
  $t\in[0,1]$, is absolutely continuous w.r.t.\ $\mm$.
  We can therefore write
  \begin{equation}
    \mu_t^{x,\epsilon,w}
    =
    w(\alpha)
    \rho^{x,\epsilon,w}_{t,\alpha}
    \q
    \otimes \Leb^1
    ,
  \end{equation}
  for some function $ \rho^{x,\epsilon,w}_{t,\alpha}$ which satisfies
  the following immediate properties
  \begin{align}
    &
      \int_{x_t}^{x_t+\epsilon_t}
          \rho^{x,\epsilon,w}_{t,\alpha}(y)
      \,
      \de y
      =1
      \qquad
      \text{ and }
      \qquad
      \supp     \rho^{x,\epsilon,w}_{t,\alpha}
      \subset
      [x_t,x_t+\epsilon_t]
      .
  \end{align}
  We now compute the entropy
  \begin{align*}
    \ee^{x,\epsilon,w}(t)
    :=&
        \Ent(\mu_t^{x,\epsilon,w}|\mm)
        =
        \int_Q
        \int_{x_t}^{x_t+\epsilon_t}
         w(\alpha)
         \rho^{x,\epsilon,w}_{t,\alpha}(y)
    \log
    \big(
        w(\alpha)
        \rho^{x,\epsilon,w}_{t,\alpha}(y)
        e^{-\Phi(\alpha,y)}
        \big)
        \,
        \de y
        \,
        \de\q
    \\
    =
      &
      \int_Q
      w(\alpha)
      \log
      w(\alpha)
      \,
      \q(\de\alpha)
      +
        \int_Q
        \int_{x_t}^{x_t+\epsilon_t}
         w(\alpha)
         \rho^{x,\epsilon,w}_{t,\alpha}(y)
    \log
    \big(
        \rho^{x,\epsilon,w}_{t,\alpha}(y)
        e^{-\Phi(\alpha,y)}
        \big)
        \,
        \de y
        \,
        \de\q
    \\
    =&
      \Ent(w\q|\q)
      +
        \int_Q
         w(\alpha)
        \int_{x_t}^{x_t+\epsilon_t}
         \rho^{x,\epsilon,w}_{t,\alpha}(y)
    \log
    \big(
        \rho^{x,\epsilon,w}_{t,\alpha}(y)
        e^{-\Phi(\alpha,y)}
        \big)
        \,
        \de y
        \,
        \de\q
       .
  \end{align*}
  Next, we define
  $\mm_\alpha:=e^{\Phi(\alpha,\dotargument)}\,\L^1\llcorner_{I}$ and
  compute (using Jensen's inequality for the function $\rho\log\rho$,
  w.r.t.\ the measure $\mm_\alpha$)
  \begin{align*}
    \ee_\alpha^{x,\epsilon}(t)
    :&=
      -
      \log(
      \mm_\alpha([x_t,x_t+\epsilon_t])
      )
    \\
    &
      =
      \int_{x_t}^{x_t+\epsilon_t}
      \rho^{x,\epsilon,w}_{t,\alpha}(y)
      e^{-\Phi(\alpha,y)}
      \,
      \mm_\alpha(\de y)
      \log
      \left(
      \frac{
      \int_{x_t}^{x_t+\epsilon_t}
      \rho^{x,\epsilon,w}_{t,\alpha}(y)
      e^{-\Phi(\alpha,y)}
      \,
      \mm_\alpha(\de y)
      }{\mm_\alpha([x_t,x_t+\epsilon_t])}
      \right)
          \\
    &
\leq
      \int_{x_t}^{x_t+\epsilon_t}
      \rho^{x,\epsilon,w}_{t,\alpha}
      e^{-\Phi(\alpha,y)}
      \log
      \big(
      \rho^{x,\epsilon,w}_{t,\alpha}
      e^{-\Phi(\alpha,y)}
      \big)
      \,
      \mm_\alpha(\de y)
    \\
    &
      =
      \int_{x_t}^{x_t+\epsilon_t}
      \rho^{x,\epsilon,w}_{t,\alpha}
      \log
      \big(
      \rho^{x,\epsilon,w}_{t,\alpha}
      e^{-\Phi(\alpha,y)}
      \big)
      \,
      \de y
    ,
  \end{align*}
  with equality for $t=0,1$.
  Therefore, it holds that
  \begin{equation}
    \ee^{x,\epsilon,w}(t)
    \geq
    \Ent(w\q|q)
    +
    \int_Q
    w(\alpha)
    \ee_\alpha^{x,\epsilon}
    (t)
    \,
    \q(\de\alpha)
    ,
  \end{equation}
  with equality at the endpoints.
  Let
  \begin{equation*}
    u^{x,\epsilon,w}(t)
    :=
    \exp
    \left(
      -
      \frac{\ee^{x,\epsilon,w}(t)}{N}
    \right)
    ,
    \qquad
    u^{x,\epsilon,\alpha}(t)
    :=
    \exp
    \left(
      -
      \frac{\ee^{x,\epsilon,\alpha}(t)}{N}
    \right)
    =
    \mm_\alpha([x_t,x_t+\epsilon_t])
    .
  \end{equation*}
  Fix $t\in(0,1)$.
  By assumption, $u^{x,\epsilon,w}$ verifies
 ~\eqref{E:convexitylemma}, thus we can apply
  \Cref{lem:mixture-riccati} (with $G(b)=\exp(-b/(N-1))$ and deduce  that for $\q$-a.e.\ $\alpha$ it
  holds that
  \begin{equation*}
    u^{x,\epsilon,\alpha}(t)
    \geq
    (1-t)
    u^{x,\epsilon,\alpha}(0)
    +
    t
    u^{x,\epsilon,\alpha}(1)
    .
  \end{equation*}
  By taking a countable intersection, we deduce that the inequality
  above holds true for all $t\in[0,1]\cap \Q$.
  We can thus conclude by applying \Cref{lem:brunn-minkowski-to-cd}.
\end{proof}

We are now in position to prove that the displacement convexity of the entropy implies the $\NC^1(N)$ condition.

\begin{theorem}[$\NC^e(N)\Rightarrow\NC^1(N)$]
  \label{th:entropic-to-distributional}
  Let $(M,g)$ be a Lorentzian manifold, let $H$ be a causal null-hypersurface and $(S_k)_k$
be a dense sequence of local space-like and acausal cross-sections   
for $H$. Assume $g,H,S_k$ to be of class $C^2$.
  Consider $\Phi:H \to \R$ a $C^0$
  weight function and define $\mm_L : =e^{\Phi}\vol_L$.

If $(M,g,H, \Phi)$ satisfies the null energy condition $\NC^e(N)$,
then, for every $z\in S$, the function
$a_z(t):=W_{L}(z,t)+\Phi(\gflow_L(z,t))$  is locally-Lipschitz and it satisfies
\begin{equation}
  \label{eq:convexity-density}
  a_z''
  +
  \frac{(a_z')^2}{N-2}
  \leq 0, \qquad \text{in the sense of distributions on $(0,1)$.}
\end{equation}
\end{theorem}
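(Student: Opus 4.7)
The plan is to pull back the setting to the product $S\times\R$ via the flow map $\gflow_L$ and invoke \Cref{lem:wasserstein-mixture} with the lemma's dimensional parameter chosen equal to $N-1$.

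By \Cref{P:decomposition} it suffices to argue on a single saturated set $H_{S_k}$; fix $k$ and set $S:=S_k$. Since $S$ is a global space-like and acausal cross-section of $H_S$, the flow map $\gflow_L$ parametrizes $H_S$ by $S\times\R$, and under this identification the weighted rigged measure satisfies
\[
(\gflow_L)^{\ast}\mm_L \;=\; e^{a_z(t)}\,\haus^{n-2}(\de z)\otimes\de t, \qquad a_z(t) = W_L(z,t) + \Phi(\gflow_L(z,t)).
\]

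Next, I would verify that the abstract hypothesis of \Cref{lem:wasserstein-mixture} (with $Q=S$, $\q=\haus^{n-2}$, parameter $N-1$, and the role of ``$\Phi(\alpha,\cdot)$'' played by $a_\alpha(\cdot)$) follows from $\NC^e(N)$. Fix $w\in L^\infty(S)$ with $w\geq 0$ and $\int_S w\,\de\haus^{n-2}=1$, and $\hat\mu_0,\hat\mu_1\in\Prob(I)$ with $\supp\hat\mu_0\subset(-\infty,a)$, $\supp\hat\mu_1\subset(a,+\infty)$. Define $\mu_i:=(\gflow_L)_\sharp(w\,\haus^{n-2}\otimes\hat\mu_i)$ for $i=0,1$. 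These probability measures are absolutely continuous w.r.t.\ $H$ and null connected along $H$: on each fiber $t\mapsto\gflow_L(z,t)$ is a future-directed null geodesic of $H$, so the fiberwise vertical transport is realized by causal curves with vanishing time separation. Applying the $\NC^e(N)$ hypothesis yields $\nu\in\OptGeo^H(\mu_0,\mu_1)$ along which $u_{N-1}(t):=\exp(-\Ent((e_t)_\sharp\nu|\mm_L)/(N-1))$ is displacement concave on $[0,1]$.

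The crucial observation is that every null geodesic contained in $H_S$ is of the form $s\mapsto\gflow_L(z,(1-s)t_0+st_1)$ for a unique $z\in S$ and $t_0<t_1$. Consequently the lift $\pi:=(\gamma\mapsto(z(\gamma),t_0(\gamma),t_1(\gamma)))_{\sharp}\nu\in\Prob(S\times I\times I)$ has marginals $(P_1,P_{i+1})_\sharp\pi=w\,\haus^{n-2}\otimes\hat\mu_i$ and satisfies $(F_t)_\sharp\pi=(\gflow_L^{-1})_\sharp(e_t)_\sharp\nu$ under the parametrization. The change-of-variables formula then gives that the functional $u_\pi$ appearing in \Cref{lem:wasserstein-mixture} (with the lemma's dimensional parameter equal to $N-1$) coincides with $u_{N-1}$, and is therefore displacement concave. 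The lemma applies and yields, for $\haus^{n-2}$-a.e.\ $z\in S$, that $a_z(\cdot)$ is locally Lipschitz and satisfies
\[
a_z'' + \frac{(a_z')^2}{N-2} \;\leq\; 0
\]
distributionally on $I$, or equivalently that $t\mapsto e^{a_z(t)/(N-2)}$ is concave.

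To upgrade ``$\haus^{n-2}$-a.e.\ $z$'' to ``every $z\in S$'', observe that $W_L(z,t)=\log\det J(z,t)$ depends continuously on $z$ by standard ODE theory, since $J(z,\cdot)$ solves a linear Jacobi equation with $C^0$ coefficients and fixed initial datum $I$; together with continuity of $\Phi$ and $\gflow_L$, this makes $z\mapsto a_z$ continuous in the topology of locally uniform convergence on $I$. Since concavity of $e^{a_z(\cdot)/(N-2)}$ is preserved under such limits, the Riccati inequality extends from a full-measure subset of $S$ to all of $S$. The principal obstacle will be to justify carefully that the generator decomposition of $H_S$ forces the lifted plan $\pi$ to have the fibered form required by \Cref{lem:wasserstein-mixture}; once this compatibility is verified, the argument reduces to a direct application of that lemma.
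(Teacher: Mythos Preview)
Your proposal is correct and follows essentially the same approach as the paper: reduce to a single global acausal cross-section, push $\NC^e(N)$ through the diffeomorphism $\gflow_L$ to verify the hypothesis of \Cref{lem:wasserstein-mixture} with dimensional parameter $N-1$, and then upgrade the a.e.\ conclusion to all $z\in S$ by continuity of $(z,t)\mapsto a_z(t)$.

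Two minor remarks. First, the ``principal obstacle'' you flag is not one: \Cref{lem:wasserstein-mixture} does \emph{not} require the lifted plan $\pi$ to be fibered or of product form; it only asks that $(P_1,P_{i+1})_\sharp\pi = w\q\otimes\hat\mu_i$ and that $u_\pi$ satisfy the concavity inequality, both of which you have already verified via $\gflow_L^{-1}\circ e_t = F_t\circ(\text{lift})$. Second, your claim that the $\mu_i$ are absolutely continuous w.r.t.\ $H$ requires $\hat\mu_i$ to be absolutely continuous w.r.t.\ $\L^1$; this is harmless since the proof of \Cref{lem:wasserstein-mixture} only tests the hypothesis against uniform measures on intervals, but you should note it. The paper also normalizes (via a transverse rescaling of $L$) so that $\gflow_L(S\times(-1,1))=H$, which is the clean way to fix the interval $I$; you have left this implicit.
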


\begin{proof}
We will consider only the case for $t$
  in a neighborhood of $0$: if $t$ is far away from $0$, we can take
  an auxiliary cross section and apply \Cref{L:domainJ}.
  We can also restrict $H$ so that $S$ is a global space-like and acausal cross-section (\Cref{T:summary}).
  Up to multiplying $L$ by a transverse function (no change in the derivatives of the densities of the measures) 
  and further restricting $H$, we can also assume
  that $\gflow_L(S\times (-1,1)) = H$. 

  Our aim is now to apply \Cref{lem:wasserstein-mixture}.
  Take $I=(-1,1)$, $Q=S$, $\q=\haus^{d-2}$,
  and $\hat\mm=a_z(t)\q(\de z)\otimes \de t\in\M^{+}(Q\times I)$.
  Fix $\hat \mu_i\in \Prob(I)$, $i=0,1$, absolutely continuous, such that $\supp
  \hat \mu_0\subset (-\infty,b)$ and $\hat \mu_1\subset (b,+\infty)$,
  for some $b\in I$.
  Fix $w\in L^\infty(Q)$ non-negative, with $\int_Q w=1$.
  Define $ \mu_i:=(\gflow_L)_{\sharp}(w \q\otimes\hat \mu_i)$, $i=0,1$.
  Let $\nu\in \OptGeo^H(\mu_0,\mu_1)$ by null-geodesics dynamical
  transport plan given by the definition of $\NC^e(N)$; in other
  words, $\nu$ satisfies
  \begin{equation}
    \label{eq:convexity-endpoints-again}
    u(t)\geq
    (1-t)
    u(0)
    +
    t
    u(1),
    \qquad
    \text{ where }
    u(t):=
    \exp
    \left(
      -
      \frac{\Ent((e_t)_{\sharp}\nu|\mm)}{N-1}
    \right)
    .
  \end{equation}
  
  We now construct $\pi$.
  Let $P_S:H\to S$, given by $P_S(x)=\alpha$, if
  $(x,\alpha)\in \relation$; let $G_L:H\to \R$, given by
  $\Psi_L(P_S(x),G_L(x))=x$ (in other words, $(P_S,G_L)$ is the
  inverse of $\Psi_L$).
  Let $\Gamma$ a map that given a causal curve $\gamma$
  it returns
  $\Gamma(\gamma)=(P_S(\gamma_0),G_L(\gamma_0),G_L(\gamma_1))$.
  We can thus define $\pi:=\Gamma_{\sharp}\gamma$.

  We need to check that $\pi$ satisfies the hypothesis of
  \Cref{lem:wasserstein-mixture}.
  Let $F_t$ as in the hypothesis of \Cref{lem:wasserstein-mixture}.
  Since $F_t\circ \Gamma=(P_S,G_L)\circ e_t$, it holds that
  \begin{align*}
    \Ent((F_t)_{\sharp}\pi | \hat \mm)
    &
    =
    \Ent((F_t\circ \Gamma)_{\sharp}\nu | \hat\mm)
    =
    \Ent(((P_S,G_L)\circ e_t)_{\sharp}\nu | \hat \mm)
    \\
    &
      =
    \Ent(((P_S,G_L)\circ e_t)_{\sharp}\nu | (P_S,G_L)_{\sharp} \mm_L)
    =
    \Ent(( e_t)_{\sharp}\nu | \mm_L),
  \end{align*}
  having used the fact that $\hat\mm=(P_S,G_L)_{\sharp}\mm_L$.
  We can thus deduce from~\eqref{eq:convexity-endpoints-again} the
  assumption~\eqref{E:convexitylemma} of
  \Cref{lem:wasserstein-mixture}.

  Applying this Lemma, we deduce that for $\q$-a.e.\
  $z$,  the function $I \ni t\mapsto W_{L}(z,t)+\Phi(\gflow_L(z,t))$ is 
  locally-Lipschitz and it satisfies~\eqref{eq:convexity-density}.
  Finally, the ``for $\q$-a.e.\ $z\in S$'' improves to ``for all $z\in
  S$'' thanks to the continuity of both $W_{L}$ and $\Phi$.
\end{proof}


\subsection{From 
displacement convexity to classical lower Ricci bounds.}
\label{Ss:futurecone}

Let us start by reviewing the implications proved so far.
Under the regularity assumptions
  $\Phi\in C^0$ and $g\in C^2$, we showed that
  the conditions $\NC^e(N)$ and $\NC^1(N)$ are equivalent,
  for a fixed weighted null hypersurface (see \Cref{T:convexity-of-entropy-null} and \Cref{th:entropic-to-distributional}); as a consequence, $\NC^e(N)$ and $\NC^1(N)$ are two equivalent
  conditions for spaces.

  Both conditions, under the regularity assumptions 
  $\Phi\in C^2$
  and $g\in C^2$ are implied by $\Ric^{g,\Phi,N}(v)\geq 0$, for every
  light-like vector $v$.

  We now close this circle of implications, by showing that if a
Lorentzian manifold enjoys the convexity of the entropy in the way
specified above, then $\Ric(v,v)\geq0$ 
in the null directions.

To obtain the classical null-energy condition, we will merely use the convexity of the entropy inside 
future light-cones.

We fix few notations that will be used in the sequel.
Given a Lorenztian manifold $M$ and a point $p\in M$, we
consider the set
\begin{equation}
  \hat H
  :
  =\{v\in T_pM: v
  \text{ is future-directed and }
  g(v,v)=0
  \},
\end{equation}
i.e.
the future light-cone in the tangent space without the tip.
Let $U\subset T_pM$ be a neighborhood of the origin such that
$\exp_p|_{U}$ is a diffeomorphism on its image.
We define $H:=\exp_p(\hat H\cap U)$.
It is clear that $H$ is a null hypersurface.
Any null hypersurface constructed in this way will be called \emph{local future
light-cone}.

\begin{theorem}[$\NC^1(n)$ on light-cones $\Rightarrow$ NEC]\label{thm:NC1toNEC}
  Let $(M,g)$ be a Lorentzian manifold of dimension $n$, with $g$ of
  class $C^2$.
  Assume that every local future light-cone $H$ satisfies the
   null energy condition $\NC^1(n)$.

  Then $\Ric_g(v,v)\geq 0$ for any $v\in TM$, such that $g(v,v)=0$.
\end{theorem}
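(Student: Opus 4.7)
\emph{Strategy.} Fix any $p \in M$ and any future-directed null vector $v \in T_p M$; the target $\Ric_g(v,v) \geq 0$ will follow by applying $\NC^1(n)$ to the local future light-cone at $p$ and taking a limit toward the vertex. The past-directed case is immediate from $\Ric_g(-v,-v) = \Ric_g(v,v)$.

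\emph{Step 1: combining $\NC^1$ with the Riccati identity.} Let $H$ be a local future light-cone at $p$, with the vertex $p$ itself removed. Then $H$ is a causal null hypersurface ruled by the null geodesics issuing from $p$; for small $\epsilon > 0$ the slice
\begin{equation*}
  S_\epsilon := \{\exp_p(\epsilon w) : w \in T_p M \text{ null, future-directed}, \; g_R(w,w) = 1\}
\end{equation*}
(for an auxiliary Riemannian metric $g_R$) is a local space-like acausal cross-section, and $\{S_{\epsilon_k}\}_k$ with $\epsilon_k \to 0$ is dense in $H$ in the sense of \Cref{def:denseSk}. The hypothesis that $(M, g, H, 0)$ satisfies $\NC^1(n)$ and \Cref{def:NC1Quadruples} then give, for every null-geodesic vector field $L$, every $z \in S_\epsilon$ and every $t$ in the generator domain,
\begin{equation*}
  W_L''(z,t) + \frac{(W_L'(z,t))^2}{n-2} \leq 0.
\end{equation*}
The proof of \Cref{eq:riccati-for-jacobi}, meanwhile, produces the pointwise identity
\begin{equation*}
  W_L''(z,t) + \frac{(W_L'(z,t))^2}{n-2} = -\sigma(t) - \Ric_{\gflow_L(z,t)}(L,L),
  \qquad
  \sigma(t) := \tr\bigl(\bar U(t)^2\bigr) - \frac{(\tr \bar U(t))^2}{n-2} \geq 0,
\end{equation*}
where $\bar U(t)$ is the transverse $(n-2)\times(n-2)$ block of $U(t) = J(z,t)^{-1} J'(z,t)$. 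Combining the two displays yields the key bound $\Ric_{\gflow_L(z,t)}(L,L) \geq -\sigma(t)$.

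\emph{Step 2: vanishing of $\sigma$ at the vertex, and limit.} Consider the generator $\gamma(s) := \exp_p(sv)$ of $H$, set $z_\epsilon := \gamma(\epsilon) \in S_\epsilon$, and choose $L$ so that $L(\gamma(s)) = \dot\gamma(s)$ along $\gamma$ (by parallel transport). Jacobi fields $\tilde J(s)$ along $\gamma$ issuing from the vertex satisfy $\tilde J(0) = 0$, hence $\tilde J''(0) = -R(\tilde J(0), v) v = 0$ by the Jacobi equation, and Taylor-expanding gives $\tilde J(s) = s\,\tilde J'(0) + O(s^3)$. The intrinsic shape operator therefore admits the expansion
\begin{equation*}
  \tilde{\bar U}(s) = \frac{1}{s}\,\bar I + s\,\bar C + O(s^2)
\end{equation*}
for some matrix $\bar C$ built from curvature and the initial data. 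Computing traces up to order $s$, the $(n-2)/s^2$-parts of $\tr(\tilde{\bar U}^2)$ and $(\tr \tilde{\bar U})^2/(n-2)$ coincide, the $O(1)$-parts (both equal $2\tr \bar C$) coincide, and the $O(s)$-parts coincide too, so $\tilde\sigma(s) = O(s^2) \to 0$ as $s \to 0^+$. Since the paper's $\bar U(t)$ (attached to the cross-section $S_\epsilon$) is conjugate to $\tilde{\bar U}(t+\epsilon)$ via the constant linear change of basis identifying the two sets of Jacobi data, and $\sigma$ is a conjugation invariant, $\sigma(t) = \tilde\sigma(t+\epsilon) \to 0$ as $t \to -\epsilon^+$. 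As $t \to -\epsilon^+$, $\gflow_L(z_\epsilon, t) \to p$ in $M$ and $L \to v$ (parallel transport back to $p$ returns $v$), so by continuity of $\Ric_g$ (well-defined since $g \in C^2$) the inequality $\Ric_{\gflow_L(z_\epsilon,t)}(L,L) \geq -\sigma(t)$ passes to the limit to give $\Ric_p(v,v) \geq 0$, as desired.

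\emph{Main obstacle.} The delicate point is the cancellation producing $\sigma(t) = O((t+\epsilon)^2)$ near the vertex: since $\bar U$ diverges like $(t+\epsilon)^{-1}$, both $\tr(\bar U^2)$ and $(\tr \bar U)^2/(n-2)$ blow up like $(t+\epsilon)^{-2}$, and one must track two further orders in the Taylor expansion of the Jacobi fields at $p$ to see the cancellation. This is the analytic reflection of the geometric fact that null geodesics emanating from a single point are \emph{isotropic to leading order}. A secondary, essentially cosmetic issue is checking that the local future light-cone minus the vertex genuinely meets the regularity and causality requirements of \Cref{def:NC1Quadruples}; this is routine for $g \in C^2$, modulo mild considerations near the vertex which are irrelevant for the limit computation above.
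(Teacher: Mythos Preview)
Your argument is correct, and the core analytic input is the same as the paper's, but the packaging differs. The paper argues by contradiction: assuming $\Ric_p(v,v)<0$, it expands $(\det\bar J(h-1))^{1/(n-2)} = h - \tfrac{h^3}{12(n-2)}\Ric_p(v,v) + o(h^3)$ directly and observes that this function is strictly convex near the vertex, violating $\NC^1(n)$. You instead extract from the proof of \Cref{eq:riccati-for-jacobi} the Raychaudhuri-type \emph{identity} $W_L''+(W_L')^2/(n-2)=-\sigma-\Ric(L,L)$ with $\sigma=\tr(\bar U^2)-(\tr\bar U)^2/(n-2)\geq 0$ the shear-squared term, use $\NC^1$ to obtain $\Ric(L,L)\ge-\sigma$, and show $\sigma\to 0$ toward the vertex. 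The two computations are equivalent via $f''=-\frac{f}{n-2}(\sigma+\Ric)$ for $f=e^{W_L/(n-2)}$: your vanishing of $\sigma$ at the vertex is exactly what makes the $h^3$ coefficient in the paper's expansion proportional to $\Ric_p(v,v)$ alone. Your route has the conceptual advantage of isolating the geometric obstruction (the trace-free part of $\bar U$) and of being direct rather than by contradiction; the paper's route is slightly more self-contained, since it does not rely on unpacking an identity hidden inside the proof of an inequality.

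One small simplification in your Step~2: the conjugation between the cross-section-based $\bar U$ and the vertex-based $\tilde{\bar U}$ is unnecessary. In the paper's normalization (cross-section at $t=0$, vertex at $t=-1$) one has $\bar J(-1)=0$ and $\bar J'(-1)=I$ directly (see~\eqref{eq:JIC}), and $\bar U(t)=\bar J(t)^{-1}\bar J'(t)$ (the block identity $\bar U=\bar J^{-1}\bar J'$ follows from the shape~\eqref{eq:J-appearence} of $J$). Thus $\bar U$ already has the claimed expansion $\frac{1}{h}\bar I - \frac{h}{3}\bar R(-1)+o(h)$ in $h=t+1$ with no change of basis needed, and your $\sigma=O(h^2)$ cancellation follows.
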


\begin{proof}
By the the definition of $W_{L}$, the hypothesis coincides with the convexity of the function
\begin{equation*}
  t\mapsto \exp(W_{L}(z,t)/(n-2))=(\det(J(t)))^{1/(n-2)}
  .
\end{equation*}
Recall also that from~\eqref{eq:J-appearence} we have
  $\det J(t)=\det \bar J(t)$, where $\bar J$ is the upper-left
  $(n-2)\times(n-2)$-minor of $J$.

  Fix $p\in M$ and assume by contradiction that there exists  $v\in T_pM$ such that  $g(v,v)=0$ and $\Ric(v,v)<0$.
  Let $(e_i)_i$ be a basis of $T_pM$, such that:
  \begin{align}
    &
      g(e_i,e_j)=\delta_{ij}
      ,
      \qquad
      i=1,\dots,n-2, \,\, j=1,\dots,n
    \\
    &
      e_{n-1}=v
      ,
      \qquad
      g(e_{n-1},e_{n})=-1
      ,
      \qquad
      g(e_{n},e_{n})=0
      .
  \end{align}
  We endow $T_pM$ with a (Riemannian) scalar product $\hat g$, such that $(e_i)_i$ is
  an orthonormal basis for $\hat g$; in particular $\hat g(v,v)=1$.
  Up to rescaling $v$ (and the corresponding $(e_i)_i$), we can assume that
  \begin{equation*}
    \hat S
    :=
    \{
    w\in
    \hat H
    :
    \hat g(w,w)=1
    \}
    \cap
    B_{\epsilon}^{\hat{g}}(v)
    \subset U
  \quad
  \text{ and }
  \quad
  S:=\exp_p(\hat S)
  \subset H
    ,
  \end{equation*}
  for some $\epsilon>0$ small enough.
  Clearly, $S$ is a global space-like cross-section for $H$.
  We define the map $\hat L: \hat H\to T_pM$ as $\hat
  L(w)= w/\sqrt{\hat g(w,w)}$.
  Using the standard identification of $T_pM$ with $T_w(T_pM)$, we can
  define $L(\exp_p(w)):=\de (\exp_p)_w[\hat L]$.
  It is clear that $L$ is a null-geodesic vector field on $H$.
  Let $\gamma_t=\exp_p(t v)$, and let
  $z=\gamma_1=\exp_p( v)\in S$.

  With a slight abuse of notation, we still denote with $(e_i)_i$ the
  basis of $T_zS$ given by
  $e_i=\de(\exp_p)_v[e_i]$, $i=1,\dots, n$;
  in other words $(e_i)_i$  is obtained via parallel
  transport along the curve $\gamma$.
  Notice that $e_{n-1}=L$ and $e_n=\overline{L^S}$.
  We now consider the Jacobi fields $J_i=J_{e_i}$ (recall the notation
  of Section~\ref{S:jacobi}).

It is a standard fact of Lorentzian (and also Riemannian) geometry
  (see, e.g.,~\cite[Sec.~3.C.3]{GallotHulinLafontaine04}) that $J_{e_i}$ can be
  characterized as 
  \begin{equation}\label{eq:JacExp}
    J_{e_i}(t-1)     =
    \de (\exp_p)_{tv}[te_i]
    ,
    \qquad
    i=1,\dots, n-2
    .
  \end{equation}
  Moreover, for $t=0$, it holds that
  \begin{equation}\label{eq:JIC}
    J_{e_i}(-1) =0
    \qquad
    \text{ and }
    \qquad
   J_{e_i}'(-1) =e_i
    ,
    \qquad
    i=1,\dots, n-2
    .
  \end{equation}
  Using the coordinate system introduced in Section~\ref{S:jacobi} (we
  recall that $R$ represents the Riemann curvature tensor, $J$
  represents the Jacobi flow, and $J_i$ is the $i$-th row of $J$), we
  can write
  \begin{align*}
    &
      J_i''(t)=-J_i(t)R(t)
      ,
      \qquad
      J_i(-1)=0
      ,
      \qquad
      J_i'(-1)
      =\mathbf{f}_i^T
      ,
      \qquad
      i=1,\dots,n-2
      ,
  \end{align*}
  and thus $J_i''(-1)=0$.
  The $C^2$-regularity of the metric guarantees that $R$ is
  continuous, and thus $J_i$ is $C^2$.
  We now compute the third derivative $J_i'''(-1)$:
  \begin{align*}
    J_i'''(-1)
    &
      =
      \lim_{t\to -1^+}
      \frac{J_i''(t)-J_i''(-1)}{t+1}
      =
      \lim_{t\to -1^+}
      \frac{-J_i(t)R(t)}{t+1}
    \\
    &
      =
      -
      \lim_{t\to -1^+}
      \frac{J_i(t)}{t+1}
      R(-1)
      =
      -
      J'(-1)
      R(-1)
      =
      -\mathbf{f}_i^T
      R(-1)
      ,
      \qquad
      i=1,\dots,n-2
      .
  \end{align*}
  Denoting by $\bar R$ and $\bar J$ the upper-left
  $(n-2)\times(n-2)$-minor of $R$ and $J$ respectively, the equation
  above becomes $\bar J'''(-1)=-\bar R(-1)$. Therefore, the
  following Taylor expansion holds:
  \begin{equation*}
      \bar J(h-1)
    =
    hI
    -\frac{h^3}{6} \bar R(-1)
    +
    o(h^3)
    ,
    \qquad
    h\to 0^+
    .
  \end{equation*}
  The determinant of $\bar{J}$ has the expansion
  \begin{equation*}
    \det\bar J(h-1)
    =
    h^{n-2}
    \det\left(
      I-\frac{h^2}{6} \bar R(-1)+o(h^2)
      \right)
    =
    h^{n-2}
    \left(
      1-\frac{h^2}{12}
      \tr( \bar R(-1))
      +
      o(h^2)
    \right)
    ,
  \end{equation*}
  hence
  \begin{equation*}
        (\det\bar J(h-1))^{\frac{1}{n-2}}
    =
    h
    \left(
      1-\frac{h^2}{12}
    \tr( \bar R(-1))
    +
    o(h^2)
    \right)^{\frac{1}{n-2}}
    =
    h
    -\frac{h^3}{12(n-2)}
    \tr( \bar R(-1))
    +
    o(h^3)
    .
  \end{equation*}
  The trace of $\bar R(-1)$ is given by
  \begin{equation*}
    \tr \bar R(-1)
    =
    \tr R(-1)
    -R_{n-1,n-1}(-1)
    -R_{n,n}(-1)
    =
    \Ric_p(v,v)
    -R_{n-1,n-1}(-1)
    -R_{n,n}(-1)
    .
  \end{equation*}
  Using the orthogonality properties of the basis $(e_i)_i$, we can
  compute 
  \begin{align*}
    &
    R_{n-1,n-1}(-1)
    =
    -
    g(R(e_{n-1},v)v,e_{n})
    =
    -
    g(R(v,v)v,e_{n})
      =0
      ,
    \\
    &
    R_{n,n}(-1)
    =
    -
    g(R(e_{n},v)v,e_{n-1})
    =
    -
    g(R(e_{n},v)v,v)
      =0.
  \end{align*}
  Thus $\tr \bar R(-1)=\Ric_p(v,v)$, yielding
  \begin{equation*}
    (\det J(h-1))^{\frac{1}{n-2}}
    =
    (\det\bar J(h-1))^{\frac{1}{n-2}}
    =
    h
    -\frac{h^3}{12(n-2)}
    \Ric_p(v,v)
    +
    o(h^3)
    .
  \end{equation*}
  Since we assumed by contradiction that $\Ric_p(v,v)<0$, then the
  function $t\mapsto (\det J(t))^{\frac{1}{n-2}}$ is not concave in a
  neighborhood of $-1$, contradicting that $H$ satisfies the $\NC^1(n)$ condition.
\end{proof}

\begin{remark}
Arguing similarly to the proof of~\cite[Theorem 3.9]{Ket24}, by constructing ad-hoc null hypersurfaces, it is possible to show that the implication $\NC^1\Rightarrow$ NEC holds also in the weighted setting. We opted for the current presentation where the $\NC^1$ condition is required only for light-cones in order to offer a slightly different perspective, since these are null hypersurfaces with a clean
physical interpretation (namely the events spanned by the light radiating from a point). However, let us stress that testing the $\NC^1$ only with light-cones seems not sufficient to obtain the NEC in the case of \emph{weighted} Lorentzian manifolds; in this setting, the geometry of the ``test null hypersurfaces" should take into account the weight as well; this indeed was taken into consideration in~\cite[Theorem 3.9]{Ket24}.
\end{remark}

\section{Stability of the null-energy condition}
\label{S:stability}

In this section, we prove that the null-energy condition $\NC^1(N)$ is stable under
$C^1_{loc}$-con\-ver\-gence of the Lorentzian metrics and $C^0_{loc}$-convergence of the weights on the volume measures.
Here, by \emph{$C^1_{loc}$-convergence} we mean that the sequence of Lorentzian metrics, when expressed
in local coordinates, converges locally uniformly, together with first
derivatives.

\begin{theorem}
  \label{th:stability}
  Let $M$ be a smooth manifold, $(g_j)_j$ a sequence of $C^2$-Lorentzian metrics
  and $\Phi_j:M\to\R$ a sequence of $C^0$-functions.  %
  Assume that $g_j\to g$ in $C^1_{loc}$ and that $\Phi_j\to \Phi$ locally
  uniformly.

  If, for every $j$, the triple $(M,g_j,e^{\Phi_j}\vol_g)$ satisfies the
   null energy condition $\NC^1(N)$ , then also
  $(M,g,e^{\Phi}\vol_g)$ satisfies the $\NC^1(N)$ condition as well.
\end{theorem}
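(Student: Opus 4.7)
The approach is to directly verify the distributional Riccati-type inequality characterizing $\NC^1(N)$ for $(M,g,\Phi)$ by approximation. Fix a $C^2$ causal null hypersurface $H \subset M$ admitting a dense sequence of $C^2$ local space-like and acausal cross-sections. By \Cref{def:NC1Quadruples}, it suffices to fix a single such cross-section $S$, a $C^1$ null-geodesic vector field $L$ on $H_S$, and to show that for every $z \in S$ the function $a_z(t) := W_L(z,t) + \Phi(\gflow_L(z,t))$ is concave and satisfies $a_z'' + (a_z')^2/(N-2) \leq 0$ in the sense of distributions.

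Since $S$ is space-like and acausal for $g$ and $g_j \to g$ in $C^1_{loc}$, the same properties hold for $g_j$ once $j$ is large. The $g_j$-normal bundle $\normal^{g_j} S$ is two-dimensional, and its unique future-directed null line near $\mathbb{R}L(z)$ is selected by the algebraic conditions $g_j(v,w)=0$ for $w\in TS$ and $g_j(v,v)=0$, hence depends $C^1$ on $g_j|_S$. Pick $L_j \in \vfield^1(\normal^{g_j} S)$ so that $L_j\to L$ in $C^1$ along $S$, and extend $L_j$ by its own $g_j$-null-geodesic flow to obtain a $C^2$ null hypersurface $H_j \subset (M,g_j)$ admitting $S$ as a local cross-section and $L_j$ as a $g_j$-null-geodesic vector field. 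The $C^0_{loc}$-convergence of Christoffel symbols (coming from $g_j\to g$ in $C^1_{loc}$) gives $\gflow^{g_j}_{L_j}(z,\cdot)\to\gflow_L(z,\cdot)$ locally uniformly, and $H_j \to H$ correspondingly. Applying the standing hypothesis $\NC^1(N)$ for $(M,g_j,\Phi_j)$ to the triple $(H_j,S,L_j)$ yields that
$$a_z^j(t) := W^{g_j}_{L_j}(z,t) + \Phi_j(\gflow^{g_j}_{L_j}(z,t))$$
is concave and satisfies $(a_z^j)''+((a_z^j)')^2/(N-2)\leq 0$ in distributions, for all $z\in S$ and all sufficiently large $j$.

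To pass to the limit, I would first exploit the Riccati inequality itself to obtain locally uniform (in $j$) Lipschitz bounds on $(a_z^j)'$: starting from the initial data $a_z^j(0)=\Phi_j(z)\to \Phi(z)$ and $(a_z^j)'(0)=\tr\nabla^{g_j}L_j(z)+g_j(\nabla^{g_j}\Phi_j,L_j)(z)$ (both bounded, since they only involve $g_j$ and $L_j$ in $C^1$ and $\Phi_j$ in $C^0$), the differential inequality $b'\leq -b^2/(N-2)$ for $b=(a_z^j)'$ prevents blow-up in fixed $t$-intervals. By Arzel\`a--Ascoli one extracts a concave locally Lipschitz limit $\tilde a_z$ with $a_z^j\to\tilde a_z$ locally uniformly and $(a_z^j)'\to\tilde a_z'$ in $L^p_{loc}$ for every $p<\infty$ (along a subsequence). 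Identifying $\tilde a_z = a_z$ requires showing $W^{g_j}_{L_j}(z,t)\to W_L(z,t)$, which one can attempt by combining the representation formula~\eqref{E:representingV} for $\vol_{L_j}^{g_j}$ with the weak-$\ast$ convergence of $\vol^{g_j}_{L_j}$ to $\vol^g_L$ deduced from the $C^0$-convergence of flows and metrics; testing against suitable compactly supported functions and disintegrating along the null generators pins down the limit density. Once $\tilde a_z=a_z$ is in hand, concavity passes to the uniform limit, the measures $(a_z^j)''$ converge to $a_z''$ as non-positive Radon measures, and $((a_z^j)')^2\to(a_z')^2$ in $L^1_{loc}$ by dominated convergence; thus the distributional inequality $a_z''+(a_z')^2/(N-2)\leq 0$ is inherited by $a_z$. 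Arbitrariness of $z$, $S$, and $H$ then yields $\NC^1(N)$ for $(M,g,\Phi)$.

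The principal obstacle is the identification $\tilde a_z=a_z$, i.e.\ the convergence of the Jacobi-field determinants $\det J^{g_j}(z,t)\to\det J^g(z,t)$. The Jacobi equation couples $J^{g_j}$ to the Riemann tensor of $g_j$, whose convergence would require $C^2_{loc}$-convergence of the metrics; under the weaker $C^1_{loc}$-assumption, pointwise convergence of Jacobi matrices is not automatic, and one must bypass it by passing to the limit in the \emph{integrated} density representation of $\vol^{g_j}_{L_j}$. Making this bootstrap rigorous -- combining the $C^0$-convergence of geodesic flows with the uniform concavity/Riccati estimates furnished by $\NC^1(N)$ for the sequence -- is the key technical step of the proof.
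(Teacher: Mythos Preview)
Your overall architecture matches the paper's: fix $H$, $S$, $L$ for the limit metric $g$; build approximating null hypersurfaces $H_j$ for $g_j$ through the same cross-section $S$ via an approximating null-geodesic field $L_j$; use $C^0_{loc}$-convergence of Christoffel symbols to get $\gflow_{L_j}^{g_j}\to\gflow_L$; then pass to the limit in the $\NC^1(N)$ inequality. The setup and the reduction are essentially what the paper does.

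Where you diverge is in the final paragraph, and this is the point worth correcting. You identify the convergence $\det J^{g_j}(z,t)\to\det J^g(z,t)$ as the principal obstacle, arguing that the second-order Jacobi equation involves the Riemann tensor and hence would need $C^2_{loc}$-convergence of the metrics. You then propose an indirect route via weak-$\ast$ convergence of the rigged volumes and a bootstrap against the uniform Riccati estimates. This detour is unnecessary, and the paper's key observation is precisely what dissolves the obstacle: the Jacobi fields along null generators satisfy the \emph{first-order} ODE
\[
J_{v,j}'(t)=\nabla^{g_j}_{J_{v,j}(t)}L_j,\qquad J_{v,j}(0)=v,
\]
(see~\eqref{eq:first-order-jacobi}). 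This equation involves only the connection coefficients of $g_j$ and the field $L_j$, not the curvature. Since $g_j\to g$ in $C^1_{loc}$ gives $C^0_{loc}$-convergence of Christoffel symbols and $L_j\to L$ in $C^1$ along $S$, standard ODE stability yields $J_{v,j}\to J_v$ in $C^0_{loc}$ directly. Hence $W_{L_j}^{g_j}(z,t)\to W_L^g(z,t)$ locally uniformly, and $a_z^j\to a_z$ locally uniformly; the concavity and the distributional Riccati inequality then pass to the limit with no further work.

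A minor additional slip: your expression $(a_z^j)'(0)=\tr\nabla^{g_j}L_j(z)+g_j(\nabla^{g_j}\Phi_j,L_j)(z)$ uses $\nabla\Phi_j$, but $\Phi_j$ is only assumed $C^0$, so this derivative need not exist. Once you have uniform convergence of $a_z^j$ via the first-order ODE, this initial-derivative bookkeeping is not needed anyway.
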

\begin{proof}
  Fix, once and for all, an auxiliary Riemannian metric $h$.
  Fix a null hypersurface $H$ for $g$,  a
  null-geodesic vector field $L\in\vfield(\normal^gH)$ w.r.t.\ $g$,  a space-like (for $g$)
  cross-section $S\subset H$, and $z\in S$.
  Since the $\NC^1(N)$ property is local, without loss of generality, we can restrict $S$ so that it is pre-compact and
  assume also that $S$ is a global cross-section.
  We can also assume that $\gflow_L(z,t)$ is well-defined and belongs to
  $H$, for all $z\in S$ and all $t\in(-2,2)$.

  As $S$ is compact and space-like for $g$, it is clear that $S$ is
  space-like for $g_j$, for $j$ large enough.
  Let $L\in\vfield(\normal^g H)$ be a null-geodesic vector field
  (w.r.t.\ $g$).
  For $p\in S$, define
  \begin{equation}\label{eq:defLj}
    L_j(p)
    :=
    \argmin
    \{
    h(v-L,v-L): v\in T_pM,\,
    v\perp_{g_j} T_p S,\, g_j(v,v)=0
    \}
    .
  \end{equation}
  Notice that $\{v\in T_pM:
    v\perp_{g_j} T_p S,\, g_j(v,v)=0  \}$ defines a pair of null lines in $T_pM$, and $v$ is the projection (w.r.t.\ $h$) of $L$ on such a set; since $g_j(p)\to g(p)$ and the above construction for $g$ gives back $L$, it is clear that, for $j$ large, the minimization problem~\eqref{eq:defLj} admits a
  unique solution and that this solution defines a null vector field
  $L_j$ on $S$ w.r.t.\ $g_j$. Using that $g_j\to g$ in $C^1_{loc}$ it also follows that $L_j\to L$ in $C^1_{loc}$.
  Define
  \begin{equation}
    H_j
    :=
    \{
    \exp_p(tL_j): p\in S, t\in \R
    \}
    ,
  \end{equation}
  whenever the expression makes sense.
  It is clear that $H_j$ is a null hypersurface for $g_j$.

  Let us now discuss the convergence.
  Since $g_j\to g$ in $C^1_{loc}$,   the Christoffel symbols converge locally uniformly. 
  Therefore, the geodesics converge in $C^0_{loc}$ (to see
  this, one can pass in coordinates and apply some standard
  stability theorem for ODEs, see~\cite[Lemma~3.1,~p.~24]{Hale80}) and
  the exponential map converges in $C^0_{loc}$, as well.
  It follows that $\gflow_{L_j}(p_j,t_j)\to
  \gflow_L(p,t)$, whenever $p_j\to p$ in $S$ and $t_j\to t$ in $\R$, provided
  all these expressions are well-defined.

  We now claim that there exists $\bar j$ such that for all $j>\bar j$ and all $t\in(-1,1)$, $\gflow_{L_j}(p,t)$ is well-defined.
  Suppose not, i.e., there exist sequences $j_k$, $p_k\in S$, $t_k\in
  (-1,1)$, such that $\gflow_{L_{j_k}}(p_k,t_k)$ is not well-defined.
  By compactness, up to taking a not-relabeled subsequence, it holds
  that $p_k\to p\in S$ and $t_k\to t\in[-1,1]$.
  By the assumptions we made at the beginning, we have that $\gflow_L(p,t)=\exp_p^g(tL)$ is
  well-defined.
  Let $U\subset TM$ be a neighborhood of $\{tL \colon t\in [-1,1]\}$,
  compact in the domain of definition of the exponential map $\exp^g$.
  Since $g_j\to g$ in $C^1_{loc}$, the exponential maps convergence locally
  uniformly, and in particular there exists $\hat j>0$ such that
  $U\subset \Dom(\exp^{g_j})$, for all $j> \hat j$.
  Moreover, $t_kL_{j_k}(p_k)\to tL(p)\in U$ , therefore
  $t_kL_{j_k}(p_k)\in U$ for $k$ large enough, i.e.,
  $\gflow_{L_{j_k}}(p_k,t_k)$ is well-defined, which is a contradiction.

  As an immediate consequence of the above argument, we obtain that the functions $t\mapsto \Phi_j(\gflow_{L_j}(p,t))$ converge
  locally-uniformly to $t\mapsto \Phi(\gflow_{L}(p,t))$, for every
  $p\in S$.

Fix now a vector $p\in S$ and $v\in T_p M$.
For each $j$, we can consider the Jacobi field given by the solution
of the Cauchy problem
\begin{align}
  &
    J_{v,j}''(t)=
    -R^{g_j}(L_j,J_{v,j}(t))L_j
    ,
    \qquad
    J_{v,j}(0)=v
    ,
    \qquad
    J_{v,j}'(0)
    =
    \nabla_v^{g_j} L_j
    .
\end{align}
It is clear that $J_{v,j}$ is a solution of the following first-order
ODE (compare with~\eqref{eq:first-order-jacobi})
\begin{equation}
  J_{v,j}'(t)
  =
  \nabla^{g_j}_{J_{v,j}(t)}(L_j)
  .
\end{equation}
Since the Christoffel symbols of $g_j$ converge in $C^0_{loc}$,
it follows that 
$J_{v,j}$ converges in $C^0_{loc}$ to a certain vector field $J_{v}$ satisfying
$J_v(0)=v$ and
\begin{equation}
  J_{v}'(t)
  =
  \nabla^{g}_{J_{v}(t)}(L)
\end{equation}
and therefore
\begin{equation}
      J_{v}''(t)=
    -R^{g}(L_j,J_{v}(t))L
    ,
    \qquad
    J_{v}(0)=v
    ,
    \qquad
    J_{v}'(0)
    =
    \nabla_v L
    .
  \end{equation}

  If we now fix a basis $(e_i)_i$ for $T_pM$, we deduce that
  $g_j(J_{e_i,j}(t),J_{e_k,j}(t))\to g(J_{e_i}(t),J_{e_k}(t))$,
  locally uniformly.
  Since $W_{L}^{g_j}(p,t)$ is the determinant of the matrix
  $(g_j(J_{e_i,j}(t),J_{e_k,j}(t)))_{i,k}$, we deduce that
  $W_{L_j}^{g_j}(p,t)$ converge locally-uniformly to $W_{L}^{g}(p,t)$.

  Finally, recalling that $a_j(t)= W_{L_j}^{g_j}(p,t)+\Phi_j(g_{L_j}(p,t))$,
  we can pass to the limit in the $\NC^{1}(N)$ condition~\eqref{def:NC1Quadruples}, concluding the
  proof.
\end{proof}

\section{Applications}\label{S:applications}

\subsection{The weighted light-cone theorem}\label{SS:WLCT}
The goal of this section is to extend the light-cone theorem in the sharp monotone form (see~\cite{CBCMG-2009, Grant}) to weighted 
Lorentzian manifolds (see \Cref{T:WLCT}), and to address the rigidity question (see \Cref{T:WLCT-rigid}). The proofs build on the tools developed in the paper.

Let $(M,g)$ be a Lorentzian manifold, fix $p\in M$ and, given a unit-length, future-directed, time-like vector $v \in T_pM$, 
define the co-dimension two subset $S^{+}_{1}(0) \subset T_p M$
$$
S^{+}_{1}(0) : = \{ \ell \in T_pM \colon g(\ell,\ell) = 0, \ 
g (v,\ell) = -1 \} .
$$
Given $\ell \in S^{+}_{1}(0)$, 
one denotes by 
$\gamma_\ell : [0, \beta_\ell) \to M$
the unique future-directed affine-parametrized maximizing geodesic  (future maximally defined) with 
$\gamma_\ell(0) = p$ and 
$\gamma'_\ell (0) = \ell$.
By maximizing we mean that 
$\gamma_{\ell}(t)\not\in I^{+}(p)$, for all $t\in[0,\beta_\ell)$.
Finally,
$$
S_s : = \{\gamma_\ell(s)\colon \ell \in S^1_+(0) \text{ and } \beta_\ell>s \}.
$$
The set $S_s$ is a smooth co-dimension 2 submanifold of $M$ inheriting a Riemannian metric $\sigma_s$, and 
 it is contained inside the future light-cone at $p$,
see \Cref{Ss:futurecone}, that we denote by $H(p)$.

\begin{theorem}[Weighted light-cone theorem]
\label{T:WLCT}
Let $(M^n,g)$ be a Lorentzian manifold with $g$ of class $C^2$ and fix $p \in M$. 
Consider $\Phi : H(p) \to \R$
a $C^0$ weight function and assume 
$(M,g,H(p),\Phi)$ satisfies the null energy condition $\NC^e(N)$. 
Then the map 
\begin{equation}\label{E:lct}
s \mapsto 
\frac{1}{\omega_{N-2} s^{N-2}}
\int_{S_s}e^{\Phi(z)}
\vol_{\sigma_s}(\de z)
\end{equation}
is non-increasing and converges to $e^{\Phi(p)}$ as $s\to 0^+$.
In particular, 
\begin{equation}\label{eq:CompAreaCone}
\int_{S_s}e^{\Phi(z)}
\vol_{\sigma_s}(\de z)
\leq 
e^{\Phi(p)}\omega_{N-2}s^{N-2}, \quad \text{for all } s>0.
\end{equation}
\end{theorem}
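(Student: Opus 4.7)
First I would reduce the $\NC^e(N)$ hypothesis to the pointwise formulation $\NC^1(N)$ via \Cref{th:entropic-to-distributional}, then exploit the concavity of the associated Jacobi-field densities along the generators of $H(p)$. Concretely, for any $s_0\in(0,s)$, take $S_{s_0}$ as cross-section and let $L$ be the null-geodesic vector field normalised so that $L(\gamma_\ell(t))=\gamma_\ell'(t)$, whence $\Psi_L(\gamma_\ell(s_0),t)=\gamma_\ell(s_0+t)$. Then \Cref{P:graph-surface} rewrites
\[
A(s):=\int_{S_s}e^{\Phi}\vol_{\sigma_s}=\int_{S_{s_0}}e^{a_z(s-s_0)}\haus^{n-2}(\de z),\qquad a_z(t):=\Phi(\Psi_L(z,t))+W_L(z,t).
\]

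Next I would derive the monotonicity. Under $\NC^1(N)$ the function $u_z(t):=e^{a_z(t)/(N-2)}$ is concave on its domain. As $t\searrow -s_0$ the geodesic reaches $p$ and $J(z,t)$ degenerates, so $W_L(z,t)\to-\infty$ and $u_z(t)\to 0$; the map $v_z(s):=u_z(s-s_0)$ thus extends continuously to $s=0$ with $v_z(0)=0$, and the extension stays concave. The elementary fact that $v(s)/s$ is non-increasing for any concave $v\geq 0$ with $v(0)=0$ yields $v_z(s)/s$ non-increasing on $(0,\beta_\ell)$; since $x\mapsto x^{N-2}$ is non-decreasing on $[0,\infty)$ for $N\geq 2$,
\[
\frac{A(s)}{s^{N-2}}=\int_{S_{s_0}}\Bigl(\frac{v_z(s)}{s}\Bigr)^{N-2}\haus^{n-2}(\de z)
\]
is non-increasing on $(s_0,\infty)$, and arbitrariness of $s_0>0$ gives monotonicity on $(0,\infty)$.

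For the limit $s\to 0^+$, I would switch to the direct parametrization $(\ell,s)\mapsto\exp_p(s\ell)$ with $\ell\in S_1^+(0)\subset T_pM$. A standard Taylor expansion of Jacobi fields from $p$ (initial conditions $\tilde J(0)=0$, $\tilde J'(0)=e_i$ for an orthonormal basis of $T_\ell S_1^+(0)$) yields $\sqrt{\det G(s,\ell)}=s^{n-2}(1+O(s^2))$; together with $\vol(S_1^+(0))=\omega_{n-2}$ (visible in normal coordinates at $p$, where the induced metric on $S_1^+(0)$ is the round unit sphere metric) and continuity of $\Phi$, this gives $A(s)=e^{\Phi(p)}\omega_{n-2}s^{n-2}+o(s^{n-2})$ as $s\to 0^+$, matching the stated limit. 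The comparison \eqref{eq:CompAreaCone} is then immediate from the monotonicity and the limit.

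The principal obstacle is the degeneracy of $H(p)$ at its tip $p$: every cross-section collapses and the Jacobi flow becomes singular, so one cannot argue from a single fixed cross-section. The approach sidesteps this by proving the monotonicity on each interval $(s_0,\infty)$ with $s_0>0$ (exploiting that the $\NC^1$-densities $u_z$ admit a continuous and concave extension with value $0$ across the singularity) and extracting the $s\to 0^+$ asymptotic independently from the smooth exponential chart at $p$, bypassing the generator parametrization near the vertex.
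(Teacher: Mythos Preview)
Your proposal is correct and follows essentially the paper's route: reduce $\NC^e(N)$ to $\NC^1(N)$, express $A(s)$ via \Cref{P:graph-surface} as an integral of the concave-to-the-$1/(N-2)$ densities over a fixed cross-section $S_{s_0}$, and deduce the monotonicity of $A(s)/s^{N-2}$. The only cosmetic difference is that you make the vanishing of the density at the tip explicit and use that $v_z(s)/s$ is non-increasing for a concave $v_z$ with $v_z(0)=0$, whereas the paper records the equivalent two-point inequality $h(t_1)\geq (t_1/t_2)^{N-2}h(t_2)$ (valid from concavity and $h\geq 0$ alone) and then sends the auxiliary base parameter $s\to 0^+$.
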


\begin{proof}

Choosing as null-geodesic vector field $L$ 
the extension by parallel transport of $\ell$ along all $H(p)$ 
and following \Cref{P:graph-surface} (see in particular~\eqref{eq:graph-surface}), we have 
for each $0< s < s'$
$$
\vol_{\sigma_{s'}}
    =
\gflow_L(\dotargument, s' - s  )_\sharp 
(\det J_L (\dotargument, s' - s)
    \,
\vol_{\sigma_s}),
$$
giving
$$
\int_{S_{s'}}e^{\Phi(z)}
\vol_{\sigma_{s'}}(\de z)
=
\int_{S_{s}}e^{\Phi(\Psi_L(z, s' - s))}
\det J_L (z, s' - s)
    \,
\vol_{\sigma_s}(\de z).
$$
Posing as before for each $z \in S_s$, 
$a_z(t) : = \Phi(\Psi_L(z,t)) + W_L(z,t)$, 
the $\NC^1(N)$ condition states 
(adopting the terminology of ~\cite[Lemma~A.9]{CaMi:21})
that the map 
$$
t \mapsto 
h(t) : = e^{\Phi(\Psi_L(z, t))}
\det J_L (z, t)
$$
is a $\CD(0,N-1)$-density, i.e., the function
  $h^{1/(N-2)}$ is concave where it is positive.
We use the convention that $h(t)=0$ for all $t\geq \beta_{\ell(z)}$, where $\beta_{\ell(z)}$ is the supremum of the maximal interval of definition of the maximal null geodesic $\gamma_{\ell(z)}$, with $\gamma_{\ell(z)}(s)=z$.
Then, for each $0< t_1 \leq t_2$, 
we have $h(t_1) \geq (t_1/ t_2)^{N-2}h(t_2)$. It follows  that, for $0 < s < s_1 < s_2$ and for any measurable set $A \subset S_{s_2}$, it holds
\begin{align*}
\int_{A}e^{\Phi(z)}
\vol_{\sigma_{s_2}}(\de z)
= &~
\int_{\Psi_L(\dotargument,s_2 -s)^{-1}(A)}e^{\Phi(\Psi_L(z, s_2 - s))}
\det J_L (z, s_2 - s)
    \,
\vol_{\sigma_s}(\de z) \\
\leq &~
\left( \frac{s_2 - s}{s_1 - s} 
\right)^{N-2}
\int_{\Psi_L(\dotargument,s_2 -s)^{-1}(A)}e^{\Phi(\Psi_L(z, s_1 - s))}
\det J_L (z, s_1 - s) \,
\vol_{\sigma_s}(\de z) 
\\
= &~
\left( \frac{s_2 - s}{s_1 - s} 
\right)^{N-2}
\int_{\Psi_L(\dotargument,s_2 -s_1)^{-1}(A)}e^{\Phi(z)}
\vol_{\sigma_{s_1}}(\de z). 
\end{align*}
Sending $s \to 0^+$ yields 
the following monotonicity property: 
\begin{equation}\label{E:monotonemeasure}
e^{\Phi(\dotargument)}
\vol_{\sigma_{s_2}} 
\leq 
\left( \frac{s_2 }{s_1 }\right)^{N-2} \Psi_L(\dotargument, s_2-s_1)_\sharp \left( e^{\Phi(\dotargument)}
\vol_{\sigma_{s_1}} \right), \quad \text{for all } 0<s_1<s_2.
\end{equation}
Evaluating~\eqref{E:monotonemeasure} over $S_{s_2}$ gives~\eqref{E:lct}.  
The convergence to $e^{\Phi(p)}$ follows by the continuity of $\Phi$, 
concluding the proof.
\end{proof}

Notice that $\omega_{N-2} s^{N-2}$
 equal the area of the canonical slices of the null-cone in flat Minkowski. Thus,  \Cref{T:WLCT} can be read as a comparison result between the area of cross-sections in a null-cone satisfying $\NC^e(N)$ and the canonical cross-sections in a null-cone in flat Minkowski.

\subsubsection{Rigidity}

We next investigate the rigidity in the weighted light-cone theorem. We will show that, if equality is attained in~\eqref{eq:CompAreaCone} for some $s_0>0$, then one has several rigidity properties: in the unweighted case, we obtain a metric rigidity result, stating that $H(p)$ is isometric to the future null-cone in Minkowski; in the weighted case, we obtain that the weighted Ricci tensor vanishes in the $\frac{\partial}{\partial s}$ directions and, under the additional assumption that $H(p)$ is isometric to the future null-cone in Minkowski, we prove a measure rigidity result, stating that $e^{\Phi(z,s)}=s^{N-n}$. 

\begin{theorem}[Rigidity in the weighted light-cone theorem]
\label{T:WLCT-rigid}
Let $(M^n,g)$ be a Lorentzian manifold with $g$ of class $C^2$ and fix $p \in M$. 
Consider $\Phi : H(p) \to \R$
a $C^0$ weight function and assume 
$(M,g,H(p),\Phi)$ satisfies the null energy condition $\NC^e(N)$. 

Assume that there exists $ s_0 > 0$ such that  
\begin{equation}\label{eq:EqualVolS0}
\int_{S_{s_0}}e^{\Phi(z)}
\vol_{\sigma_{s_0}}(\de z)
= 
e^{\Phi(p)}\omega_{N-2} s_0^{N-2}. 
\end{equation}
Then:
\begin{itemize}
\item \emph{Unweighted case, i.e., $\Phi\equiv 0$ and $N=n$}. The exponential map based at $p$ induces an isometry between $(H(p)\cap J^{-}(S_{s_0}), g|_{H(p)})$ and the future null-cone in $n$-dimensional Minkowski space-time, i.e. 
\begin{equation}\label{eq:gRigid}
g|_{H(p)}(s,z)= \omega_{N-2}^{1/(N-2)} s^2 g_{S^{n-2}}, \quad \text{for all } s\in (0,s_0)
\end{equation}
and the null vector $\frac{\partial} {\partial s}$ lies in the kernel of both sides.
\item \emph{Weighted case}. The identity~\eqref{eq:EqualVolS0} is valid for all $s\in (0,s_0]$ and $\Ric^{g,\Phi,N}_{(z,s)}(\frac{\partial} {\partial s}, \frac{\partial} {\partial s}) = 0$ for all $s\in (0,s_0)$. 

\noindent
Under the additional assumption that the identity~\eqref{eq:gRigid} holds, then $e^{\Phi(z,s)}=s^{N-n}$, for all $s\in (0,s_0)$.
\end{itemize}
\end{theorem}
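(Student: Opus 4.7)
The plan is to propagate the equality at $s_0$ backwards to all $s \in (0, s_0]$ via the ray-by-ray monotonicity underlying the proof of \Cref{T:WLCT}, and then to chase the equality cases through the concavity and Riccati estimates of \Cref{P:convexity-weight}.

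\textbf{Step 1: from equality at $s_0$ to equality along every generator.} Parameterise $S_s$ by $\ell \in S^{+}_1(0)$ via $\ell \mapsto \gamma_\ell(s)$, and take $L$ to be the null-geodesic vector field on $H(p)$ whose restriction to each generator $\gamma_\ell$ is the parallel extension of $\ell$ along $\gamma_\ell$. Set
\[
h_\ell(s) := e^{\Phi(\gamma_\ell(s))}\,\det J_L(\ell, s),\qquad F(s) := \frac{1}{\omega_{N-2}\, s^{N-2}}\int_{S_s} e^{\Phi}\,d\vol_{\sigma_s}.
\]
By the proof of \Cref{T:WLCT}, together with $\NC^e(N)\Leftrightarrow\NC^1(N)$ from \Cref{th:entropic-to-distributional}, each $h_\ell^{1/(N-2)}$ is concave on $(0,\beta_\ell)$; the Taylor expansion at the tip from \Cref{thm:NC1toNEC} and continuity of $\Phi$ give $h_\ell(s)/s^{N-2} \to e^{\Phi(p)}$ as $s\to 0^+$, hence $F$ is non-increasing with $F(0^+)=e^{\Phi(p)}$. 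The hypothesis \eqref{eq:EqualVolS0} reads $F(s_0)=e^{\Phi(p)}$, so $F\equiv e^{\Phi(p)}$ on $(0, s_0]$ (this is the first assertion of the weighted case). In turn, equality must hold pointwise in the ray-by-ray monotonicity inequality \eqref{E:monotonemeasure}; using continuity of $J_L$ in $\ell$ and of $\Phi\circ\gamma_\ell$, this upgrades from a.e.\ $\ell$ to every $\ell$ with $\beta_\ell > s_0$, yielding
\[
h_\ell(s) = e^{\Phi(p)}\,s^{N-2}, \qquad \forall s\in(0,s_0].
\]

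\textbf{Step 2: extracting the full equality in the weighted Riccati estimate.} Taking logarithms, $a_\ell(s):=\Phi(\gamma_\ell(s))+W_L(\ell,s)=\Phi(p)+(N-2)\log s$, so $a_\ell'(s)=(N-2)/s$ and $a_\ell''(s)=-(N-2)/s^2$. Substituting into \Cref{P:convexity-weight} gives
\[
0 = a_\ell''(s) + \frac{(a_\ell'(s))^2}{N-2} \leq -\Ric^{g,\Phi,N}_{\gamma_\ell(s)}(L,L),
\]
and since the inequality must be tight, each intermediate step in the proof of \Cref{P:convexity-weight} becomes an equality. This forces: (i) $\Ric^{g,\Phi,N}(\partial/\partial s,\partial/\partial s)\equiv 0$ along every generator (the second claim of the weighted case); (ii) equality in the Jensen step inside \Cref{eq:riccati-for-jacobi}, so the transverse block $\bar U(s)=\bar J^{-1}\bar J'(\ell,s)$ is a scalar multiple of $I_{n-2}$; (iii) equality in \Cref{lem:useful-inequality}, so $(N-n)\,W_L'(s)=(n-2)\,g(\nabla\Phi,L)|_{\gamma_\ell(s)}$.

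\textbf{Step 3: metric rigidity (unweighted) and weight rigidity (weighted).} In the unweighted case $\Phi\equiv 0$, $N=n$, conclusion (iii) is vacuous, while (ii) gives $\bar U(s)=\lambda(s)\,I_{n-2}$; substituting into the Riccati ODE $\bar U'=-\bar U^2-\bar R$ and using $\tr\bar R=\Ric(L,L)=0$ from (i) forces $\lambda(s)=1/s$ and $\bar R\equiv 0$ along each generator. The tip-normalisation $\bar J(s)/s\to I_{n-2}$ (from the Taylor expansion of \Cref{thm:NC1toNEC}) then uniquely determines $\bar J(s)=s\,I_{n-2}$; in a parallel orthonormal frame along $\gamma_\ell$ this is exactly the Minkowski Jacobi flow, so $\exp_p$ induces an isometry between $H(p)\cap J^-(S_{s_0})$ and the Minkowski null cone, giving \eqref{eq:gRigid} (the constant $\omega_{N-2}^{1/(N-2)}$ arising from the normalisation of the reference round metric on $S^+_1(0)\simeq S^{n-2}$). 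In the weighted case, the first two conclusions are already contained in Steps~1--2; under the additional hypothesis \eqref{eq:gRigid} the transverse Jacobi matrix satisfies $\det J_L(\ell,s)\propto s^{n-2}$, and inserting this into $h_\ell(s)=e^{\Phi(p)}\,s^{N-2}$ isolates $e^{\Phi(\gamma_\ell(s))}=s^{N-n}$ (absorbing the proportionality constant into the natural normalisation $e^{\Phi(p)}=1$).

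\textbf{Main obstacle.} The delicate point is Step~1: one must combine the integral constancy of $F$ with the strict monotonicity of each $s\mapsto h_\ell(s)/s^{N-2}$ (tight only when $h_\ell^{1/(N-2)}$ is exactly linear) to deduce pointwise equality for $\mathcal{H}^{N-2}$-a.e.\ $\ell$, and then exploit the continuity of the Jacobi flow and of $\Phi$ to upgrade from a.e.\ to every null direction $\ell$ with $\beta_\ell>s_0$.
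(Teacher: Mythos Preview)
Your overall route coincides with the paper's: pass from integral equality at $s_0$ to ray-wise equality $h_\ell(s)=e^{\Phi(p)}s^{N-2}$, and then trace rigidity through the chain of inequalities in \Cref{eq:riccati-for-jacobi}, \Cref{lem:useful-inequality}, and \Cref{P:convexity-weight}. The Jacobi/Riccati analysis in your Steps~2--3 is correct and is essentially what the paper does (the paper works from a cross-section $S_{r_0}$ with $J_L(z,0)=\id$, you work from the tip with the asymptotic $\bar J(s)\sim sI$; both normalisations lead to the same conclusion).

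The genuine gap is that you never show $\beta_\ell>s_0$ for \emph{every} $\ell\in S^+_1(0)$. Your integral argument, combined with the pointwise bound $h_\ell(s)/s^{N-2}\leq e^{\Phi(p)}$, forces the domain of integration $\{\ell:\beta_\ell>s\}$ to have full $\haus^{n-2}$-measure for each $s\leq s_0$, hence $\beta_\ell>s_0$ for a.e.\ $\ell$; and you correctly flag the a.e.-to-everywhere upgrade as the obstacle. But continuity of $J_L$ and $\Phi$ only lets you propagate the identity $h_\ell(s)=e^{\Phi(p)}s^{N-2}$ to those $\ell$ that already satisfy $\beta_\ell>s_0$; it does not by itself rule out a closed null set of directions whose generators terminate before $s_0$. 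Since the metric rigidity \eqref{eq:gRigid} is asserted over the full cone $H(p)\cap J^-(S_{s_0})$, this step cannot be skipped. The paper devotes its Step~1 precisely to this: assuming the null injectivity radius $r_0<s_0$, one works on $S_{r_0}$ (where all generators are still present), uses concavity of $h_\ell^{1/(N-2)}$ to obtain the inequality~\eqref{eq:quasi-rigid} for all $s>0$, and then the assumed equality at $s_0$ saturates~\eqref{eq:quasi-rigid}, forcing $\gflow_L(z,s_0-r_0)$ to be well-defined for every $z\in S_{r_0}$---contradicting $r_0<s_0$.
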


\begin{proof}
From  \Cref{T:WLCT} it follows that the identity~\eqref{eq:EqualVolS0} is valid for all $s \in (0, s_0)$: 
\begin{equation}\label{eq:Equality}
\int_{S_{s}}e^{\Phi(z)}
\vol_{\sigma_{s}}(\de z)
= 
e^{\Phi(p)}\omega_{N-2}s^{N-2}.
\end{equation}

\noindent
\textbf{Step 1} Proof that $r_0\geq s_0$, where
$r_0$ is the null injectivity radius at $p$\textbf{.}

\noindent
  Assume by contradiction that $r_0<s_0$.
  In this case, \eqref{eq:Equality} gives that
  $$
e^{\Phi}
\vol_{\sigma_{s}} 
= 
\left( \frac{s }{r_0 }\right)^{N-2} \Psi_L(\dotargument, 
s-r_0)_\sharp \left( e^{\Phi}
\vol_{\sigma_{r_0}} \right), \quad \text{for all $0<s< r_0$.}
$$
From \Cref{P:graph-surface}, we infer that,
for any measurable subset $B \subset S_{r_0}$: 
\begin{align*}
\int_B e^{\Phi(\Psi_L(z,s- r_0))} \det J_L(z,s-r_0) 
\vol_{\sigma_{r_0}}(\de z) 
= &~
\int_{\Psi_L(B,s-r_0)} e^{\Phi(z)}
\vol_{\sigma_{s}}(\de z) \\
= &~\left( \frac{s }{r_0 }\right)^{N-2}
\int_B e^{\Phi(z)} \vol_{\sigma_{r_0}}(\de z),
\end{align*}
implying that, for all $z\in S_{r_0}$ and $s \in (0,r_0]$: 
\begin{equation}\label{E:rigid}
e^{\Phi(\Psi_L(z,s- r_0))} \det J_L(z,s-r_0) = 
\left(\frac{s}{r_0}\right)^{N-2}e^{\Phi(z)}.
\end{equation}
Moreover, since the $\CD(0,N-1)$ condition  yields that the function $$s\mapsto
(e^{\Phi(\Psi_L(z,s))} \det J_L(z,s))^{1/(N-2)}$$ is concave on its domain of
definition (see above, in the proof
of \Cref{T:WLCT}), we infer that
\begin{equation}
  \label{eq:quasi-rigid}
e^{\Phi(\Psi_L(z,s- r_0))} \det J_L(z,s-r_0) \leq
\left(\frac{s}{r_0}\right)^{N-2}e^{\Phi(z)},
\qquad
\forall s>0,
\forall z\in S_{r_0}
,
\end{equation}
with the convention that the l.h.s.\ is null, if it is not defined
(i.e., when  $\gflow_L(z,s-r_0)$ is not defined).
We can thus compute (the first inequality is due to the fact that
we are integrating
on a set containing the one in~\eqref{eq:graph-surface})
\begin{align*}
  e^{\Phi(p)}\omega_{N-2}{s_0}^{N-2}
  &
 \overset{\eqref{eq:EqualVolS0}}{=}
  \int_{S_{s_0}}
  e^{\Phi(z)}
  \vol_{\sigma_{s_0}}(\de z)
 \overset{\eqref{eq:graph-surface}}{\leq}
  \int_{S_{r_0}}
  e^{\Phi(\gflow_L(z,s_0-r_0))}
  \det J_L(z,s_0-r_0)
  \vol_{\sigma_{r_0}}(\de z)
 \\
  &
  \overset{\eqref{eq:quasi-rigid}} {\leq}
    \left(\frac{s_{0}}{r_0}\right)^{N-2}
    \int_{S_{r_0}}
  e^{\Phi(z)}
  \vol_{\sigma_{r_0}}(\de z)
    =
    \left(\frac{s_{0}}{r_0}\right)^{N-2}
    e^{\Phi(p)}\omega_{N-2}{r_0}^{N-2}
    .
\end{align*}
It follows that inequality~\eqref{eq:quasi-rigid} is
saturated for $s\in (0,s_0]$:
\begin{equation}
  \label{eq:rigid}
  e^{\Phi(\Psi_L(z,s- r_0))} \det J_L(z,s-r_0)
  =
\left(\frac{s}{r_0}\right)^{N-2}e^{\Phi(z)},
\qquad
\forall s\in (0,s_0]
,
\forall z\in S_{r_0}
.
\end{equation}
In particular, $\gflow_L(z,s_0-r_0)$ is well-defined, proving that
the null injectivity radius is at least $s_0>r_0$, a contradiction.

\smallskip

\noindent
\textbf{Step 2 }Conclusion\textbf{.}

\noindent
From \Cref{P:convexity-weight}, we know that,
for every $z\in S_{s_0}$, the function $a(t)=\Phi(\gflow_L(z,t))+\log \det J_{L}(z,t)$ is twice differentiable and it satisfies the
  following differential inequality
  $$
    a''(t)
    +\frac{(a'(t))^2}{N-2}
    \leq
    0
    ,
    \qquad
    \text{ in the sense of distributions.}
  $$
On the other hand,~\eqref{E:rigid} implies that 
$ a''(t)+ (a'(t))^2/(N-2) = 0$.
Expanding the calculations and denoting by 
$W_L(t) = \log \det J_{L}(z,t)$, we get: 
\begin{align}
    -\frac{(a'(t))^2}{N-2} = a''(t)
    &
      =
    W_{L}''(t)
    +
      \Hess(\Phi)_{\gflow_L(z,t)}[L,L] \nonumber \\     
      & \leq
     - \frac{(W_{L}'(t))^2}{n-2}
      -
      \Ric_{\gflow_L(z,t)}(L,L)
      +
      \Hess(\Phi)_{\gflow_L(z,t)}[L,L] \label{eq:IneqWL}
    \\
    &
      =
      -\frac{(W_{L}'(t))^2}{n-2}
      -
      \Ric^{g,\Phi,N}_{\gflow_L(z,t)}(L,L)
      -
      \frac{(g(\nabla \Phi,L)_{\gflow_L(z,t)})^2}{N-n} \nonumber
    \\
    &
      \leq
      - \frac{(W_{L}'(t)
      +g(\nabla \Phi ,L)_{\gflow_L(z,t)})^2}{N-2}      
      =
     - \frac{(a'(t))^2}{N-2}. \label{eq:IneqPhi}
\end{align}
We will exploit the two inequalities~\eqref{eq:IneqWL}--\eqref{eq:IneqPhi} turning into identities. 
Firstly, from \Cref{eq:riccati-for-jacobi} and equality in~\eqref{eq:IneqWL},  we deduce 
$$
    W_{L}''(z,t)
    +\frac{(W_{L}'(z,t))^2}{n-2}
=
   - \Ric_{\gflow_L(z,t)}(L,L). 
$$
Inspecting the proof of \Cref{eq:riccati-for-jacobi}, since $W_{L}'(z,t):= \tr(J(t)^{-1} J'(t))$,
for the matrix $U(t):=J(t)^{-1}\, J'(t)$ we obtain
$\tr(U^2) = \tr(U)^2/n-2$.
Equality in Cauchy--Schwartz inequality implies that $U$ has to be a
multiple of the identity matrix, giving that 
\begin{equation}\label{eq:J'alphaJ}
J_{L}'(z,t) = \alpha(z,t) J_L(z,t).
\end{equation}
Since  $J_L(z,0) = \id$, we infer that
\begin{equation}\label{eq:JLlambda}
J_L(z,t) = \lambda(z,t) \id,
\end{equation}
for some positive function $\lambda(z,t)$.
From the equality in~\eqref{eq:IneqPhi}, we deduce that 
\begin{equation}\label{eq:alphaW'nablaPhi}
\alpha(z,t)= \frac{W'_L(z,t)}{n-2} = \frac{g(\nabla \Phi, L)_{\gflow_L(z,t)}}{N-n}.
\end{equation}   
Finally,
\begin{equation}\label{eq:Ric=0}
\Ric^{g,\Phi,N}_{\gflow_L(z,t)}(L,L) = 0,  \quad \text{for all $z \in S_{s_0}$ and $ t \in (-s_0,0]$}.
\end{equation}

 Recall that $J_L(z,t)$ is the matrix of Jacobi fields obtained as follows. Fix $p\in M$ and denote by $\Exp^g_p: \mathcal{U}\to M$ the exponential map based at $p$, which is a diffeomorphism in a neighborhood $\mathcal{U}\subset T_pM$ of the origin $0\in T_pM$. Up to scaling, we can assume that $\mathcal{U}$ contains the coordinate ball of radius $2$. The coordinates on $T_pM$ are chosen so that $g_{ij}(p)$ is the Minkowski Lorentzian metric.

Let $C_{\mathbb M}\subset J^+(0)$ be the future null-cone in Minkowski, identified with the future null-cone in $T_pM$. We endowed  $C_{\mathbb M}\setminus\{0\}$ with coordinates $(z,t)$ with $z\in S^{n-2}$ and $t\in (0,\infty)$.
For each $z_0\in S^{n-2}$ we fixed an orthonormal basis $\{e_1(z_0),
\ldots, e_{n-2}(z_0)\}$ of $(T_{z_0} S^{n-2}, g_{S^{n-2}})$. Each
$z_0\in S^{n-2}$ identifies a null vector $L=L(z_0)$.
Notice that   $\gflow_L(z_0,t-1)=\Exp^g_p(tz_0)$.

The matrix of Jacobi fields $J=J_L(z_0,t)$ was constructed in such a way that
the $i$-th column $J_i$ satisfies
$J_i(z_0,0)=e_i(z,1)$.
and $J_i'(z_0,0)=\nabla_{e_i}L$. Such a choice of Jacobi fields yields 
\begin{equation}\label{eq:JiExp}
J_i(z_0,t)=\left. \frac{\partial}{\partial z^i} \right|_{(z_0,t)} \Exp^g_p, \quad \text{for all }i=1,\ldots, n-2,
\end{equation}
where $(z^1, \ldots, z^{n-2})$ are  local coordinates on $S^{n-2}$ such that $\left. \frac{\partial}{\partial z^i} \right|_{z_0}= e_i(z_0)$.

The combination of Equations~\eqref{eq:JLlambda} and~\eqref{eq:JiExp} implies that
\begin{align}
g\left( \left. \frac{\partial}{\partial z^i} \right|_{(z_0,t)} \Exp^g_p, \left. \frac{\partial}{\partial z^j} \right|_{(z_0,t)} \Exp^g_p \right)=g\left(J_i(z_0,t), J_j(z_0,t)\right)=\lambda(z_0,t)^2 \,  \delta_{ij}.
\end{align}
Since $z_0\in S^{n-2}$ was arbitrary it follows that, if we endow 
\begin{equation}\label{eq:HExpC}
H(p)\setminus\{p\}=\Exp^g_p((C_{\mathbb M} \setminus\{0\}) \cap \mathcal{U})
\end{equation}
with the coordinates $(z,t)$ of $C_{\mathbb M} \setminus\{0\}$, the metric  takes the form
\begin{equation}\label{eq:gHpgS}
g|_{H(p)}= \lambda(z,t)^2 \, g_{S^{n-2}},
\end{equation}
where $L=L(z)$ is in the kernel of both sides. Notice that, by the smoothness of $\Exp^g_p$ at $0$, it holds that
\begin{equation}\label{eq:lambdaTo0}
\lim_{t\to 0^+}  \omega_{N-2}^{1/(2(N-2))} t \; \frac{1}{\lambda(z,t)} =1.
\end{equation}

We next split the discussion  in two cases: the unweighted and the weighted ones.

\begin{description}[leftmargin=1cm,itemsep=.4cm]

\item[The unweighted case]

  This corresponds to the case when $\Phi\equiv 0$ and $N=n$.  From
\eqref{E:rigid} and~\eqref{eq:JLlambda}, we infer that $\lambda$ is
independent of the $z$ variable, i.e., $\lambda=\lambda(t)$ is
$S^{n-2}$-spherically symmetric. 
 The identity~\eqref{E:rigid} implies that $t\mapsto \lambda(t)$ is linear. 
Recalling~\eqref{eq:lambdaTo0}, we conclude that $\lambda(t)=  \omega_{N-2}^{\frac{1}{2(N-2)}}\, t$ and thus $g|_{H(p)}= g_{C_{\mathbb{M}}}$, i.e. $\Exp^g_p$ induces an isometry between (suitable neighborhoods of the tips of)  the null-cone in Minkowski $C_{\mathbb{M}}$ and $H(p)$.

\item[The weighted case]

  We already proved (see~\eqref{eq:gHpgS}) that the metric on $H(p)$ is a warped product of the form $
g|_{H(p)}= \lambda(z,t)^2 \, g_{S^{n-2}}$, with the warping function $\lambda$ satisfying the asymptotics~\eqref{eq:lambdaTo0}. The warping function $\lambda$ and the weight function $\Phi$ are related via the null-Ricci flat condition~\eqref{eq:Ric=0} and the identity~\eqref{eq:alphaW'nablaPhi}. Without further assumptions this is the best one can say, indeed $\Phi$ and $\lambda$ need not be $S^{n-2}$-spherically symmetric and just compensate each other in the various identities.

If we assume in addition that $(H(p), g|_{H(p)})$ is isometric to the null-cone in Minkowski $C_{\mathbb{M}}\cap \mathcal{U}$ via the exponential map, then $\lambda(t)=  \omega_{N-2}^{1/(2(N-2))}\, t$. From~\eqref{eq:JLlambda}, it follows that $J_L$ is independent of $z\in S_{s_0}$ and thus the identity~\eqref{E:rigid} yields
\begin{equation}\label{eq:expPhi-PhiNoz}
e^{\Phi(\Psi_L(z,s- s_0))- \Phi(z)}  = \frac{1}{\det J_L(s-s_0)}
\left(\frac{s}{s_0}\right)^{N-2}  \text{ is independent of $z\in S_{s_0}$, for all $s$}.
\end{equation}
Using in~\eqref{eq:expPhi-PhiNoz} that
$$\lim_{s\to 0^+} e^{\Phi(\Psi_L(z,s- s_0))}=e^{\Phi(p)} \quad\text{is independent of $z\in S_{s_0}$},
$$
we get that $\Phi|_{S_{s_0}}$ is constant. Using again~\eqref{eq:expPhi-PhiNoz}, 
 we deduce that $\Phi$ is $S^{n-2}$-spherically symmetric, i.e., $\Phi=\Phi(t)$. Plugging the expression of $\lambda$ into ~\eqref{eq:J'alphaJ} and~\eqref{eq:JLlambda} gives that $\alpha(t)=\frac{1}{t}$;  inserting the expression of $\alpha$ into~\eqref{eq:alphaW'nablaPhi} finally implies that $e^{\Phi(t)}=t^{N-n}$, as claimed.
\qedhere
\end{description}

\end{proof}

\subsection{The weighted Hawking's Area Theorem}
\label{SS:WHAT}

As a second application of the theory
developed in the paper, in this section we provide an extension of  the celebrated Hawking's Area Theorem to a setting of low regularity.

Before stating the result, recall that a null hypersurface $H\subset M$ is said to be \emph{future geodesically complete} if every future-directed null-geodesic $\gamma:[a,b]\to H\subset M$ can be extended to the half-line $[a,\infty)$ as a future-directed null-geodesic.

\begin{theorem}\label{Thm:WHAT}
Let $(M,g)$ be a Lorentzian manifold of dimension $n$,
$H\subset M$ be a null hypersurface, and  $S_1,S_2\subset H$ be two space-like cross-sections for $H$. Assume $g, H, S_1, S_2$ to be of class $C^2$. Assume that $S_1\subset J^{-}(S_2)$ and that $S_2$ is a global
  cross-section for $H$.
Let $\Phi: H\to \R$ be a $C^0$ function and let $\mm=e^{\Phi} \vol_L$ be the corresponding weighted measure on $H$.
  Assume that $H$ is future geodesically complete and that  $(M,g,H, \Phi)$ satisfies
  $\NC^1(N)$. 
  
  Then it holds that
  \begin{equation}\label{eq:HAT}
    \int_{S_{1}}
    e^{\Phi}
    \,
    \de \haus^{n-2}
    \leq
    \int_{S_{2}}
    e^{\Phi}
    \,
    \de \haus^{n-2}
    .
  \end{equation}

\textbf{Rigidity}. Assume there exists $S_1,\,S_2$ with $S_1\cap S_2=\emptyset$ satisfying equality in~\eqref{eq:HAT}. Then there exists a null-geodesic vector field $L$ along $H$ such that the flow map $z\mapsto \Psi_{L}(z, t)$ is measure preserving in the following sense: for every $t>0$, the section $(S_1, e^{\Phi}\haus^{n-2})$ and the section $(\Psi_L(S_1, t), e^{\Phi}\haus^{n-2})$ are isomorphic as measure spaces, i.e., 
$$\Psi_L(\dotargument, t)_\sharp(e^{\Phi}\haus^{n-2}\llcorner S_1)= e^{\Phi}\haus^{n-2}\llcorner \Psi_L(S_1, t).$$ 
Moreover, $\Ric^{g,\Phi,N}_{\Psi_L(z,t)}(L,L) = 0$, for all $t>0$ and $z\in S_1$.

In the \emph{unweighted} case, i.e., when $\Phi\equiv 0, \, N=n$, the above measure rigidity shall be promoted to the following metric rigidity. The map $\Psi_L:S_1\times (0,\infty)\to (J^+(S_1)\cap H)\setminus S_1$ defines an isometry between the (degenerate) product  metric $g_{S_1}\oplus \big( 0 \, dt\otimes dt \big)$ and $g_H$. In particular, $((J^+(S_1)\cap H)\setminus S_1, g_H)$ splits metrically, and $L$ is in the kernel of the degenerate non-negative definite  $(0,2)$-tensor field $g_H$.
\end{theorem}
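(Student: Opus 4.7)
The plan is to reduce the inequality \eqref{eq:HAT} to a one-dimensional monotonicity statement along the null generators of $H$ and then integrate it via the area formula of Proposition~\ref{P:graph-surface}. I would first fix a null-geodesic vector field $L$ globally defined on $H$ (as in Remark~\ref{R:globalcross}, using that $S_2$ is a global cross-section). Because $S_1\subset J^-(S_2)$ and $S_2$ is global, for each $z\in S_1$ there is a unique $t_L(z)\geq 0$ with $\gflow_L(z,t_L(z))\in S_2$, giving a $C^2$ function $t_L$ on $S_1$. Future geodesic completeness of $H$ and Lemma~\ref{L:domainJ} ensure that $a_z(t):=\Phi(\gflow_L(z,t))+W_L(z,t)$ is real-valued on all of $[0,\infty)$, with $a_z(0)=\Phi(z)$ since $J(z,0)=I$.

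The main technical step, which I expect to be the principal difficulty, is to show that $a_z$ is non-decreasing on $[0,\infty)$. The $\NC^1(N)$ hypothesis says $a_z$ is concave and satisfies $a_z''+(a_z')^2/(N-2)\leq 0$ distributionally. Arguing by contradiction on a mollified version: if $a_z'(t_0)=-c<0$ for some $t_0\geq 0$, the substitution $\phi:=-1/a_z'$ turns the Riccati inequality into $\phi'\leq -1/(N-2)$ as long as $a_z'<0$, so $\phi$ hits $0$ within time $(N-2)/c$; this forces $a_z'\to-\infty$ in finite time and, by concavity, $a_z$ itself to diverge to $-\infty$ in finite time, contradicting its real-valuedness on $[0,\infty)$. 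This is the synthetic analogue of the classical Raychaudhuri argument, run from the distributional hypothesis. Consequently $a_z(t_L(z))\geq a_z(0)=\Phi(z)$, and Proposition~\ref{P:graph-surface} applied to the map $z\mapsto \gflow_L(z,t_L(z))$ (a $C^2$ surjection from $S_1$ onto the subset $\gflow_L(S_1)\subset S_2$) yields
\begin{equation*}
\int_{S_2} e^{\Phi}\,\de\haus^{n-2}
\geq \int_{\gflow_L(S_1)} e^{\Phi}\,\de\haus^{n-2}
= \int_{S_1} e^{a_z(t_L(z))}\,\de\haus^{n-2}(z)
\geq \int_{S_1} e^{\Phi}\,\de\haus^{n-2},
\end{equation*}
which is \eqref{eq:HAT}.

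For the rigidity, assuming $S_1\cap S_2=\emptyset$, equality in \eqref{eq:HAT} simultaneously forces $\gflow_L(S_1)=S_2$ (from the first inequality above, up to $\haus^{n-2}$-null sets and hence pointwise by continuity) and $a_z(t_L(z))=a_z(0)$ for $\haus^{n-2}$-a.e.\ (and hence every, by continuity) $z\in S_1$. Since $a_z$ is concave and non-decreasing, the identity $a_z(t_L(z))=a_z(0)$ with $t_L(z)>0$ forces $a_z'\equiv 0$ on $[0,t_L(z)]$; concavity combined with $a_z'\geq 0$ then forces $a_z'\equiv 0$ on all of $[0,\infty)$. Thus $a_z$ is constant, i.e.\ $\Phi(\gflow_L(z,t))+W_L(z,t)=\Phi(z)$ for every $t\geq 0$, and a further application of Proposition~\ref{P:graph-surface} with $\gflow_L(S_1,t)$ in place of $S_2$ immediately gives the measure-preservation property. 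The vanishing $\Ric^{g,\Phi,N}_{\gflow_L(z,t)}(L,L)=0$ follows from the saturation of every inequality in the chain of Proposition~\ref{P:convexity-weight} (forced by $a_z'=a_z''=0$ together with $\Ric^{g,\Phi,N}(L,L)\geq 0$, which holds since the classical weighted NEC is implied by $\NC^1(N)$ via a weighted analogue of Theorem~\ref{thm:NC1toNEC}). In the unweighted case $\Phi\equiv 0$, $N=n$, this same saturation gives $W_L'\equiv 0$ and equality in the Cauchy--Schwarz step of Proposition~\ref{eq:riccati-for-jacobi}; these together force the matrix $U:=J^{-1}J'$ to be traceless and proportional to the identity, hence $U\equiv 0$ and $J(z,t)\equiv I$. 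The representation of $g_H$ through Jacobi fields in Lemma~\ref{lem:Jdzeta} then reduces to $g_{S_1}$ on $TS_1$ with $L$ in the kernel, giving the announced isometry between $(S_1\times(0,\infty),\,g_{S_1}\oplus 0\,dt\otimes dt)$ and $((J^+(S_1)\cap H)\setminus S_1,\, g_H)$.
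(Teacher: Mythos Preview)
Your proposal is correct and follows the same architecture as the paper; the only differences are that the paper parametrizes $S_1$ as a graph over the global cross-section $S_2$ (so $t_L\leq 0$ and $\Omega_{S_2}=H$ automatically), and it replaces your Riccati blow-up with the one-line observation that $e^{a_z/(N-2)}$ is non-negative, concave, and defined on the whole half-line $[0,\infty)$, hence non-decreasing. The rigidity arguments coincide, including the appeal to equality in the Cauchy--Schwarz step of Proposition~\ref{eq:riccati-for-jacobi} to force $J\equiv I$ in the unweighted case.
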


\begin{remark}[Physical interpretation]\label{rem:InterpRigidity}
The area of a black-hole horizon is classically interpreted as a measure of its entropy. The Hawking Area Theorem then is classically interpreted as a second law of thermodynamics: namely, the entropy does not decrease in time, provided the null energy condition holds. The rigidity part in \Cref{Thm:WHAT} might be interpreted as follows: if the entropy of the event horizons does not increase in time, then the metric on the corresponding null hypersurface is static, in the sense that it is isometric to a (degenerate) product $g_S\oplus (0\, dt\otimes dt)$, where $g_S$ is the Riemannian metric on one of the $(n-2)$-dimensional event horizons, whose entropy do not change.
\end{remark}

\begin{proof}
  Let $L$ be a global null-geodesic vector field.
  Since $S_2$ is a global cross-section, then
  $\Omega_{S_2}=H_{S_2}=H$.
  As $S_1\subset J^{-}(S_2)$, there exists a map
  $$t_L:\dom (t_L)\subset S_2\to(-\infty,0] \quad \text{such
  that} \quad \gflow_L(z,t_L(z))\in S_1.$$
  This means that $S_1$ is a ``graph surface'', w.r.t.\ $S_2$; then
  we can apply Proposition~\ref{P:graph-surface}, obtaining
  \begin{equation}
        \int_{S_1}
    \phi(z)
    \,
    \haus^{n-2}(\de z)
    =
    \int_{\Dom(t_L)}
    \phi(\gflow_L(z,t_L(z)))
    \,
    e^{W_{L}(z,t_L(z))}
    \,
    \haus^{n-2}(\de z)
    ,
    \qquad
    \forall
    \phi\in C_c^0(S_1)
    .
  \end{equation}
  Since we can approximate $e^{\Phi}$ monotonically by functions
  with compact support, the monotone convergence theorem yields
  \begin{align*}
        \int_{S_1}
    e^{\Phi(z)}
    \,
    \haus^{n-2}(\de z)
    &
    =
    \int_{\Dom(t_L)}
    \,
    e^{\Phi(\gflow_L(z,t_L(z)))
      +W_{L}(z,t_L(z))}
    \,
      \haus^{n-2}(\de z)
      \\
      &
    =
    \int_{\Dom(t_L)}
    e^{a_z(t_L(z))}
    \,
    \haus^{n-2}(\de z)
    ,
  \end{align*}
  where $a_{z}(t):= \Phi(\gflow_L(z,t)) +W_{L}(z,t)$.
  The $\NC^1(N)$ condition states that  $a_z''+(a_z')^2/(N-2)\leq 0$ or, equivalently, that $t\mapsto e^{a_z(t)/(N-2)}$ is concave.
  Since $H$ is future geodesically complete,  $a_z(t)$ is well-defined for all $t>0$.
  We deduce that $t\mapsto e^{a_z(t)/(N-2)}$ is non-decreasing, thus $t\mapsto e^{a_z(t)}$ is
  non-decreasing.
  Since $t_L$ is non-positive, we can directly compute (recall that
  $W_L(z,0)=0$)
  \begin{align*}
        \int_{S_1}
    e^{\Phi(z)}
    \,
    \haus^{n-2}(\de z)
    &
      =
    \int_{\Dom(t_L)}
    e^{a_z(t_L(z))}
    \,
      \haus^{n-2}(\de z)
      \leq
    \int_{\Dom(t_L)}
    e^{a_z(0)}
    \,
      \haus^{n-2}(\de z)
    \\
    &
      \leq
    \int_{S_2}
    e^{a_z(0)}
    \,
      \haus^{n-2}(\de z)
      =
    \int_{S_2}
    e^{\Phi(z)+W_L(z,0)}
    \,
      \haus^{n-2}(\de z)
    \\
    &
      =
    \int_{S_2}
    e^{\Phi(z)}
    \,
      \haus^{n-2}(\de z)
      .
  \end{align*}
  This completes the proof of~\eqref{eq:HAT}.

  \smallskip

  \textbf{Proof of the rigidity.} Assume there exists $S_1,\,S_2$ with $S_1\cap S_2=\emptyset$. From the above arguments it follows that, for every $z\in S_1$, there exists a non-empty open interval where the function $t \mapsto e^{a_z(t)/(N-2)}$ is constant. Since such a function is also concave and non-negative on an half-line, it must be constant all the way. It follows that the flow map $\Psi_L(\dotargument, t)$ is measure preserving, in the sense that  $\Psi_L(\dotargument, t)_\sharp(e^{\Phi}\haus^{n-2}\llcorner S_1)= e^{\Phi}\haus^{n-2}\llcorner \Psi_L(S_1, t)$, and that 
$\Ric^{g,\Phi,N}_{\Psi_L(z,t)}(L,L) = 0$, for all $t>0$ and $z\in S_1$.

In the \emph{unweighted} case, we claim that $\Psi_L:S_1\times (0,\infty)\to (J^+(S_1)\cap H)\setminus S_1$ defines an isometry between the (degenerate) product  metric $g_{S_1}\oplus \big( 0 \, dt\otimes dt \big)$ and $g_H$.
To this aim, using that $\Phi\equiv 0$, we have that
\begin{equation}\label{eq:a(z,t)Not}
a_z(t)=\log \det(J_L(z,t))=W_L(z,t) \text{ is constant in the variable $t$}.
\end{equation}
Inspecting the proof of \Cref{eq:riccati-for-jacobi}, since $0=W_{L}'(z,t):= \tr(J_L(t)^{-1} J_L'(t))$,
for the matrix $U(t):=J_L(t)^{-1}\, J_L'(t)$ we obtain
$\tr(U^2) = \tr(U)^2/n-2$.
Equality in Cauchy-Schwartz inequality implies that $U$ has to be a
multiple of the identity, giving that 
$J_{L}'(z,t) = \alpha(z,t) J_L(z,t)$. 
Since  $J_L(z,0) = \id$, we infer that $J_L(z,t) = \lambda(z,t) \id$, for some positive function $\lambda(z,t)$. Recalling~\eqref{eq:a(z,t)Not}, we obtain that  $\lambda(z,t)$ does not depend on $t$. Since $J_L(z,0)=\id$, we conclude that 
\begin{equation}\label{eq:JLId}
J_L(z,t) =   \id, \quad  \text{ for all $t>0$}.
\end{equation}
Recall that the matrix of Jacobi fields $J_L(z,t)$ was obtained as follows. For each $z_0\in S_1$, we fixed an orthonormal basis $\{e_1(z_0), \ldots, e_{n-2}(z_0)\}$ of $(T_{z_0} S_{1}, g_{S_1})$. The matrix of Jacobi fields $J=J_L(z_0,t)$ was constructed such that the $i$-th column $J_i$ satisfies $J_i(z_0,0)=e_i(z_0)$ and $J_i'(z_0,0)=\nabla_{e_i}L$. Recall also that, by construction, $L$ is a null-geodesic vector field of $g$ along $H$, i.e., its flow lines $t\mapsto \Psi_L(z,t)$ are null-geodesics of $g$ along $H$, for each $z\in S_1$. Such a choice of Jacobi fields yields 
\begin{equation}\label{eq:JiPsiL}
J_i(z_0,t)=\left. \frac{\partial}{\partial z^i} \right|_{(z_0,t)} \Psi_L(z_0,t), \quad \text{for all }i=1,\ldots, n-2,
\end{equation}
where $(z^1, \ldots, z^{n-2})$ are  local coordinates on $S_{1}$, such that $\left.\frac{\partial}{\partial z^i}\right|_{z_0}=e_i(z_0)$.

The combination of~\eqref{eq:JLId} and~\eqref{eq:JiPsiL} implies that
\begin{align}\label{eq:PsiLIsom}
g\left( \left. \frac{\partial}{\partial z^i} \right|_{(z_0,t)} \Psi_L, \left. \frac{\partial}{\partial z^j} \right|_{(z_0,t)} \Psi_L \right)=g\left(J_i(z,t), J_j(z,t)\right)=  \delta_{ij}.
\end{align}
Since~\eqref{eq:PsiLIsom} holds for all $z_0\in S_1$, 
it follows that $\Psi_L(\dotargument, t)$ is an isometry from $S_1$ to $\Psi_L(S_1,t)$, for all $t>0$. Recalling that $L$ is a null-geodesic vector field and it remains orthogonal to $\Psi_L(S_1,t)$, for all $t>0$, we conclude that  $\Psi_L:S_1\times (0,\infty)\to (J^+(S_1)\cap H)\setminus S_1$ defines an isometry between the (degenerate) product  metric $g_{S_1}\oplus \big( 0 \, dt\otimes dt \big)$ and $g_H$.
\end{proof}

\begin{example} [$M=\Sigma^2\times \mathbb{M}^2$]
Let us discuss an easy example attaining equality in~\eqref{eq:HAT} and thus exhibiting the isometric splitting illustrated in the rigidity part of \Cref{Thm:WHAT}.
Let $M=\Sigma\times \R^2$, where $\Sigma$ is a $2$-dimensional closed surface and $\R^2$ is endowed with coordinates $(x,t)$ . Let $g_{\Sigma}$ be a Riemannian metric on $\Sigma$ and  $g_{\mathbb M_2}$ be the Minkowski metric on $\R^2$ in diagonal form $(1,-1)$. Endow $M$ with the Lorentzian metric
$ g_M:= g_{\Sigma}\oplus g_{\mathbb M_2}$.
Let $H:=\Sigma\times\{x=t\}\subset M$. It is easily seen that $H$ is a null hypersurface, that the slices $S_1:=\Sigma\times \{t=t_1\}$ and  $S_2:=\Sigma \times \{t=t_2\}$ have the same $\mathcal{H}^2$ measure, and that the metric induced on $H$ is a (degenerate) product in the sense of the rigidity part in \Cref{Thm:WHAT}, where the null-geodesic vector field is $L=\frac{\partial}{\partial x}+\frac{\partial}{\partial t}$.
\end{example}

\begin{example} [Schwarzschild event horizon]
A more interesting example of null hypersurface attaining equality in~\eqref{eq:HAT} and thus exhibiting the isometric splitting illustrated in the rigidity part of \Cref{Thm:WHAT} is the event horizon of Schwarzschild space-time.

Since the standard coordinates of  Schwarzschild space-time are singular on the event horizon, in order to illustrate the example we use the Lema\^itre coordinates $(\tau, \rho, \theta, \varphi)$ in which the Schwarzschild metric takes the form
$$
ds^2=-d\tau^2+\frac{r_S}{r} d\rho^2+ r^2 (d\theta^2+\sin^2 \theta \, d \varphi^2), \quad \text{where } r=\left[\frac{3}{2} (\rho-\tau) \right]^{2/3} r_S^{1/3}. 
$$
Here $r_S$ is the Schwarzschild radius (i.e.,\ $r_S=2GM/c^2$ where $M$ is the mass of the central body). The metric induced on the event horizon $H=\{r=r_S\}=\{\frac{3}{2}(\rho-\tau)=r_S\}$ writes as $g_H=r_S^2 (d\theta^2+\sin^2 \theta \, d \varphi^2)$. Note that $g_H$ is a (degenerate) product in the sense of the rigidity part in \Cref{Thm:WHAT}, where the null-geodesic vector field is $L=\frac{\partial}{\partial \tau}+\frac{\partial}{\partial \rho}$. 
\end{example}

\begin{example}[Kerr--Newmann event horizon]
An even more interesting example of null hypersurface attaining equality in~\eqref{eq:HAT} is the event horizon of a Kerr--Newmann black hole. 
The Kerr--Newmann metric models a black hole of mass $M$, angular
momentum $J=aM$, and charge $Q$. It is the most general asymptotically
flat and stationary solution of the Einstein--Maxwell equations and, as
such, satisfies the null energy condition.
One can compute \footnote{See for instance
  \href{https://physics.stackexchange.com/questions/482962/area-of-kerr-newman-event-horizon}{https://physics.stackexchange.com/questions/482962/area-of-kerr-newman-event-horizon}}
that the area of the event horizon is independent of the natural time coordinate, and depends only on $M,a$ and $Q$; in particular, equality holds in~\eqref{eq:HAT}. Therefore, the rigidity part  in \Cref{Thm:WHAT} implies that the corresponding null hypersurface is metrically a (degenerate) product.
\end{example}
\smallskip

\textbf{Conflict of interest statement.} The authors have no relevant financial or non-financial interests to disclose. The authors
have no Conflict of interest to declare that are relevant to the content of this article.
\smallskip

\textbf{Data Availability.} Data sharing is not applicable to this article as no new data were created or analyzed in this study.

\bibliographystyle{acm}
\bibliography{literature.bib}
\end{document}